\numberwithin{equation}{section}
\titleformat{\section}[block]{\bfseries\filcenter}
{{\upshape\thesection\enspace}}{.5em}{}
\titleformat{\subsection}[block]{\filcenter}
{{\upshape\thesubsection\enspace}}{.5em}{} 
\titleformat{\subsubsection}[block]{\filcenter}
{{\upshape\thesubsubsection\enspace}}{.5em}{} 
\setlist{nosep}  
\newcommand{\N}{\mathbb{N}}     
\newcommand{\R}{\mathbb{R}}     
\newcommand{\C}{\mathbb{C}}     
\newcommand{\Prob}{\mathbb{P}}  
\newcommand{\Exp}{\mathbb{E}}   
\newcommand{\goth}[1]{\mathfrak{#1}} 
\newcommand{\ind}[2]{\mathbbm{1}_{#1}\left( #2 \right)}          
\newcommand{\inner}[2]{\left\langle #1 \, , \, #2 \right\rangle} 
\newcommand{\norm}[1]{\left|\left|#1\right|\right|}              
\newcommand{\triplet}[3]{\left( #1, #2, #3 \right) }             
\newcommand{\ProbSpace}{\triplet{\Omega}{\mathscr{F}}{\Prob}}    
\newcommand{\abs}[1]{\left| #1 \right|}                          
\newcommand{\defeq}{\mathrel{\mathop:}=}                         
\newcommand\restr[2]{{
  \left.\kern-\nulldelimiterspace 
  #1 
  \vphantom{\big|} 
  \right|_{#2} 
  }}
\newsavebox{\@brx}
\newcommand{\llangle}[1][]{\savebox{\@brx}{\(\m@th{#1\langle}\)}%
  \mathopen{\copy\@brx\kern-0.5\wd\@brx\usebox{\@brx}}}
\newcommand{\rrangle}[1][]{\savebox{\@brx}{\(\m@th{#1\rangle}\)}%
  \mathclose{\copy\@brx\kern-0.5\wd\@brx\usebox{\@brx}}}
\theoremstyle{plain} 
\newtheorem{theorem}{Theorem}[section]    
\newtheorem{proposition}[theorem]{Proposition} 
\newtheorem{corollary}[theorem]{Corollary}
\newtheorem{lemma}[theorem]{Lemma}
\newtheorem{assumption}[theorem]{Assumption}
\theoremstyle{definition} 
\newtheorem{definition}[theorem]{Definition}
\newtheorem{example}[theorem]{Example}
\newtheorem{remark}[theorem]{Remark}
 \title{Stochastic integration with respect to cylindrical semimartingales}
\author{C. A. Fonseca-Mora}
\affil{  Escuela de Matem\'{a}tica, Universidad de Costa Rica, \\ San Jos\'{e}, 11501-2060, Costa Rica. 

\noindent E-mail:  christianandres.fonseca@ucr.ac.cr }
\date{}
\begin{document}

 \maketitle

\abstract{In this work we introduce a theory of  stochastic integration with respect to general cylindrical semimartingales defined on a locally convex space $\Phi$. Our construction of the stochastic integral is based on the theory of tensor products of topological vector spaces and the property of good integrators of  real-valued semimartingales. This theory is further developed in the case where $\Phi$ is a complete, barrelled, nuclear space, where we obtain a complete description of the class of integrands as $\Phi$-valued locally bounded and weakly predictable processes. Several other properties of the stochastic integral are proven, including a Riemann representation, a stochastic integration by parts formula and a stochastic Fubini theorem. Our theory is then applied to provide sufficient and necessary conditions for existence and uniqueness of solutions to linear stochastic evolution equations driven by  semimartingale noise taking values in the strong dual $\Phi'$ of $\Phi$. In the last part of this article we apply our theory to define stochastic integrals with respect to a sequence of real-valued semimartingales.} 

\smallskip

\emph{MSC2020 subject classifications.}  Primary 60H05; Secondary 60B11, 60G20, 60G48. 

\emph{Key words and phrases:} cylindrical semimartingales, stochastic integrals, tensor products, locally convex spaces, nuclear spaces.   

\section{Introduction}\label{sectIntroduction}

Cylindrical semimartingales play an important role as the driving noise for stochastic partial differential equations. For many decades the most widely used model of cylindrical semimartingale in stochastic analysis was the cylindrical Brownian motion (see e.g. \cite{Criens:2019, DaPratoZabczyk, Kalinichenko:2019, vanNeervenVeraarWeis:2008}).  However, in recent years there has been an increasing interest in the usage of other classes of cylindrical semimartingales as such driving noise; we can cite, for example, the cylindrical L\'{e}vy processes \cite{JakubowskiRiedle:2017, KosmalaRiedle:2021, KumarRiedle:2020,  PriolaZabczyk:2011, SunXieXie:2020}, cylindrical martingale-valued measures \cite{FonsecaMora:2018-1} and cylindrical continuous local martingales \cite{MikuleviciusRozovskii:1998, MikuleviciusRozovskii:1999, VeraarYaroslavtsev:2016}. To the extent of our knowledge, none of these works considers stochastic integration with respect to general cylindrical semimartingales in a general locally convex space. 

In this work, our main objective is to introduce a theory of stochastic integration for cylindrical semimartingales in the topological dual $\Phi'$ to a locally convex space $\Phi$. Recall that a cylindrical semimartingale $X$ in $\Phi'$ is a linear operator such that for each $\phi \in \Phi$, $X(\phi)$ is a real-valued semimartingale. In practice, in order to develop a theory of integration one usually ask for some appropriate continuity properties on $X$ as an operator from $\Phi$ into the space of real-valued semimartingales (see Assumption \ref{assuCylSemimartingale}).

One can easily define the stochastic integral for elementary weakly predictable integrands, however, when one wants to consider more general integrands, one faces the difficulties inherent to the absence of estimates due to the generality of $\Phi$ and $X$. Moreover, even for the case when $\Phi$ is a Banach space it is pointed out in (\cite{JakubowskiRiedle:2017}, Remark 2.2) that  general cylindrical semimartingales do not enjoy a semimartingale decomposition in terms of a cylindrical local martingale and another cylindrical process, then making the classical approach to define the stochastic integral in finite dimensions non-viable in our infinite dimensional setting. Consequently, our work requires a different approach which the reader will see, is based on the theory of tensor products on topological vector spaces, and the property of good integrators for real-valued semimartingales. All the necessary properties of locally convex spaces and tensor products, as well as properties of cylindrical processes  will be given in Section \ref{sectPrelimi}.     

The construction of the stochastic integral is the main object of our study in Section \ref{sectionConstruStochInteg}. In the next paragraphs we will try to give a description of the approach used in this construction. 

Consider a probability space $\ProbSpace$ equipped with a filtration  and denote by $b\mathcal{P}$ the Banach space of all the real-valued bounded predictable processes equipped with the norm of uniform convergence on $[0,\infty) \times \Omega$. Let $S^{0}$ denotes the space of real-valued semimartingales equipped with Emery's topology. Given $\phi \in \Phi$ and $h \in b\mathcal{P}$, one can define a bilinear form $J$ on $\Phi \times b\mathcal{P}$  with values in $S^{0}$ by the prescription $J(\phi,h)= h \cdot X(\phi)$, where $h \cdot X(\phi)$ refers to the stochastic integral of $h$ with respect to the real-valued semimartingale  $X(\phi)$. If  
$\Phi \, \widehat{\otimes}_{\nu} \, b\mathcal{P}$ denotes the completed topological linear tensor product of $\Phi$ and $b\mathcal{P}$, by using the good integrator property of real-valued semimartingales and the continuity property of our cylindrical semimartingales one can show that the bilinear form $J$ is jointly continuous, and hence, defines via its linearization a continuous linear operator $I: \Phi \, \widehat{\otimes}_{\nu} \, b\mathcal{P} \rightarrow S^{0}$ satisfying $I(\phi \otimes h)=J(\phi,h)=h \cdot X(\phi)$ (see Theorem  \ref{theoExisWeakStochIntegral}). We consider the operator $I$ as a first step in our construction of the stochastic integral. Indeed, one can show that the operator $I$ behaves nicely in terms of linearity on the integrators and by taking continuous parts and under stopping times. The above construction and properties of this general integral are carried out in Section \ref{subSectionConstStocIntegral}. 

The operator $I$ introduced above has the problem that the integrands are elements of the topological vector space $\Phi \, \widehat{\otimes}_{\nu} \, b\mathcal{P}$, which is not, in general, locally convex and hence one might not be able to find a ``nice'' description for its elements.  To overcome this obstacle we utilize a convexification of topologies argument to show that the mapping $I$ defines a unique continuous linear mapping from $\Phi \, \widehat{\otimes}_{\pi} \, b\mathcal{P}$ into $(S^{0})_{lcx}$ such that for each $(\phi,h) \in \Phi \times b\mathcal{P}$ we have $I(\phi \otimes h)=h \cdot X(\phi)$ (see Theorem \ref{theoContiWeakIntegConvexifSemiSpace}). Here, 
$\Phi \, \widehat{\otimes}_{\pi} \, b\mathcal{P}$ denotes the completed locally convex tensor product of $\Phi$ and 
$b\mathcal{P}$, and $(S^{0})_{lcx}$ denotes the space $S^{0}$ equipped with the convexification of Emery's topology. We then define our stochastic integral as the image of $H \in \Phi \, \widehat{\otimes}_{\pi} \, b\mathcal{P}$ under the mapping $I$. 
The above extension of the stochastic integral to locally convex integrands and the study of some further properties of the stochastic integral is carried out in Section \ref{subSecLocalConvInteg}.   
 
In Section \ref{sectStochIntegNuclear} we apply our general theory of stochastic integration to the construction of stochastic integrals for weakly predictable and weakly bounded stochastic processes taking values in a reflexive nuclear space $\Phi$ and with respect to cylindrical semimartingales in $\Phi'$. Before we describe our theory of integration in the context of nuclear spaces, we will explain our motivations. 

The theory of stochastic integration with respect to semimartingales taking values in the dual of a nuclear space was initiated in a series of papers by A. S. \"{U}st\"{u}nel (see \cite{Ustunel:1982, Ustunel:1982-1, Ustunel:1982-2}). There,  
under the assumption that $\Phi$ is a complete, bornological, reflexive nuclear space whose strong dual $\Phi'$ is complete and nuclear, \"{U}st\"{u}nel introduced the concept of projective system of  semimartingales in $\Phi'$, and developed a theory of stochastic integration using his concept of projective system and the theory of stochastic integration with respect to semimartingales taking values in a separable Hilbert space. The approach of \"{U}st\"{u}nel was proven to be very useful in applications, but the concept of projective system of semimartingales is strongly tied to the assumption that $\Phi'$ is complete and nuclear, therefore making it impossible to be extended to general nuclear spaces, since it is often the case that $\Phi'$ is neither nuclear nor complete (e.g. if $S$ is any uncountable set, $\R^{S}$ is nuclear but $(\R^{S})'$ is not nuclear; see Remark 51.1 in \cite{Treves}).

Motivated by the above, in \cite{FonsecaMora:Semi} the present author initiated a systematic study of semimartingales in duals of general nuclear spaces by considering them as $\Phi'$-valued processes whose induced cylindrical process is a cylindrical semimartingale in $\Phi'$. In \cite{FonsecaMora:Semi} we obtained several criteria to determine when a given cylindrical semimartingale in $\Phi'$ defines a $\Phi'$-valued c\`{a}dl\`{a}g semimartingale (this is known as the technique of \emph{regularization}), and we proved in \cite{FonsecaMora:Semi} that semimartingales in $\Phi'$ do enjoy a canonical semimartingale representation as the sum of a predictable process which is weakly of bounded variation, a continuous process which is weakly a continuous local martingale, and  a c\`{a}dl\`{a}g process which is weakly a purely discontinuous local martingale. 
However, such a decomposition cannot be used to define a stochastic integral with respect to semimartingales in the dual of a nuclear space. The reasons behind this fact is that there does not exist a theory of stochastic integration for a $\Phi'$-valued c\`{a}dl\`{a}g process which is weakly a local martingale nor for a $\Phi'$-valued c\`{a}dl\`{a}g process which is weakly of bounded  variation. Therefore, a different approach has to be used and this is where our general theory of stochastic integration with respect to cylindrical semimartingales in duals of locally convex spaces fits precisely to fulfil our needs.

In Section \ref{sectNuclearSpaceCylSemi} we review some properties of cylindrical semimartingales and semimartingales in duals of nuclear spaces. Then in Section \ref{sectionCharacWeakIntegransNuclear} we show that when $\Phi$ is a complete barrelled nuclear space,  the collection $b\mathcal{P}(\Phi)$ of all $\Phi$-valued weakly predictable and weakly adapted processes is isomorphic to $\Phi \, \widehat{\otimes}_{\pi} \, b\mathcal{P}$, and therefore we will show that the stochastic integral mapping $I$, defines a linear continuous operator from $b\mathcal{P}(\Phi)$ (with some topology of uniform convergence) into $(S^{0})_{lcx}$. This shows the existence of the stochastic integral for every $\Phi$-valued weakly predictable and weakly adapted process. We would like to stress that the above does not imply in general that $I$ is continuous from  
$b\mathcal{P}(\Phi)$ into $S^{0}$. However, if for a cylindrical semimartingale $X$ such a continuity for the mapping $I$ is satisfied, we will refer to such an $X$ as a ``good integrator'' and further properties of the stochastic integral in this particular setting will be explored. It is worth to mention that our concept of good integrator differs from that introduced in \cite{KurtzProtter:1996} for cylindrical semimartingales (there referred as $\mathbbm{H}^{\#}$-semimartingales) on a separable Banach space $\mathbbm{H}$. 

In Section \ref{sectExteStocIntegNuclear} we carry on a further study of the stochastic integral in nuclear spaces by extending the integral mapping to locally bounded integrands and studying its continuity properties for left-continuous integrands. The study of locally bounded integrands and the extension of the stochastic integral to these integrands is carried out in Section \ref{subSectStocIngLocalBound}. We will see that, in particular, our extended integral allows us to define the stochastic integral for a larger class of nuclear spaces than those considered by \"{U}st\"{u}nel. In Section \ref{subSectRiemanRepre} we explore the continuity properties of the integral mapping defined by a $\Phi'$-valued c\`{a}dl\`{a}g semimartingale that is a ``good integrator'', and we show that a Riemann representation and a integration by parts formula hold for these types of integrators (see Theorems \ref{theoRiemannRepresentation} and \ref{theoIntegByPartsFormula}).

In Section \ref{sectStochFubiniSEE} we pursue further extensions of our theory of stochastic integration and applications to linear stochastic evolution equations driven by  semimartingale noise taking values in the strong dual $\Phi'$ of $\Phi$. First, in Section \ref{subsectStochFubiniTheorem} we formulate and prove a stochastic Fubini theorem for semimartingales which are ``good integrators'' (Theorem \ref{theoStochFubini}). We are not aware of any other work in the literature that proved a stochastic Fubini theorem for semimartingales in the dual of a nuclear space. Later, in Section \ref{subsectSEECylSemiNoise} we to formulate sufficient and necessary conditions for the existence and uniqueness of weak solutions to the following class of stochastic evolution equations
$$ dY_{t}=A' Y_{t} +  dX_{t}, t \geq 0, $$
where $X$ is $\Phi'$-valued semimartingale which is a good integrator, and $A$ is the generator of a strongly continuous $C_{0}$-semigroup $(S(t): t \geq 0) \subseteq \mathcal{L}(\Phi,\Phi)$ (Theorems \ref{theoExisSolutionsSEE} and \ref{theoUniqueSoluSEE}). Our stochastic Fubini theorem will play a central role in our arguments.  Sufficient conditions for the existence of a unique solution with continuous paths are also given (Theorem \ref{theoExisUniqCadlagSoluSEE}). 

Later, in Section \ref{sectAppliSequenceSemi} we apply our theory of stochastic integration in nuclear spaces to carry out an alternative construction for the stochastic integral with respect to a sequence of real-valued semimartingales introduced in \cite{DeDonnoPratelli:2006}. Some new properties of the stochastic integral are obtained as part of our construction. 

Finally, in Section \ref{sectRemarkComparisons} we include a literature review of other theories of stochastic integration in locally convex spaces and comparisons with the theory developed in this article.

\section{Preliminaries}\label{sectPrelimi}

\subsection{Locally Convex Spaces}\label{secNotaDefi}

In this section we introduce our notation and review some of the key concepts on locally convex spaces and operators that we will need throughout this paper. For more
information see \cite{Jarchow, Schaefer, Treves}. All vector spaces in this paper are real. 

Let $\Phi$ be a locally convex space. We denote by $\Phi'$ the topological dual of $\Phi$ and by $\inner{f}{\phi}$ the canonical pairing of elements $f \in \Phi'$, $\phi \in \Phi$. Unless otherwise specified, $\Phi'$ will always be consider equipped with its \emph{strong topology}, i.e. the topology on $\Phi'$ generated by the family of semi-norms $( \eta_{B} )$, where for each $B \subseteq \Phi$ bounded we have $\eta_{B}(f)=\sup \{ \abs{\inner{f}{\phi}}: \phi \in B \}$ for all $f \in \Phi'$. Recall that $\Phi$ is called \emph{semi-reflexive} if the canonical (algebraic) embedding of $\Phi$ into $\Phi''$ is onto, and is called \emph{reflexive} if the canonical embedding is indeed an isomorphism (of topological vector spaces). 

A locally convex space is called \emph{ultrabornological} if it is the inductive limit of a family of Banach spaces. A \emph{barreled space} is a locally convex space such that every convex, balanced, absorbing and closed subset is a neighborhood of zero. For equivalent definitions see \cite{Jarchow, NariciBeckenstein}.
  
A continuous seminorm (respectively norm) $p$ on $\Phi$ is called \emph{Hilbertian} if $p(\phi)^{2}=Q(\phi,\phi)$, for all $\phi \in \Phi$, where $Q$ is a symmetric, non-negative bilinear form (respectively inner product) on $\Phi \times \Phi$. For any given continuous seminorm $p$ on $\Phi$ let $\Phi_{p}$ be the Banach space that corresponds to the completion of the normed space $(\Phi / \mbox{ker}(p), \tilde{p})$, where $\tilde{p}(\phi+\mbox{ker}(p))=p(\phi)$ for each $\phi \in \Phi$. We denote by  $\Phi'_{p}$ the Banach space dual to $\Phi_{p}$ and by $p'$ the corresponding dual norm. Observe that if $p$ is Hilbertian then $\Phi_{p}$ and $\Phi'_{p}$ are Hilbert spaces. If $q$ is another continuous seminorm on $\Phi$ for which $p \leq q$, we have that $\mbox{ker}(q) \subseteq \mbox{ker}(p)$ and the inclusion map from $\Phi / \mbox{ker}(q)$ into $\Phi / \mbox{ker}(p)$ has a unique continuous and linear extension that we denote by $i_{p,q}:\Phi_{q} \rightarrow \Phi_{p}$. 

Let $\theta$ be a weaker pseudometrizable topology  on $\Phi$. We denote by $\Phi_{\theta}$ the space $(\Phi,\theta)$ and by $\widehat{\Phi}_{\theta}$ its completion. If $\theta$ is generated by an increasing sequence of separable continuous Hilbertian semi-norms $( p_{n} : n \in \N)$ on $\Phi$, we will say that $\Phi$ a (weaker) \emph{countably Hilbertian topology} on $\Phi$.  In this latter case $\widehat{\Phi}_{\theta}$ is separable and  complete (but not necessarily Hausdorff) and its dual space satisfies $(\widehat{\Phi}_{\theta})'=(\Phi_{\theta})'=\bigcup_{n \in \N} \Phi'_{p_{n}}$ (see \cite{FonsecaMora:2018}, Proposition 2.4).   

For a topological vector space $\Psi$ we denote by $(\Psi)_{lcx}$ its convexification, i.e. the space $\Psi$ equipped with the strongest locally convex topology on $\Psi$ that is weaker than the given topology in $\Psi$. A local basis of neighborhoods of zero in $(\Psi)_{lcx}$ can be defined taking the convex envelopes of the members of a local basis of neighborhoods of zero in $\Psi$.

We denote by $\mathcal{L}(\Phi,\Psi)$ the linear space of all the linear and continuous operators between any two locally convex spaces (or more generally topological vector spaces) $\Phi$ and $\Psi$. For information of the topologies on the space $\mathcal{L}(\Phi,\Psi)$ the reader is referred to e.g. Chapter 32 in \cite{Treves}. If $R \in \mathcal{L}(\Phi,\Psi)$ we denote by $R'$ its \emph{dual operator} and recall that $R' \in \mathcal{L}(\Psi',\Phi')$.

%


\subsection{Tensor Products of Topological Vector Spaces}\label{subsecTensorProdTVS}

In this section we quickly review the definition of the projective topology on the tensor product of two topological vector spaces. 

Let $\Psi$ and $\Phi$ be two vector spaces. We denote by $\Psi \otimes \Phi$ the algebraic tensor product defined as the set of elements of the form $\sum_{i=1}^{n} \psi_{i} \otimes \phi_{i}$, for some $n \in \N$ and $\psi_{i} \in \Psi$, $\phi_{i} \in \Phi$ for $i =1, \dots, n$. The canonical product mapping $\otimes: \Psi \times \Phi \rightarrow \Psi \otimes \Phi$ is bilinear. Recall the `universal property' of tensor products (see e.g. Theorem 39.1 in \cite{Treves}): to every bilinear mapping $B$ of $\Psi \times \Phi$ into a vector space $\Upsilon$, there corresponds a unique linear map $\tilde{B}: \Psi \otimes \Phi \rightarrow \Upsilon$, called its \emph{linearization}, such that $B=\tilde{B} \circ \otimes$.  

Assume that $\Psi$ and $\Phi$ are locally convex spaces. The projective topology on $\Psi \otimes \Phi$ can be constructed via seminorms in the following way. Let $p$ (respectively $q$) be a seminorm on $\Psi$ (respectively $\Phi$). For any $\theta \in \Psi \otimes \Phi$, define 
$$(p \otimes q)(\theta)=\inf \sum_{j} p(\psi_{j}) q(\phi_{j}),$$
where the infimum is taken over all finite set of pairs $(\psi_{j}, \phi_{j})$ such that $\theta= \sum_{j} \psi_{j} \otimes \phi_{j}$. Then, one can show (see \cite{Treves}, Proposition 43.1, p.435) that $p \otimes q$ defines a seminorm on $\Psi \otimes \Phi$ (a norm if both $p$ and $q$ are norms). If $(p_{\alpha})$ (respectively $(q_{\beta})$) are a basis of continuous seminorms on $\Psi$ (respectively $\Phi$), then $(p_{\alpha} \otimes q_{\beta})$ is a basis of seminorms generating a locally convex topology on $\Psi \otimes \Phi$ called the \emph{projective topology}. The space $\Psi \otimes \Phi$ equipped with this topology will be denoted by $\Psi \,  \otimes_{\pi} \,  \Phi$ and its completion will be denoted by $\Psi \,  \widehat{\otimes}_{\pi} \,  \Phi$. Observe that if $\Psi$ and $\Phi$ are normed spaces then $\Psi \,  \widehat{\otimes}_{\pi} \,  \Phi$ is a Banach space. 

The projective topology $\pi$ is the strongest locally convex topology on $\Psi \otimes \Phi$ for which the mapping $\otimes: \Psi \times \Phi \rightarrow \Psi \otimes \Phi$ is continuous. Moreover it is the unique vector topology on  $\Psi \otimes \Phi$ having the property that for every locally convex space $\Upsilon$, a bilinear mapping $B$ of $\Psi \times \Phi$ into  $\Upsilon$ is continuous if and only if its linearization $\tilde{B}$ is continuous of $\Psi \otimes \Phi$ into $\Upsilon$  (see \cite{Treves}, Proposition 43.4, p.438). For further properties of the projective topology of two locally convex spaces the reader is referred to \cite{Jarchow, KotheII, Treves}. 

Suppose that $\Psi$ and $\Phi$ are topological vector spaces. We will also require the  definition of the (not-locally convex) projective tensor topology on $\Psi \otimes \Phi$ defined in \cite{Tomasek:1965, Turpin:1982}. Let $\mathcal{U}$ be a system of neighborhoods of zero in $\Psi$ and $\mathcal{V}$ be a system of neighborhoods of zero in $\Phi$. For any sequence $( U_{i}: i \in \N) \subseteq \mathcal{U}$, and any sequence $( V_{i}: i \in \N) \subseteq \mathcal{V}$, define
$$ \Gamma_{(U_{i}),(V_{i})} = \bigcup_{n \geq 1} \sum_{i=1}^{n} U_{i} \otimes V_{i}, $$    
where $U_{i} \otimes V_{i}= \{ \psi \otimes \phi: \psi \in U_{i}, \phi \in V_{i}\}$. One can show (see \cite{Tomasek:1965}) that the collection of sets of the form $\Gamma_{(U_{i}),(V_{i})}$ defines a vector topology $\nu$ on $\Psi \otimes \Phi$, called the \emph{projective topology}. This topology can be defined equivalently in terms of a generating family of pseudo-seminorms (see \cite{Turpin:1982}). The space $\Psi \otimes \Phi$ equipped with this topology will be denoted by $\Psi \,  \otimes_{\nu} \,  \Phi$ and we denote by $\Psi \,  \widehat{\otimes}_{\nu} \,  \Phi$ its completion. The space $\Psi \,  \otimes_{\nu} \,  \Phi$ is Hausdorff (respectively complete) if both $\Psi$ and $\Phi$ are Hausdorff (respectively complete) (see Th\'{e}or\`{e}me 2.4 in \cite{Turpin:1982}). However, it is worth to mention that unlike the case of the projective topology for locally convex spaces, the topology introduced above in general fails to be associative (see \cite{Glockner:2004}). 

The projective topology $\nu$ is the strongest vector topology on $\Psi \otimes \Phi$ for which the mapping $\otimes: \Psi \times \Phi \rightarrow \Psi \otimes \Phi$ is continuous. Moreover it is the unique vector topology on  $\Psi \otimes \Phi$ having the property that for every topological vector space $\Upsilon$, a bilinear mapping $B$ of $\Psi \times \Phi$ into  $\Upsilon$ is continuous if and only if its linearization $\tilde{B}$ is continuous of $\Psi \otimes \Phi$ into $\Upsilon$ (see \cite{Tomasek:1965}, Theorem 2).

\subsection{Cylindrical and Stochastic Processes} \label{subSectionCylAndStocProcess}

Throughout this work we assume that $\ProbSpace$ is a complete probability space equipped with a filtration $( \mathcal{F}_{t} : t \geq 0)$ that satisfies the \emph{usual conditions}, i.e. it is right continuous and $\mathcal{F}_{0}$ contains all subsets of sets of $\mathcal{F}$ of $\Prob$-measure zero. We denote by $L^{0} \ProbSpace$ the space of equivalence classes of real-valued random variables defined on $\ProbSpace$. We always consider the space $L^{0} \ProbSpace$ equipped with the topology of convergence in probability and in this case it is a complete, metrizable, topological vector space.

Let $\Phi$ be a locally convex space. A \emph{cylindrical random variable}\index{cylindrical random variable} in $\Phi'$ is a linear map $X: \Phi \rightarrow L^{0} \ProbSpace$ (see \cite{FonsecaMora:2018}). If $X$ is a cylindrical random variable in $\Phi'$, we say that $X$ is \emph{$n$-integrable} ($n \in \N$) if $ \Exp \left( \abs{X(\phi)}^{n} \right)< \infty$, $\forall \, \phi \in \Phi$, and has \emph{mean-zero} if $ \Exp \left( X(\phi) \right)=0$, $\forall \phi \in \Phi$. The \emph{Fourier transform} of $X$ is the map from $\Phi$ into $\C$ given by $\phi \mapsto \Exp ( e^{i X(\phi)})$.

Let $X$ be a $\Phi'$-valued random variable, i.e. $X:\Omega \rightarrow \Phi'$ is a $\mathscr{F}/\mathcal{B}(\Phi')$-measurable map (recall that $\Phi'$ is equipped with its strong topology). For each $\phi \in \Phi$ we denote by $\inner{X}{\phi}$ the real-valued random variable defined by $\inner{X}{\phi}(\omega) \defeq \inner{X(\omega)}{\phi}$, for all $\omega \in \Omega$. The linear mapping $\phi \mapsto \inner{X}{\phi}$ is called the \emph{cylindrical random variable induced/defined by $X$}. We will say that a $\Phi'$-valued random variable $X$ is \emph{$n$-integrable} if the cylindrical random variable induced by $X$ is \emph{$n$-integrable}. 
 
Let $J=\R_{+} \defeq [0,\infty)$ or $J=[0,T]$ for  $T>0$. We say that $X=( X_{t}: t \in J)$ is a \emph{cylindrical process} in $\Phi'$ if $X_{t}$ is a cylindrical random variable for each $t \in J$. Clearly, any $\Phi'$-valued stochastic processes $X=( X_{t}: t \in J)$ induces/defines a cylindrical process under the prescription: $\inner{X}{\phi}=( \inner{X_{t}}{\phi}: t \in J)$, for each $\phi \in \Phi$. 

If $X$ is a cylindrical random variable in $\Phi'$, a $\Phi'$-valued random variable $Y$ is called a \emph{version} of $X$ if for every $\phi \in \Phi$, $X(\phi)=\inner{Y}{\phi}$ $\Prob$-a.e. A $\Phi'$-valued process $Y=(Y_{t}:t \in J)$ is said to be a $\Phi'$-valued \emph{version} of the cylindrical process $X=(X_{t}: t \in J)$ on $\Phi'$ if for each $t \in J$, $Y_{t}$ is a $\Phi'$-valued version of $X_{t}$.  

For a $\Phi'$-valued process $X=( X_{t}: t \in J)$ terms like continuous, c\`{a}dl\`{a}g, c\`{a}gl\`{a}d, adapted, predictable, etc. have the usual (obvious) meaning. 

A $\Phi'$-valued random variable $X$ is called \emph{regular} if there exists a weaker countably Hilbertian topology $\theta$ on $\Phi$ such that $\Prob( \omega: X(\omega) \in (\widehat{\Phi}_{\theta})')=1$. Furthermore, a $\Phi'$-valued process $Y=(Y_{t}:t \in J)$ is said to be \emph{regular} if $Y_{t}$ is a regular random variable for each $t \in J$. In that case the law of each $Y_{t}$ is a Radon measure in $\Phi'$ (see Theorem 2.10 in \cite{FonsecaMora:2018}).

Recall that a real-valued adapted c\`{a}dl\`{a}g  process $( x_{t}: t \geq 0) $ is a \emph{semimartingale} if it admit a representation of the form 
$$x_{t}=x_{0} + m_{t}+a_{t}, \quad \forall \, t \geq 0, $$
where $(m_{t}:t \geq 0)$ is a c\`{a}dl\`{a}g local martingale and $(a_{t}: t \geq 0)$ is a c\`{a}dl\`{a}g adapted process of finite variation, and $m_{0}=a_{0}=0$. In the following paragraphs we will review only some of the most important results on the theory of semimartingales that we will need through this article. The reader is referred to \cite{DellacherieMeyer, JacodShiryaev, Protter} for further details. 

We denote by $S^{0}$  the linear space (of equivalence classes) of real-valued semimartingales. We denote by $\mathcal{M}_{loc}$ and $\mathscr{V}$ the subspaces of real-valued local martingales and of finite variation process.  

We will always assume $S^{0}$ is equipped with Emery's topology which is defined as follows: recall that the topology of convergence in probability uniformly on compact intervals of time is defined by the F-seminorm:  
$$z \mapsto \norm{z}_{ucp}\defeq \sum_{n=1}^{\infty} 2^{-n} \Exp \left( 1 \wedge \sup_{0 \leq t \leq n} \abs{z_{t}} \right).$$
Emery's topology on $S^{0}$ is the topology defined by the F-seminorm:
$$z \mapsto \norm{z}_{S^{0}}\defeq \sup\{ \norm{ h \cdot z}_{ucp}: h \in \mathcal{E}_{1}  \},$$
where  $\mathcal{E}_{1}$ is the collection of all the  real-valued predictable processes of the form 
$ \displaystyle{h=\sum_{i=1}^{n-1} a_{i} \mathbbm{1}_{(t_{i}, t_{i+1}] \times \Omega}}$,  
for $0 < t_{1} < t_{2} < \dots t_{n} < \infty$, $a_{i}$ is an $\mathcal{F}_{t_{i}}$-measurable random variable, $\abs{a_{i}} \leq 1$, $i=1, \dots, n-1$, and 
$$ (h \cdot z)_{t}=\int_{0}^{t} h_{s} dz_{s}= \sum_{i=1}^{n-1} a_{i} \left( z_{t_{i+1} \wedge t}-z_{t_{i} \wedge t} \right). $$  
The space $S^{0}$ equipped with Emery's topology is a complete, metrizable, topological vector space (however it is not in general locally convex). See \cite{Memin:1980} for an equivalent topology on $S^{0}$ defined by Memin.  

In this work we will make reference to several spaces of particular classes of semimartingales which we detail as follows. 
By $S^{c}$ we denote the subspace of $S^{0}$ of all the continuous semimartingales and by $\mathcal{M}^{c}_{loc}$ the space of continuous local martingales, both are equipped with the topology of uniform convergence in probability on compact intervals of time. Likewise $\mathcal{A}_{loc}$  denotes the space of all predictable processes of finite variation, with locally integrable variation, equipped with the F-seminorm: $\norm{a}_{\mathcal{A}_{loc}}=\Exp \left( 1 \wedge \int_{0}^{\infty} \abs{d a_{s}} \right)$. The spaces $S^{c}$, $\mathcal{M}^{c}_{loc}$ and $\mathcal{A}_{loc}$ are all closed subspaces of $S^{0}$ and the subspace topology on $S^{c}$, $\mathcal{M}^{c}_{loc}$ and $\mathcal{A}_{loc}$ coincides with their given topology (see \cite{Memin:1980}, Th\'{e}or\`{e}me IV.5 and IV.7). 
 
For every real-valued semimartingale $x=(x_{t}:t \geq 0)$ and each $p \in [1,\infty]$, we denote by $\norm{x}_{\mathcal{H}^{p}_{S}}$ the following quantity: 
$$\norm{x}_{\mathcal{H}^{p}_{S}} = \inf \left\{ \norm{ [m,m]_{\infty}^{1/2} + \int_{0}^{\infty} \abs{d a_{s} } }_{L^{p}\ProbSpace} : x=m+a \right\}, $$
where the infimum is taken over all the decompositions $x=m+a$ as a sum of a local martingale $m$ and a process of finite variation $a$. Recall that  $([m,m]_{t}: t \geq 0)$ denotes the quadratic variation process associated to the local martingale $m$, i.e. $[m,m]_{t}=\llangle  m^{c} , m^{c}  \rrangle_{t}+\sum_{0 \leq s \leq t} (\Delta m_{s})^{2}$, where $m^{c}$ is the (unique) continuous local martingale part of $m$ and $(\llangle  m^{c} , m^{c}  \rrangle_{t}:t \geq 0)$ its angle bracket process (see Section I in \cite{JacodShiryaev}). The set of all semimartingales $x$ for which $\norm{x}_{\mathcal{H}^{p}_{S}}< \infty$ is a Banach space under the norm $\norm{\cdot}_{\mathcal{H}^{p}_{S}}$ and is denoted by $\mathcal{H}^{p}_{S}$ (see Section 16.2 in \cite{CohenElliott}). Furthermore, if $x=m+a$ is a decomposition of $x$ such that $\norm{x}_{\mathcal{H}^{p}_{S}}< \infty$ it is known that in such a case $a$ is of integrable variation (see VII.98(c) in \cite{DellacherieMeyer}). 


For $p \geq 1$, denote by $\mathcal{M}_{\infty}^{p}$ the space of real-valued martingales for which $\norm{m}_{\mathcal{M}_{\infty}^{p}}= \norm{ \sup_{t \geq 0} m_{t} }_{L^{p}\ProbSpace}< \infty$. It is well-known that $\mathcal{M}_{\infty}^{p}$ equipped with the norm $\norm{\cdot}_{\mathcal{M}_{\infty}^{p}}$ is a Banach space. Likewise, we denote by $\mathcal{A}$ the space of all predictable processes of finite variation, with integrable variation. It is well-know that $\mathcal{A}$ is a Banach space when equipped with the norm $\norm{a}_{\mathcal{A}}=\Exp \int_{0}^{t} \abs{d a_{s}} < \infty$. 

If $\Psi$ is a Hilbert space, the above definitions of semimartingale, 
and (of spaces) of particular classes of semimartingales can be defined in a completely analogue way for $\Psi$-valued processes. The corresponding spaces of semimartingales will be denoted by $S^{0}(\Psi)$, $\mathcal{H}^{p}_{S}(\Psi)$, $\mathcal{M}_{\infty}^{p}(\Psi)$, $\mathcal{A}$, etc. The reader is referred to \cite{Metivier} for the basic theory of Hilbert space valued semimartingales.

Let $\Phi$ denotes a locally convex space. 
A \emph{cylindrical semimartingale} in $\Phi'$ is a cylindrical process $X=(X_{t}: t \geq 0)$ in $\Phi'$ such that $\forall \phi \in \Phi$, $X(\phi)$ is a real-valued semimartingale. In general, if $\goth{S}$ denotes any space of a particular class of semimartingales (as described above), then by a $\goth{S}$-cylindrical semimartingale in $\Phi'$ we mean a cylindrical process $X=(X_{t}: t \geq 0)$ in $\Phi'$ such that $\forall \phi \in \Phi$, $X(\phi) \in \goth{S}$.

\section{Stochastic Integration in Locally Convex Spaces} \label{sectionConstruStochInteg}

Let $\Phi$ denotes a locally convex space and let $(X_{t}:t \geq 0)$ be a cylindrical semimartingale in $\Phi'$. Our objective in this section is to define the stochastic integral process $\int_{0}^{t} H dX$, $t \geq 0$,  as a real-valued semimartingale, where $H$ belongs to a sufficiently large class of integrands.  

As described in Section \ref{sectIntroduction}, to have a working theory of stochastic integration minimum continuity conditions on $X$ are required. These conditions are the following: 
 
\begin{assumption}\label{assuCylSemimartingale} From now on and unless otherwise specified, we will assume that to every cylindrical semimartingale $X=( X_{t}: t \geq 0)$ in $\Phi'$  there exists a weaker pseudometrizable linear topology $\theta$ on $\Phi$ such that $X$ extends to a continuous linear mapping $X^{\theta}$ from $\widehat{\Phi}_{\theta}$ into $S^{0}$. 
\end{assumption}

\begin{remark} \label{remaAssumpCylSemiWeakInteg}
If $\Phi$ is a Fr\'{e}chet space and $X$ is a cylindrical semimartingale in $\Phi'$ for which each $X_{t}:\Phi \rightarrow L^{0} \ProbSpace$ is continuous, from an application of the closed graph theorem it is easy to verify that $X$ satisfies  Assumption \ref{assuCylSemimartingale} if we take $\theta$ as the original topology.  We will see in Section \ref{sectNuclearSpaceCylSemi} that if $\Phi$ is a nuclear space then Assumption \ref{assuCylSemimartingale} is equivalent to several conditions which are easier to check.  
\end{remark}

\subsection{Construction of the Stochastic Integral}\label{subSectionConstStocIntegral}

For our construction of the stochastic integral we  will assume the reader is familiar with the theory of stochastic integration with respect to real-valued semimartingales. However, for further developments the following key properties will be needed.

Denote by $b\mathcal{P}$ the linear space of all the bounded predictable processes $h : \R_{+} \times \Omega \rightarrow \R$. It is a Banach space when equipped with the uniform norm $\norm{h}_{u}=\sup_{(r,\omega)} \abs{h(r,\omega)}$. If $h \in b\mathcal{P}$ and $z \in S^{0}$, then $h$ is stochastically integrable with respect to $z$, and its stochastic integral, that we denote by $h \cdot z=(( h \cdot z)_{t}: t \geq 0)$, is an element of $S^{0}$ (see \cite{Protter}, Theorem IV.15). The mapping $(z,h) \mapsto h \cdot z$ from $S^{0} \times b\mathcal{P}$ into $S^{0}$ is bilinear (see \cite{Protter}, Theorem IV.16-7) and separately continuous (see Theorems 12.4.10-13 in \cite{CohenElliott}).

Our construction of the stochastic integral will be given in Theorem \ref{theoExisWeakStochIntegral} below. We won't follow the `standard' procedure of defining the integral first for a class of simple integrands and then to extend it to general integrands by a continuity argument. In fact, as described in Section \ref{sectIntroduction} our idea is to use the continuity properties of the stochastic integral with respect to real-valued semimartingales and the powerful machinery of tensor products on topological vector spaces. Our arguments will produce a continuous linear operator $I$ from the projective tensor product space $\Phi \, \widehat{\otimes}_{\nu} \, b\mathcal{P}$ (our space of integrands) into the space of real-valued semimartingales $S^{0}$.

\begin{theorem}\label{theoExisWeakStochIntegral}
There exists a unique continuous linear map $I: \Phi \, \widehat{\otimes}_{\nu} \, b\mathcal{P} \rightarrow S^{0}$ such that for each $(\phi,h) \in \Phi \times b\mathcal{P}$ we have $I(\phi \otimes h)=h \cdot X(\phi)$.
\end{theorem}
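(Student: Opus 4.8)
The plan is to obtain $I$ as the continuous linearization of the bilinear map $J:\Phi \times b\mathcal{P} \to S^{0}$, $J(\phi,h)=h \cdot X(\phi)$, by exploiting the universal property of the (not-locally-convex) projective tensor topology $\nu$ recalled in Section \ref{subsecTensorProdTVS}. By that universal property (\cite{Tomasek:1965}, Theorem 2), once $J$ is shown to be jointly continuous, its linearization $\tilde{J}$ is a continuous linear map on $\Phi \, \otimes_{\nu} \, b\mathcal{P}$ with $\tilde{J}(\phi \otimes h)=h\cdot X(\phi)$. Since $S^{0}$ is complete and $\Phi \, \otimes_{\nu} \, b\mathcal{P}$ is dense in its completion $\Phi \, \widehat{\otimes}_{\nu} \, b\mathcal{P}$, the map $\tilde{J}$ extends uniquely to a continuous linear map $I:\Phi \, \widehat{\otimes}_{\nu} \, b\mathcal{P} \to S^{0}$ with the required values on elementary tensors. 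Uniqueness is then automatic, since any continuous linear map prescribed to equal $h\cdot X(\phi)$ on the $\phi \otimes h$ must agree with $I$ on the dense linear span $\Phi \otimes b\mathcal{P}$, hence everywhere.

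Thus the whole statement reduces to the joint continuity of $J$. The obstacle is that $\Phi$ is an arbitrary locally convex space, so one cannot argue directly on $\Phi \times b\mathcal{P}$; this is precisely where Assumption \ref{assuCylSemimartingale} intervenes. Let $\iota:\Phi \to \widehat{\Phi}_{\theta}$ be the canonical embedding, which is continuous because $\theta$ is weaker than the topology of $\Phi$, and let $X^{\theta}:\widehat{\Phi}_{\theta}\to S^{0}$ be the continuous extension provided by the Assumption. I would then factor $J = B \circ (\iota \times \mathrm{id})$, where $B:\widehat{\Phi}_{\theta}\times b\mathcal{P}\to S^{0}$ is the bilinear map $B(\psi,h)=h\cdot X^{\theta}(\psi)$. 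Since the composition of a jointly continuous bilinear map with a continuous linear map in each argument is again jointly continuous, it suffices to prove that $B$ is jointly continuous, and the gain is that now \emph{both} factors are metrizable: $b\mathcal{P}$ is a Banach space and $\widehat{\Phi}_{\theta}$ is (pseudo)metrizable.

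I would first record that $B$ is separately continuous, using the good-integrator properties recalled at the start of Section \ref{subSectionConstStocIntegral}: for fixed $h$, the map $\psi \mapsto h\cdot X^{\theta}(\psi)$ is the composition of the continuous map $X^{\theta}$ with the continuous map $z\mapsto h\cdot z$ (separate continuity of the real stochastic integral in the integrator), while for fixed $\psi$ the map $h\mapsto h\cdot X^{\theta}(\psi)$ is continuous by separate continuity of the integral in the integrand. The main obstacle is to upgrade separate continuity to joint continuity, and I would do this by a Baire-category argument: regarding $B$ as a separately continuous bilinear map on $b\mathcal{P}\times \widehat{\Phi}_{\theta}$, the factor $b\mathcal{P}$ is a Banach space and hence a Baire space, while $\widehat{\Phi}_{\theta}$ is metrizable, so the classical theorem that a separately continuous bilinear map on the product of a Baire topological vector space and a metrizable topological vector space is jointly continuous applies. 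It is essential here that this theorem rests only on the Baire property of one factor, a countable neighborhood basis in the other, and the existence of closed balanced neighborhoods of zero in the target, and therefore requires no local convexity of $S^{0}$. Pulling back through $\iota \times \mathrm{id}$ yields the joint continuity of $J$, which, via the reduction of the first paragraph, completes the construction of $I$.
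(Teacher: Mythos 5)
Your proposal is correct and follows essentially the same route as the paper: both use Assumption \ref{assuCylSemimartingale} to transfer the bilinear map to the complete pseudometrizable space $\widehat{\Phi}_{\theta}\times b\mathcal{P}$, upgrade separate continuity to joint continuity by a Baire-category/metrizability theorem that needs no local convexity of $S^{0}$ (you cite the classical Baire-factor version, the paper cites Swartz's Corollary 8 for complete pseudometrizable spaces), and then invoke the universal property of $\otimes_{\nu}$ plus completeness of $S^{0}$ and density for existence and uniqueness. The only cosmetic difference is that you pull joint continuity back to $\Phi\times b\mathcal{P}$ and linearize $J$ directly, whereas the paper linearizes $J^{\theta}$ over $\widehat{\Phi}_{\theta}\otimes_{\nu}b\mathcal{P}$ and composes with the continuous canonical inclusion of $\Phi\otimes_{\nu}b\mathcal{P}$ into it.
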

\begin{proof}
First, define the map $J: \Phi \times b\mathcal{P}  \rightarrow S^{0}$ by 
$$ J(\phi,h)=h \cdot X(\phi), \quad \forall \, \phi \in \Phi, \, h \in  b\mathcal{P} .$$ 
The properties of the stochastic integral with respect to real-valued semimartingales show that $J$ is bilinear and that $h \mapsto J^{w}(\phi,h)$ is continuous from $b\mathcal{P}$ into $S^{0}$ for any given $\phi \in \Phi$. 

Let $\theta$ and $X^{\theta}$  as in Assumption \ref{assuCylSemimartingale}. From our assumption $X$ extends to a continuous linear mapping $X^{\theta}$ from $\widehat{\Phi}_{\theta}$ into $S^{0}$, but since the canonical inclusion from $\Phi$ into $\widehat{\Phi}_{\theta}$ is continuous, we therefore have that $X$ is continuous from $\Phi$ into $S^{0}$. The properties of the stochastic integral with respect to real-valued semimartingales show that $\phi \mapsto J(\phi,h)$ is continuous from $\Phi$ into $S^{0}$ for any given $h \in  b\mathcal{P}$. Hence, $J$ is separately continuous. 

Now, by the properties of $X^{\theta}$ and since $S^{0}$ is complete, it is clear that $J$ has a bilinear and separately continuous extension $J^{\theta}: \widehat{\Phi}_{\theta} \times b\mathcal{P} \rightarrow S^{0}$,  defined by $J^{\theta} (\phi,h)=h \cdot X^{\theta}(\phi)$ for all $(\phi,h )  \in \widehat{\Phi}_{\theta} \times b\mathcal{P}$. Moreover, since $\widehat{\Phi}_{\theta}$, $b\mathcal{P}$ and $S^{0}$ are all complete pseudometrizable topological vector spaces, the mapping $J^{\theta}$ is indeed continuous (see \cite{Swartz:1984}, Corollary 8); here the space $ \widehat{\Phi}_{\theta} \times b\mathcal{P} $ being equipped with the product topology.

Let $\tilde{J}^{\theta}: \widehat{\Phi}_{\theta} \,  \otimes \, b\mathcal{P} \rightarrow S^{0}$ be the linearization of the bilinear form $J^{\theta}: \widehat{\Phi}_{\theta} \times b\mathcal{P} \rightarrow S^{0}$, that is, we have 
$\tilde{J}^{\theta}(\phi \otimes h)=J^{\theta}(\phi,h)$ for each $(\phi,h )  \in \widehat{\Phi}_{\theta} \times b\mathcal{P}$. The continuity of $J^{\theta}$ implies that $\tilde{J}^{\theta}$ is continuous from $\widehat{\Phi}_{\theta} \,  \otimes_{\nu} \, b\mathcal{P}$ into $S^{0}$. 

Now, as the canonical inclusion from $\Phi$ into $\widehat{\Phi}_{\theta}$ is continuous, it follows that the canonical inclusion $j$ from $\Phi \,  \otimes_{\nu} \, b\mathcal{P}$ into $\widehat{\Phi}_{\theta} \,  \otimes_{\nu} \, b\mathcal{P}$ is also continuous (see \cite{Tomasek:1965}, Proposition 2). Therefore the map $\widetilde{J}: \Phi \,  \otimes_{\nu} \, b\mathcal{P} \rightarrow S^{0}$ defined by the composition $\widetilde{J}= \widetilde{J}^{\theta} \circ j$, is linear and continuous. But then there exists a continuous linear map $I: \Phi \, \widehat{\otimes}_{\nu} \, b\mathcal{P} \rightarrow S^{0}$ such that $\widetilde{J}= I \circ \iota$, where $\iota$ denotes the canonical inclusion from $\Phi \, \otimes_{\nu} \, b\mathcal{P}$ into its completion $\Phi \, \widehat{\otimes}_{\nu} \, b\mathcal{P}$. Observe that from its construction, for all $\phi \in \Phi$ and $h \in b\mathcal{P}$ we have 
$$ I (\phi \otimes h)= \widetilde{J}(\phi \otimes h)= \widetilde{J}^{\theta} \circ j (\phi \otimes h) = J^{\theta}(\phi,h)=J(\phi,h)= h \cdot X(\phi).$$ 
The above property determines $I$ uniquely due to the isomorphism of the continuous bilinear mappings of $\Phi \times b\mathcal{P}$ into $S^{0}$ onto the space $\mathcal{L}(\Phi \, \widehat{\otimes}_{\nu} \, b\mathcal{P}, S^{0})$.  
\end{proof}
 
\begin{definition}
We will call the map $I: \Phi \, \widehat{\otimes}_{\nu} \, b\mathcal{P} \rightarrow S^{0}$ defined in Theorem \ref{theoExisWeakStochIntegral} the \emph{stochastic integral map} determined by $X$, and for each $H \in \Phi \, \widehat{\otimes}_{\nu} \, b\mathcal{P}$, we call $I(H)$ the \emph{stochastic integral of $H$ with respect to $X$}. We will utilize the notation $\int H \, dX$ for $I(H)$, and for each $t \geq 0$ we denote $\left( \int H \, dX \right)_{t}$ by $\int_{0}^{t} H \, dX$.  We will call $\Phi \, \widehat{\otimes}_{\nu} \, b\mathcal{P}$ the \emph{space of integrands} and each $H \in \Phi \, \otimes \, b\mathcal{P}$ will be referred as an \emph{elementary integrand}.    
\end{definition}

\begin{remark} 
At this point it is useful to stress why we have chosen the space $ b\mathcal{P}$ to define our class of integrands $\Phi \, \widehat{\otimes}_{\nu} \, b\mathcal{P}$. There are three main reasons for this choice. The first is because the stochastic integral with respect to real-valued semimartingales possesses nice continuity properties for integrands in the space $ b\mathcal{P}$. The importance of the above has been made clear in the proof of Theorem \ref{theoExisWeakStochIntegral}. Second, the elements in the class  $ b\mathcal{P}$ are integrable with respect to every real-valued semimartingale, in particular, if  $ h \in b\mathcal{P}$ then $h \cdot X(\phi) \in S^{0}$ for each $\phi \in \Phi$. The above fact is of extreme importance since in our construction of the stochastic integral mapping $I$ we must be able to integrate with respect to every possible realization $X(\phi)$ of $X$. Third, the space $ b\mathcal{P}$ is Banach and hence we will be able to consider also the locally convex tensor product of $\Phi$ and $ b\mathcal{P}$.   
\end{remark}

We now explore other properties of our stochastic integral. If $H \in \Phi \, \otimes \, b\mathcal{P}$ and if $X$ is a cylindrical local martingale (respectively a cylindrical finite variation process), then it is clear that  $\int H \, dX $ is a local martingale (respectively a finite variation process). However, it is not clear that $\int H \, dX $ is a local martingale (respectively a finite variation process) if $H \in \Phi \, \widehat{\otimes}_{\nu} \, b\mathcal{P}$. The main reason for the occurrence of this phenomenon is the fact that the spaces $\mathcal{M}_{loc}$ and $\mathscr{V}$ are not closed subspaces of $S^{0}$ (see \cite{Emery:1979}). 

A different situation occurs with the spaces $S^{c}$, $\mathcal{M}^{c}_{loc}$, $\mathcal{A}_{loc}$ which are all closed subspaces of $S^{0}$. In this case we have the following:

\begin{proposition} \label{propWeakInteContinuSemimar} 
Assume $H \in \Phi \, \widehat{\otimes}_{\nu} \, b\mathcal{P}$. 
\begin{enumerate}
\item If $X$ is a $S^{c}$-cylindrical semimartingale in $\Phi'$, then $\int \, H \, dX \in S^{c}$. 
\item If $X$ is a $\mathcal{M}^{c}_{loc}$-cylindrical local martingale in $\Phi'$, then $\int \, H \, dX \in \mathcal{M}^{c}_{loc}$.
\item If $X$ is a $\mathcal{A}_{loc}$-cylindrical semimartingale in $\Phi'$, then $\int \, H \, dX \in \mathcal{A}_{loc}$. 
\end{enumerate}
\end{proposition}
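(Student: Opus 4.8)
The plan is to prove all three assertions by a single density argument that reduces everything to the case of elementary integrands $H \in \Phi \, \otimes \, b\mathcal{P}$. The two structural facts that make this work are already recorded in the excerpt: the subspaces $S^{c}$, $\mathcal{M}^{c}_{loc}$ and $\mathcal{A}_{loc}$ are all \emph{closed} in $S^{0}$ (in contrast to $\mathcal{M}_{loc}$ and $\mathscr{V}$, whose failure to be closed is exactly why the naive analogue of this proposition does not hold), and the map $I : \Phi \, \widehat{\otimes}_{\nu} \, b\mathcal{P} \rightarrow S^{0}$ of Theorem \ref{theoExisWeakStochIntegral} is continuous. Together with the density of $\Phi \, \otimes \, b\mathcal{P}$ in its completion $\Phi \, \widehat{\otimes}_{\nu} \, b\mathcal{P}$, these will let me lift the result from elementary to arbitrary integrands.

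First I would settle the elementary case. For $H = \sum_{i=1}^{n} \phi_{i} \otimes h_{i} \in \Phi \, \otimes \, b\mathcal{P}$, linearity of $I$ gives $\int H \, dX = \sum_{i=1}^{n} h_{i} \cdot X(\phi_{i})$, so since each target space is a linear subspace it is enough to treat one summand $h \cdot X(\phi)$ with $h \in b\mathcal{P}$, and here I invoke the standard properties of the real-valued stochastic integral. In case (1), $X(\phi) \in S^{c}$ is continuous, so the jumps satisfy $\Delta (h \cdot X(\phi)) = h \, \Delta X(\phi) = 0$ and $h \cdot X(\phi)$ is a continuous semimartingale, hence lies in $S^{c}$. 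In case (2), integrating the bounded predictable process $h$ against the continuous local martingale $X(\phi)$ produces again a continuous local martingale, so $h \cdot X(\phi) \in \mathcal{M}^{c}_{loc}$. In case (3), $X(\phi) \in \mathcal{A}_{loc}$ is predictable, of finite variation with locally integrable variation, and then $h \cdot X(\phi)$ is predictable, of finite variation, with total variation controlled by $\norm{h}_{u}$ times that of $X(\phi)$, so it again lies in $\mathcal{A}_{loc}$.

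Finally I would pass to the completion. Because $I$ is continuous and $S^{c}$ is closed in $S^{0}$, the preimage $I^{-1}(S^{c})$ is a closed subset of $\Phi \, \widehat{\otimes}_{\nu} \, b\mathcal{P}$; by the elementary case it contains the dense subspace $\Phi \, \otimes \, b\mathcal{P}$, and therefore $I^{-1}(S^{c}) = \Phi \, \widehat{\otimes}_{\nu} \, b\mathcal{P}$, i.e. $\int H \, dX \in S^{c}$ for every $H$. Running the identical argument with $\mathcal{M}^{c}_{loc}$ and with $\mathcal{A}_{loc}$ in place of $S^{c}$ yields (2) and (3).

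I do not expect a serious obstacle here: the whole content is the interplay of the continuity of $I$, the density of the elementary integrands, and the closedness of the three subspaces. The only place that demands genuine care is the elementary case, and there it is essential that we integrate \emph{bounded} predictable processes — this is what guarantees integrability against every realization $X(\phi)$ and the preservation of the finite-variation and local-integrability structure in case (3) — together with the fact that, unlike $\mathcal{M}_{loc}$ and $\mathscr{V}$, the three spaces in question are closed, which is precisely what legitimizes the closure step.
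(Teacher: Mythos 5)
Your proposal is correct and follows essentially the same route as the paper: verify the claim for elementary integrands via the standard properties of the real-valued stochastic integral, then combine the continuity of $I$, the density of $\Phi \otimes b\mathcal{P}$ in $\Phi \, \widehat{\otimes}_{\nu} \, b\mathcal{P}$, and the closedness of $S^{c}$, $\mathcal{M}^{c}_{loc}$ and $\mathcal{A}_{loc}$ in $S^{0}$ to conclude. The preimage formulation of the closure step is just a slight repackaging of the paper's density argument.
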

\begin{proof} To prove (1), assume $X$ is a $S^{c}$-cylindrical semimartingale in $\Phi'$. Then for each $(\phi,h) \in \Phi \times b\mathcal{P}$ we have $\int \, \phi \otimes h \, dX=h \cdot X(\phi)$ belongs to $S^{c}$. By linearity of the integral mapping, the above property extends to every $H \in \Phi \otimes b\mathcal{P}$. But then since $S^{c}$  is a closed subspace in $S^{0}$, the integral mapping $H \mapsto \int \, H \, dX$ is continuous from $\Phi \, \widehat{\otimes}_{\nu} \, b\mathcal{P}$ into $S^{0}$, $\Phi \otimes b\mathcal{P}$ is dense in $\Phi \, \widehat{\otimes}_{\nu} \, b\mathcal{P}$, and because the induced topology from $S^{0}$ coincides with the topology in $S^{c}$, then $\int \, H \, dX$ belongs to $S^{c}$ for each $H \in \Phi \, \widehat{\otimes}_{\nu} \, b\mathcal{P}$. The proof of (2) (respectively of (3)) follows from similar arguments using that  $\int \, \phi \otimes h \, dX$ belongs to $\mathcal{M}^{c}_{loc}$ (respectively to $\mathcal{A}_{loc}$) whenever $(\phi,h) \in \Phi \times b\mathcal{P}$, and that $\mathcal{M}^{c}_{loc}$ (respectively $\mathcal{A}_{loc}$) is a closed subspace of $S^{0}$.  
\end{proof}

We know from Theorem \ref{theoExisWeakStochIntegral} that by construction the stochastic integral mapping is linear and continuous on the integrands. The following result shows that the stochastic integral is linear with respect to the integrators. 

\begin{proposition} \label{propLineaWeakInteInIntegrators} Suppose that $X$ and $Y$ are two cylindrical semimartingales in $\Phi'$. Then, for each $H \in \Phi \, \widehat{\otimes}_{\nu} \, b\mathcal{P}$, the processes $\int \, H d (X+Y)$ and $\int \, H d X +\int \, H d Y$ are indistinguishable. 
\end{proposition}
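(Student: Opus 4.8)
The plan is to reduce the identity to the algebraic tensor product $\Phi \otimes b\mathcal{P}$, where it follows from linearity of the real-valued stochastic integral in the integrator, and then to propagate it to all of $\Phi \, \widehat{\otimes}_{\nu} \, b\mathcal{P}$ by a density argument. Write $I_{X}$, $I_{Y}$ and $I_{X+Y}$ for the stochastic integral maps determined by $X$, $Y$ and $X+Y$ respectively. Before anything else I would check that $I_{X+Y}$ is well defined, i.e. that $X+Y$ is a cylindrical semimartingale satisfying Assumption \ref{assuCylSemimartingale}. That $X+Y$ is a cylindrical semimartingale is immediate, since $(X+Y)(\phi)=X(\phi)+Y(\phi)$ is a sum of real-valued semimartingales. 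For the continuity requirement, if $\theta_{X}$ and $\theta_{Y}$ are weaker pseudometrizable linear topologies associated to $X$ and $Y$ as in Assumption \ref{assuCylSemimartingale}, take $\theta=\theta_{X} \vee \theta_{Y}$, the supremum topology, which is again a weaker pseudometrizable linear topology on $\Phi$ (a countable union of two countable families of seminorms generates it). Since $\theta$ is finer than each of $\theta_{X}$ and $\theta_{Y}$, both $X$ and $Y$ are $\theta$-continuous from $\Phi$ into $S^{0}$, and as $S^{0}$ is complete they extend to continuous linear maps on $\widehat{\Phi}_{\theta}$; hence so does $X+Y$, which shows that $X+Y$ satisfies Assumption \ref{assuCylSemimartingale}.

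With this in place, the key step is the elementary computation on simple tensors. For every $(\phi,h) \in \Phi \times b\mathcal{P}$,
$$ I_{X+Y}(\phi \otimes h)= h \cdot (X+Y)(\phi)= h \cdot \bigl( X(\phi)+Y(\phi) \bigr)= h \cdot X(\phi) + h \cdot Y(\phi)= I_{X}(\phi \otimes h)+I_{Y}(\phi \otimes h), $$
where the middle equality uses the bilinearity (in particular, the linearity in the integrator) of the real-valued stochastic integral recalled in Section \ref{subSectionConstStocIntegral}. By linearity of each of the three maps, the identity $I_{X+Y}(H)=I_{X}(H)+I_{Y}(H)$ then holds for every $H$ in the algebraic tensor product $\Phi \otimes b\mathcal{P}$.

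To finish, I would invoke continuity and density. All three maps $I_{X+Y}$, $I_{X}$ and $I_{Y}$ are continuous from $\Phi \, \widehat{\otimes}_{\nu} \, b\mathcal{P}$ into $S^{0}$ by Theorem \ref{theoExisWeakStochIntegral}, hence so is $H \mapsto I_{X}(H)+I_{Y}(H)$. Since $\Phi \otimes b\mathcal{P}$ is dense in $\Phi \, \widehat{\otimes}_{\nu} \, b\mathcal{P}$ and $S^{0}$ is Hausdorff, two continuous maps agreeing on this dense subset must coincide, giving $I_{X+Y}(H)=I_{X}(H)+I_{Y}(H)$ for all $H \in \Phi \, \widehat{\otimes}_{\nu} \, b\mathcal{P}$. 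This is precisely the equality of $\int \, H \, d(X+Y)$ and $\int \, H \, dX + \int \, H \, dY$ as elements of $S^{0}$; since elements of $S^{0}$ are equivalence classes of semimartingales modulo indistinguishability, the two processes are indistinguishable.

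I expect the only genuinely delicate point to be the preliminary verification that $X+Y$ again falls under Assumption \ref{assuCylSemimartingale}, i.e. the construction of a single weaker pseudometrizable topology handling both integrators simultaneously so that $I_{X+Y}$ is even defined. Once that integral map is available, the remainder is the standard scheme of establishing a linear identity on simple tensors and extending it by continuity and density, and it involves no estimates.
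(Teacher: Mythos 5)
Your proof is correct and follows essentially the same route as the paper: verify that $X+Y$ satisfies Assumption \ref{assuCylSemimartingale}, establish the identity on simple tensors via linearity of the real-valued stochastic integral in the integrator, and extend to all of $\Phi \, \widehat{\otimes}_{\nu} \, b\mathcal{P}$ by continuity and density (which is what the paper's appeal to ``uniqueness of the stochastic integral mapping'' amounts to). The only difference is that you spell out the construction of a common weaker pseudometrizable topology $\theta_{X} \vee \theta_{Y}$ for $X+Y$, a point the paper dismisses as clear.
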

\begin{proof} First, it is clear that the cylindrical semimartingale $X+Y=(X_{t}+Y_{t}: t \geq 0)$ also satisfies the conditions in Assumption \ref{assuCylSemimartingale}. Therefore, the mappings $H \mapsto \int \, H d X$, $H \mapsto \int \, H d Y$,  and $H \mapsto \int \, H d (X+Y)$ from  $\Phi \, \widehat{\otimes}_{\nu} \, b\mathcal{P}$ into $S^{0}$ satisfy the conclusions of Theorem \ref{theoExisWeakStochIntegral}. In particular, for each $H = \phi \otimes h \in \Phi \otimes b\mathcal{P}$ we have 
$$\int \, H d (X+Y) = h \cdot (X+Y)(\phi)= h \cdot X(\phi)+h \cdot Y(\phi)=  \int \, H d X +\int \, H d Y. $$
Then, by uniqueness of the stochastic integral mapping we have that for each $H \in \Phi \, \widehat{\otimes}_{\nu} \, b\mathcal{P}$,  
$$\int \, H d (X+Y)=\int \, H d X +\int \, H d Y,$$
with equality in $S^{0}$, hence under indistinguishability. 
\end{proof}

Other properties of the stochastic integral are provided in the following theorem. In particular, we study what happens when we take the continuous part and the stopped process of our weak stochastic integral. In order to formulate our result, we will need first to extend the concepts of continuous part and stopped process to cylindrical semimartingales. 

Given the cylindrical semimartingale $X$, observe that for each $\phi \in \Phi$ the real-valued semimartingale $X(\phi)$ has a uniquely determined continuous local martingale part $X(\phi)^c$ (see VIII.45 in \cite{DellacherieMeyer}). We define a cylindrical continuous local martingale $X^{c}$ by the prescription $X^{c}(\phi) \defeq X(\phi)^{c}$ for each $\phi \in \Phi$. The fact that the continuous local martingale part is unique easily shows that the map $X^{c}$ is linear. Similarly, if $\tau$ is an stopping time, we define a cylindrical semimartingale $X^{\tau}$ by $X^{\tau}(\phi) \defeq X(\phi)^\tau$ for each $\phi \in \Phi$. The map $X^{\tau}$ is clearly linear. Therefore, $X^{c}$ and $X^{\tau}$ are cylindrical semimartingales in $\Phi'$. Since the operations $z \mapsto z^{c}$ and $z \mapsto z^{\tau}$ are continuous from $S^{0}$ into $S^{0}$ (see \cite{Emery:1979}), then the cylindrical semimartingales $X^{c}$ and $X^{\tau}$ both satisfy Assumption \ref{assuCylSemimartingale} and hence the stochastic integral is defined with respect to each of them. Moreover, we have:
 
\begin{theorem}\label{theoContPartStoppedWeakInt}  Let $H \in \Phi \, \widehat{\otimes}_{\nu} \, b\mathcal{P}$. Then, 
\begin{enumerate}
\item $\displaystyle{\left(\int H \, dX \right)^{c}= \int H \, dX^{c}}$. 
\item $\displaystyle{\left(\int H \, dX \right)^{\tau}= \int H \, dX^{\tau}}$, for every stopping time $\tau$. 
\end{enumerate} 
\end{theorem}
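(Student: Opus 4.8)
The plan is to reuse the density-and-continuity strategy that already underlies Proposition \ref{propWeakInteContinuSemimar} and Proposition \ref{propLineaWeakInteInIntegrators}: each identity asserts the equality of two continuous linear maps from $\Phi \, \widehat{\otimes}_{\nu} \, b\mathcal{P}$ into $S^{0}$, so it will suffice to verify the equality on the dense subspace $\Phi \otimes b\mathcal{P}$ of elementary integrands and then to pass to the completion.

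For part (1), I would consider the two maps $H \mapsto \left( \int H \, dX \right)^{c}$ and $H \mapsto \int H \, dX^{c}$. The first is the composition of the stochastic integral map $I$ (continuous by Theorem \ref{theoExisWeakStochIntegral}) with the operation $z \mapsto z^{c}$, which is continuous from $S^{0}$ into $S^{0}$; hence the composition is continuous. The second is precisely the stochastic integral map determined by the cylindrical semimartingale $X^{c}$, which, as noted just before the statement, satisfies Assumption \ref{assuCylSemimartingale}, so this map is continuous as well. It then remains to check agreement on an elementary integrand $\phi \otimes h$: there the left side equals $\left( h \cdot X(\phi) \right)^{c}$ while the right side equals $h \cdot X^{c}(\phi) = h \cdot \left( X(\phi)^{c} \right)$, and these coincide by the real-valued identity $\left( h \cdot z \right)^{c} = h \cdot z^{c}$, valid for every $z \in S^{0}$ and $h \in b\mathcal{P}$ (the finite-variation part of $z$ contributes nothing to the continuous local martingale part, and the scalar integral commutes with taking $(\cdot)^{c}$ on the local martingale part).

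Part (2) is entirely analogous. The map $H \mapsto \left( \int H \, dX \right)^{\tau}$ is continuous, being the composition of $I$ with the continuous stopping operation $z \mapsto z^{\tau}$ on $S^{0}$, and $H \mapsto \int H \, dX^{\tau}$ is continuous because $X^{\tau}$ satisfies Assumption \ref{assuCylSemimartingale}. On an elementary integrand $\phi \otimes h$ the two sides read $\left( h \cdot X(\phi) \right)^{\tau}$ and $h \cdot X^{\tau}(\phi) = h \cdot \left( X(\phi)^{\tau} \right)$, which agree by the standard real-valued identity $\left( h \cdot z \right)^{\tau} = h \cdot z^{\tau}$.

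In both cases, agreement on $\Phi \otimes b\mathcal{P}$ extends by linearity to that subspace and then, since the maps are continuous and $\Phi \otimes b\mathcal{P}$ is dense in $\Phi \, \widehat{\otimes}_{\nu} \, b\mathcal{P}$, to the whole space; the resulting equalities hold in $S^{0}$ and hence up to indistinguishability. The only genuinely substantive inputs are the two scalar identities $\left( h \cdot z \right)^{c} = h \cdot z^{c}$ and $\left( h \cdot z \right)^{\tau} = h \cdot z^{\tau}$, together with the continuity of $z \mapsto z^{c}$ and $z \mapsto z^{\tau}$ on $S^{0}$. Once these are in hand the rest is a routine density argument, so I do not expect any serious obstacle; the only point requiring a little care is confirming that both competing maps are continuous, which is exactly what the closedness-free composition argument above secures.
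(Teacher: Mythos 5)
Your proposal is correct and follows essentially the same route as the paper: the paper likewise defines the two competing continuous linear maps $A(H)=\left(\int H\,dX\right)^{c}$ and $B(H)=\int H\,dX^{c}$ (and analogously for stopping), verifies agreement on elementary integrands $\phi\otimes h$ via the scalar identities $\left(h\cdot z\right)^{c}=h\cdot z^{c}$ and $\left(h\cdot z\right)^{\tau}=h\cdot z^{\tau}$, and then extends by linearity, continuity and density of $\Phi\otimes b\mathcal{P}$. No gaps.
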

\begin{proof}
For every $\phi \in \Phi$ and $h \in b\mathcal{P}$, well-known properties of the stochastic integral with respect to real-valued semimartingales (see e.g. Theorem 12.3.22 in \cite{CohenElliott}) show that 
\begin{enumerate}[label=(\alph*)]
\item $\left( h \cdot X(\phi) \right)^{c} = h \cdot X(\phi)^{c}$. 
\item $( h \cdot X(\phi))^{\tau}=  h \cdot \left( X(\phi)^{\tau} \right)$, for every stopping time $\tau$.   
\end{enumerate}
To prove $(1)$, denote by $A$ the mapping $A(H)=\left(\int H \, dX \right)^{c}$ for each $H \in \Phi \, \widehat{\otimes}_{\nu} \, b\mathcal{P}$. Then by Theorem \ref{theoExisWeakStochIntegral} we have $A \in \mathcal{L}(\Phi \, \widehat{\otimes}_{\nu} \, b\mathcal{P}, S^{0})$. 

On the other hand, since $X^{c}$ is a cylindrical semimartingale satisfying the conditions in Assumption \ref{assuCylSemimartingale}, it follows from Theorem \ref{theoExisWeakStochIntegral} that the mapping $B$ defined by $B(H)=\int H \, dX^{c}$ satisfies $B \in \mathcal{L}(\Phi \, \widehat{\otimes}_{\nu} \, b\mathcal{P}, S^{0})$. By the property (a) given above and the action of the stochastic integral on elementary integrands (Theorem \ref{theoExisWeakStochIntegral}), we have
$$A(\phi \otimes h)=\left( h \cdot X(\phi) \right)^{c} = h \cdot X(\phi)^{c}=B(\phi \otimes h),$$
for every $\phi \in \Phi$ and $h \in b\mathcal{P}$. By linearity, we have that $ A(H) = B(H)$ for every $H \in \Phi \, \otimes \, b\mathcal{P}$. This property can be extended to every $H \in \Phi \, \widehat{\otimes}_{\nu} \, b\mathcal{P}$ by continuity of the mappings $A$ and $B$. This shows \emph{(1)}. The proof of \emph{(2)} can be carried out using similar arguments and the property (b) listed above. 
\end{proof}

\subsection{Locally Convex Integrands}\label{subSecLocalConvInteg}

We learnt from  Section \ref{subSectionConstStocIntegral} that the topological vector tensor product $\Phi \, \widehat{\otimes}_{\nu} \, b\mathcal{P}$ was exactly what we needed for our general construction of the stochastic integral. However, in general there is not a nice description of the elements in $\Phi \, \widehat{\otimes}_{\nu} \, b\mathcal{P}$. Indeed, even if we assume that $\Phi$ is metrizable, unlike the case of the tensor products of locally convex spaces (see \cite{Schaefer}, Theorem III.6.4, p.94-5), the elements of the space $\Phi \, \widehat{\otimes}_{\nu} \, b\mathcal{P}$ in general can not be expressed in terms of an absolutely convergent series. Indeed, the space $\Phi \, \widehat{\otimes}_{\nu} \, b\mathcal{P}$ might not even be metrizable (for a counterexample see \cite{Hollstein:1973}). In particular we do not know of a characterization for the elements in $\Phi \, \widehat{\otimes}_{\nu} \, b\mathcal{P}$ as random processes taking values in $\Phi$. 

In the next result we will try to amend this situation by redefining our stochastic integral for integrands in the locally convex projective tensor product $\Phi \, \widehat{\otimes}_{\pi} \, b\mathcal{P}$. 
However, the reader should be aware that because the space $S^{0}$ is not locally convex, we cannot replicate the arguments used in the proof of Theorem \ref{theoExisWeakStochIntegral} to show the existence of a linear mapping from $\Phi \, \widehat{\otimes}_{\pi} \, b\mathcal{P}$  into $S^{0}$. Nevertheless, we will see in the next result that by using the convexified topology of $S^{0}$ the stochastic integral of Theorem \ref{theoExisWeakStochIntegral} can be used to define such a linear mapping. 

\begin{theorem} \label{theoContiWeakIntegConvexifSemiSpace}
The stochastic integral mapping $I$ of Theorem \ref{theoExisWeakStochIntegral} defines a unique continuous linear mapping from $\Phi \, \widehat{\otimes}_{\pi} \, b\mathcal{P}$ into $(S^{0})_{lcx}$ such that for each $(\phi,h) \in \Phi \times b\mathcal{P}$ we have $I(\phi \otimes h)=h \cdot X(\phi)$. Moreover this mapping preserves the properties of the stochastic integral for integrands in $\Phi \, \widehat{\otimes}_{\pi} \, b\mathcal{P}$ given in Proposition \ref{propLineaWeakInteInIntegrators} and Theorem \ref{theoContPartStoppedWeakInt} by replacing continuity into $S^{0}$ by continuity into $(S^{0})_{lcx}$. 
\end{theorem}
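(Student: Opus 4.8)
The plan is to transfer the joint continuity of the bilinear map $J$ already obtained in the proof of Theorem \ref{theoExisWeakStochIntegral} into the convexified target $(S^{0})_{lcx}$, to invoke the universal property of the locally convex projective topology, and then to extend by density to the completion. Concretely, recall that $J:\Phi\times b\mathcal{P}\to S^{0}$, $J(\phi,h)=h\cdot X(\phi)$, was shown to be jointly continuous into $S^{0}$. Since the identity map $S^{0}\to (S^{0})_{lcx}$ is continuous (the convexified topology is by definition coarser than Emery's topology), the composition shows that $J$ is jointly continuous as a map into the locally convex space $(S^{0})_{lcx}$. Because the target is now locally convex, the universal property of the projective topology $\pi$ (Proposition 43.4 in \cite{Treves}) applies and yields that the linearization $\widetilde{J}:\Phi\,\otimes_{\pi}\,b\mathcal{P}\to (S^{0})_{lcx}$ is linear and continuous with $\widetilde{J}(\phi\otimes h)=h\cdot X(\phi)$.

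Next I would extend $\widetilde{J}$ to the completion. As $\Phi\,\otimes_{\pi}\,b\mathcal{P}$ is dense in $\Phi\,\widehat{\otimes}_{\pi}\,b\mathcal{P}$ and $(S^{0})_{lcx}$ is complete, the uniformly continuous linear map $\widetilde{J}$ extends uniquely to a continuous linear map $I:\Phi\,\widehat{\otimes}_{\pi}\,b\mathcal{P}\to (S^{0})_{lcx}$; this extension still sends $\phi\otimes h$ to $h\cdot X(\phi)$, and uniqueness of $I$ follows from the density of the algebraic tensor product together with continuity into the Hausdorff space $(S^{0})_{lcx}$. I expect this extension step to be the main obstacle: one cannot simply route the definition through $\Phi\,\widehat{\otimes}_{\nu}\,b\mathcal{P}$ and the map of Theorem \ref{theoExisWeakStochIntegral}, because a $\pi$-Cauchy net in $\Phi\,\otimes\,b\mathcal{P}$ need not be $\nu$-Cauchy, so a typical element of the $\pi$-completion does not arise from the $\nu$-completion. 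Hence the extension must be performed directly into $(S^{0})_{lcx}$, and it genuinely relies on the completeness of that convexified space, which is the crux of why the convexification is introduced here.

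Finally, for the ``moreover'' assertion I would re-run the proofs of Proposition \ref{propLineaWeakInteInIntegrators} and Theorem \ref{theoContPartStoppedWeakInt} verbatim, with $(S^{0})_{lcx}$ in place of $S^{0}$. The only additional ingredient needed is that the operations $z\mapsto z^{c}$, $z\mapsto z^{\tau}$, and addition of integrators remain continuous after convexification; this is immediate from the functoriality of the convexification, namely that any continuous linear map $T:E\to F$ between topological vector spaces remains continuous as a map $T:E_{lcx}\to F_{lcx}$ (apply the universal property of $E_{lcx}$ to the continuous linear map $E\to F\to F_{lcx}$ into the locally convex space $F_{lcx}$). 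With these continuity statements in hand, the relevant identities hold on elementary integrands exactly as before and propagate to all of $\Phi\,\widehat{\otimes}_{\pi}\,b\mathcal{P}$ by linearity, density, and continuity into $(S^{0})_{lcx}$.
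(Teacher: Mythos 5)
Your construction of the continuous linear map on the uncompleted tensor product is correct but routed differently from the paper. You take the jointly continuous bilinear map $J:\Phi\times b\mathcal{P}\to S^{0}$ (joint continuity is indeed available, though only because it is inherited from the extension $J^{\theta}$ on $\widehat{\Phi}_{\theta}\times b\mathcal{P}$ built in the proof of Theorem \ref{theoExisWeakStochIntegral}; $\Phi$ itself need not be pseudometrizable, so the Baire-type argument does not apply to $J$ directly), compose with the continuous identity $S^{0}\to(S^{0})_{lcx}$, and invoke the universal property of $\pi$ for locally convex targets. The paper instead starts from the already-linearized map $I$ restricted to $\Phi\,\otimes_{\nu}\,b\mathcal{P}$, composes with $S^{0}\to(S^{0})_{lcx}$, and uses the identifications $(\Phi\otimes_{\nu}b\mathcal{P})_{lcx}\simeq\Phi\otimes_{\pi}b\mathcal{P}$ together with $\mathcal{L}(E,F)=\mathcal{L}(E_{lcx},F)$ for locally convex $F$. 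The two routes produce the same map on $\Phi\,\otimes_{\pi}\,b\mathcal{P}$; yours is slightly more self-contained, the paper's makes the relation to the $\nu$-integral explicit. Your treatment of the ``moreover'' clause --- functoriality of convexification so that $z\mapsto z^{c}$, $z\mapsto z^{\tau}$ and addition of integrators remain continuous into $(S^{0})_{lcx}$, then identities on elementary integrands propagated by linearity, density and continuity --- is exactly what the paper does.

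The one step I would push back on is the passage to the completion. You assert that $(S^{0})_{lcx}$ is complete and call this the crux of the argument. That claim is established nowhere in the paper and is not a formal consequence of the completeness of $S^{0}$: convexification coarsens the uniformity, hence admits more Cauchy nets, and in general neither completeness nor even the Hausdorff property (which you also need for uniqueness of the extension) survives. You are right that a general element of $\Phi\,\widehat{\otimes}_{\pi}\,b\mathcal{P}$ cannot be reached through $\Phi\,\widehat{\otimes}_{\nu}\,b\mathcal{P}$, so you have correctly located where the substance of the extension lies; but as written this step rests on an unproved completeness assertion. In fairness, the paper's own proof simply states that ``the map $I$ extends'' to the completion without addressing the point, so your argument is no less rigorous than the published one --- it merely makes the missing hypothesis explicit. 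Note also that the reason the convexification is introduced is not completeness but the fact that the universal property of $\pi$ (equivalently, the passage from $\nu$-continuity to $\pi$-continuity) requires a locally convex target; that is a separate issue from whether the extension to the completion converges.
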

\begin{proof} 
First, observe that because the canonical inclusion $S^{0} \rightarrow (S^{0})_{lcx}$ is linear and continuous, it follows from Theorem \ref{theoExisWeakStochIntegral} that the stochastic integral mapping $I$ induces a continuous linear operator from $\Phi \otimes_{\nu} b\mathcal{P}$ into $(S^{0})_{lcx}$. Then, since $(\Phi \otimes_{\nu} b\mathcal{P})_{lcx} \simeq \Phi \otimes_{\pi} b\mathcal{P}$ (see Lemma 6.3 in \cite{Glockner:2012}) and $(S^{0})_{lcx}$ is locally convex, it follows that (see \cite{Shuchat:1972}, Proposition 6)
$$\mathcal{L}(\Phi \otimes_{\nu} b\mathcal{P},(S^{0})_{lcx}) =\mathcal{L}((\Phi \otimes_{\nu} b\mathcal{P})_{lcx},(S^{0})_{lcx})= \mathcal{L}(\Phi \otimes_{\pi} b\mathcal{P},(S^{0})_{lcx}). $$
Hence, the map $I$ extends to a continuous linear mapping from $\Phi \, \widehat{\otimes}_{\pi} \, b\mathcal{P}$ into $(S^{0})_{lcx}$. This map preserves the property that for each $(\phi,h) \in \Phi \times b\mathcal{P}$ we have $I(\phi \otimes h)=h \cdot X(\phi)$ and hence the arguments used in the proofs of Proposition \ref{propLineaWeakInteInIntegrators} and Theorem \ref{theoContPartStoppedWeakInt} remain valid for integrands in $\Phi \, \widehat{\otimes}_{\pi} \, b\mathcal{P}$ by replacing continuity into $S^{0}$ by continuity into $(S^{0})_{lcx}$.
\end{proof}

The result in Theorem \ref{theoContiWeakIntegConvexifSemiSpace} deserves some further clarifications. First, since the topology in $S^{0}$ is not locally convex, the spaces $S^{0}$ and $(S^{0})_{lcx}$ are not homeomorphic. Therefore, the integral mapping $I^{w}: \Phi \, \widehat{\otimes}_{\pi} \, b\mathcal{P} \rightarrow (S^{0})_{lcx}$ defined in Theorem \ref{theoContiWeakIntegConvexifSemiSpace} does not necessarily implies the existence of a continuous linear operator from $\Phi \, \widehat{\otimes}_{\pi} \, b\mathcal{P}$ into $S^{0}$. Indeed, we do not know if such a continuity is satisfied for every cylindrical semimartingale $X$. Even though, the conclusion of Theorem  \ref{theoContiWeakIntegConvexifSemiSpace} is still useful since it shows that to every $H \in \Phi \, \widehat{\otimes}_{\pi} \, b\mathcal{P}$ we can associate a real-valued semimartingale $I(H)$ which satisfies most of the basic properties one might expect for a stochastic integral. 

Our next objective in this section is to study some classes of cylindrical semimartingales for which a construction of the  stochastic integral can be carried out in order to obtain a continuous and linear operator from $\Phi \, \widehat{\otimes}_{\pi} \, b\mathcal{P}$ into $S^{0}$. These classes of semimartingales share in common that their range is in a subspace of a space of semimartingales with a Banach space structure. 

Let $p \in [1, \infty]$. Let $X=(X_{t}: t \geq 0)$ be a $\mathcal{H}^{p}_{S}$-cylindrical semimartingale. A direct calculation (e.g. as in Chapter IV in \cite{Protter}) shows that for every $h \in b\mathcal{P}$ and $\phi \in \Phi$, $h \cdot X(\phi) \in \mathcal{H}^{p}_{S}$ and we have 
\begin{equation}\label{eqInequaIntegSpSemimartingales}
\norm{h \cdot X(\phi)}_{\mathcal{H}^{p}_{S}} \leq \norm{h}_{u} \norm{ X(\phi)}_{\mathcal{H}^{p}_{S}}.
\end{equation}
As the next result shows, for this class of cylindrical semimartingales, we can make a construction of  the stochastic integral map with range in $\mathcal{H}^{p}_{S}$. 
 
\begin{proposition} \label{propWeakInteForSpSemimar}
If $X$ is a $\mathcal{H}^{p}_{S}$-cylindrical semimartingale, then there exists a unique continuous linear map $I: \Phi \, \widehat{\otimes}_{\pi} \, b\mathcal{P} \rightarrow \mathcal{H}^{p}_{S}$ such that for each $(\phi,h) \in \Phi \times b\mathcal{P}$ we have $I(\phi \otimes h)=h \cdot X(\phi)$. Moreover this mapping preserves the properties of the stochastic integral for integrands in $\Phi \, \widehat{\otimes}_{\pi} \, b\mathcal{P}$ given in Proposition \ref{propLineaWeakInteInIntegrators} and Theorem \ref{theoContPartStoppedWeakInt}. 
\end{proposition}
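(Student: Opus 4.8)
The plan is to run the same scheme as in Theorems \ref{theoExisWeakStochIntegral} and \ref{theoContiWeakIntegConvexifSemiSpace}, but now with the \emph{Banach} space $\mathcal{H}^{p}_{S}$ as the target, so that no convexification is needed. I would define the bilinear map $J\colon \Phi \times b\mathcal{P} \to \mathcal{H}^{p}_{S}$ by $J(\phi,h)=h \cdot X(\phi)$; by hypothesis its range does lie in $\mathcal{H}^{p}_{S}$. If $J$ is jointly continuous, then by the universal property of the projective tensor topology (Proposition 43.4 in \cite{Treves}) its linearization is continuous from $\Phi \otimes_{\pi} b\mathcal{P}$ into $\mathcal{H}^{p}_{S}$; since $\Phi \otimes b\mathcal{P}$ is dense in $\Phi \, \widehat{\otimes}_{\pi} \, b\mathcal{P}$ and $\mathcal{H}^{p}_{S}$ is complete, this linearization extends uniquely to a continuous linear map $I$ on the completion, and uniqueness of $I$ follows from density together with the prescription $I(\phi \otimes h)=h \cdot X(\phi)$.

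Everything therefore reduces to the joint continuity of $J$. Because $b\mathcal{P}$ and $\mathcal{H}^{p}_{S}$ are normed, this amounts to producing a single continuous seminorm $p$ on $\Phi$ with $\norm{J(\phi,h)}_{\mathcal{H}^{p}_{S}} \leq p(\phi)\,\norm{h}_{u}$. The inequality \eqref{eqInequaIntegSpSemimartingales} already handles the $h$-variable, giving $\norm{J(\phi,h)}_{\mathcal{H}^{p}_{S}} \leq \norm{h}_{u}\,\norm{X(\phi)}_{\mathcal{H}^{p}_{S}}$, so what remains is exactly the continuity of the linear map $\phi \mapsto X(\phi)$ from $\Phi$ into $\mathcal{H}^{p}_{S}$. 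I expect this to be the main obstacle: the hypothesis only provides the \emph{pointwise} membership $X(\phi) \in \mathcal{H}^{p}_{S}$ together with continuity of $X$ into $S^{0}$ (Assumption \ref{assuCylSemimartingale}), and pointwise membership in the finer Banach space does not by itself force continuity into its stronger topology.

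To close this gap I would argue by the closed graph theorem. The inclusion $\mathcal{H}^{p}_{S} \hookrightarrow S^{0}$ is continuous (convergence in $\mathcal{H}^{p}_{S}$ implies convergence in Emery's topology), and $X\colon \Phi \to S^{0}$ is continuous by Assumption \ref{assuCylSemimartingale}; since $S^{0}$ is Hausdorff, if $\phi_{\alpha}\to\phi$ in $\Phi$ and $X(\phi_{\alpha})\to y$ in $\mathcal{H}^{p}_{S}$, then $y=X(\phi)$, so the graph of $X\colon \Phi \to \mathcal{H}^{p}_{S}$ is closed. As $\mathcal{H}^{p}_{S}$ is Banach, the closed graph theorem then yields continuity of $X$ into $\mathcal{H}^{p}_{S}$, and hence the desired seminorm bound, once the appropriate structural hypotheses on $\Phi$ are available (e.g. $\Phi$ barrelled); this is the step where assumptions on $\Phi$ beyond mere local convexity must be invoked, and I would flag it as the delicate point.

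For the final assertion I would note that composing $I$ with the continuous inclusion $\mathcal{H}^{p}_{S} \hookrightarrow (S^{0})_{lcx}$ gives a continuous map agreeing with the integral of Theorem \ref{theoContiWeakIntegConvexifSemiSpace} on every elementary tensor $\phi \otimes h$, hence on all of $\Phi \, \widehat{\otimes}_{\pi} \, b\mathcal{P}$ by density; thus $I(H)$ is, as a semimartingale, the same object constructed before. Consequently the identities of Proposition \ref{propLineaWeakInteInIntegrators} and Theorem \ref{theoContPartStoppedWeakInt}, being statements about equality (up to indistinguishability) of the resulting processes, transfer verbatim. One need only check that the right-hand sides there remain meaningful in the present Banach setting, i.e. that $X^{c}$ and $X^{\tau}$ are again $\mathcal{H}^{p}_{S}$-cylindrical semimartingales; this is immediate, since passing to the continuous local martingale part or stopping can only decrease both $[m,m]_{\infty}$ and the total variation, whence $\norm{X(\phi)^{c}}_{\mathcal{H}^{p}_{S}}, \norm{X(\phi)^{\tau}}_{\mathcal{H}^{p}_{S}} \leq \norm{X(\phi)}_{\mathcal{H}^{p}_{S}}$.
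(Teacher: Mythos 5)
Your overall strategy is the right one and matches the paper's in outline, but there is a genuine gap at precisely the point you flag: the continuity of $\phi \mapsto X(\phi)$ from $\Phi$ into $\mathcal{H}^{p}_{S}$. The proposition assumes only that $\Phi$ is locally convex, so you cannot invoke a closed graph theorem for maps defined on $\Phi$ itself — barrelledness (or any comparable Baire-type or webbed-domain hypothesis) is simply not available, and your argument stalls exactly where you say it is ``delicate.'' Since the whole construction rests on this seminorm bound, the proof as written is incomplete.

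The way the paper closes this gap is to not work on $\Phi$ at all but to exploit Assumption \ref{assuCylSemimartingale}: $X$ factors through a continuous map $X^{\theta}\colon \widehat{\Phi}_{\theta} \to S^{0}$, and $\widehat{\Phi}_{\theta}$ is a \emph{complete pseudometrizable} topological vector space. The closed graph theorem does hold for linear maps from such a space into the Banach space $\mathcal{H}^{p}_{S}$ (this is the version cited from \cite{NariciBeckenstein}), and the graph of $X^{\theta}\colon \widehat{\Phi}_{\theta} \to \mathcal{H}^{p}_{S}$ is closed by exactly the comparison-of-topologies argument you give. One then gets separate continuity of $J^{\theta}$ on $\widehat{\Phi}_{\theta} \times b\mathcal{P}$ from \eqref{eqInequaIntegSpSemimartingales}, upgrades it to joint continuity because both factors are complete pseudometrizable, linearizes over $\widehat{\Phi}_{\theta} \otimes_{\pi} b\mathcal{P}$, and composes with the continuous inclusion $\Phi \otimes_{\pi} b\mathcal{P} \to \widehat{\Phi}_{\theta} \otimes_{\pi} b\mathcal{P}$ before extending to the completion — i.e.\ the same factorization device as in Theorem \ref{theoExisWeakStochIntegral}, which you did not carry over. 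Your treatment of the final assertion (composing with $\mathcal{H}^{p}_{S} \hookrightarrow (S^{0})_{lcx}$ and checking that $X^{c}$ and $X^{\tau}$ remain $\mathcal{H}^{p}_{S}$-cylindrical via the monotonicity of $[m,m]_{\infty}$ and the total variation under taking continuous parts and stopping) is fine and slightly more explicit than the paper's.
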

\begin{proof}
First, if $\theta$ and $X^{\theta}$ are as in Assumption \ref{assuCylSemimartingale}, the continuity of the map $X^{\theta}:\widehat{\Phi}_{\theta} \rightarrow S^{0}$, the fact that the $\mathcal{H}^{p}_{S}$-topology is finer than that $S^{0}$-topology, and by following standard arguments one can show that $X^{\theta}$ is sequentially closed; therefore continuous from $\widehat{\Phi}_{\theta}$ into $\mathcal{H}^{p}_{S}$ by the closed graph theorem (see \cite{NariciBeckenstein}, Theorem 14.3.4, p.465). Moreover, by \eqref{eqInequaIntegSpSemimartingales} it is clear that the bilinear form $J: \Phi \times b\mathcal{P}  \rightarrow \mathcal{H}^{p}_{S}$  defined by $ J(\phi,h)=h \cdot X(\phi)$, $\forall \, \phi \in \Phi$, $h \in  b\mathcal{P}$, is separately continuous. Then, it has an extension $J^{\theta}: \widehat{\Phi}_{\theta} \times b\mathcal{P}  \rightarrow \mathcal{H}^{p}_{S}$,  defined by $J^{\theta} (\phi,h)=h \cdot X^{\theta}(\phi)$ for all $(\phi,h )  \in \widehat{\Phi}_{\theta} \times b\mathcal{P}$, that is bilinear and separately continuous, hence continuous. 
Then, using the properties of the locally convex projective topology we can follow line-by-line the proof of Theorem \ref{theoExisWeakStochIntegral} by replacing $S^{0}$ with $\mathcal{H}^{p}_{S}$ to get the existence of the mapping $I$ such that for each $(\phi,h) \in \Phi \times b\mathcal{P}$ we have $I(\phi \otimes h)=h \cdot X(\phi)$. Hence the arguments used in the proofs of Proposition \ref{propLineaWeakInteInIntegrators} and Theorem \ref{theoContPartStoppedWeakInt} (with $S^{0}$ replaced by $\mathcal{H}^{p}_{S}$) remain valid and so their conclusions as well. 
\end{proof}

We can use the result of Proposition \ref{propWeakInteForSpSemimar} to prove a series representation for the stochastic integral for integrands in $H \in \Phi \, \widehat{\otimes}_{\pi} \, b\mathcal{P}$. 
Suppose that $\Phi$ is a metrizable locally convex space. Since $b\mathcal{P}$ is Banach, each element $H \in \Phi \, \widehat{\otimes}_{\pi} \, b\mathcal{P}$ can be represented as the sum of an absolute convergent series (see \cite{Schaefer}, Theorem III.6.4, p.94-5), 
\begin{equation} \label{decompElemeProjTensProduc}
H = \sum_{i=1}^{\infty} \lambda_{i} (\phi_{i} \otimes h_{i}), 
\end{equation}
where $\sum \abs{\lambda_{i}}< \infty$, $\phi_{i} \rightarrow 0$ in $\Phi$, and $h_{i} \rightarrow 0$ in $b\mathcal{P}$. 

\begin{corollary} \label{coroSeriesRepreWeakIntegral}
Let $\Phi$ be a metrizable locally convex space and let $H \in \Phi \, \widehat{\otimes}_{\pi} \, b\mathcal{P}$ with the representation \eqref{decompElemeProjTensProduc}. If $X$ is a $\mathcal{H}^{p}_{S}$-cylindrical semimartingale  then  $I(H)$ satisfies the following representation with convergence in $\mathcal{H}^{p}_{S}$: 
\begin{equation}\label{eqSeriesReprWeakInteg}
I(H)=\sum_{i=1}^{\infty} \lambda_{i} ( h_{i} \cdot X(\phi_{i})).
\end{equation}
\end{corollary}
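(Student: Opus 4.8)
The plan is to invoke Proposition \ref{propWeakInteForSpSemimar} in order to realize the stochastic integral map $I$ as a \emph{continuous} linear operator from $\Phi \, \widehat{\otimes}_{\pi} \, b\mathcal{P}$ into the Banach space $\mathcal{H}^{p}_{S}$, and then simply transport the absolutely convergent representation \eqref{decompElemeProjTensProduc} of $H$ across $I$. This is precisely where the hypothesis that $X$ be a $\mathcal{H}^{p}_{S}$-cylindrical semimartingale is used: the general integral map of Theorem \ref{theoContiWeakIntegConvexifSemiSpace} is only continuous into $(S^{0})_{lcx}$, whereas here we want genuine convergence in the finer (and locally convex, Banach) topology of $\mathcal{H}^{p}_{S}$, and this is exactly what Proposition \ref{propWeakInteForSpSemimar} supplies.

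First I would denote by $S_{n}=\sum_{i=1}^{n} \lambda_{i} (\phi_{i} \otimes h_{i})$ the partial sums of \eqref{decompElemeProjTensProduc}. Since that series converges absolutely, and hence converges, to $H$ in the complete space $\Phi \, \widehat{\otimes}_{\pi} \, b\mathcal{P}$, we have $S_{n} \to H$ in $\Phi \, \widehat{\otimes}_{\pi} \, b\mathcal{P}$. Because $I: \Phi \, \widehat{\otimes}_{\pi} \, b\mathcal{P} \to \mathcal{H}^{p}_{S}$ is linear and continuous by Proposition \ref{propWeakInteForSpSemimar}, it follows at once that $I(S_{n}) \to I(H)$ with convergence in $\mathcal{H}^{p}_{S}$.

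Next I would compute $I(S_{n})$ explicitly. By linearity of $I$ together with its defining action $I(\phi \otimes h)=h \cdot X(\phi)$ on elementary tensors, one has $I(S_{n})=\sum_{i=1}^{n} \lambda_{i}\, I(\phi_{i} \otimes h_{i})=\sum_{i=1}^{n} \lambda_{i} (h_{i} \cdot X(\phi_{i}))$. Combining this identity with the convergence $I(S_{n}) \to I(H)$ in $\mathcal{H}^{p}_{S}$ established in the previous step yields exactly the representation \eqref{eqSeriesReprWeakInteg}, with convergence in $\mathcal{H}^{p}_{S}$.

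I do not expect any genuine obstacle here beyond having the correct continuity statement in hand, since the substantive work was already carried out in Proposition \ref{propWeakInteForSpSemimar}. If desired, one could instead verify the convergence directly from the estimate \eqref{eqInequaIntegSpSemimartingales}: as $h_{i} \to 0$ in $b\mathcal{P}$ and $\phi_{i} \to 0$ in $\Phi$ (whence $\norm{X(\phi_{i})}_{\mathcal{H}^{p}_{S}} \to 0$ by the continuity of $X$ into $\mathcal{H}^{p}_{S}$ proved in Proposition \ref{propWeakInteForSpSemimar}), the quantities $\norm{h_{i} \cdot X(\phi_{i})}_{\mathcal{H}^{p}_{S}} \leq \norm{h_{i}}_{u}\norm{X(\phi_{i})}_{\mathcal{H}^{p}_{S}}$ are bounded, so $\sum_{i} \abs{\lambda_{i}} \norm{h_{i} \cdot X(\phi_{i})}_{\mathcal{H}^{p}_{S}} < \infty$ because $\sum_{i} \abs{\lambda_{i}} < \infty$; this shows the series on the right of \eqref{eqSeriesReprWeakInteg} converges absolutely in $\mathcal{H}^{p}_{S}$, and its sum is then identified with $I(H)$ again by continuity. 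The continuity route is cleaner, and it is the one I would present.
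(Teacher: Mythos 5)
Your proposal is correct and follows essentially the same route as the paper: the paper's proof likewise invokes the linearity and continuity of $I: \Phi \, \widehat{\otimes}_{\pi} \, b\mathcal{P} \rightarrow \mathcal{H}^{p}_{S}$ from Proposition \ref{propWeakInteForSpSemimar} together with its action on elementary tensors, and you have merely made the partial-sum argument explicit. The optional direct estimate via \eqref{eqInequaIntegSpSemimartingales} is a valid supplementary check but not needed.
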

\begin{proof} Let $X$ be a $\mathcal{H}^{p}_{S}$-cylindrical semimartingale. 
The linearity and continuity of the stochastic integral mapping $I: \Phi \, \widehat{\otimes}_{\pi} \, b\mathcal{P} \rightarrow \mathcal{H}^{p}_{S}$ and its action on elementary integrands (Proposition \ref{propWeakInteForSpSemimar}) implies that 
$$ I(H)= \sum_{i=1}^{\infty} \lambda_{i} \int \phi_{i} \otimes h_{i} \, dX = \sum_{i=1}^{\infty} \lambda_{i} ( h_{i} \cdot X(\phi_{i})), $$
where the convergence occurs in the space $\mathcal{H}^{p}_{S}$.  
\end{proof}

We can extend the result in Proposition \ref{propWeakInteForSpSemimar} to other classes of cylindrical semimartingales.   

\begin{proposition} \label{propWeakInteForSquaMarting}
If $X$ is a $\mathcal{M}_{\infty}^{2}$-cylindrical martingale (respectively a $\mathcal{A}$-cylindrical semimartingale), then there exists a unique continuous linear map $I$ from $\Phi \, \widehat{\otimes}_{\pi} \, b\mathcal{P}$ into $\mathcal{M}_{\infty}^{2}$ (respectively into $\mathcal{A}$) such that for each $(\phi,h) \in \Phi \times b\mathcal{P}$ we have $I(\phi \otimes h)=h \cdot X(\phi)$. Moreover this mapping preserves the properties of the integral given in Proposition \ref{propLineaWeakInteInIntegrators}, Theorem \ref{theoContPartStoppedWeakInt} and Corollary \ref{coroSeriesRepreWeakIntegral} with convergence in $\mathcal{M}_{\infty}^{2}$ (respectively in $\mathcal{A}$) for the series \eqref{eqSeriesReprWeakInteg}. 
\end{proposition}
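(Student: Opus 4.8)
The plan is to follow the blueprint already laid down in Proposition \ref{propWeakInteForSpSemimar}, since that proof was explicitly engineered to carry over to any target space of semimartingales carrying a Banach structure for which a bound analogous to \eqref{eqInequaIntegSpSemimartingales} holds. The essential point is that both $\mathcal{M}_{\infty}^{2}$ and $\mathcal{A}$ are Banach spaces continuously embedded into $S^{0}$, so the same closed-graph and separate-continuity machinery applies verbatim once the correct norm estimate is established. First I would record, for the martingale case, that if $m \in \mathcal{M}_{\infty}^{2}$ and $h \in b\mathcal{P}$ then $h \cdot m \in \mathcal{M}_{\infty}^{2}$ with
\[
\norm{h \cdot m}_{\mathcal{M}_{\infty}^{2}} \leq \norm{h}_{u} \, \norm{m}_{\mathcal{M}_{\infty}^{2}},
\]
which follows from Doob's inequality together with the fact that $h \cdot m$ is again a martingale and $\abs{(h \cdot m)_{t}} \leq \norm{h}_{u} \int_{0}^{t} \abs{d[m,m]_{s}^{1/2}}$-type control; more directly one uses the $L^{2}$ isometry and boundedness of $h$. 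For the finite-variation case, if $a \in \mathcal{A}$ then $h \cdot a \in \mathcal{A}$ and trivially $\norm{h \cdot a}_{\mathcal{A}} = \Exp \int_{0}^{\infty} \abs{h_{s}} \, \abs{d a_{s}} \leq \norm{h}_{u} \, \norm{a}_{\mathcal{A}}$, since the variation of the integral is dominated pathwise by $\norm{h}_{u}$ times the variation of $a$.

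With the appropriate estimate in hand, the argument proceeds exactly as in Proposition \ref{propWeakInteForSpSemimar}. Let $\theta$ and $X^{\theta}$ be as in Assumption \ref{assuCylSemimartingale}. Since the norm topology on $\mathcal{M}_{\infty}^{2}$ (respectively $\mathcal{A}$) is finer than the $S^{0}$-topology and $X^{\theta}:\widehat{\Phi}_{\theta} \rightarrow S^{0}$ is continuous, a standard sequentially-closed-graph argument shows $X^{\theta}$ is continuous from $\widehat{\Phi}_{\theta}$ into $\mathcal{M}_{\infty}^{2}$ (respectively $\mathcal{A}$) by the closed graph theorem. The norm estimate then gives that the bilinear map $J(\phi,h) = h \cdot X(\phi)$ is separately continuous into the Banach target, extends to a separately continuous, hence jointly continuous, bilinear map $J^{\theta}$ on $\widehat{\Phi}_{\theta} \times b\mathcal{P}$, and linearizes to a continuous map on the projective tensor product. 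Because the target is now a genuine Banach (hence locally convex) space, one works directly with the locally convex projective topology $\pi$ and obtains the unique continuous linear $I : \Phi \, \widehat{\otimes}_{\pi} \, b\mathcal{P} \rightarrow \mathcal{M}_{\infty}^{2}$ (respectively into $\mathcal{A}$) with $I(\phi \otimes h) = h \cdot X(\phi)$.

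The preservation of the properties from Proposition \ref{propLineaWeakInteInIntegrators}, Theorem \ref{theoContPartStoppedWeakInt}, and Corollary \ref{coroSeriesRepreWeakIntegral} is then immediate: each of those proofs depends only on the action $I(\phi \otimes h) = h \cdot X(\phi)$ on elementary integrands, on the linearity and continuity of $I$, and on the density of $\Phi \otimes b\mathcal{P}$ in the completed tensor product, all of which hold verbatim with $\mathcal{H}^{p}_{S}$ replaced by $\mathcal{M}_{\infty}^{2}$ or $\mathcal{A}$. In particular, the metrizable-$\Phi$ series representation \eqref{eqSeriesReprWeakInteg} converges in the new target norm because continuity of $I$ transports the absolutely convergent series \eqref{decompElemeProjTensProduc} to an absolutely convergent series in the Banach target.

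The only genuinely nonroutine point, and hence the main obstacle, is verifying the two norm estimates cleanly, especially for the martingale case. One must confirm that $h \cdot m$ stays inside $\mathcal{M}_{\infty}^{2}$ rather than merely being a local martingale, which uses boundedness of $h$ together with $L^{2}$-boundedness of $m$; the finite-variation estimate is comparatively trivial. Everything downstream is a transcription of the $\mathcal{H}^{p}_{S}$ argument with the target space swapped, so no new conceptual ingredient beyond the correct continuity bound is required.
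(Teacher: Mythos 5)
Your proposal is correct and takes essentially the same route as the paper: establish the norm estimate $\norm{h\cdot X(\phi)}\leq C\,\norm{h}_{u}\,\norm{X(\phi)}$ in the Banach target (the paper derives the $\mathcal{M}_{\infty}^{2}$ case from the It\^{o} isometry and Burkholder's inequality, so a constant does appear, unlike your unit-constant statement --- immaterial for continuity) and then rerun the proof of Proposition \ref{propWeakInteForSpSemimar} verbatim with $\mathcal{H}^{p}_{S}$ replaced by $\mathcal{M}_{\infty}^{2}$ or $\mathcal{A}$. Your flagged concern that $h\cdot m$ must be a genuine element of $\mathcal{M}_{\infty}^{2}$ rather than merely a local martingale is handled exactly as you indicate, via boundedness of $h$ and $L^{2}$-boundedness of the integrator.
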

\begin{proof}
Let $X=(X_{t}: t \geq 0)$ be a $\mathcal{M}_{\infty}^{2}$-cylindrical martingale. Observe that the arguments used in the proof of Proposition  \ref{propWeakInteForSpSemimar} remain valid if we replace $\mathcal{H}^{p}_{S}$ with $\mathcal{M}_{\infty}^{2}$ as long as we can prove that the bilinear form $J: \Phi \times b\mathcal{P}  \rightarrow \mathcal{M}_{\infty}^{2}$  defined by $ J(\phi,h)=h \cdot X(\phi)$, $\forall \, \phi \in \Phi$, $h \in  b\mathcal{P}$, is separately continuous.

In effect, for every $\phi \in \Phi$ and $h \in b\mathcal{P}$ that $h \cdot X(\phi) \in \mathcal{M}_{\infty}^{2}$ and by the It\^{o} isometry (see e.g. \cite{DellacherieMeyer}, Th\'{e}or\`{e}me VIII.5, p.331) and Burkholder's inequality (e.g. see \cite{Protter}, Theorem IV.74, p.226) there exists $c_{1}, c_{2}>0$ such that,
$$ \norm{ h \cdot X(\phi)}_{\mathcal{M}_{\infty}^{2}}^{2} \leq c_{1} \Exp \int^{\infty}_{0} \abs{h_{t}}^{2} d [X(\phi),X(\phi)]_{t} \leq  c_{2} \norm{h}_{u}^{2} \norm{ X(\phi)}_{\mathcal{M}_{\infty}^{2}}^{2},$$ 
from which we obtain the desired conclusion. Hence, this proves that Proposition \ref{propWeakInteForSquaMarting} holds for the case of a $\mathcal{M}_{\infty}^{2}$-cylindrical martingale. 

Likewise, suppose that $X=(X_{t}: t \geq 0)$ is a $\mathcal{A}$-cylindrical semimartingale. For every $h \in b\mathcal{P}$ and $\phi \in \Phi$, we have $h \cdot X(\phi) \in \mathcal{A}$ and a direct calculation shows that 
$$ \norm{h \cdot X(\phi)}_{\mathcal{A}}= \Exp \int_{0}^{\infty} \abs{d(h \cdot X(\phi))_{s}} \leq \norm{h}_{u} \norm{ X(\phi)}_{\mathcal{A}}. $$
The above inequality shows that the bilinear form $J: \Phi \times b\mathcal{P}  \rightarrow \mathcal{A}$, $ J(\phi,h)=h \cdot X(\phi)$, $\forall \, \phi \in \Phi$, $h \in  b\mathcal{P}$, is separately continuous. Then as explained before by following the arguments used in the proof of Proposition \ref{propWeakInteForSpSemimar} we can show  that the conclusion of Proposition \ref{propWeakInteForSquaMarting} holds for the case of a $\mathcal{A}$-cylindrical semimartingale.
\end{proof}

\begin{remark}\label{remaLocaConvWeakIntegSpSemima}
If $X$ is either a $\mathcal{H}^{p}_{S}$-cylindrical semimartingale, a $\mathcal{M}_{\infty}^{2}$-cylindrical martingale, or a $\mathcal{A}$-cylindrical semimartingale, Propositions   \ref{propWeakInteForSpSemimar} and \ref{propWeakInteForSquaMarting} show that for these classes of cylindrical semimartingales the corresponding integral mapping $I$ defines a continuous linear operator from $\Phi \, \widehat{\otimes}_{\pi} \, b\mathcal{P}$ into $S^{0}$. Moreover, this operator extends to a linear continuous operator from 
$\Phi \, \widehat{\otimes}_{\pi} \, b\mathcal{P}$ into $(S^{0})_{lcx}$. Since the action of this operator on the elementary integrands is the same to that of the operator of Theorem \ref{theoContiWeakIntegConvexifSemiSpace}, then both integral mappings coincide for such an $X$. 
\end{remark}

We finalize this section by showing that under the additional assumption that $\Phi$ is metrizable it is possible to characterize our integrands in $\Phi \, \widehat{\otimes}_{\pi} \, b\mathcal{P}$ as families of $\Phi$-valued weakly predictable bounded processes. 
 
\begin{proposition}\label{propInducedBoundPredProcessMetricCase} Let $\Phi$ be a metrizable locally convex space and suppose $H \in \Phi \, \widehat{\otimes}_{\pi} \, b\mathcal{P}$ has the representation \eqref{decompElemeProjTensProduc}. Then $\tilde{H}: \R_{+} \times \Omega \rightarrow \Phi$ defined for each $(t,\omega) \in \R_{+} \times \Omega$ by 
\begin{equation} \label{eqInducedBoundPredProcessMetriCase}
\tilde{H}(t,\omega)= \sum_{i=1}^{\infty} \lambda_{i}  h_{i}(t,\omega) \, \phi_{i},
\end{equation}
is a $\Phi$-valued bounded (i.e. its image is a bounded subset in $\Phi$) weakly predictable process (i.e. $\forall f \in \Phi'$, the map $(t,\omega) \mapsto \inner{f}{\tilde{H}(t,\omega)}$ is predictable). 
\end{proposition}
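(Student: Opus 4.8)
The plan is to verify the two claimed properties of $\tilde{H}$ directly from the series representation \eqref{eqInducedBoundPredProcessMetriCase}, after first checking that the series actually defines an element of $\Phi$ for each fixed $(t,\omega)$. Since $\sum \abs{\lambda_{i}} < \infty$, $\phi_{i} \to 0$ in $\Phi$, and $h_{i} \to 0$ in $b\mathcal{P}$ (so in particular the $h_{i}$ are uniformly bounded, say $\sup_{i} \norm{h_{i}}_{u} \leq M$), I would first observe that for any continuous seminorm $p$ on $\Phi$ the bound $p\bigl( \lambda_{i} h_{i}(t,\omega) \phi_{i} \bigr) \leq \abs{\lambda_{i}} \, M \, p(\phi_{i})$ holds uniformly in $(t,\omega)$. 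The sequence $(p(\phi_{i}))$ is bounded (as $\phi_{i} \to 0$), so $\sum_{i} \abs{\lambda_{i}} M p(\phi_{i}) < \infty$, showing the series converges absolutely with respect to every continuous seminorm. Because $\Phi$ is metrizable, its completion is irrelevant here only if $\Phi$ is itself complete; I should note that the partial sums form a Cauchy sequence in $\Phi$, and the limit lies in $\Phi$ because the series representation of $H$ is inherited from the tensor product structure (alternatively one works in the completion $\widehat{\Phi}$ and uses that $H$ lives in $\Phi \widehat{\otimes}_{\pi} b\mathcal{P}$). This same uniform estimate immediately yields boundedness of the image: taking the supremum over $(t,\omega)$ of $p(\tilde{H}(t,\omega))$ gives a finite bound depending only on $p$, $M$, and $\sum \abs{\lambda_{i}} p(\phi_{i})$, so $\tilde{H}$ maps into a bounded subset of $\Phi$.

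For weak predictability, fix $f \in \Phi'$. By continuity and linearity of $f$ on the absolutely convergent series (the convergence being in $\Phi$, hence respected by the continuous functional $f$), I obtain the pointwise identity
$$ \inner{f}{\tilde{H}(t,\omega)} = \sum_{i=1}^{\infty} \lambda_{i} \, h_{i}(t,\omega) \, \inner{f}{\phi_{i}}. $$
Each term $(t,\omega) \mapsto \lambda_{i} h_{i}(t,\omega) \inner{f}{\phi_{i}}$ is predictable, since $h_{i} \in b\mathcal{P}$ is predictable and it is multiplied by the constant $\lambda_{i} \inner{f}{\phi_{i}}$. The partial sums are therefore predictable, and I would argue that the series converges uniformly on $\R_{+} \times \Omega$: indeed $\abs{ \lambda_{i} h_{i}(t,\omega) \inner{f}{\phi_{i}} } \leq \abs{\lambda_{i}} M \abs{\inner{f}{\phi_{i}}}$, and since $\phi_{i} \to 0$ in $\Phi$ the scalars $\inner{f}{\phi_{i}} \to 0$ and in particular are bounded, so the majorant $\sum_{i} \abs{\lambda_{i}} M \abs{\inner{f}{\phi_{i}}}$ is a convergent numerical series. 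By the Weierstrass $M$-test the convergence is uniform, and a uniform (hence pointwise) limit of predictable processes is predictable. This establishes that $(t,\omega) \mapsto \inner{f}{\tilde{H}(t,\omega)}$ is predictable for every $f \in \Phi'$, which is exactly weak predictability.

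The main subtlety, and the step I would treat most carefully, is the interchange of the functional $f$ (or of a seminorm $p$) with the infinite sum, together with the well-definedness of the pointwise series in $\Phi$ itself rather than merely in the completion. The absolute-convergence structure of \eqref{decompElemeProjTensProduc}, guaranteed by Theorem III.6.4 of \cite{Schaefer} for the metrizable projective tensor product, is precisely what underwrites both the convergence of $\sum \lambda_{i} h_{i}(t,\omega) \phi_{i}$ in $\Phi$ for each fixed $(t,\omega)$ and the legitimacy of applying continuous linear functionals term by term. I would also remark that the representation \eqref{decompElemeProjTensProduc} of $H$ is not unique, so strictly speaking $\tilde{H}$ depends on the chosen representation; the proposition as stated constructs $\tilde{H}$ from a given representation, so this is not an obstacle to the claim, but it is worth flagging that one does not yet assert $\tilde{H}$ is an invariant of $H$.
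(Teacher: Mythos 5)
Your proof is correct and follows essentially the same route as the paper's: the uniform bound $\sup_i \norm{h_i}_u \leq M$ together with the boundedness of $(p(\phi_i))$ for each continuous seminorm $p$ gives absolute convergence of the series in every seminorm, hence well-definedness and boundedness of the image, with weak predictability obtained termwise. The only differences are that you spell out the predictability argument (which the paper dismisses as clear) and explicitly flag the completeness/representation-dependence subtleties, neither of which changes the substance.
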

\begin{proof} Let $(p_{n}: n \in \N)$ be sequence of seminorms generating the topology on $\Phi$. For each $n \in \N$, since $\phi_{i} \rightarrow 0$ in $\Phi$, and $h_{i} \rightarrow 0$ in $b\mathcal{P}$, it follows that the sets $\{ \phi_{i}: i \in \N \}$ and $\{ h_{i}: i \in \N \}$ are bounded in $\Phi$ and $b\mathcal{P}$ respectively. Therefore, there exists $M>0$ such that $\sup_{(t,\omega)} \abs{h_{i}(t,\omega)} \leq M$ for all $i \in \N$, and for each $n \in \N$ there exists $N_{n}>0$ such that $p_{n}(\phi_{i}) \leq N_{n}$ for all $i \in \N$. Then, for each $n \in \N$ we have 
$$ \sum_{i=1}^{\infty} \sup_{(t,\omega)} p_{n} \left( \lambda_{i}  h_{i}(t,\omega) \phi_{i} \right) = \sum_{i=1}^{\infty} \abs{\lambda_{i}} \sup_{(t,\omega)} \abs{h_{i}(t,\omega)} p_{n} \left( \phi_{i} \right) \leq M N_{n}  \sum_{i=1}^{\infty} \abs{\lambda_{i}}< \infty. $$
Hence, for each $(t,\omega) \in \R_{+} \times \Omega$, the series $\displaystyle{\sum_{i=1}^{\infty} \lambda_{i}  h_{i}(t,\omega) \, \phi_{i}}$ converges in $\Phi$. So, the map $\tilde{H}$ is well-defined and is clearly weakly predictable. Moreover, the image of $\tilde{H}$ is bounded in $\Phi$ since each seminorm $p_{n}$ is bounded under the image of $\tilde{H}$ (see \cite{Treves}, Proposition 14.5, p.139).     
\end{proof}

In Section \ref{sectionCharacWeakIntegransNuclear} and under the assumption that $\Phi$ is a complete barrelled nuclear space (not necessarily metrizable), we will study characterizations for integrands in $\Phi \, \widehat{\otimes}_{\pi} \, b\mathcal{P}$ as $\Phi$-valued weakly predictable bounded processes.

\section{Stochastic Integration in Nuclear Spaces}\label{sectStochIntegNuclear}

In this section we will apply the theory of stochastic integration developed in the last section to the context of stochastic integration in nuclear spaces. We will see that in this setting the class of locally convex integrands can be fully characterized in terms of families of weakly bounded processes. 

\subsection{Nuclear Spaces and Cylindrical Semimartingales}
\label{sectNuclearSpaceCylSemi}

In this section we quickly review some concepts on nuclear spaces and (cylindrical) semimartingales defined in their dual spaces. 

Recall that a (Hausdorff) locally convex space $\Phi$ is called \emph{nuclear} if its topology is generated by a family $\Pi$ of Hilbertian semi-norms such that for each $p \in \Pi$ there exists $q \in \Pi$, satisfying $p \leq q$ and such that $i_{p,q}: \Phi_{q} \rightarrow \Phi_{p}$ is Hilbert-Schmidt. Other equivalent definitions of nuclear spaces can be found in \cite{Jarchow, Pietsch, Treves}. 
 
For our application of the theory of stochastic integration we will usually require that our nuclear space $\Phi$ be also  complete and barrelled. In such case the spaces $\Phi$ and $\Phi'$ are reflexive (see Theorem IV.5.6 in \cite{Schaefer}, p.145).  

The following are all examples of complete, ultrabornological (hence barrelled) nuclear spaces: 
the spaces of functions $\mathscr{E}_{K} \defeq \mathcal{C}^{\infty}(K)$ ($K$: compact subset of $\R^{d}$) and $\mathscr{E}\defeq \mathcal{C}^{\infty}(\R^{d})$, the rapidly decreasing functions $\mathscr{S}(\R^{d})$, and the space of test functions $\mathscr{D}(U) \defeq \mathcal{C}_{c}^{\infty}(U)$ ($U$: open subset of $\R^{d}$), as well are the spaces of distributions $\mathscr{E}'_{K}$, $\mathscr{E}'$, $\mathscr{S}'(\R^{d})$, and $\mathscr{D}'(U)$. Other examples are the space of harmonic functions $\mathcal{H}(U)$ ($U$: open subset of $\R^{d}$), the space of polynomials $\mathcal{P}_{n}$ in $n$-variables and the space of real-valued sequences $\R^{\N}$ (with direct product topology). For references see \cite{Pietsch, Schaefer, Treves}. 

Semimartingales and cylindrical semimartingales in the dual of nuclear space $\Phi$ where studied in \cite{FonsecaMora:Semi}. In the next paragraphs we recall some of the key properties of such semimartingales that we will need for our study of the stochastic integral. We start with the following result which shows that Assumption \ref{assuCylSemimartingale} is very natural within the context of nuclear spaces.   
  
\begin{proposition}\label{propCylSemimContOpeSpaceSemim}
Suppose that $\Phi$ is nuclear and let $X=( X_{t}: t \geq 0) $ be a cylindrical semimartingale in $\Phi'$. The following statements are equivalent:
\begin{enumerate} 
\item There exists a weaker countably Hilbertian topology $\theta$ on $\Phi$ such that $X$ extends to a continuous map from $\widehat{\Phi}_{\theta}$ into $S^{0}$. 
\item The mapping $X:\Phi \rightarrow S^{0}$, $\phi \mapsto X(\phi)$, is continuous. 
\item  For each $T>0$, the family of linear maps $( X_{t}: t \in [0,T])$ from $\Phi$ into $L^{0}\ProbSpace$ is equicontinuous (at the origin).
\item For each $T > 0$, the Fourier transforms 
of the family $( X_{t}: t \in [0,T] )$ are equicontinuous (at the origin) in $\Phi$.
\end{enumerate}
If we additionally assume that $\Phi$ is ultrabornological the above statements are also equivalent to 
\begin{enumerate}\setcounter{enumi}{4}
\item  For each $t \geq 0$, $X_{t}:\Phi \rightarrow L^{0}\ProbSpace$ is continuous (at the origin). 
\end{enumerate}
\end{proposition}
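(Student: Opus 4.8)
The plan is to establish $(1) \Leftrightarrow (2)$, then $(2) \Rightarrow (3)$ and $(3) \Rightarrow (1)$ to close a cycle, together with the separate equivalence $(3) \Leftrightarrow (4)$; the equivalence with $(5)$ I would treat afterwards under the extra hypothesis. Two of the implications are immediate. For $(1) \Rightarrow (2)$, since $\theta$ is weaker than the original topology of $\Phi$ the canonical inclusion $\Phi \hookrightarrow \widehat{\Phi}_{\theta}$ is continuous, and composing it with the continuous extension $X^{\theta}: \widehat{\Phi}_{\theta} \to S^{0}$ shows $X: \Phi \to S^{0}$ is continuous (this is exactly the observation already used at the start of the proof of Theorem \ref{theoExisWeakStochIntegral}). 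For $(2) \Rightarrow (1)$, I would exploit that $S^{0}$ is metrizable: fixing a countable basis $(W_{m})$ of neighbourhoods of zero in $S^{0}$, continuity of $X$ provides for each $m$ a continuous seminorm on $\Phi$ whose $X$-image lies in $W_{m}$; since $\Phi$ is nuclear each such seminorm is dominated by a continuous Hilbertian seminorm whose associated local space is separable, and by nuclearity one may insert a Hilbert--Schmidt dominating seminorm. Taking $\theta$ to be the topology generated by the resulting increasing sequence of separable Hilbertian seminorms gives a weaker countably Hilbertian topology for which $X$ is $\theta$-continuous, and completeness of $S^{0}$ lets $X$ extend to $\widehat{\Phi}_{\theta}$.

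For $(2) \Rightarrow (3)$, I would use that \'{E}mery's topology on $S^{0}$ is finer than the topology of uniform convergence on compact time intervals in probability (see \cite{Emery:1979, Memin:1980}). Then $X: \Phi \to S^{0}$ continuous implies that for each $T>0$ and $\epsilon>0$ there is a neighbourhood $U$ of zero in $\Phi$ with $\Prob(\sup_{0\le t\le T}\abs{X_{t}(\phi)}>\epsilon)<\epsilon$ for all $\phi\in U$, which is precisely the equicontinuity of the family $(X_{t}: t\in[0,T])$. The equivalence $(3) \Leftrightarrow (4)$ is a cylindrical \Levy-continuity phenomenon: equicontinuity in $L^{0}\ProbSpace$ controls $\abs{1 - \Exp(e^{iX_{t}(\phi)})} \leq \Exp(2 \wedge \abs{X_{t}(\phi)})$ uniformly in $t \leq T$, giving $(3) \Rightarrow (4)$, while the standard inequality bounding a tail probability by an integral of $1 - \mathrm{Re}\,\Exp(e^{isX_{t}(\phi)})$ gives the converse; both directions are available in the equicontinuous form proved in \cite{FonsecaMora:2018}.

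The hard implication is $(3) \Rightarrow (1)$, and I expect it to be the crux of the argument; it is here that nuclearity is indispensable. The difficulty is that equicontinuity of the \emph{time-marginals} does not by itself control the \'{E}mery seminorm: for $h \in \mathcal{E}_{1}$ the integral $h \cdot X(\phi)$ telescopes into a sum of marginal increments whose number grows without bound as the partition refines, so marginal equicontinuity yields no uniform estimate over $h \in \mathcal{E}_{1}$, i.e. no $S^{0}$-continuity. To bridge this gap I would invoke the regularization theorem for cylindrical semimartingales of \cite{FonsecaMora:Semi}: equicontinuity of $(X_{t}: t \in [0,T])$ for every $T$ produces a continuous Hilbertian seminorm $q$ and a $\Phi'_{q}$-valued \cadlag semimartingale version of $X$, the point being that the summability of Hilbert--Schmidt norms supplied by nuclearity converts marginal control into genuine path regularity. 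Working in the Hilbert space $\Phi'_{q}$ and precomposing with a Hilbert--Schmidt dominating embedding $i_{q,\tilde{q}}: \Phi_{\tilde{q}} \to \Phi_{q}$ then yields $\tilde{q}$-continuity of $\phi \mapsto X(\phi)$ into $S^{0}$, from which the weaker countably Hilbertian topology $\theta$ and the continuous extension $X^{\theta}$ of $(1)$ are read off.

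Finally, under the additional hypothesis that $\Phi$ is ultrabornological (hence barrelled), I would close the loop with $(5)$. The implication $(3) \Rightarrow (5)$ is trivial, since equicontinuity of the family forces each individual $X_{t}$ to be continuous. For $(5) \Rightarrow (3)$ I would apply the Banach--Steinhaus theorem for barrelled spaces: for fixed $\phi$ the path $t \mapsto X_{t}(\phi)$ is \cadlag, so $\sup_{0\le t\le T}\abs{X_{t}(\phi)} < \infty$ almost surely and the set $\{X_{t}(\phi): t \in [0,T]\}$ is bounded in $L^{0}\ProbSpace$; thus the family $(X_{t}: t \in [0,T])$ of continuous linear maps is pointwise bounded, and barrelledness upgrades pointwise boundedness to equicontinuity. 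Alternatively, ultrabornologicity together with the fact that $S^{0}$ is an F-space, hence webbed, permits a closed-graph argument giving $(5) \Rightarrow (2)$ directly.
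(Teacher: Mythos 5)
Your reconstruction is broadly sound and is genuinely more self-contained than the paper's own proof, which establishes only $(1)\Rightarrow(2)$ directly and delegates $(2)\Leftrightarrow(3)\Leftrightarrow(5)$, $(2)\Rightarrow(1)$ to Propositions 3.14--3.15 of \cite{FonsecaMora:Semi} and $(3)\Leftrightarrow(4)$ to Proposition IV.3.4 of \cite{VakhaniaTarieladzeChobanyan}. Your $(1)\Rightarrow(2)$, $(2)\Rightarrow(1)$ (via metrizability of $S^{0}$ and insertion of separable Hilbertian seminorms), $(2)\Rightarrow(3)$ and $(3)\Leftrightarrow(4)$ all match the intended arguments. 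Two points, however, need repair.

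First, in $(5)\Rightarrow(3)$ your primary route via Banach--Steinhaus does not go through as stated: the codomain $L^{0}\ProbSpace$ is not locally convex, and the barrel argument (pull back a closed convex balanced neighbourhood of zero) has nothing to pull back. Barrelledness of $\Phi$ is therefore not known to upgrade pointwise boundedness to equicontinuity here; this is exactly why the paper assumes \emph{ultrabornological} rather than merely barrelled. Your alternative is the correct one, but it should be run on the Banach pieces: writing $\Phi$ as an inductive limit of Banach spaces $E_{\alpha}$, each restriction $X|_{E_{\alpha}}$ has closed graph into the F-space $S^{0}$ (closedness follows from $(5)$, since $S^{0}$-convergence forces convergence of the marginals in probability), so the Baire-category closed graph theorem for F-spaces gives continuity of each restriction, hence of $X$, i.e. $(5)\Rightarrow(2)$. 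Second, in $(3)\Rightarrow(1)$ you correctly diagnose that marginal equicontinuity controls only the ucp seminorms and not Emery's, but the regularization theorem does not by itself close this gap: it produces a $(\widehat{\Phi}_{\vartheta})'$-valued \cadlag version $Y$, and path regularity of $Y$ again yields only ucp-type estimates $\sup_{t\le T}\abs{\inner{Y_{t}}{\phi}}\le q'_{n}(Y_{t})\,q_{n}(\phi)$. One must still show that $\inner{Y}{\phi}$ is a semimartingale for every $\phi$ in the Hilbert space completion $\Phi_{q}$ and that the resulting map is continuous into $S^{0}$ in Emery's topology; this is precisely the content of Propositions 3.14--3.15 of \cite{FonsecaMora:Semi} (resting on the good-integrator/Bichteler--Dellacherie machinery and a closed graph argument on the Banach space $\Phi_{q}$), and your sentence ``yields $\tilde{q}$-continuity of $\phi\mapsto X(\phi)$ into $S^{0}$'' currently asserts rather than proves it.
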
 
\begin{proof} We have $(1) \Rightarrow (2)$ since the canonical inclusion from $\Phi$ into $\widehat{\Phi}_{\theta}$ is continuous. For $(2) \Leftrightarrow (3) \Leftrightarrow (5)$ and $(2) \Rightarrow (1)$ see Propositions 3.14 and 3.15 in \cite{FonsecaMora:Semi}. For $(3) \Leftrightarrow (4)$ see Proposition IV.3.4 in \cite{VakhaniaTarieladzeChobanyan}. 
\end{proof}

If $\Phi$ is a nuclear space, a $\Phi'$-valued process $X=(X_{t}: t \geq 0)$ is called a \emph{semimartingale} if the induced cylindrical process is a cylindrical semimartingale. In a completely analogue way we define the concepts of $\Phi'$-valued special semimartingale, martingale, local martingale and process of finite variation, etc. 
Observe that in the above definition we have not assumed that a $\Phi'$-valued semimartingale is c\`{a}dl\`{a}g. However, the next result shows that under some conditions a c\`{a}dl\`{a}g version exists. 

\begin{theorem}[\cite{FonsecaMora:Semi}, Theorem 3.7] \label{theoRegulCylinSemimartigalesNuclear}
Let $X=( X_{t}: t \geq 0) $ be a cylindrical semimartingale in $\Phi'$ satisfying any of the equivalent conditions in Proposition \ref{propCylSemimContOpeSpaceSemim}. Then, there exists a weaker countably Hilbertian topology $\theta$ on $\Phi$ and a $(\widehat{\Phi}_{\theta})'$-valued c\`{a}dl\`{a}g semimartingale $Y= ( Y_{t} : t \geq 0)$, such that for every $\phi \in \Phi$, $\inner{Y}{\phi}= ( \inner{Y_{t}}{\phi}: t \geq 0)$ is a version of $X(\phi)= ( X_{t}(\phi): t \geq 0)$. Moreover, $Y$ is a $\Phi'$-valued, regular, c\`{a}dl\`{a}g semimartingale that is a version of $X$ and it is unique up to indistinguishable versions.  Furthermore, if for each $\phi \in \Phi$ the real-valued semimartingale $X(\phi)$ is continuous, then $Y$ is a  continuous process in $(\widehat{\Phi}_{\theta})'$ and in $\Phi'$. 
\end{theorem}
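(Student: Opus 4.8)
The plan is to exploit the nuclearity of $\Phi$ to reduce the problem to a countable chain of Hilbert spaces linked by Hilbert--Schmidt maps, and then to manufacture the version $Y$ as a limit of genuine Hilbert-space-valued \cadlag semimartingales obtained by summing the one-dimensional semimartingales $X(\phi)$ against the images of an orthonormal basis under a Hilbert--Schmidt embedding. First I would apply Proposition \ref{propCylSemimContOpeSpaceSemim} to put the hypothesis in its strongest form: there is a weaker countably Hilbertian topology $\theta$ on $\Phi$, generated by an increasing sequence $(p_{n}: n \in \N)$ of separable continuous Hilbertian seminorms, such that $X$ extends to a continuous map $X^{\theta}: \widehat{\Phi}_{\theta} \rightarrow S^{0}$. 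Since $\Phi$ is nuclear, after passing to a cofinal subsequence and relabelling I may assume that for every $n$ the canonical linking map $i_{p_{n},p_{n+1}}: \Phi_{p_{n+1}} \rightarrow \Phi_{p_{n}}$ is Hilbert--Schmidt, and hence so is its dual $i_{p_{n},p_{n+1}}': \Phi'_{p_{n}} \rightarrow \Phi'_{p_{n+1}}$. This is the step where nuclearity is indispensable: it supplies the square-summability needed to radonify the cylindrical object into a genuine Hilbert-space-valued one.

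Fixing $n$ and regarding the restriction of $X$ as a cylindrical semimartingale on the separable Hilbert space $\Phi_{p_{n}}$ that is continuous into $S^{0}$, I would choose an orthonormal basis $(f_{k}: k \in \N) \subseteq \Phi$ of $\Phi_{p_{n}}$ adapted to the singular value decomposition of $i_{p_{n},p_{n+1}}'$, and set $v_{k} \defeq i_{p_{n},p_{n+1}}'(R f_{k}) \in \Phi'_{p_{n+1}}$, where $R$ is the Riesz isomorphism of $\Phi_{p_{n}}$; then the $v_{k}$ are orthogonal with $\sum_{k} \norm{v_{k}}_{p'_{n+1}}^{2} = \norm{i_{p_{n},p_{n+1}}'}_{\mathrm{HS}}^{2} < \infty$. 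Writing $\overline{X(f_{k})}$ for the \cadlag version of the real-valued semimartingale $X(f_{k})$, the candidate process is
$$ Y_{t} \defeq \sum_{k=1}^{\infty} \overline{X(f_{k})}_{t} \, v_{k}, \qquad t \geq 0, $$
with values in the Hilbert space $\Phi'_{p_{n+1}}$. For each fixed $t$, the continuity of the cylindrical random variable $X_{t}$ on $\Phi_{p_{n}}$ together with the Hilbert--Schmidt summability of $(v_{k})$ yields convergence of the series in $L^{0}(\Omega; \Phi'_{p_{n+1}})$ by the radonification arguments of \cite{FonsecaMora:2018}, and each partial sum is a genuine $\Phi'_{p_{n+1}}$-valued \cadlag semimartingale, being a finite combination of real semimartingales with fixed vector coefficients.

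The crux, and the step I expect to be the main obstacle, is to upgrade this pointwise-in-$t$ convergence to convergence of the \emph{paths}, so that the limit is itself \cadlag and a semimartingale rather than a mere family of random variables. I would establish, using the good-integrator estimates underlying Emery's topology, that the partial sums form a Cauchy sequence in the complete space of $\Phi'_{p_{n+1}}$-valued semimartingales equipped with the Hilbertian analogue of Emery's topology: the orthogonality of the $v_{k}$ decouples the tail norm $\norm{\sum_{k=M}^{N} \overline{X(f_{k})}_{t} v_{k}}_{p'_{n+1}}^{2}$ into $\sum_{k=M}^{N} \lambda_{k}^{2} \abs{\overline{X(f_{k})}_{t}}^{2}$, while the continuity of $X^{\theta}$ into $S^{0}$ controls the sizes of the scalar semimartingales $X(f_{k})$ uniformly in $k$, so that the Hilbert--Schmidt decay of $(\lambda_{k})$ forces the tails to vanish in the semimartingale topology. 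Completeness then delivers a limit $Y$ that is a $\Phi'_{p_{n+1}}$-valued \cadlag semimartingale, its semimartingale property being confirmed through the Hilbert-space theory of \cite{Metivier}; if this direct route proves delicate, the same conclusion can be reached by feeding the tightness furnished above into the general \cadlag regularization theorems of \cite{FonsecaMora:2018} and recovering the semimartingale structure afterwards.

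For the remaining assertions, note that by construction $\inner{Y_{t}}{\phi} = \sum_{k} \overline{X(f_{k})}_{t} \, \inner{v_{k}}{\phi}$ reproduces, for $\phi \in \Phi$, the expansion of $X_{t}(\phi)$ along the basis $(f_{k})$ of $\Phi_{p_{n}}$, so $\inner{Y}{\phi}$ is a version of $X(\phi)$ for every $\phi \in \Phi$; since $Y$ takes values in $\Phi'_{p_{n+1}} \subseteq (\widehat{\Phi}_{\theta})' \subseteq \Phi'$, it is a regular \cadlag $\Phi'$-valued semimartingale whose law at each time is Radon by Theorem 2.10 of \cite{FonsecaMora:2018}. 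For uniqueness, if $\tilde{Y}$ is another regular \cadlag version then for each $\phi$ the processes $\inner{Y}{\phi}$ and $\inner{\tilde{Y}}{\phi}$ are \cadlag versions of $X(\phi)$, hence indistinguishable; taking a countable subset of $\Phi$ that separates the points of the relevant Hilbertian subspace of $\Phi'$ (available since $\widehat{\Phi}_{\theta}$ is separable) and invoking regularity shows that $Y$ and $\tilde{Y}$ coincide as Radon-valued processes, whence they are indistinguishable. Finally, the continuous case is immediate: if every $X(\phi)$ is continuous then each $\overline{X(f_{k})}$ is continuous, the partial sums have continuous paths, and the convergence established above is uniform on compacts, transferring continuity to $Y$ in $\Phi'_{p_{n+1}}$ and hence in $(\widehat{\Phi}_{\theta})'$ and in $\Phi'$.
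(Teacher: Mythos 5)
First, note that the paper does not prove this statement itself: it is imported verbatim from \cite{FonsecaMora:Semi} (Theorem 3.7 there), so your proposal is being measured against that external argument rather than anything in this manuscript. Against that benchmark, the step you yourself identify as the crux is a genuine gap. Your plan is to radonify by summing $Y_{t}=\sum_{k}\overline{X(f_{k})}_{t}\,v_{k}$ and to get the Cauchy property of the partial sums from ``the continuity of $X^{\theta}$ into $S^{0}$ controls the sizes of the scalar semimartingales $X(f_{k})$ uniformly in $k$'' together with the Hilbert--Schmidt decay of the $\lambda_{k}$. But continuity of $X^{\theta}$ only makes $\{X(f_{k}):k\in\N\}$ a \emph{bounded subset of the non-locally-convex $F$-space $S^{0}$}, and a general semimartingale carries no moments; boundedness in Emery's topology cannot be paired with $\sum_{k}\lambda_{k}^{2}<\infty$ to control $\sum_{k}\lambda_{k}^{2}\abs{\overline{X(f_{k})}_{t}}^{2}$. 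Concretely, the scalar c\`{a}dl\`{a}g processes $z_{k}=k\,\mathbbm{1}_{A_{k}}\mathbbm{1}_{[1,\infty)}$ with $A_{k}$ independent and $\Prob(A_{k})=1/k$ form a bounded subset of $S^{0}$, yet $\sum_{k}k^{-3/2}\abs{z_{k}(1)}^{2}=\sum_{k}k^{1/2}\mathbbm{1}_{A_{k}}$ diverges a.s.\ by Borel--Cantelli. The series/radonification route works for cylindrical processes with second moments ($\mathcal{M}^{2}_{\infty}$-martingales, $\mathcal{H}^{2}_{S}$-semimartingales, Gaussian noise), but not for a bare cylindrical semimartingale. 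The actual proof in \cite{FonsecaMora:Semi} instead exploits condition (4) of Proposition \ref{propCylSemimContOpeSpaceSemim} (equicontinuity of the Fourier transforms), a Minlos--Sazonov-type radonification that uses positive-definiteness rather than square-summability, and the c\`{a}dl\`{a}g regularization theorem of \cite{FonsecaMora:2018}; your fallback sentence points in that direction, but the ``tightness furnished above'' is exactly what the failed series argument was supposed to supply, so nothing has been furnished.

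A second, related problem is that your construction, if it worked, would place $Y$ in a single Hilbert space $\Phi'_{p_{n+1}}$ for all time, which is strictly stronger than the theorem's conclusion that $Y$ lives in $(\widehat{\Phi}_{\theta})'=\bigcup_{n}\Phi'_{p_{n}}$. That stronger conclusion is not available in general: the paper only obtains a single-Hilbert-space version under the extra integrability of $\mathcal{H}^{1}_{S}$-semimartingales (see Example \ref{examQuadraCova}, quoting Theorem 3.22 of \cite{FonsecaMora:Semi}), and Proposition \ref{propLeftLimiLocallyBounDualNuclear} must localize with stopping times precisely because the paths exhaust the increasing union $\bigcup_{n}B_{\rho'_{n}}(n)$. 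Your uniqueness and continuity paragraphs are fine (they amount to Proposition 2.12 of \cite{FonsecaMora:2018} plus separability of $\widehat{\Phi}_{\theta}$), but they sit on top of a construction whose central convergence claim does not hold under the stated hypotheses.
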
 

\begin{example} Let $Z=(Z_{t}: t \geq 0)$ be a $\R^{d}$-valued semimartingale and let $\delta_{x}$ denotes the Dirac measure at $x \in \R^{d}$. As an application of Theorem \ref{theoRegulCylinSemimartigalesNuclear} it is shown in Example 3.10 in \cite{FonsecaMora:Semi} (see also Example III.1 in \cite{Ustunel:1982}) that the cylindrical processes in $\mathscr{D}'(\R^{d})$ defined for each $t \geq 0$ by 
$$ X_{t} (\phi)= \delta_{Z_{t}}(\phi)=\phi(Z_{t}), \quad \forall \, \phi \in \mathscr{D}(\R^{d}), $$
defines a $\mathscr{D}'(\R^{d})$-valued c\`{a}dl\`{a}g semimartingale $(Y_{t}: t \geq 0)$ with Radon distributions such that $\forall \phi \in \mathscr{D}(\R^{d})$, $\Prob$-a.e. $\inner{Y_{t}}{\phi}=X_{t} (\phi)=\phi(Z_{t})$ for all $t \geq 0$. 

The above definition can be used also to produce examples of $\mathcal{H}^{1}_{S}$-cylindrical semimartingales. In effect, if $Z$ is a $\mathcal{H}^{2}_{S}$-semimartingale it is a direct consequence of It\^{o}'s formula that $X_{t}(\phi)=\phi(Z_{t})$ is a $\mathcal{H}^{1}_{S}$ semimartingale for each $\phi \in \mathscr{D}(\R^{d})$, hence $X$ is a $\mathcal{H}^{1}_{S}$-cylindrical semimartingale in $\mathscr{D}'(\R^{d})$. 
\end{example}

\begin{remark}\label{remaSemiDefiUstunel}
In \cite{Ustunel:1982}, \"{U}st\"{u}nel introduced another definition for a semimartingale in the dual of a nuclear space by means of the concept of projective system of stochastic process. In \cite{Ustunel:1982} it is assumed that $\Phi$ is a complete, bornological nuclear space and $\Phi'$ is nuclear and complete. \"{U}st\"{u}nel's definition follows in the following way: let $( q_{i}: i \in I)$ be a family of Hilbertian seminorms generating the nuclear topology on $\Phi'$ and for each $i \in I$ let $k_{q_{i}}$ denotes the canonical inclusion of $\Phi'$ into the separable Hilbert space $(\Phi')_{q_{i}}$, and for $q_{i} \leq q_{j}$ let $k_{q_{i},q_{j}}$ denotes the canonical inclusion of $(\Phi')_{q_{j}}$ into $(\Phi')_{q_{i}}$. A \emph{projective system of semimartingales} is a family $X=( X^{i}: i \in I)$ where each $X^{i}$ is a $(\Phi')_{q_{i}}$-valued semimartingale, and  such that if $q_{i} \leq q_{j}$ then $k_{q_{i},q_{j}} X^{j}$ and $X^{i}$ are indistinguishable. A $\Phi'$-valued processes $Y$ is called a \emph{semimartingale} if $k_{q_{i}} Y= X^{i}$ for each $i \in I$. It is clear that any semimartingale defined in the projective limit sense is also a semimartingale in our cylindrical sense. Conversely if $\Phi'$ is a Fr\'{e}chet nuclear space then every $\Phi'$-valued semimartingale in the cylindrical sense is a semimartingale defined in the projective limit sense
(see \cite{Ustunel:1982-1}, Theorem II.1 and Corollary II.3). Observe that the definition of $\Phi'$-valued semimartingales as projective systems of semimartingales only makes sense if the strong dual $\Phi'$ is complete and nuclear. The above because if $\Phi'$ does not satisfy these assumptions it might not be possible to express $\Phi'$ as a projective limit of Hilbert spaces. 
\end{remark}
  
\subsection{The Stochastic Integral in the Nuclear Space Setting} \label{sectionCharacWeakIntegransNuclear}

We start this section by introducing the following class of processes:

\begin{definition} \label{defiBoundWeakInteg}
Let $\Phi$ be locally convex. We denote by $b\mathcal{P}(\Phi)$ the space of all $\Phi$-valued processes $H=(H_{t}: t \geq 0)$ which are:
\begin{enumerate}
\item \emph{weakly predictable}, that is  
$\forall f  \in \Phi'$ the mapping $(t,\omega) \mapsto \inner{f}{H_{t}(\omega)}$ is predictable, 
\item \emph{weakly bounded}, that is $\forall f  \in \Phi'$ we have $\sup_{(t,\omega)} \abs{\inner{f}{H_{t}(\omega)}}< \infty$. 
\end{enumerate} 
In other words, we have $\inner{f}{H}\defeq \{ \inner{f}{H_{t}(\omega)}: t \geq 0, \omega \in \Omega   \} \in b\mathcal{P}$ for every $f \in \Phi'$.  
\end{definition}

The main objective of this section is to construct the  stochastic integral for integrands that belongs to $b\mathcal{P}(\Phi)$ under the assumption that $\Phi$ is a complete, barrelled, nuclear space and that $X$ is a cylindrical semimartingale satisfying Assumption \ref{assuCylSemimartingale}. To do this, we will show that the spaces $\Phi \, \widehat{\otimes}_{\pi} \, b\mathcal{P}$ and $b\mathcal{P}(\Phi)$ are homeomorphic when $b\mathcal{P}(\Phi)$ is equipped with the topology of uniform convergence on bounded subsets in $\Phi'$. This result will be used together with Theorem \ref{theoContiWeakIntegConvexifSemiSpace} to define the integral for processes in $b\mathcal{P}(\Phi)$.  To carry out this program, it will be of great importance the following result which is a consequence of the nice properties of tensor products and nuclear spaces. 

\begin{proposition} \label{propProjTensProdNuclearCase}
If $\Phi$ is a complete, barrelled, nuclear space, $\Phi \, \widehat{\otimes}_{\pi} \, b\mathcal{P} \simeq \mathcal{L}_{b}(\Phi', b\mathcal{P})$ (here $b$ denotes the topology of uniform convergence on bounded subsets in $\Phi'$). 
\end{proposition}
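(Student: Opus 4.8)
The plan is to recognize the claimed isomorphism as an instance of the standard identification of the completed projective tensor product of a nuclear space with an operator space, and to carry out the identification through Schwartz's $\epsilon$-product, using the Montel and barrelledness properties of $\Phi$ to pin down the correct topology. First I would record the structural facts about $\Phi$ that will be used repeatedly. Since $\Phi$ is a complete, barrelled, nuclear space it is reflexive (as already noted in this section via \cite{Schaefer}); moreover, because in a nuclear space every bounded set is precompact, $\Phi$ is semi-Montel, and together with barrelledness this makes $\Phi$ a Montel space. Two consequences follow: on $\Phi'$ the strong topology coincides with the topology $c$ of uniform convergence on convex balanced compact subsets (Montel property), so that $\Phi'_{c}=\Phi'_{\beta}$ as topological vector spaces; and, by barrelledness (Banach--Steinhaus), the equicontinuous subsets of $\Phi'$ are exactly the strongly bounded ones, so the topology of uniform convergence on equicontinuous subsets of $\Phi'$ coincides with the topology $b$ of uniform convergence on bounded subsets.

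Next I would pass from the $\pi$-topology to the $\epsilon$-topology. Because $\Phi$ is nuclear, the projective and injective tensor topologies on $\Phi \otimes b\mathcal{P}$ coincide, and hence so do their completions, giving $\Phi \, \widehat{\otimes}_{\pi} \, b\mathcal{P} = \Phi \, \widehat{\otimes}_{\epsilon} \, b\mathcal{P}$ (see \cite{Treves, Jarchow}). Since $b\mathcal{P}$ is a Banach space, hence complete, and since $\Phi$ nuclear has the approximation property, the completed injective tensor product is naturally isomorphic to the $\epsilon$-product $\Phi \, \epsilon \, b\mathcal{P}$, which by definition is the space $\mathcal{L}_{e}(\Phi'_{c}, b\mathcal{P})$ of continuous linear maps from $\Phi'_{c}$ into $b\mathcal{P}$, topologized by uniform convergence on equicontinuous subsets of $\Phi'$. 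Under this isomorphism an elementary tensor $\phi \otimes h$ corresponds to the rank-one operator $f \mapsto \inner{f}{\phi}\, h$, which is exactly the prescription underlying the bilinear map used to build the stochastic integral.

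Finally I would combine these identifications. By the Montel property $\Phi'_{c}=\Phi'_{\beta}$, so $\mathcal{L}_{e}(\Phi'_{c}, b\mathcal{P})$ and $\mathcal{L}_{e}(\Phi'_{\beta}, b\mathcal{P})$ agree as spaces of operators; by barrelledness the defining topology $e$ coincides with the topology $b$, so this operator space carries precisely the topology of $\mathcal{L}_{b}(\Phi', b\mathcal{P})$. Chaining the isomorphisms
$$\Phi \, \widehat{\otimes}_{\pi} \, b\mathcal{P} = \Phi \, \widehat{\otimes}_{\epsilon} \, b\mathcal{P} \simeq \Phi \, \epsilon \, b\mathcal{P} = \mathcal{L}_{e}(\Phi'_{c}, b\mathcal{P}) = \mathcal{L}_{b}(\Phi', b\mathcal{P})$$
yields the claim.

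I expect the main obstacle to lie in the middle step, namely the identification of $\Phi \, \widehat{\otimes}_{\epsilon} \, b\mathcal{P}$ with the \emph{full} operator space $\mathcal{L}_{e}(\Phi'_{c}, b\mathcal{P})$. Continuity and injectivity of the canonical map $\phi \otimes h \mapsto [\,f \mapsto \inner{f}{\phi}\, h\,]$ are routine, but its \emph{surjectivity} --- that every continuous operator $\Phi'_{c} \to b\mathcal{P}$ is approximated by finite-rank operators arising from the algebraic tensor product --- is a kernel-theorem phenomenon and is precisely where the nuclearity of $\Phi$ (via the approximation property) together with the completeness of $b\mathcal{P}$ becomes indispensable. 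Rather than reproving this, I would secure it by invoking the standard representation theorems for nuclear spaces in \cite{Treves, Jarchow}, and devote the explicit argument only to the two topology comparisons $\Phi'_{c}=\Phi'_{\beta}$ and $e=b$.
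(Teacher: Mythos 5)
Your argument is correct and is essentially the route the paper takes: the paper compresses your first three identifications into a single citation of Treves, Proposition 50.4, which for $\Phi$ complete nuclear and $b\mathcal{P}$ complete directly yields $\Phi \,\widehat{\otimes}_{\pi}\, b\mathcal{P} \simeq \mathcal{L}_{e}(\Phi', b\mathcal{P})$, and then, exactly as in your final step, uses barrelledness to identify the equicontinuous and bounded subsets of $\Phi'$ and hence $\mathcal{L}_{e}(\Phi', b\mathcal{P})$ with $\mathcal{L}_{b}(\Phi', b\mathcal{P})$. Your extra detour through $\pi=\epsilon$, the $\epsilon$-product, and the Montel identity $\Phi'_{c}=\Phi'_{\beta}$ is sound but is simply the content absorbed into that one citation.
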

\begin{proof}
Since $\Phi$ is complete nuclear and $b\mathcal{P}$ is a Banach space, it follows from Proposition 50.4 in \cite{Treves}, p.522, that $\Phi \, \widehat{\otimes}_{\pi} \,b\mathcal{P} \simeq \mathcal{L}_{e}(\Phi', b\mathcal{P})$ (here $e$ denotes the topology of equicontinuous convergence). Then, as $\Phi$ is barrelled the equicontinuous and bounded subsets in $\Phi'$ coincide (see \cite{Schaefer}, Theorem IV.5.2, p.141) and hence we have $\mathcal{L}_{e}(\Phi', b\mathcal{P}) \simeq \mathcal{L}_{b}(\Phi', b\mathcal{P})$, which shows the result. 
\end{proof}

In view of Proposition \ref{propProjTensProdNuclearCase} our objective is to show that there exists an isomorphism between the spaces $b\mathcal{P}(\Phi)$ and $\mathcal{L}_{b}(\Phi', b\mathcal{P})$. In the next result we will show that such isomorphism exists in the more general context of reflexive locally convex spaces.   

\begin{proposition} \label{propIsomorBounPredSpaceLinOper}
Suppose that the locally convex space $\Phi$ is reflexive. Then, the map from $b\mathcal{P}(\Phi)$ into $\mathcal{L}_{b}(\Phi', b\mathcal{P})$ defined by 
\begin{equation} \label{defIsomorBounPredSpaceLinOper}
H \mapsto \left[ f \mapsto \inner{f}{H} \right].  
\end{equation}
is an isomorphism.  
\end{proposition}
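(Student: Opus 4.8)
The plan is to show that the map in \eqref{defIsomorBounPredSpaceLinOper}, which I will denote $\Lambda$ (so that $\Lambda(H)(f)=\inner{f}{H}$), is a linear bijection of $b\mathcal{P}(\Phi)$ onto $\mathcal{L}_{b}(\Phi',b\mathcal{P})$ that carries the defining seminorms of the two spaces onto one another. Linearity in $H$ is immediate from the bilinearity of the canonical pairing, so the work is concentrated in four steps: well-definedness, injectivity, the matching of topologies, and surjectivity.

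First I would check that $\Lambda$ is well defined, i.e. that $\Lambda(H)$ is a \emph{continuous} operator from $\Phi'$ into $b\mathcal{P}$. Fix $H\in b\mathcal{P}(\Phi)$ and set $K=\{H_{t}(\omega): t\geq 0,\,\omega\in\Omega\}\subseteq\Phi$. Weak boundedness of $H$ says precisely that $K$ is bounded in the weak topology $\sigma(\Phi,\Phi')$; since weakly bounded subsets of a locally convex space are bounded (Mackey's theorem, see \cite{Schaefer}), $K$ is a bounded subset of $\Phi$. Consequently $\norm{\Lambda(H)(f)}_{u}=\sup_{(t,\omega)}\abs{\inner{f}{H_{t}(\omega)}}=\eta_{K}(f)$, and $\eta_{K}$ is by construction a continuous seminorm for the strong topology on $\Phi'$. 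Hence $\Lambda(H)\in\mathcal{L}(\Phi',b\mathcal{P})$, and the displayed identity $\norm{\Lambda(H)(f)}_{u}=\eta_{K}(f)$ also underlies the continuity statements below.

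For injectivity, if $\Lambda(H)=\Lambda(H')$ then $\inner{f}{H_{t}(\omega)}=\inner{f}{H'_{t}(\omega)}$ for every $f\in\Phi'$ and every $(t,\omega)$; since $\Phi'$ separates the points of $\Phi$ this forces $H=H'$. For the two-sided continuity I would simply match seminorms: for a bounded set $B\subseteq\Phi'$ the topology of uniform convergence on bounded subsets assigns to $H$ the seminorm $\sup_{f\in B}\norm{\inner{f}{H}}_{u}$, while the topology $b$ on $\mathcal{L}(\Phi',b\mathcal{P})$ assigns to $T$ the seminorm $\sup_{f\in B}\norm{T(f)}_{u}$; under $\Lambda$ these coincide exactly, so once bijectivity is established $\Lambda$ is automatically a homeomorphism.

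The main obstacle is surjectivity, and this is where reflexivity enters. Given $T\in\mathcal{L}_{b}(\Phi',b\mathcal{P})$, for each fixed $(t,\omega)$ the evaluation $h\mapsto h(t,\omega)$ is a continuous linear functional on $b\mathcal{P}$ (of norm at most one), so its composition with $T$ gives a continuous linear functional $\ell_{t,\omega}$ on the strong dual $\Phi'$, i.e. an element of the bidual $(\Phi')'=\Phi''$. Reflexivity of $\Phi$ then yields a unique $H_{t}(\omega)\in\Phi$ with $\inner{f}{H_{t}(\omega)}=\ell_{t,\omega}(f)=T(f)(t,\omega)$ for all $f\in\Phi'$. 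This defines a $\Phi$-valued process $H=(H_{t}:t\geq 0)$ with $\inner{f}{H}=T(f)\in b\mathcal{P}$ for every $f$; in particular $H$ is weakly predictable and weakly bounded, so $H\in b\mathcal{P}(\Phi)$, and by construction $\Lambda(H)=T$. Combining the four steps shows $\Lambda$ is the desired isomorphism; I expect the Mackey-boundedness step (for well-definedness) and the reflexive identification $(\Phi')'=\Phi$ (for surjectivity) to be the only points requiring genuine care.
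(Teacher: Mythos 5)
Your proof is correct and follows the same overall architecture as the paper's: show the map is well defined (i.e.\ lands in $\mathcal{L}(\Phi',b\mathcal{P})$) using that the image of $H$ is a bounded subset of $\Phi$, get injectivity from the fact that $\Phi'$ separates points, and get surjectivity by identifying $f\mapsto T(f)(t,\omega)$ with an element of the bidual and invoking reflexivity. The one place where you take a genuinely cleaner route is the well-definedness step: the paper first passes through the closed graph theorem (using that $\Phi'$ is barrelled because $\Phi$ is reflexive) and only then, inside the closedness verification, produces the estimate $\sup_{(t,\omega)}\abs{\inner{f}{H_{t}(\omega)}}\leq q_{B}(f)$ with $B$ a bounded set containing the image of $H$. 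You observe directly that $\norm{\inner{f}{H}}_{u}=\eta_{K}(f)$ where $K$ is the (Mackey-bounded, hence bounded) image of $H$, and that $\eta_{K}$ is by definition one of the generating seminorms of the strong topology on $\Phi'$; this makes the continuity immediate and quantitative, and renders the closed graph theorem unnecessary --- indeed the paper's own closedness argument secretly contains this direct estimate. Your surjectivity step is also slightly more explicit than the paper's (which simply asserts that the prescription $\inner{f}{H_{t}(\omega)}=G(f)_{t}(\omega)$ defines an element of $\Phi$), since you exhibit $\ell_{t,\omega}$ as the composition of $T$ with the norm-one evaluation functional on $b\mathcal{P}$, hence as a continuous functional on the strong dual. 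Both refinements are improvements in exposition rather than in substance; no step is missing.
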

\begin{proof}
First we show that the map defined by \eqref{defIsomorBounPredSpaceLinOper} is well-defined. Let $H \in b\mathcal{P}(\Phi)$. The map $f \mapsto \inner{f}{H}$  is clearly linear. We will show that it is also continuous. In effect, because $\Phi$ is reflexive then  $\Phi'$ is barrelled (see \cite{Schaefer}, Theorem IV.5.6, p.145), hence from the closed graph theorem (see \cite{NariciBeckenstein}, Theorem 14.3.4, p.465) we only need to show that the map $f \mapsto \inner{f}{H}$ is closed. 

Suppose the net $f_{\lambda}$ converges to $f$ in $\Phi'$ and that $\inner{f_{\lambda}}{H}$ converges to $Y$ in $b\mathcal{P}$. Since $\Phi$ is reflexive, for a subset of $\Phi$ the property of being weakly bounded implies strong boundedness (see \cite{Schaefer}, Theorem IV.5.2, p.141), therefore the image of $H$ is contained in a bounded subset of $\Phi$, say $B$. Then we have that $q_{B}(g)=\sup_{\phi \in B} \abs{\inner{g}{\phi}}$ $\forall g \in \Phi'$, is a continuous seminorm on $\Phi'$, and hence $q_{B}(f_{\lambda}-f) \rightarrow 0$. Therefore
$$ \sup_{(t,\omega)} \abs{ \inner{f_{ \lambda}}{H_{t}(\omega)}-\inner{f}{H_{t}(\omega)} } \leq q_{B}(f_{\lambda}-f) \rightarrow 0, $$
and hence $\inner{f_{\lambda}}{H}$ converges to $\inner{f}{H}$ in $b\mathcal{P}$. By uniqueness of limits we have $Y=\inner{f}{H}$. This proves that $f \mapsto \inner{f}{H} \in \mathcal{L}(\Phi', b\mathcal{P})$ and therefore the map given  in \eqref{defIsomorBounPredSpaceLinOper} is well-defined.

Now, the map defined by \eqref{defIsomorBounPredSpaceLinOper} is clearly linear and has kernel $\{0\}$, hence it is injective. To prove that it is also surjective, let $G \in \mathcal{L}(\Phi', b\mathcal{P})$. Then, for all $f \in \Phi'$ we have $G(f) \in b\mathcal{P}$. Given $(t,\omega) \in \R_{+} \times \Omega$, let $H_{t}(\omega)$  be defined by 
\begin{equation} \label{defInverIsomoBoundPredSpaceLinOper}
\inner{f}{H_{t}(\omega)}=G(f)_{t}(\omega), \quad \, \forall \, f \in \Phi'.
\end{equation}
Because $\Phi$ is reflexive, the prescription given above defines $H_{t}(\omega)$ as an element of $\Phi$. Hence, $H:\R_{+} \times \Omega \rightarrow \Phi$ is well-defined. Moreover, \eqref{defInverIsomoBoundPredSpaceLinOper} shows that $H$ is weakly predictable. Furthermore, because $G(f) \in b\mathcal{P}$ it follows from \eqref{defInverIsomoBoundPredSpaceLinOper} that $H$ is weakly bounded. Hence, $H$ defined in \eqref{defInverIsomoBoundPredSpaceLinOper} is an element of $ b\mathcal{P}(\Phi)$. Therefore the map defined by \eqref{defIsomorBounPredSpaceLinOper}  is surjective and consequently it is an isomorphism. 
\end{proof}

If $\Phi$ is reflexive, in view of Proposition \ref{propIsomorBounPredSpaceLinOper} we can (and we will) equip the space $b\mathcal{P}(\Phi)$ with the topology of uniform convergence on bounded subsets in $\Phi'$ induced via the isomorphism \eqref{defIsomorBounPredSpaceLinOper}. That is, we equip $b\mathcal{P}(\Phi)$ with the topology generated by the family of seminorms 
$\displaystyle{q_{B}(H)=\sup_{f \in B} \norm{ \inner{f}{H}}_{u}}$, where $B$ ranges over the bounded subsets in $\Phi'$. 

Moreover, since $\Phi$ is reflexive, a fundamental system of bounded, convex, balanced subsets in $\Phi'$ is the family of all the polar sets $B_{p}(1)^{0}$ where $p$ ranges over the continuous seminorms on $\Phi$ (see Theorem IV.5.2 in \cite{Schaefer}, p.141). Hence, the topology of uniform convergence on bounded subsets in $\Phi'$ introduced on $b\mathcal{P}(\Phi)$ in the above paragraph can be equivalently described as the topology generated by the family of seminorms $\displaystyle{p(H)=\sup_{(t,\omega)} p(H_{t}(\omega))}$, where $p$ ranges over a generating family of seminorms for the topology on $\Phi$. Hence, the topology in $b\mathcal{P}(\Phi)$ is the \emph{topology of convergence in $\Phi$ uniformly on $\R_{+} \times \Omega$}. Unless otherwise specified we will always consider $b\mathcal{P}(\Phi)$ equipped with this topology.  Observe that $b\mathcal{P}(\Phi)$ is Fr\'{e}chet if $\Phi$ is a Fr\'{e}chet nuclear space.  

Combining the results in Propositions  \ref{propProjTensProdNuclearCase} and  \ref{propIsomorBounPredSpaceLinOper} we obtain the following characterization for the space of weak integrands for some classes of nuclear spaces.

\begin{theorem}\label{theoWeakIntegNuclearBoundPrec} Assume that $\Phi$ is a complete, barrelled, nuclear space. Then, 
$$\Phi \, \widehat{\otimes}_{\pi} \, b\mathcal{P} \simeq \mathcal{L}_{b}(\Phi', b\mathcal{P}) \simeq b\mathcal{P}(\Phi).$$ 
\end{theorem}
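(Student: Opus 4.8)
The plan is to obtain the claimed chain of isomorphisms by simply concatenating the two isomorphisms already established in Propositions \ref{propProjTensProdNuclearCase} and \ref{propIsomorBounPredSpaceLinOper}, after checking that the hypotheses of both are in force under the standing assumption that $\Phi$ is complete, barrelled and nuclear. So this is really a bookkeeping result: the analytic content has been done, and what remains is to confirm that the two earlier propositions may legitimately be applied together.

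First I would invoke Proposition \ref{propProjTensProdNuclearCase} verbatim. Since $\Phi$ is complete, barrelled and nuclear, it yields the first isomorphism $\Phi \, \widehat{\otimes}_{\pi} \, b\mathcal{P} \simeq \mathcal{L}_{b}(\Phi', b\mathcal{P})$ with no further work.

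The second isomorphism $\mathcal{L}_{b}(\Phi', b\mathcal{P}) \simeq b\mathcal{P}(\Phi)$ requires Proposition \ref{propIsomorBounPredSpaceLinOper}, whose hypothesis is that $\Phi$ be \emph{reflexive} rather than nuclear. So the one genuine verification is that a complete barrelled nuclear space is reflexive; this was already recorded in Section \ref{sectNuclearSpaceCylSemi} (via Theorem IV.5.6 in \cite{Schaefer}). With reflexivity in hand, Proposition \ref{propIsomorBounPredSpaceLinOper} provides the linear bijection $H \mapsto \left[ f \mapsto \inner{f}{H} \right]$ from $b\mathcal{P}(\Phi)$ onto $\mathcal{L}_{b}(\Phi', b\mathcal{P})$. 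Moreover, as discussed in the paragraph immediately following that proposition, the topology on $b\mathcal{P}(\Phi)$ is by construction the one transported from $\mathcal{L}_{b}(\Phi', b\mathcal{P})$ through this bijection (equivalently, the topology of convergence in $\Phi$ uniformly on $\R_{+} \times \Omega$), so the map is automatically a topological isomorphism and not merely an algebraic one.

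Composing the two isomorphisms then gives $\Phi \, \widehat{\otimes}_{\pi} \, b\mathcal{P} \simeq \mathcal{L}_{b}(\Phi', b\mathcal{P}) \simeq b\mathcal{P}(\Phi)$, as claimed. I do not expect any real obstacle: the substance was carried out in the two cited propositions, and the only points deserving a moment's attention are the implication ``complete $+$ barrelled $+$ nuclear $\Rightarrow$ reflexive'', which is needed to apply Proposition \ref{propIsomorBounPredSpaceLinOper}, and the observation that the topology placed on $b\mathcal{P}(\Phi)$ is precisely what makes the second map a homeomorphism.
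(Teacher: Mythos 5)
Your proposal is correct and follows exactly the paper's route: the theorem is obtained by combining Proposition \ref{propProjTensProdNuclearCase} with Proposition \ref{propIsomorBounPredSpaceLinOper}, using the fact (recorded in Section \ref{sectNuclearSpaceCylSemi}) that a complete barrelled nuclear space is reflexive, and the observation that the topology on $b\mathcal{P}(\Phi)$ is by construction the one transported from $\mathcal{L}_{b}(\Phi', b\mathcal{P})$. Nothing is missing.
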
 


\begin{corollary}\label{coroDenseSimpleBoundedProcesses} Assume that $\Phi$ is a complete, barrelled, nuclear space.
The collection of all the $\Phi$-valued elementary processes:
\begin{equation}\label{eqDefiSimpleBoundProceNuclear}
H_{t}(\omega)=\sum_{k=1}^{n} h_{k}(r,\omega) \phi_{k}, \quad \forall \, t \geq 0, \, \omega \in \Omega, 
\end{equation}
where $n \in \N$, $h_{k} \in b\mathcal{P}$ and $\phi_{k} \in \Phi$ for $k=1,\dots, n$, is dense in $b\mathcal{P}(\Phi)$.  
\end{corollary}
\begin{proof}
The homeomorphism of Proposition \ref{propProjTensProdNuclearCase} maps the element $\sum_{k=1}^{n} \phi_{k} \otimes h_{k}$ in $\Phi \otimes b\mathcal{P}$ into the continuous linear operator $\left( f \mapsto \sum_{k=1}^{n} \inner{f}{\phi_{k}} h_{k} \right)$ from $\Phi'$ into $b\mathcal{P}$. But by Proposition \ref{propIsomorBounPredSpaceLinOper} the later is mapped into \eqref{eqDefiSimpleBoundProceNuclear} which is clearly an element in $b\mathcal{P}(\Phi)$. Since the elements in $\Phi \otimes b\mathcal{P}$ are dense in $\Phi \, \widehat{\otimes}_{\pi} \, b\mathcal{P}$, the homeomorphism in Theorem \ref{theoWeakIntegNuclearBoundPrec} shows that the elementary processes of the form \eqref{eqDefiSimpleBoundProceNuclear} are dense in $b\mathcal{P}(\Phi)$.   
\end{proof}

We are ready to define the stochastic integral for integrands in $b\mathcal{P}(\Phi)$. 

\begin{theorem} \label{theoBoundedWeakIntegNucleSpace}
Let $\Phi$ be a complete, barrelled, nuclear space and let $X$ be a cylindrical semimartingale satisfying any of the equivalent conditions in Proposition \ref{propCylSemimContOpeSpaceSemim}. There exists a unique continuous and linear mapping $H \mapsto \int \, H \, dX$ from $ b\mathcal{P}(\Phi)$ into $(S^{0})_{lcx}$ such that for each $H \in  b\mathcal{P}(\Phi)$ of the form \eqref{eqDefiSimpleBoundProceNuclear} we have 
\begin{equation}\label{eqActionWeakIntegSimpleIntegNuclear}
 \int \, H \, dX =  \sum_{k=1}^{n} \, h_{k} \cdot X(\phi_{k}). 
\end{equation}
Moreover for every $H \in b\mathcal{P}(\Phi)$ we have:
\begin{enumerate}
\item $\displaystyle{\left(\int H \, dX \right)^{c}= \int H \, dX^{c}}$. 
\item $\displaystyle{\left(\int H \, dX \right)^{\tau}=\int H \mathbbm{1}_{[0,\tau]} \, dX= \int H \, dX^{\tau}}$, for every stopping time $\tau$. 
\item If $Y$ is another cylindrical semimartingale in $\Phi'$ satisfying any of the equivalent conditions in Proposition \ref{propCylSemimContOpeSpaceSemim}, then the processes $\int \, H d (X+Y)$ and $\int \, H d X +\int \, H d Y$ are indistinguishable. 
\end{enumerate} 
\end{theorem}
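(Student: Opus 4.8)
The plan is to \emph{define} the integral on $b\mathcal{P}(\Phi)$ as the transport of the map $I$ of Theorem \ref{theoContiWeakIntegConvexifSemiSpace} through the isomorphism of Theorem \ref{theoWeakIntegNuclearBoundPrec}, and then to read off every asserted property from the corresponding property already proved for $I$. Since $\Phi$ is complete, barrelled and nuclear, the conditions of Proposition \ref{propCylSemimContOpeSpaceSemim} are precisely Assumption \ref{assuCylSemimartingale}, so Theorem \ref{theoContiWeakIntegConvexifSemiSpace} furnishes a continuous linear map $I \colon \Phi \, \widehat{\otimes}_{\pi} \, b\mathcal{P} \to (S^{0})_{lcx}$. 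Writing $\Theta \colon \Phi \, \widehat{\otimes}_{\pi} \, b\mathcal{P} \to b\mathcal{P}(\Phi)$ for the topological isomorphism of Theorem \ref{theoWeakIntegNuclearBoundPrec}, I would set $\int H \, dX \defeq I(\Theta^{-1}(H))$ for $H \in b\mathcal{P}(\Phi)$. As a composition of two continuous linear maps, $H \mapsto \int H \, dX$ is continuous and linear from $b\mathcal{P}(\Phi)$ into $(S^{0})_{lcx}$.

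First I would verify the prescription \eqref{eqActionWeakIntegSimpleIntegNuclear} and deduce uniqueness. By the description of $\Theta$ recalled in the proof of Corollary \ref{coroDenseSimpleBoundedProcesses}, an elementary process $H$ of the form \eqref{eqDefiSimpleBoundProceNuclear} is exactly the image under $\Theta$ of $\sum_{k=1}^{n} \phi_{k} \otimes h_{k} \in \Phi \otimes b\mathcal{P}$; hence $\Theta^{-1}(H) = \sum_{k} \phi_{k} \otimes h_{k}$, and linearity of $I$ together with $I(\phi \otimes h) = h \cdot X(\phi)$ yields $\int H \, dX = \sum_{k} h_{k} \cdot X(\phi_{k})$. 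Uniqueness is then immediate: by Corollary \ref{coroDenseSimpleBoundedProcesses} the elementary processes are dense in $b\mathcal{P}(\Phi)$, so any continuous linear map into $(S^{0})_{lcx}$ satisfying \eqref{eqActionWeakIntegSimpleIntegNuclear} agrees with $H \mapsto \int H \, dX$ on a dense set, hence everywhere.

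Properties (1), (3) and the two outer equalities of (2) I would obtain by transport. Because $\Theta$ does not depend on the integrator, writing $I^{X}$ for the map of Theorem \ref{theoContiWeakIntegConvexifSemiSpace} associated with $X$ we have $\int H \, dX = I^{X}(\Theta^{-1}(H))$ for every admissible integrator. The identities established for $I^{X}$ in Theorem \ref{theoContiWeakIntegConvexifSemiSpace}, i.e. the extensions of Proposition \ref{propLineaWeakInteInIntegrators} and Theorem \ref{theoContPartStoppedWeakInt} to $\Phi \, \widehat{\otimes}_{\pi} \, b\mathcal{P}$, evaluated at $\Theta^{-1}(H)$, give at once $(\int H \, dX)^{c} = \int H \, dX^{c}$, $(\int H \, dX)^{\tau} = \int H \, dX^{\tau}$, and $\int H \, d(X+Y) = \int H \, dX + \int H \, dY$; here one uses, as in Section \ref{subSectionConstStocIntegral} and the proof of Proposition \ref{propLineaWeakInteInIntegrators}, that $X^{c}$, $X^{\tau}$ and $X+Y$ again satisfy the conditions of Proposition \ref{propCylSemimContOpeSpaceSemim}.

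The only genuinely new point is the middle equality of (2), $\int H \, \mathbbm{1}_{[0,\tau]} \, dX = (\int H \, dX)^{\tau}$. I would first note $H \mathbbm{1}_{[0,\tau]} \in b\mathcal{P}(\Phi)$ (the factor $\mathbbm{1}_{[0,\tau]}$ preserves weak predictability and weak boundedness) and that $H \mapsto H\mathbbm{1}_{[0,\tau]}$ is continuous on $b\mathcal{P}(\Phi)$, since $p(H_{t}(\omega)\mathbbm{1}_{[0,\tau]}(t,\omega)) \leq p(H_{t}(\omega))$ for every generating seminorm $p$. For an elementary $H$ as in \eqref{eqDefiSimpleBoundProceNuclear} one has $H\mathbbm{1}_{[0,\tau]} = \sum_{k} (h_{k}\mathbbm{1}_{[0,\tau]}) \phi_{k}$, and the real-valued identity $(h\mathbbm{1}_{[0,\tau]}) \cdot z = (h \cdot z)^{\tau}$ together with linearity of stopping gives $\int H\mathbbm{1}_{[0,\tau]} \, dX = \sum_{k} (h_{k} \cdot X(\phi_{k}))^{\tau} = (\int H \, dX)^{\tau}$; both $H \mapsto \int H\mathbbm{1}_{[0,\tau]} \, dX$ and $H \mapsto (\int H \, dX)^{\tau}$ are continuous into $(S^{0})_{lcx}$ (the latter because $z \mapsto z^{\tau}$ is continuous on $S^{0}$, hence on $(S^{0})_{lcx}$), so they agree everywhere by density. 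The whole argument is essentially bookkeeping once the two structural inputs are in hand; I expect the only delicate point to be these density and continuity passages, which now take place in the merely locally convex, non-metrizable target $(S^{0})_{lcx}$ rather than in $S^{0}$, so one must check that each map involved is continuous for the convexified topology and that $(S^{0})_{lcx}$ is separated enough for dense agreement to force equality.
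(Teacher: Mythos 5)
Your proposal is correct and follows essentially the same route as the paper: the integral is defined by transporting the map of Theorem \ref{theoContiWeakIntegConvexifSemiSpace} through the isomorphism of Theorem \ref{theoWeakIntegNuclearBoundPrec}, the action on elementary integrands and uniqueness come from Corollary \ref{coroDenseSimpleBoundedProcesses}, and the middle equality in (2) is checked on elementary processes via $(h\mathbbm{1}_{[0,\tau]})\cdot X(\phi)=(h\cdot X(\phi))^{\tau}$ and then extended by density and continuity into $(S^{0})_{lcx}$. The extra continuity checks you supply (for $H\mapsto H\mathbbm{1}_{[0,\tau]}$ and for stopping in the convexified topology) are consistent with, and slightly more explicit than, the paper's argument.
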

\begin{proof}
For each $H \in  b\mathcal{P}(\Phi)$ we define the  stochastic integral $\int H \, dX$ of $H$ with respect to $X$ as the stochastic integral of Theorem \ref{theoContiWeakIntegConvexifSemiSpace} for the unique image of $H$ in $\Phi \, \widehat{\otimes}_{\pi} \,  b\mathcal{P}$ under the homeomorphism of Theorem \ref{theoWeakIntegNuclearBoundPrec}. It then follows from Theorem \ref{theoContiWeakIntegConvexifSemiSpace} that the  mapping $H \mapsto \int \, H \, dX$ is linear and continuous from $ b\mathcal{P}(\Phi)$ into $(S^{0})_{lcx}$ and that it acts on $H \in  b\mathcal{P}$ of the form \eqref{eqDefiSimpleBoundProceNuclear} as given in \eqref{eqActionWeakIntegSimpleIntegNuclear}. Properties \emph{(1)}, \emph{(2)} and \emph{(3)} are also a consequence of the respective properties of the integral mapping of Theorem \ref{theoContiWeakIntegConvexifSemiSpace}. In the particular case of property \emph{(2)}, observe that  $H \mathbbm{1}_{[0,\tau]} \in  b\mathcal{P}(\Phi)$ and hence  $\displaystyle{\int H \mathbbm{1}_{[0,\tau]} \, dX}$ is defined. Since for each $h \in b\mathcal{P}$, $\phi \in \Phi$,  $( h \cdot X(\phi))^{\tau}=( h \mathbbm{1}_{[0,\tau]}  ) \cdot X(\phi)= h \cdot \left( X(\phi)^{\tau} \right)$, then it is a consequence of \eqref{eqActionWeakIntegSimpleIntegNuclear} that equality in \emph{(2)} holds for $H$ of the form \eqref{eqDefiSimpleBoundProceNuclear}. The result extends to every $H \in  b\mathcal{P}(\Phi)$ by the density of the elementary processes in $b \mathcal{P}(\Phi)$ (Corollary \ref{coroDenseSimpleBoundedProcesses}) and the continuity of the mapping $H \mapsto \int \, H \, dX$ from $ b\mathcal{P}(\Phi)$ into $(S^{0})_{lcx}$.
\end{proof}

In the second part of this section we will explore some additional properties of the stochastic integral defined in Theorem \ref{theoBoundedWeakIntegNucleSpace}. In order to prove these properties we will request our cylindrical semimartingales to satisfy the following:

\begin{definition}\label{defiGoodIntegrators}
Let $\Phi$ be a complete, barrelled, nuclear space. We will say that a cylindrical semimartingale $X=(X_{t}: t \geq 0)$ in $\Phi'$ is a \emph{good integrator} if $X$ 
satisfies any of the equivalent conditions in Proposition \ref{propCylSemimContOpeSpaceSemim} and if the stochastic integral mapping $H \mapsto \int \, H \, dX$ of Theorem \ref{theoBoundedWeakIntegNucleSpace} defines a continuous linear mapping from $b\mathcal{P}(\Phi)$ into $S^{0}$. 
\end{definition}

The definition of good integrators introduced above is a generalization of the definition for real-valued semimartingales. In efect, every $z \in S^{0}$ defines a cylindrical semimartingale $\alpha \mapsto \alpha z$ which is by definition continuous from $\R$ into $S^{0}$. By construction, if $\Phi=\R$ our stochastic integral coincides with that for real-valued semimartingales and hence the Bichteler–Dellacherie theorem implies that the stochastic integral mapping  $H \mapsto \int \, H \, dX$ of Theorem \ref{theoBoundedWeakIntegNucleSpace} is a continuous linear mapping from $b\mathcal{P}(\R)=b\mathcal{P}$ into $S^{0}$.  

We do not know if every cylindrical semimartingale $X$ in $\Phi'$ (satisfying any of the equivalent conditions in Proposition \ref{propCylSemimContOpeSpaceSemim}) is a good integrator.  
However, the next result shows that a positive answer is obtained for some important classes of cylindrical semimartingales. 

\begin{proposition}\label{propGoodIntegratorsNuclearSpace}
Let $\Phi$ be a complete, barrelled, nuclear space,and let $X$ be a cylindrical semimartingale in $\Phi'$ satisfying any of the equivalent conditions in Proposition \ref{propCylSemimContOpeSpaceSemim}. For the stochastic  integral mapping $H \mapsto \int \, H \, dX$ of Theorem \ref{theoBoundedWeakIntegNucleSpace} it follows that:
\begin{enumerate}
\item If $X$ is a $\mathcal{H}^{p}_{S}$-cylindrical semimartingale, $H \mapsto \int \, H \, dX$ is continuous from $b\mathcal{P}(\Phi)$ into $\mathcal{H}^{p}_{S}$.  
\item If $X$ is a $\mathcal{M}_{\infty}^{2}$-cylindrical martingale, $H \mapsto \int \, H \, dX$ is continuous from $b\mathcal{P}(\Phi)$ into $\mathcal{M}_{\infty}^{2}$. 
\item If $X$ is a $\mathcal{A}$-cylindrical semimartingale, $H \mapsto \int \, H \, dX$ is continuous from $b\mathcal{P}(\Phi)$ into $\mathcal{A}$.
\end{enumerate}
In particular, if  $X$ is either a $\mathcal{H}^{p}_{S}$-cylindrical semimartingale, a $\mathcal{M}_{\infty}^{2}$-cylindrical martingale, or a $\mathcal{A}$-cylindrical semimartingale, then $X$ is a good integrator.  
\end{proposition}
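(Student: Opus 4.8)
The plan is to observe that the required continuity is, up to a transport of structure, already contained in Propositions \ref{propWeakInteForSpSemimar} and \ref{propWeakInteForSquaMarting}: those results produce \emph{continuous} linear integral maps whose ranges lie in the Banach spaces $\mathcal{H}^{p}_{S}$, $\mathcal{M}^{2}_{\infty}$ and $\mathcal{A}$, so it remains only to push them through the isomorphism of Theorem \ref{theoWeakIntegNuclearBoundPrec} and to reconcile the outcome with the integral of Theorem \ref{theoBoundedWeakIntegNucleSpace}.

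I would begin with case (1). Let $\Lambda \colon b\mathcal{P}(\Phi) \to \Phi \, \widehat{\otimes}_{\pi} \, b\mathcal{P}$ denote the topological isomorphism of Theorem \ref{theoWeakIntegNuclearBoundPrec}, and let $I_{p} \colon \Phi \, \widehat{\otimes}_{\pi} \, b\mathcal{P} \to \mathcal{H}^{p}_{S}$ be the continuous linear map supplied by Proposition \ref{propWeakInteForSpSemimar}. Then $I_{p} \circ \Lambda$ is linear and continuous from $b\mathcal{P}(\Phi)$ into $\mathcal{H}^{p}_{S}$. By Remark \ref{remaLocaConvWeakIntegSpSemima}, once $I_{p}$ is post-composed with the canonical continuous inclusion $\mathcal{H}^{p}_{S} \hookrightarrow (S^{0})_{lcx}$ it coincides with the integral map of Theorem \ref{theoContiWeakIntegConvexifSemiSpace}; and since $\int H \, dX$ is by definition (Theorem \ref{theoBoundedWeakIntegNucleSpace}) the image of $\Lambda H$ under that latter map, I conclude that $\int H \, dX = I_{p}(\Lambda H)$, as an element of $\mathcal{H}^{p}_{S}$, for every $H \in b\mathcal{P}(\Phi)$. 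Thus $H \mapsto \int H \, dX = (I_{p}\circ\Lambda)(H)$ is continuous from $b\mathcal{P}(\Phi)$ into $\mathcal{H}^{p}_{S}$, which is (1). For (2) and (3) I would repeat this verbatim, replacing $I_{p}$ by the continuous integral maps into $\mathcal{M}^{2}_{\infty}$ and $\mathcal{A}$ of Proposition \ref{propWeakInteForSquaMarting} and again appealing to Remark \ref{remaLocaConvWeakIntegSpSemima} for the identification.

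For the final assertion I would compose with continuous embeddings into $S^{0}$: the inclusion $\mathcal{H}^{p}_{S} \hookrightarrow S^{0}$ is continuous because the $\mathcal{H}^{p}_{S}$-topology is finer than the $S^{0}$-topology (as used already in the proof of Proposition \ref{propWeakInteForSpSemimar}); the inclusion $\mathcal{A} \hookrightarrow S^{0}$ factors continuously through $\mathcal{A}_{loc}$, which carries the subspace topology of $S^{0}$; and $\mathcal{M}^{2}_{\infty} \hookrightarrow S^{0}$ factors continuously through $\mathcal{H}^{2}_{S}$ by Burkholder's inequality. In each case $H \mapsto \int H \, dX$ is then continuous from $b\mathcal{P}(\Phi)$ into $S^{0}$, which is exactly the defining property of a good integrator (Definition \ref{defiGoodIntegrators}).

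I expect the only genuinely delicate point to be the identification step in the second paragraph: one must make sure the two integral maps are being compared in the locally convex Hausdorff space $(S^{0})_{lcx}$, where both are continuous and where the inclusion from $\mathcal{H}^{p}_{S}$ (resp. $\mathcal{M}^{2}_{\infty}$, $\mathcal{A}$) is injective, so that equality of their images is meaningful and forces $\int H \, dX$ to live in the smaller Banach space. Everything else is a routine composition of continuous maps.
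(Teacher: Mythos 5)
Your proposal is correct and follows essentially the same route as the paper: identify the integral of Theorem \ref{theoBoundedWeakIntegNucleSpace} with the maps of Propositions \ref{propWeakInteForSpSemimar} and \ref{propWeakInteForSquaMarting} via Remark \ref{remaLocaConvWeakIntegSpSemima} and the isomorphism of Theorem \ref{theoWeakIntegNuclearBoundPrec}, then compose. Your only addition is to spell out the continuity of the embeddings of $\mathcal{H}^{p}_{S}$, $\mathcal{M}^{2}_{\infty}$ and $\mathcal{A}$ into $S^{0}$ for the good-integrator conclusion, which the paper leaves implicit; those verifications are accurate.
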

\begin{proof}
To prove (1), assume that $X$ is a $\mathcal{H}^{p}_{S}$-cylindrical semimartingale. We know by Remark \ref{remaLocaConvWeakIntegSpSemima} that the stochastic  integral mapping defined in Theorem \ref{theoContiWeakIntegConvexifSemiSpace} coincides with the stochastic integral mapping defined in Proposition \ref{propWeakInteForSpSemimar}. But we know from this last result that the stochastic integral mapping is continuous from $\Phi \, \widehat{\otimes}_{\pi} \, b\mathcal{P}$ into $\mathcal{H}^{p}_{S}$  and by our definition of the integral in  Theorem \ref{theoBoundedWeakIntegNucleSpace} we obtain the desired conclusion. The proof of (2) and (3) follows from the arguments used above but applying Proposition \ref{propWeakInteForSquaMarting} instead of Proposition \ref{propWeakInteForSpSemimar}. 
\end{proof} 

\begin{example}
Let $B=(B_{t}: t \geq 0)$ denotes a real-valued Brownian motion. For every $t \geq 0$ define 
$$X_{t}(\phi)= \int_{0}^{t} \phi(s) dB_{s}, \quad \forall \phi \in \mathcal{S}(\R). $$
Then, using the properties of the It\^{o} stochastic integral one can show that $X=(X_{t}: t \geq 0)$ is a $\mathcal{S}'(\R)$-valued process which defines a $\mathcal{M}_{\infty}^{2}$-cylindrical martingale  satisfying Assumption  \ref{assuCylSemimartingale} (for details see Example 3.20 in \cite{FonsecaMora:Semi}). Therefore $X$ is a good integrator by Proposition \ref{propGoodIntegratorsNuclearSpace}.
 
Since $\mathcal{S}(\R)$ is metrizable, the stochastic integral with respect to $X$ can be represented as a series of stochastic integrals. In effect, let $H \in b\mathcal{P}(\mathcal{S}(\R))$. If the unique image of $H$ in $\mathcal{S}(\R) \, \widehat{\otimes}_{\pi} \, b\mathcal{P}$ as given in Theorem \ref{theoWeakIntegNuclearBoundPrec} satisfies the decomposition \eqref{decompElemeProjTensProduc} for some $\sum \abs{\lambda_{i}}< \infty$, $\phi_{i} \rightarrow 0$ in $\mathcal{S}(\R)$, $h_{i} \rightarrow 0$ in $b\mathcal{P}$, then as a $\mathcal{S}(\R)$-valued process $H$ satisfies the representation
$$H(t,\omega)=\sum_{i=1}^{\infty} \lambda_{i} h_{i}(t,\omega) \phi_{i}, $$ 
with convergence in $b\mathcal{P}(\mathcal{S}(\R))$. Hence, by Proposition \ref{propGoodIntegratorsNuclearSpace}, \eqref{eqActionWeakIntegSimpleIntegNuclear}, and the associativity of the stochastic integral for real-valued semimartingales we have
$$ \int^{t}_{0} \, H \, dX=  \sum_{i=1}^{\infty} \lambda_{i} ( h_{i} \cdot X(\phi_{i}))= \sum_{i=1}^{\infty} \lambda_{i} \int_{0}^{t} h_{i}(s) \phi_{i}(s) dB_{s}, \quad \forall t \geq 0. $$
\end{example}

As the next result shows, we can use Proposition \ref{propGoodIntegratorsNuclearSpace} to construct new examples of good integrators. We will need the Banach space $\mathcal{M}^{2}_{\infty} \oplus \mathcal{A}$ (introduced by Memin in \cite{Memin:1980}) of special semimartingales with canonical decomposition $x=m+a$ where $m \in \mathcal{M}^{2}_{\infty}$ and $a \in \mathcal{A}$, equipped with the norm $\norm{x}_{\mathcal{M}^{2}_{\infty} \oplus \mathcal{A}}= \norm{m}_{\mathcal{M}_{\infty}^{2}}+\norm{a}_{\mathcal{A}}$. 

\begin{corollary}\label{coroSpecialSemiIsGoodInteg}
Let $\Phi$ be a complete, barrelled, nuclear space,and let $X$ be a $\mathcal{M}^{2}_{\infty} \oplus \mathcal{A}$-cylindrical semimartingale in $\Phi'$ satisfying any of the equivalent conditions in Proposition \ref{propCylSemimContOpeSpaceSemim}. Then, there exists a $\mathcal{M}^{2}_{\infty}$-cylindrical martingale $M$ and a $\mathcal{A}$-cylindrical semimartingale $A$, $M$ and $A$ satisfying any of the equivalent conditions in Proposition \ref{propCylSemimContOpeSpaceSemim}, such that $X=M+A$ and 
$$ \int \, H \, dX =  \int \, H \, dM + \int \, H \, dA  \in \mathcal{M}^{2}_{\infty} \oplus \mathcal{A}, \quad \forall  H \in b\mathcal{P}(\Phi). $$
Moreover, the stochastic  integral mapping $H \mapsto \int \, H \, dX$ of Theorem \ref{theoBoundedWeakIntegNucleSpace} is continuous from $b\mathcal{P}(\Phi)$ into $\mathcal{M}^{2}_{\infty} \oplus \mathcal{A}$. In particular, $X$ is a good integrator.  
\end{corollary}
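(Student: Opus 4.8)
The plan is to build the claimed decomposition $X=M+A$ directly from the pointwise canonical decomposition of each real-valued semimartingale $X(\phi)$, then to upgrade the hypothesis that $X$ is continuous into $S^{0}$ to continuity into the Banach space $\mathcal{M}^{2}_{\infty} \oplus \mathcal{A}$, and finally to read off the conclusion from Proposition \ref{propGoodIntegratorsNuclearSpace} together with the linearity of the integral in the integrator (Theorem \ref{theoBoundedWeakIntegNucleSpace}(3)). First I would define the cylindrical processes $M$ and $A$: for each $\phi \in \Phi$ the semimartingale $X(\phi)$ lies in $\mathcal{M}^{2}_{\infty} \oplus \mathcal{A}$, hence is special and admits a canonical decomposition $X(\phi)=M(\phi)+A(\phi)$ with $M(\phi) \in \mathcal{M}^{2}_{\infty}$ and $A(\phi) \in \mathcal{A}$. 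Since the canonical decomposition of a special semimartingale is unique, the assignments $\phi \mapsto M(\phi)$ and $\phi \mapsto A(\phi)$ inherit the linearity of $\phi \mapsto X(\phi)$: applying uniqueness to $X(\phi+\psi)=X(\phi)+X(\psi)$ and to $X(\alpha\phi)=\alpha X(\phi)$, and using that sums of $\mathcal{M}^{2}_{\infty}$-martingales (respectively $\mathcal{A}$-processes) remain in $\mathcal{M}^{2}_{\infty}$ (respectively $\mathcal{A}$), forces $M$ and $A$ to be linear. Thus $M$ is a $\mathcal{M}^{2}_{\infty}$-cylindrical martingale, $A$ is a $\mathcal{A}$-cylindrical semimartingale, and $X=M+A$ by construction.

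The heart of the proof is to show that $M$ and $A$ satisfy the equivalent conditions in Proposition \ref{propCylSemimContOpeSpaceSemim}. I would obtain this by first proving that $X$, viewed as the linear map $\phi \mapsto X(\phi)$ from $\Phi$ into the Banach space $\mathcal{M}^{2}_{\infty} \oplus \mathcal{A}$, is continuous. Since $\Phi$ is barrelled and $\mathcal{M}^{2}_{\infty} \oplus \mathcal{A}$ is Banach, it suffices by the closed graph theorem (\cite{NariciBeckenstein}, Theorem 14.3.4) to check that this map has closed graph. To that end, suppose a net satisfies $\phi_{\lambda} \to \phi$ in $\Phi$ and $X(\phi_{\lambda}) \to y$ in $\mathcal{M}^{2}_{\infty} \oplus \mathcal{A}$. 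Because $X$ satisfies Proposition \ref{propCylSemimContOpeSpaceSemim}(2), we have $X(\phi_{\lambda}) \to X(\phi)$ in $S^{0}$; and since the topology of $\mathcal{M}^{2}_{\infty} \oplus \mathcal{A}$ is finer than that of $S^{0}$ (exactly as for $\mathcal{H}^{p}_{S}$ in the proof of Proposition \ref{propWeakInteForSpSemimar}, via the continuous inclusions $\mathcal{M}^{2}_{\infty} \hookrightarrow \mathcal{H}^{2}_{S}$, $\mathcal{A} \hookrightarrow \mathcal{H}^{1}_{S}$ and $\mathcal{H}^{p}_{S} \hookrightarrow S^{0}$), also $X(\phi_{\lambda}) \to y$ in $S^{0}$, whence $y=X(\phi)$ by Hausdorffness of $S^{0}$. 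With $X:\Phi \to \mathcal{M}^{2}_{\infty} \oplus \mathcal{A}$ continuous, composing with the continuous canonical projections onto $\mathcal{M}^{2}_{\infty}$ and $\mathcal{A}$ and then with the continuous inclusions into $S^{0}$ shows that $M:\Phi \to S^{0}$ and $A:\Phi \to S^{0}$ are continuous; that is, both satisfy Proposition \ref{propCylSemimContOpeSpaceSemim}(2).

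Finally I would assemble the conclusion. By parts (2) and (3) of Proposition \ref{propGoodIntegratorsNuclearSpace}, the maps $H \mapsto \int H \, dM$ and $H \mapsto \int H \, dA$ are continuous from $b\mathcal{P}(\Phi)$ into $\mathcal{M}^{2}_{\infty}$ and into $\mathcal{A}$ respectively. Since $X=M+A$, Theorem \ref{theoBoundedWeakIntegNucleSpace}(3) gives $\int H \, dX = \int H \, dM + \int H \, dA$ for every $H \in b\mathcal{P}(\Phi)$, and this sum lies in $\mathcal{M}^{2}_{\infty} \oplus \mathcal{A}$ with $\int H \, dM$ and $\int H \, dA$ as its canonical martingale and finite variation components. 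Consequently $H \mapsto (\int H \, dM, \int H \, dA)$ is continuous from $b\mathcal{P}(\Phi)$ into $\mathcal{M}^{2}_{\infty} \oplus \mathcal{A}$, which under the identification is precisely continuity of $H \mapsto \int H \, dX$ into $\mathcal{M}^{2}_{\infty} \oplus \mathcal{A}$. Composing once more with the continuous inclusion $\mathcal{M}^{2}_{\infty} \oplus \mathcal{A} \hookrightarrow S^{0}$ yields continuity of $H \mapsto \int H \, dX$ from $b\mathcal{P}(\Phi)$ into $S^{0}$, so $X$ is a good integrator.

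The step I expect to be the main obstacle is the continuity upgrade in the second paragraph. The hypotheses only furnish continuity of $X$ into $S^{0}$, and because neither $\mathcal{M}_{loc}$ nor $\mathscr{V}$ is closed in $S^{0}$ one cannot simply project the martingale and finite variation parts continuously at the level of $S^{0}$; the closed graph theorem circumvents this, but it rests on the comparison of topologies showing that $\mathcal{M}^{2}_{\infty} \oplus \mathcal{A}$ is finer than $S^{0}$, which has to be justified carefully rather than assumed.
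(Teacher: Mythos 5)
Your proof is correct and follows essentially the same route as the paper: the canonical decomposition $X(\phi)=m_{\phi}+a_{\phi}$ with linearity from uniqueness, the closed graph theorem (barrelled domain, Banach codomain, topology comparison with $S^{0}$) to upgrade $X$ to a continuous map into $\mathcal{M}^{2}_{\infty}\oplus\mathcal{A}$, continuity of the projections and inclusions to handle $M$ and $A$, and then Proposition \ref{propGoodIntegratorsNuclearSpace} plus linearity in the integrator to conclude. The extra detail you supply for the closed-graph step is exactly what the paper leaves implicit.
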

\begin{proof}
For each $\phi \in \Phi$ we have $X(\phi) \in \mathcal{M}^{2}_{\infty} \oplus \mathcal{A}$ and hence we have the canonical decomposition 
$X(\phi)=m_{\phi}+a_{\phi}$. Define $M: \Phi \rightarrow \mathcal{M}^{2}_{\infty}$ and $A: \Phi \rightarrow \mathcal{A}$ respectively by   $M(\phi)=m_{\phi}$ and $A(\phi)=A_{\phi}$ for each $\phi \in \Phi$. The uniqueness of the canonical decomposition shows that $M$ and $A$ are linear, hence $M$ is a $\mathcal{M}^{2}_{\infty}$-cylindrical martingale and $A$ is a $\mathcal{A}$-cylindrical semimartingale. Moreover, $X=M+A$. 

Observe that by our assumption $X$ is linear and continuous from $\Phi$ into $S^{0}$, and since the image of $X$ is in $\mathcal{M}^{2}_{\infty} \oplus \mathcal{A}$, by an application of the closed graph theorem it follows that  $X$ is linear and continuous from $\Phi$ into  $\mathcal{M}^{2}_{\infty} \oplus \mathcal{A}$. Now, since $\mathcal{M}^{2}_{\infty} \oplus \mathcal{A}$ is a direct sum of the Banach spaces $\mathcal{M}^{2}_{\infty}$ and $\mathcal{A}$, the projection $ x=m+a \mapsto m$ (respectively $ x=m+a \mapsto a$) is linear continuous from $\mathcal{M}^{2}_{\infty} \oplus \mathcal{A}$ into $\mathcal{M}^{2}_{\infty}$ (respectively into $\mathcal{A}$). Thus, since $X$ is continuous from $\Phi$ into $\mathcal{M}^{2}_{\infty} \oplus \mathcal{A}$, then $M$ (respectively $A$) is continuous from $\Phi$ into $\mathcal{M}^{2}_{\infty}$ (respectively $\mathcal{A}$). Since the canonical inclusion from $\mathcal{M}^{2}_{\infty}$ (respectively $\mathcal{A}$) into $S^{0}$ is linear and continuous, we conclude that $M$ (respectively $A$) is linear continuous from $\Phi$ into $S^{0}$, hence satisfies the equivalent conditions in Proposition \ref{propCylSemimContOpeSpaceSemim}. 

Hence, by linearity of the stochastic integral and Proposition \ref{propGoodIntegratorsNuclearSpace}, for each $H \in b\mathcal{P}(\Phi)$ we have 
$$ \int \, H \, dX =  \int \, H \, dM + \int \, H \, dA  \in \mathcal{M}^{2}_{\infty} \oplus \mathcal{A}, $$ 
and the stochastic integral mapping $H \mapsto \int \, H \, dX$  is continuous from $b\mathcal{P}(\Phi)$ into $\mathcal{M}^{2}_{\infty} \oplus \mathcal{A}$.  
\end{proof}

An application to the result in Corollary \ref{coroSpecialSemiIsGoodInteg} will be given in Section \ref{subsecIntegSequenSemima} in our study of the stochastic integral with respect to a sequence of real-valued semimartingales. 


We return to the study of the properties of the stochastic integral and in particular we will start by studying its jumps.  For a $\Phi'$-valued c\`{a}dl\`{a}g adapted process $(Z_{t}: t \geq 0)$, recall $(\Delta Z)_{t}\defeq Z_{t}-Z_{t-}$ is the jump al time $t$ of $Z$.  

\begin{proposition}\label{propJumpStochIntegraBoundPred} Let $\Phi$ be a complete, barrelled, nuclear space.  
Suppose that $X=(X_{t}: t \geq 0)$ is a good integrator in $\Phi'$ and let $\widetilde{X}=(\widetilde{X}_{t}: t \geq 0)$ denotes the $\Phi'$-valued, regular, c\`{a}dl\`{a}g version of $X$ as in Theorem \ref{theoRegulCylinSemimartigalesNuclear}. For every $H \in b\mathcal{P}(\Phi)$ we have
\begin{equation}\label{eqJumpsStochInteg}
\Delta \left( \int_{0}^{t} H \, dX \right)= \inner{ \Delta \widetilde{X}_{t}}{H(t)}, \quad \forall \, t \geq 0. 
\end{equation}
\end{proposition}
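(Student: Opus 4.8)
The plan is to establish \eqref{eqJumpsStochInteg} first for elementary integrands of the form \eqref{eqDefiSimpleBoundProceNuclear} and then to extend it to arbitrary $H \in b\mathcal{P}(\Phi)$ by density (Corollary \ref{coroDenseSimpleBoundedProcesses}), using the good integrator hypothesis. Two facts from the real-valued theory drive the elementary case: the jump formula $\Delta(h \cdot z)_{t} = h_{t}\,\Delta z_{t}$ for $h \in b\mathcal{P}$, $z \in S^{0}$; and the fact that $\inner{\widetilde{X}_{t}}{\phi}$ and $X(\phi)_{t}$ are indistinguishable, since by Theorem \ref{theoRegulCylinSemimartigalesNuclear} both are c\`{a}dl\`{a}g versions of the same real semimartingale, whence $\inner{\Delta\widetilde{X}_{t}}{\phi} = \Delta X(\phi)_{t}$ for all $t \geq 0$ up to indistinguishability.

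For $H$ as in \eqref{eqDefiSimpleBoundProceNuclear}, identity \eqref{eqActionWeakIntegSimpleIntegNuclear} gives $\int H\,dX = \sum_{k=1}^{n} h_{k}\cdot X(\phi_{k})$, so the two facts above yield, up to indistinguishability,
$$ \Delta\left( \int_{0}^{t} H\,dX \right) = \sum_{k=1}^{n} h_{k}(t)\,\Delta X(\phi_{k})_{t} = \sum_{k=1}^{n} h_{k}(t)\,\inner{\Delta\widetilde{X}_{t}}{\phi_{k}} = \inner{\Delta\widetilde{X}_{t}}{H(t)}, $$
which is \eqref{eqJumpsStochInteg} in the elementary case.

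For general $H$ I would take elementary $H^{(m)} \to H$ in $b\mathcal{P}(\Phi)$ and let $m \to \infty$ in the elementary identity. On the left, the good integrator property gives $\int H^{(m)}\,dX \to \int H\,dX$ in $S^{0}$, hence uniformly on compacts in probability; since uniform convergence of c\`{a}dl\`{a}g paths passes to left limits, one gets $\sup_{t\leq N} \abs{\Delta(\int_{0}^{t} H^{(m)} dX) - \Delta(\int_{0}^{t} H\,dX)} \leq 2\sup_{t\leq N}\abs{\int_{0}^{t} H^{(m)} dX - \int_{0}^{t} H\,dX} \to 0$ in probability, so the jump processes converge uniformly on compacts in probability. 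On the right, fix $\omega$ outside a null set and $N > 0$: the set $\{ \Delta\widetilde{X}_{t}(\omega) : t \in [0,N] \}$ is bounded in $\Phi'$, because $t \mapsto \widetilde{X}_{t}(\omega)$ is c\`{a}dl\`{a}g in $\Phi'$ and hence $q \circ \widetilde{X}_{\cdot}(\omega)$ is a bounded c\`{a}dl\`{a}g real function for every continuous seminorm $q$. As $\Phi$ is barrelled this bounded set is equicontinuous, so there is a continuous seminorm $p$ with $\abs{\inner{\Delta\widetilde{X}_{t}(\omega)}{\phi}} \leq p(\phi)$ for all $t \in [0,N]$ and $\phi \in \Phi$; therefore $\sup_{t\leq N}\abs{\inner{\Delta\widetilde{X}_{t}(\omega)}{H^{(m)}_{t}(\omega) - H_{t}(\omega)}} \leq p(H^{(m)} - H) \to 0$, the last term denoting the corresponding seminorm on $b\mathcal{P}(\Phi)$. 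Thus the right-hand sides converge almost surely, uniformly on compacts. Passing to a subsequence along which the left-hand sides also converge almost surely uniformly on compacts, and using the elementary identity for each $m$, forces \eqref{eqJumpsStochInteg} for $H$ up to indistinguishability.

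The crux I expect is the uniform-in-time control of the right-hand side: pointwise-in-$t$ convergence of $\inner{\Delta\widetilde{X}_{t}}{H^{(m)}(t)}$ is immediate from continuity of each functional $\Delta\widetilde{X}_{t}(\omega) \in \Phi'$ and would already give \eqref{eqJumpsStochInteg} at each fixed $t$, but to reach indistinguishability one must dominate \emph{all} the jumps on a compact time interval by a single continuous seminorm. This is exactly where the regularity of $\widetilde{X}$ (so that it is a genuine c\`{a}dl\`{a}g $\Phi'$-valued process, making its jump set on $[0,N]$ bounded) together with the barrelledness of $\Phi$ (bounded subsets of $\Phi'$ are equicontinuous) enters.
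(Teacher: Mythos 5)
Your argument follows the paper's overall strategy --- prove \eqref{eqJumpsStochInteg} for elementary integrands via $\Delta(h\cdot X(\phi))=h\,\Delta X(\phi)$ and $\Delta X(\phi)=\inner{\Delta\widetilde{X}}{\phi}$, then pass to general $H$ by density of elementary processes and the good-integrator continuity of $H\mapsto \int H\,dX$ into $S^{0}$ --- but the limit on the right-hand side is handled quite differently. The paper settles for pointwise convergence: at each fixed $t$ and for $\omega$ in a full set, $\inner{\Delta\widetilde{X}_{t}(\omega)}{H^{i}_{t}(\omega)}\to\inner{\Delta\widetilde{X}_{t}(\omega)}{H_{t}(\omega)}$ by continuity of the single functional $\Delta\widetilde{X}_{t}(\omega)$, and the identity is then obtained at each fixed $t$ by uniqueness of limits in probability. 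You instead dominate \emph{all} jumps on $[0,N]$ at once: the jump set of the c\`{a}dl\`{a}g regular version is bounded in $\Phi'$, barrelledness upgrades boundedness to equicontinuity, and a single continuous seminorm $p$ then controls $\sup_{t\leq N}\abs{\inner{\Delta\widetilde{X}_{t}}{H^{(m)}_{t}-H_{t}}}$ by $p(H^{(m)}-H)$. This is sound and is in fact what one needs to get \eqref{eqJumpsStochInteg} as an identity of processes (indistinguishability) rather than merely an a.s.\ identity at each fixed $t$; in that sense your argument is more informative than the paper's.

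The one wrinkle is your closing step, ``passing to a subsequence along which the left-hand sides also converge almost surely uniformly on compacts.'' Corollary \ref{coroDenseSimpleBoundedProcesses} only furnishes a \emph{net} of elementary processes, since $b\mathcal{P}(\Phi)$ need not be metrizable (it is Fr\'{e}chet only when $\Phi$ is), and from a net converging in probability one cannot in general extract an almost surely convergent subnet; moreover, almost sure convergence of a net does not imply convergence in probability, so the two sides cannot be matched in ucp without further work. Two repairs are available: either fall back on the paper's fixed-$t$ identification by uniqueness of limits in probability (which, as you note, your pointwise observation already yields), or upgrade your uniform bound to ucp convergence of the right-hand side by replacing the $\omega$-dependent seminorm with a deterministic one on a localizing event --- using, as in the proof of Proposition \ref{propLeftLimiLocallyBounDualNuclear}, stopping times $\tau_{n}\uparrow\infty$ such that $\widetilde{X}^{\tau_{n}}$ and its left limits lie in a fixed bounded set $B_{\rho_{n}'}(n)$ --- after which uniqueness of ucp limits applies to the whole net and no subsequence is needed.
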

\begin{proof}
We prove the result first for elementary processes. To to this, observe that for every $\phi \in \Phi$ and $h \in b\mathcal{P}$, it is a well-known property of the stochastic integral (see e.g. Theorem 12.3.22 in \cite{CohenElliott}) that we have $\Delta \left( h \cdot X(\phi) \right)= h \, \Delta X(\phi)$.
Then for an elementary process $H \in  b\mathcal{P}(\Phi)$ of the form \eqref{eqDefiSimpleBoundProceNuclear}  it follows from \eqref{eqActionWeakIntegSimpleIntegNuclear} that 
\begin{eqnarray*}
\Delta \left( \int \, H \, dX \right)
& = & \sum_{k=1}^{n} \Delta \left( h_{k} \cdot X (\phi_{k}) \right) = \sum_{k=1}^{n}  h_{k} \, \Delta X(\phi_{k}) \\
& = &  \sum_{k=1}^{n}  h_{k} \, \inner{\Delta \widetilde{X}}{\phi_{k}} =  \inner{ \Delta \widetilde{X}}{H}.
\end{eqnarray*}

We now prove the general case. Given $H \in b\mathcal{P}(\Phi)$, it follows from Corollary \ref{coroDenseSimpleBoundedProcesses} that there exists a net $(H^{i}: i \in I)$ of elementary processes such that $H^{i} \rightarrow H$ in $b\mathcal{P}(\Phi)$. Since the convergence in $b\mathcal{P}(\Phi)$ is uniform, we then have that for each $r \geq 0$ and $\omega \in \Omega$ we have $H^{i}_{r}(\omega) \rightarrow H_{r}(\omega)$ in $\Phi$. Let $\Omega_{0}$ be a subset of $\Omega$ with $\Prob(\Omega_{0})=1$ and such that for each $\omega \in \Omega_{0}$, $\Delta \widetilde{X}_{t}(\omega) \in \Phi'$ for each $t \geq 0$. Then, for any given $t \geq 0$ we have 
$$ \inner{\Delta \widetilde{X}_{t}(\omega)}{H^{i}_{t}(\omega)} \rightarrow   
\inner{\Delta \widetilde{X}_{t}(\omega)}{H_{t}(\omega)}, \quad \forall  \, \omega \in \Omega_{0}. $$

On the other hand, as $X$ is a good integrator we have  $\int \, H^{i} \, dX \rightarrow \int \, H \, dX$ in $S^{0}$. Since the mapping that takes a c\`{a}dl\`{a}g process into its jump process is continuous under the topology of uniform convergence in probability on compact intervals of time (see Lemma 12.4.2 in \cite{CohenElliott}, p.277-8) and the latter topology is weaker than the semimartingale topology, then we have $\Delta \left( \int \, H^{i} \, dX \right) \rightarrow \Delta \left( \int \, H \, dX \right)$ in the topology of uniform convergence in probability on compact intervals of time. In particular, for every $t \geq 0$ we have 
 $\Delta \left( \int_{0}^{t} \, H^{i} \, dX \right) \rightarrow \Delta \left( \int_{0}^{t} \, H \, dX \right)$ 
with convergence in probability. Since \eqref{eqJumpsStochInteg} holds for every $H^{i}$, by uniqueness of limits we conclude \eqref{eqJumpsStochInteg}. 
\end{proof}


We continue our study with the following associativity property for the stochastic integral that relates our theory of stochastic integration with the stochastic integral in finite dimensions. 

\begin{proposition}\label{propAssociativeWeakIntegralNuclear} Let $\Phi$ be a complete, barrelled, nuclear space. Suppose that $X=(X_{t}: t \geq 0)$ is a good integrator in $\Phi'$. 
For every $g \in b\mathcal{P}$ and $H \in b\mathcal{P}(\Phi)$, we have
\begin{equation}\label{eqAssociatWeakIntegNuclear}
\int \, g \, dZ(H) = \int \, g H \, dX, 
\end{equation}
where $Z(H)= \int \, H \, dX$ and $(g H)_{t}(\omega)=g_{t}(\omega)H_{t}(\omega)$ for all $t \geq 0$, $\omega \in \Omega$. 
\end{proposition}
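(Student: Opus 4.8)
The plan is to first verify that $gH$ belongs to $b\mathcal{P}(\Phi)$ and that multiplication by $g$ acts continuously on $b\mathcal{P}(\Phi)$, then to establish \eqref{eqAssociatWeakIntegNuclear} for elementary integrands, and finally to pass to the limit using the density of the elementary processes (Corollary \ref{coroDenseSimpleBoundedProcesses}) together with the good integrator hypothesis on $X$. For the first point, observe that for every $f \in \Phi'$ we have $\inner{f}{(gH)_{t}(\omega)}=g_{t}(\omega)\inner{f}{H_{t}(\omega)}$, which is the product of two elements of $b\mathcal{P}$ and hence lies in $b\mathcal{P}$; thus $gH \in b\mathcal{P}(\Phi)$ and $\int gH \, dX$ is defined. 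Moreover, for any continuous seminorm $p$ on $\Phi$ one has $p((gH)_{t}(\omega))=\abs{g_{t}(\omega)}\,p(H_{t}(\omega)) \leq \norm{g}_{u}\,p(H_{t}(\omega))$, so that $\sup_{(t,\omega)}p((gH)_{t}(\omega)) \leq \norm{g}_{u}\sup_{(t,\omega)}p(H_{t}(\omega))$. In the notation for the seminorms generating the topology of $b\mathcal{P}(\Phi)$, this shows that $H \mapsto gH$ is continuous (indeed bounded) from $b\mathcal{P}(\Phi)$ into itself.

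Next I would prove \eqref{eqAssociatWeakIntegNuclear} for an elementary process $H$ of the form \eqref{eqDefiSimpleBoundProceNuclear}. For such $H$ the product $gH$ is again elementary, namely $(gH)_{t}(\omega)=\sum_{k=1}^{n}(g h_{k})_{t}(\omega)\phi_{k}$ with each $g h_{k} \in b\mathcal{P}$, so \eqref{eqActionWeakIntegSimpleIntegNuclear} gives $\int gH \, dX=\sum_{k=1}^{n}(g h_{k})\cdot X(\phi_{k})$. On the other hand, $Z(H)=\int H \, dX=\sum_{k=1}^{n} h_{k}\cdot X(\phi_{k})$, and since the real-valued stochastic integral is linear in the integrator and satisfies the associativity identity $g\cdot(h_{k}\cdot X(\phi_{k}))=(g h_{k})\cdot X(\phi_{k})$ (see e.g. \cite{Protter}), we obtain $\int g \, dZ(H)=\sum_{k=1}^{n} g\cdot(h_{k}\cdot X(\phi_{k}))=\sum_{k=1}^{n}(g h_{k})\cdot X(\phi_{k})$. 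The two expressions coincide, which proves \eqref{eqAssociatWeakIntegNuclear} for elementary $H$.

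Finally I would extend the identity to arbitrary $H \in b\mathcal{P}(\Phi)$ by a continuity argument. By Corollary \ref{coroDenseSimpleBoundedProcesses} choose a net $(H^{i})$ of elementary processes with $H^{i}\to H$ in $b\mathcal{P}(\Phi)$. Since $X$ is a good integrator, the map $H \mapsto \int H\, dX$ is continuous from $b\mathcal{P}(\Phi)$ into $S^{0}$, so $Z(H^{i})=\int H^{i}\,dX \to Z(H)$ in $S^{0}$; and because the scalar stochastic integral $z \mapsto g\cdot z$ is continuous from $S^{0}$ into $S^{0}$ (see Theorems 12.4.10--13 in \cite{CohenElliott}), it follows that $\int g \, dZ(H^{i}) \to \int g \, dZ(H)$ in $S^{0}$. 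On the other side, by the continuity of $H \mapsto gH$ established above we have $gH^{i}\to gH$ in $b\mathcal{P}(\Phi)$, and using again that $X$ is a good integrator we get $\int gH^{i}\, dX \to \int gH \, dX$ in $S^{0}$. Since both sides of \eqref{eqAssociatWeakIntegNuclear} agree for each $H^{i}$ and both limits are taken in the Hausdorff space $S^{0}$, uniqueness of limits yields \eqref{eqAssociatWeakIntegNuclear} for $H$. The essential point, and the place where the good integrator hypothesis is genuinely used, is that all these convergences must take place in $S^{0}$ rather than merely in $(S^{0})_{lcx}$, so that the continuity of the scalar integral in its integrator can be invoked; without the good integrator assumption one would only control $Z(H^{i})$ in the convexified topology, where the scalar integration map need not be continuous.
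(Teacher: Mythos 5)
Your proof is correct and follows essentially the same route as the paper: verify the identity for elementary integrands via the associativity of the scalar stochastic integral, then extend by continuity using the density of elementary processes and the fact that both $H \mapsto \int g \, dZ(H)$ and $H \mapsto \int gH \, dX$ are continuous from $b\mathcal{P}(\Phi)$ into $S^{0}$ under the good integrator hypothesis. Your closing remark about why the convergences must hold in $S^{0}$ rather than $(S^{0})_{lcx}$ correctly identifies the role of that hypothesis.
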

\begin{proof}
First we show that both sides of \eqref{eqAssociatWeakIntegNuclear} are well-defined. 
Since $Z(H) \in S^{0}$, then for every $g \in b\mathcal{P}$ we have that $\int \, g \, dZ(H) \in S^{0}$. Similarly, for every $g \in b\mathcal{P}$ and $H \in b\mathcal{P}(\Phi)$ we have that $g H \in  b\mathcal{P}(\Phi)$, thus $\int \, g H \, dX \in S^{0}$. 

Now we show the equality in \eqref{eqAssociatWeakIntegNuclear}. First, as $X$ is a good integrator the mapping $H \mapsto Z(H)$ is continuous from $b\mathcal{P}(\Phi)$ into $S^{0}$. Since the stochastic integral for real-valued semimartingales is continuous in the integrator, the mapping $H \mapsto \int \, g \, dZ(H)$ is continuous from $b\mathcal{P}(\Phi)$ into $S^{0}$. 

On the other hand, the mapping $H \mapsto gH$ is continuous from $b\mathcal{P}(\Phi)$ into $b\mathcal{P}(\Phi)$. Hence, as $X$ is a good integrator the mapping $H \mapsto \int \, g H \, dX$ is continuous from $b\mathcal{P}(\Phi)$ into $S^{0}$.  Thus, to show \eqref{eqAssociatWeakIntegNuclear} and in view of Corollary \ref{coroDenseSimpleBoundedProcesses} we only need to check that the equality in \eqref{eqAssociatWeakIntegNuclear} is valid for elementary integrands. 

In effect, if $H$ has the form \eqref{eqDefiSimpleBoundProceNuclear}, then by our the definition of the stochastic integral we have
$$ Z(H)= \sum_{k=1}^{n} \, h_{k} \cdot X(\phi_{k}). $$
Then, by linearity and associativity of the stochastic integral for real-valued semimartingales we have
$$ \int \, g \, dZ(H) = \sum_{k=1}^{n} \, g \cdot \left(h_{k} \cdot X(\phi_{k}) \right)
= \sum_{k=1}^{n} \, (g h_{k}) \cdot X(\phi_{k}) 
= \int \, g H \, dX. $$ 
\end{proof}

\begin{corollary}\label{coroWeakIntegF0Measurable} Let $\Phi$ be a complete, barrelled, nuclear space. Suppose that $X=(X_{t}: t \geq 0)$ is a good integrator in $\Phi'$. If $\xi$ is a $\mathcal{F}_{0}$-measurable real-valued bounded random variable, then for every $H \in b\mathcal{P}(\Phi)$ we have
$$ \int \, \xi  H \, dX= \xi \int \, H \, dX. $$
\end{corollary}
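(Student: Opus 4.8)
The plan is to deduce this directly from the associativity property established in Proposition \ref{propAssociativeWeakIntegralNuclear}, by taking the scalar integrand there to be $\xi$ itself, regarded as a process that is constant in time.

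First I would set $g=(g_{t}: t \geq 0)$ to be the process defined by $g_{t}(\omega)=\xi(\omega)$ for all $t \geq 0$ and $\omega \in \Omega$. Since $\xi$ is $\mathcal{F}_{0}$-measurable and bounded, and $\mathcal{F}_{0} \subseteq \mathcal{F}_{t}$ for every $t$, the process $g$ is bounded, adapted and left-continuous (indeed constant in time), hence predictable; therefore $g \in b\mathcal{P}$. Moreover $(gH)_{t}(\omega)=g_{t}(\omega)H_{t}(\omega)=\xi(\omega)H_{t}(\omega)=(\xi H)_{t}(\omega)$, so that $gH=\xi H \in b\mathcal{P}(\Phi)$, and both sides of the claimed identity are well defined by Theorem \ref{theoBoundedWeakIntegNucleSpace}. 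Applying Proposition \ref{propAssociativeWeakIntegralNuclear} with this choice of $g$ and writing $Z(H)=\int \, H \, dX$, I obtain
$$ \int \, g \, dZ(H) = \int \, g H \, dX = \int \, \xi H \, dX. $$

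It then remains only to identify the left-hand side. Here I would invoke the elementary property of the stochastic integral with respect to a real-valued semimartingale, namely that the integral of an $\mathcal{F}_{0}$-measurable (time-constant) integrand $\xi$ against $z \in S^{0}$ equals $\xi(z-z_{0})$; since $Z(H)$ is itself a stochastic integral it starts at zero, i.e. $Z(H)_{0}=0$, whence $\int \, g \, dZ(H)=\xi \, Z(H)=\xi \int \, H \, dX$. Combining this with the previous display yields the desired equality $\int \, \xi H \, dX = \xi \int \, H \, dX$.

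The only point requiring a little care, and the step I expect to be the main (though minor) obstacle, is the verification of the scalar identity $\int \, \xi \, dz = \xi(z-z_{0})$ for an $\mathcal{F}_{0}$-measurable $\xi$, together with the observation $Z(H)_{0}=0$. Both are immediate from the definitions recalled in the preliminaries: the former because $\xi$ coincides on $(0,\infty)$ with the elementary integrand $\xi\,\mathbbm{1}_{(0,T]}$, whose integral against $z$ is $\xi(z_{\cdot \wedge T}-z_{0})$, and the latter because every stochastic integral of the form $h \cdot z$ vanishes at time zero. Consequently no genuine difficulty arises and the corollary follows at once.
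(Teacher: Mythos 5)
Your argument is correct and is essentially the paper's own proof: both reduce the claim via Proposition \ref{propAssociativeWeakIntegralNuclear} applied to the time-constant process $g_{t}(\omega)=\xi(\omega)\in b\mathcal{P}$, and then pull $\xi$ out of the resulting real-valued integral $\int g \, dZ(H)$. The only cosmetic difference is that the paper simply cites the classical scalar fact $\int \xi h \, dz = \xi \int h \, dz$ (Lemma 4.14 in \cite{KarandikarRao}), whereas you verify it by hand via $\int \xi \, dz = \xi(z-z_{0})$ together with $Z(H)_{0}=0$, which amounts to the same thing.
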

\begin{proof}
It is a well-known fact that if $z \in S^{0}$ and $h \in b\mathcal{P}$, then $\int \, \xi h \, dz= \xi \int \, h \, dz$ (see e.g. Lemma 4.14 in \cite{KarandikarRao}, p.95). Hence, the above fact and Proposition \ref{propAssociativeWeakIntegralNuclear} imply that:
$$ \int \, \xi  H \, dX=  \int \, \xi \, dZ(H)= \xi \int \, dZ(H) = \xi \int \, H \, dX. $$
\end{proof}

\begin{remark}
A simple direct consequence of Corollary \ref{coroWeakIntegF0Measurable} is that the integral is the same for members in $b\mathcal{P}(\Phi)$ of the same class of indistinguishability. Moreover, for a sequence 
$H^{n}$ to converge to $H$ in $b\mathcal{P}(\Phi)$ the convergence only needs to occurs almost surely  in $\Phi$ uniformly on $\R_{+} \times \Omega$. 
\end{remark}

\section{Extension of the Stochastic Integral in a Nuclear Space} \label{sectExteStocIntegNuclear}

\subsection{Stochastic Integral For Locally Bounded Integrands} \label{subSectStocIngLocalBound}

In the previous section we have introduced the stochastic integral for integrands that belongs to the space $b\mathcal{P}(\Phi)$. In this section we extend the stochastic integral mapping to the a more general family of integrands which are locally bounded in the sense defined below:

\begin{definition}\label{defiLocallyBoundedProcess}
Let $\Phi$ be locally convex. We will say that $H:\R_{+} \times \Omega \rightarrow \Phi$ is  \emph{locally bounded}  if there exists a  sequence $( \tau_{n} : n \in \N)$ of stopping times increasing to $\infty$ $\Prob$-a.e. such that for each $n \in \N$, the mapping $(t, \omega) \mapsto H^{\tau_{n}}_{t}(\omega) \defeq H_{t \wedge \tau_{n}} (\omega)$ takes its values in a bounded subset of $\Phi$ for $\Prob$-almost all $\omega \in \Omega$. 
\end{definition}

%

Let $\Phi$ be locally convex and $H=(H_{t}: t \geq 0)$ be a  $\Phi$-valued c\`{a}dl\`{a}g process. The process obtained by considering its left limits $H_{-}=(H_{t-}: t \geq 0)$ is a c\`{a}gl\`{a}d process. If $H$ is \emph{weakly adapted}, i.e. if  $\inner{f}{H} $ is adapted for each $f \in \Phi'$, the same is satisfied for $H_{-}$. Therefore, for each $f \in \Phi'$ we have that $\inner{f}{H_{-}} $ is a  c\`{a}gl\`{a}d adapted process and hence $\inner{f}{H_{-}}$ is locally bounded. However if $\Phi$ is only locally convex the weak locally boundedness of $H_{-}$ is not in general enough to show that $H_{-}$ is locally bounded. Nevertheless, a sufficient condition for c\`{a}dl\`{a}g processes in the dual of a nuclear space is given below.

\begin{proposition}\label{propLeftLimiLocallyBounDualNuclear}
Let $\Phi$ be a nuclear space. Let $X=(X_{t}: t \geq 0)$ be a $\Phi'$-valued, regular, c\`{a}dl\`{a}g, weakly adapted process such that $\forall \, T>0$, the family $(X_{t}: t \in [0,T])$ of linear mappings from $\Phi$ into $L^{0} \ProbSpace$ is equicontinuous. 
Then, the process $X_{-}=(X_{t-}: t \geq 0)$ is predictable and locally bounded in $\Phi'$. 
\end{proposition}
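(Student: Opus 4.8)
The plan is to prove both assertions by localizing, on each compact time interval, from the strong dual $\Phi'$ (which is typically non-metrizable) to a single separable Hilbert space dual $\Phi'_{q}$, where the classical real- and Hilbert-space arguments are available, and then to transfer the conclusions back to $\Phi'$ using nuclearity. First I would extract the Hilbert dual afforded by the hypotheses. Fix $T>0$. Because $\Phi$ is nuclear and the family $(X_{t}: t \in [0,T])$ is equicontinuous from $\Phi$ into $L^{0}\ProbSpace$, the regularization theory for regular c\`{a}dl\`{a}g processes in duals of nuclear spaces (the same mechanism behind Theorem \ref{theoRegulCylinSemimartigalesNuclear}; see also \cite{FonsecaMora:2018}) provides a continuous separable Hilbertian seminorm $q=q_{T}$ on $\Phi$ such that, for $\Prob$-almost every $\omega$, the path $t \mapsto X_{t}(\omega)$ takes its values in the separable Hilbert space $\Phi'_{q}$ and is c\`{a}dl\`{a}g there on $[0,T]$. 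Consequently $X_{-}=(X_{t-}: t\geq 0)$ takes values in $\Phi'_{q}$ and is c\`{a}gl\`{a}d in the norm $q'$ on $[0,T]$, almost surely.

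For the local boundedness I would argue as follows. A c\`{a}dl\`{a}g path on the compact interval $[0,T]$ into a Hilbert space is bounded, so for almost every $\omega$ the set $\{X_{t-}(\omega): t\in[0,T]\}$ is $q'$-bounded. The transfer to $\Phi'$ is then purely topological: if $\sup_{t\le T} q'(X_{t-}(\omega))=R<\infty$, then $\{X_{t-}(\omega):t\le T\}$ is contained in $R\,\{\phi:q(\phi)\le 1\}^{\circ}$, and this polar is equicontinuous, hence bounded in $\Phi'$ — note that equicontinuous subsets of $\Phi'$ are always bounded, so no barrelledness of $\Phi$ is required. Taking the deterministic stopping times $\tau_{n}=n$, the stopped process $(X_{-})^{\tau_{n}}$ has, for almost every $\omega$, range $\{X_{(t\wedge n)-}(\omega):t\ge 0\}=\{X_{s-}(\omega):s\in[0,n]\}$ contained in a bounded subset of $\Phi'$; since $\tau_{n}\uparrow\infty$, this is exactly local boundedness in the sense of Definition \ref{defiLocallyBoundedProcess}.

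For predictability I would first note that $X_{-}$ is weakly predictable: for each $\phi\in\Phi$ the process $\inner{X_{-}}{\phi}=\inner{X}{\phi}_{-}$ is the left-limit process of the real-valued c\`{a}dl\`{a}g adapted process $\inner{X}{\phi}$, hence is c\`{a}gl\`{a}d, adapted, and therefore predictable. To upgrade this to predictability of $X_{-}$ as a $\Phi'$-valued process, I would work again on a strip $[0,n]\times\Omega$, where $X_{-}$ takes values in the separable Hilbert space $\Phi'_{q}$ (with $q=q_{n}$). Since the canonical image of $\Phi$ is dense in $\Phi_{q}=(\Phi'_{q})'$, one may choose a countable family $(\phi_{k})\subseteq\Phi$ whose evaluations separate the points of $\Phi'_{q}$; the Borel $\sigma$-algebra of the separable Hilbert space $\Phi'_{q}$ is then generated by $(\inner{\cdot}{\phi_{k}})$. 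As each $(t,\omega)\mapsto\inner{X_{t-}(\omega)}{\phi_{k}}$ is predictable, it follows that $X_{-}$ is predictable as a $\Phi'_{q}$-valued map on $[0,n]\times\Omega$, and composing with the continuous inclusion $\Phi'_{q}\hookrightarrow\Phi'$ gives predictability into $\Phi'$ on $[0,n]\times\Omega$. Letting $n\to\infty$ yields predictability on all of $\R_{+}\times\Omega$.

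The main obstacle is the very first step: distilling, out of the weak hypotheses (equicontinuity of the cylindrical evaluations together with regularity), a single separable Hilbert dual $\Phi'_{q}$ in which the path is genuinely c\`{a}dl\`{a}g on each compact interval. This is exactly what makes both the boundedness-of-range argument and the identification of the Borel $\sigma$-algebra with the cylindrical one available, and it is where nuclearity — through Hilbert–Schmidt factorizations and the regularization machinery — does the essential work. Once the problem has been localized to $\Phi'_{q}$, the remaining steps are the classical left-continuity and measurability arguments, together with the elementary polar estimate that carries boundedness in $\Phi'_{q}$ back to boundedness in $\Phi'$.
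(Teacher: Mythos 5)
Your overall strategy (regularize into a Hilbertian dual, take left limits there, transfer measurability and boundedness back to $\Phi'$) is the same as the paper's, and your predictability argument is essentially the one used there: weak predictability plus the coincidence of the Borel and cylindrical $\sigma$-algebras in the regularized dual upgrades to strong predictability. However, there is a genuine gap in your local boundedness argument. Definition \ref{defiLocallyBoundedProcess}, as it is used downstream (e.g.\ to conclude $H\mathbbm{1}_{[0,\tau_{n}]}\in b\mathcal{P}(\Phi)$ in the proof of Theorem \ref{theoLocalBoundWeakIntegNucleSpace}), requires the stopped process $X_{-}^{\tau_{n}}$ to take its values in a \emph{single} bounded subset of $\Phi'$, uniformly over $(t,\omega)$. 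With your deterministic choice $\tau_{n}=n$ you only obtain that for $\Prob$-almost every $\omega$ the set $\{X_{t-}(\omega): t\leq n\}$ is bounded with a radius $R=R(\omega)$ depending on $\omega$; there is no reason for $\sup_{\omega}\sup_{t\leq n} q'(X_{t-}(\omega))$ to be finite, so you do not produce a fixed bounded set containing the range of the stopped process. The paper avoids this by writing $(\widehat{\Phi}_{\vartheta})'=\bigcup_{n} B_{\rho'_{n}}(n)$ for an increasing sequence of bounded, closed, convex, balanced sets and taking $\tau_{n}$ to be the first exit time of $\widetilde{X}$ or $\widetilde{X}_{-}$ from $B_{\rho'_{n}}(n)$ (capped at $n$); this forces $\widetilde{X}^{\tau_{n}}_{t-}(\omega)\in B_{\rho'_{n}}(n)$ for \emph{all} $(t,\omega)$, which is the statement actually needed.

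A secondary issue is your opening step: the regularization machinery yields a weaker \emph{countably Hilbertian} topology $\vartheta$, i.e.\ a countable family of seminorms with paths living in the union $\bigcup_{n}\Phi'_{\rho_{n}}$; it does not directly give a single seminorm $q_{T}$ per compact interval such that almost all paths are c\`{a}dl\`{a}g in the one Hilbert space $\Phi'_{q_{T}}$. (Compare Example \ref{examASLocalBocnTraje}(2), where the quantifier order is ``for a.e.\ $\omega$ and each $T$ there exists $p=p(\omega,T)$''.) Both defects are repaired simultaneously by working, as the paper does, with the countable exhaustion $B_{\rho'_{n}}(n)$ of $(\widehat{\Phi}_{\vartheta})'$ and the associated hitting times, rather than with a single $q_{T}$ and deterministic times.
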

\begin{proof}
Our assumptions on $X$ together with the Regularization Theorem (see Theorem 3.2 in \cite{FonsecaMora:2018}) show that there exists a weaker countably Hilbertian topology $\vartheta$ on $\Phi$ and a $(\widehat{\Phi}_{\vartheta})'$-valued c\`{a}dl\`{a}g process $\widetilde{X}=(\widetilde{X}_{t}: t \geq 0)$, such that for every $\phi \in \Phi$, $\inner{\widetilde{X}}{\phi}$ is a version of $\inner{X}{\phi}$. Since $\widetilde{X}$ is also a $\Phi'$-valued, regular, c\`{a}dl\`{a}g process, then $X$ and $\widetilde{X}$ are indistinguishable (see Proposition 2.12 in \cite{FonsecaMora:2018}). Observe that for each $\phi \in \Phi$ the real-valued process $\inner{\widetilde{X}_{-}}{\phi}$ is left-continuous and adapted, hence is predictable. But as $\widetilde{X}_{-}$ is a  $(\widehat{\Phi}_{\vartheta})'$-valued process, and the Borel and cylindrical $\sigma$-algebras in $(\widehat{\Phi}_{\vartheta})'$ coincide, the fact that $\widetilde{X}_{-}$ is weakly predictable implies it is (strongly) predictable. By indistinguishability we have that $X_{-}$ is predictable.  

We now prove the existence of the localizing sequence of stopping times.  Following Section 7 in \cite{FonsecaMora:2018-1}, if we choose an increasing sequence of continuous Hilbertian semi-norms $( \rho_{n}: n \in \N)$ on $\Phi$ that generates the topology $\vartheta$, then 
$$  (\widehat{\Phi}_{\vartheta})' = \bigcup_{n \in \N} B_{\rho'_{n}}(n),$$
where $B_{\rho'_{n}}(n)=\{ f \in \Phi': \rho_{n}'(f) \leq n  \}$. The collection $\{ B_{\rho_{n}'}(n): n \in \N \}$ is an increasing sequence of  bounded, closed, convex, balanced subsets of $\Phi'$. Furthermore, since $\widetilde{X}$ is a  $(\widehat{\Phi}_{\vartheta})'$-valued c\`{a}dl\`{a}g process we have
\begin{equation} \label{levyAlmostSureCountUnionBallsInDual}
\widetilde{X}_{t}(\omega) \in \bigcup_{n \in \N} B_{\rho'_{n}}(n), \, \forall \, (t,\omega) \in \R_{+} \times \Omega. 
\end{equation} 

For each $n \in \N$ define $\tau_{n}$ by 
\begin{equation} \label{defiStoppingTimesTauNLevyNoise}
 \tau_{n}(\omega) \defeq \inf \{ t \geq 0: \widetilde{X}_{t}(\omega) \notin B_{\rho'_{n}}(n) \mbox{ or } \widetilde{X}_{t-}(\omega) \notin B_{\rho'_{n}}(n) \mbox{ or } t \geq n \}, \quad \forall \, \omega \in \Omega.
\end{equation}  
Each $\tau_{n}$ is a stopping time and from \eqref{levyAlmostSureCountUnionBallsInDual} it follows that $\tau_{n} \rightarrow \infty$ $\Prob$-a.e. as $n \rightarrow \infty$. Moreover, by the definition of $\tau_{n}$ we have
$\widetilde{X}^{\tau_{n}}_{t-}(\omega) \in B_{\rho'_{n}}(n)$ for all $(t,\omega) \in \R_{+} \times \Omega$. Since   $X$ and $\widetilde{X}$ are indistinguishable, we therefore have that  $X^{\tau_{n}}_{-}$ has a bounded image in $\Phi'$. Thus $X_{-}=(X_{t-}: t \geq 0)$ is predictable and locally bounded in $\Phi'$
\end{proof}

For our theory of stochastic integration the most useful version of Proposition \ref{propLeftLimiLocallyBounDualNuclear} is the following:

\begin{proposition} \label{propLeftLimitProcessLocallyBounded}
Suppose that $\Phi$ is reflexive and that $\Phi'$ is  nuclear. Let $X=(X_{t}: t \geq 0)$ be a $\Phi$-valued, regular, c\`{a}dl\`{a}g weakly adapted process such that $\forall \, T>0$, the family $(X_{t}: t \in [0,T])$ of linear mappings from $\Phi'$ into $L^{0} \ProbSpace$ is equicontinuous. Then, the process $X_{-}=(X_{t-}: t \geq 0)$ is predictable and locally bounded in $\Phi$. 
\end{proposition}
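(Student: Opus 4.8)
The plan is to obtain this statement as the dual counterpart of Proposition \ref{propLeftLimiLocallyBounDualNuclear}, by interchanging the roles played there by a nuclear space and its strong dual. To this end, write $\Psi \defeq \Phi'$, which is a nuclear space by hypothesis, and apply Proposition \ref{propLeftLimiLocallyBounDualNuclear} to $\Psi$. The key structural input is the reflexivity of $\Phi$: the canonical embedding of $\Phi$ into $\Phi''=(\Phi')'=\Psi'$ is an isomorphism of topological vector spaces, so that $\Psi'$ is identified with $\Phi$ carrying its original topology.

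Under this identification I would check that each hypothesis on $X$ is precisely the corresponding hypothesis required by Proposition \ref{propLeftLimiLocallyBounDualNuclear} for the nuclear space $\Psi$. A $\Phi$-valued process is a $\Psi'$-valued process, i.e. a process with values in the dual of the nuclear space $\Psi$. The c\`{a}dl\`{a}g property and weak adaptedness transfer verbatim, because the canonical pairing between $\Phi$ and $\Phi'=\Psi$ is the one used to test weak adaptedness from either viewpoint. Regularity of the $\Phi$-valued process $X$ is, by the definition recalled in Section \ref{subSectionCylAndStocProcess}, exactly regularity as a $\Psi'$-valued process: there is a weaker countably Hilbertian topology $\theta$ on $\Psi$ with $X$ taking values in $(\widehat{\Psi}_{\theta})'$ almost surely. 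Finally, the stated equicontinuity of the family $(X_{t}: t\in[0,T])$ of maps from $\Phi'=\Psi$ into $L^{0}\ProbSpace$ is exactly the equicontinuity required of the induced cylindrical random variables on the nuclear predual $\Psi$.

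With all hypotheses verified, Proposition \ref{propLeftLimiLocallyBounDualNuclear} applied to $\Psi$ yields at once that $X_{-}=(X_{t-}: t\geq 0)$ is predictable and locally bounded in $\Psi'$. To translate the conclusion back, I would note that local boundedness in $\Psi'$ is local boundedness in $\Phi$: by reflexivity the bounded subsets of the strong bidual $\Psi'=(\Phi')'$ coincide with the bounded subsets of $\Phi$, so the localizing stopping times and bounded sets furnished by Proposition \ref{propLeftLimiLocallyBounDualNuclear} serve unchanged.

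I do not anticipate any genuinely difficult analytic step: the whole argument is a reduction to the already-proven Proposition \ref{propLeftLimiLocallyBounDualNuclear}. The only point demanding care --- and the one place where the reasoning could slip --- is the bookkeeping of topological identifications: one must ensure that reflexivity gives an isomorphism of \emph{topological} vector spaces $\Phi\cong\Psi'$ rather than a mere algebraic bijection, that the strong topology on $\Psi'$ is the one under which ``regular'' and ``locally bounded'' are interpreted, and that the weak-adaptedness pairing is the symmetric canonical pairing of $\Phi$ and $\Phi'$. Once these identifications are fixed, the proposition is an immediate corollary.
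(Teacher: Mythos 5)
Your proposal is correct and is exactly the paper's argument: the paper proves this proposition in one line as ``a direct consequence of Proposition \ref{propLeftLimiLocallyBounDualNuclear} using that by reflexivity $\Phi$ is the strong dual of $\Phi'$.'' Your more detailed bookkeeping of the topological identifications (reflexivity as an isomorphism of topological vector spaces, the transfer of regularity, weak adaptedness, and equicontinuity) simply makes explicit what the paper leaves implicit.
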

\begin{proof}
The result is a direct consequence of Proposition \ref{propLeftLimiLocallyBounDualNuclear} using that by reflexivity $\Phi$ is the strong dual of $\Phi'$. 
\end{proof}

\begin{corollary}\label{coroLeftLimitProcessLocallyBoundedSouslin}
Let $\Phi$ be a reflexive Souslin space whose dual $\Phi'$ is ultrabornological and nuclear. If $X=(X_{t}: t \geq 0)$ is a  $\Phi$-valued c\`{a}dl\`{a}g weakly adapted process, then  $X_{-}=(X_{t-}: t \geq 0)$ is predictable and locally bounded. 
\end{corollary}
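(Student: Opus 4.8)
The plan is to obtain the statement as a direct application of Proposition \ref{propLeftLimitProcessLocallyBounded}. That proposition requires $\Phi$ to be reflexive, $\Phi'$ to be nuclear, and $X$ to be a $\Phi$-valued, \emph{regular}, c\`{a}dl\`{a}g, weakly adapted process for which the families $(X_{t}: t \in [0,T])$ of induced linear maps from $\Phi'$ into $L^{0}\ProbSpace$ are \emph{equicontinuous} for every $T>0$. Reflexivity of $\Phi$, nuclearity of $\Phi'$, the c\`{a}dl\`{a}g property and weak adaptedness are all present in the hypotheses, so the entire task reduces to deriving the two missing ingredients --- regularity of $X$ and equicontinuity of the families $(X_{t}: t \in [0,T])$ --- from the Souslin and ultrabornological assumptions.

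First I would establish regularity. Since $\Phi$ is a Souslin space, every finite Borel measure on $\Phi$ is Radon; in particular, the law of each $X_{t}$ is a Radon measure on $\Phi$. As $\Phi'$ is nuclear and $\Phi=(\Phi')'$ by reflexivity, I would then apply the characterization of regular random variables in the dual of a nuclear space --- with $\Phi'$ playing the role of the nuclear space --- to conclude that each $X_{t}$, having a Radon law, is a regular $\Phi$-valued random variable (see the regularity--Radon equivalence in \cite{FonsecaMora:2018}, Theorem 2.10). Hence $X$ is a regular process.

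The equicontinuity is the heart of the proof and I would argue it in three steps. Since $X$ is c\`{a}dl\`{a}g and each $f \in \Phi'$ acts continuously on $\Phi$, the real-valued process $\inner{f}{X}$ is c\`{a}dl\`{a}g, hence has bounded paths on every $[0,T]$; this makes $\sup_{t \in [0,T]} \abs{\inner{f}{X_{t}}}$ almost surely finite and therefore shows that, for each fixed $f$, the family $\{ \inner{f}{X_{t}} : t \in [0,T] \}$ is bounded in $L^{0}\ProbSpace$. In other words the family $(X_{t}: t \in [0,T])$ of maps $\Phi' \rightarrow L^{0}\ProbSpace$ is pointwise bounded. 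Next, regularity of each $X_{t}$ provides a weaker countably Hilbertian topology in which $X_{t}$ lives almost surely, and from this one checks that the induced cylindrical random variable $f \mapsto \inner{f}{X_{t}}$ is continuous from $\Phi'$ into $L^{0}\ProbSpace$. Finally --- and this is where the ultrabornological hypothesis is used in an essential way --- I would write $\Phi'$ as an inductive limit of Banach spaces $(E_{\alpha})$; on each $E_{\alpha}$ the restricted family is a pointwise bounded family of continuous linear maps into the complete metrizable space $L^{0}\ProbSpace$, so the Banach--Steinhaus theorem for $F$-spaces yields equicontinuity on $E_{\alpha}$, and passing to the inductive limit gives equicontinuity of $(X_{t}: t \in [0,T])$ on all of $\Phi'$.

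With regularity and equicontinuity in hand, Proposition \ref{propLeftLimitProcessLocallyBounded} applies and delivers that $X_{-}=(X_{t-}: t \geq 0)$ is predictable and locally bounded in $\Phi$, which is the claim. The step I expect to be the main obstacle is the equicontinuity, and within it the correct form of the uniform boundedness principle: the classical locally convex Banach--Steinhaus theorem is unavailable because the target $L^{0}\ProbSpace$ is not locally convex, so one must instead invoke its $F$-space version (valid for an arbitrary topological vector space range) on each Banach step of $\Phi'$, which is exactly why ``ultrabornological'' and not merely ``barrelled'' is assumed. A secondary point needing care is the passage from regularity to continuity of the induced cylindrical random variable.
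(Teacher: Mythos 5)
Your overall route is exactly the paper's: the paper also reduces the corollary to Proposition \ref{propLeftLimitProcessLocallyBounded} by supplying the two missing hypotheses, getting Radon laws from the Souslin property (Bogachev, Theorem 7.4.3), then regularity and continuity of each $X_{t}:\Phi'\rightarrow L^{0}\ProbSpace$ from Theorem 2.10 of \cite{FonsecaMora:2018}, and finally equicontinuity of $(X_{t}: t\in[0,T])$ from Proposition 3.10 of \cite{FonsecaMora:2018}, which is stated precisely for ultrabornological \emph{nuclear} spaces. Where you differ is that you attempt to reprove that last cited proposition from scratch, and your argument for it has a genuine gap.

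The gap is in the final step, ``passing to the inductive limit gives equicontinuity on all of $\Phi'$.'' Writing $\Phi'=\varinjlim E_{\alpha}$ as a locally convex inductive limit of Banach spaces and obtaining, via Banach--Steinhaus on each $E_{\alpha}$, neighborhoods $U_{\alpha}$ with $X_{t}(U_{\alpha})\subseteq W$ for all $t\in[0,T]$, does not produce a neighborhood of zero in $\Phi'$ doing the same job: the set $\bigcap_{t}X_{t}^{-1}(W)$ contains every $j_{\alpha}(U_{\alpha})$ but is in general \emph{not convex} when $W$ is a neighborhood of zero in the non-locally-convex space $L^{0}\ProbSpace$, and the locally convex inductive limit topology only recognizes convex sets as neighborhoods (a set is a $0$-neighborhood iff it contains a convex balanced set whose preimage under each $j_{\alpha}$ is a $0$-neighborhood). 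So equicontinuity on every Banach step does not transfer to $\Phi'$ by this argument alone. This is exactly why the cited Proposition 3.10 assumes nuclearity of $\Phi'$ in addition to ultrabornologicity --- nuclearity is what lets one convexify (e.g.\ by passing to the equicontinuity of the Fourier transforms, as in the equivalence $(3)\Leftrightarrow(4)$ of Proposition \ref{propCylSemimContOpeSpaceSemim}, or by factoring through Hilbertian seminorms) --- yet your three-step equicontinuity argument never invokes the nuclearity of $\Phi'$ at all. The fix is either to cite Proposition 3.10 of \cite{FonsecaMora:2018} as the paper does, or to insert the convexification step that nuclearity provides; the rest of your proof (Radon laws, regularity, pointwise boundedness in $L^{0}$ from c\`{a}dl\`{a}g paths, and the final appeal to Proposition \ref{propLeftLimitProcessLocallyBounded}) is sound and matches the paper.
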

\begin{proof}
First, since $\Phi$ is Souslin, the probability distribution of each $X_{t}$ is a Radon measure on $\Phi$ (see \cite{BogachevMT}, Theorem 7.4.3, p.85)  and hence by Theorem 2.10 in \cite{FonsecaMora:2018} the mapping $X_{t}: \Phi' \rightarrow L^{0} \ProbSpace$ is continuous and the process $X$ is regular. As $\Phi'$ is an ultrabornological nuclear space, then it follows from Proposition 3.10 in \cite{FonsecaMora:2018}  that $\forall \, T>0$ the family $(X_{t}: t \in [0,T])$ of linear mappings from $\Phi'$ into $L^{0} \ProbSpace$ is equicontinuous. Then Proposition  \ref{propLeftLimitProcessLocallyBounded} shows that $X_{-}=(X_{t-}: t \geq 0)$ is a predictable and locally bounded process with values in $\Phi$.  
\end{proof}

\begin{example}\label{examLeftLimiProcesSpaceDistribu}
The spaces $\mathscr{E}_{K}$, $\mathscr{E}$, $\mathscr{S}(\R^{d})$, $\mathscr{D}(U)$, $\mathscr{E}'_{K}$, $\mathscr{E}'$, $\mathscr{S}'(\R^{d})$, and $\mathscr{D}'(U)$ are all reflexive Souslin spaces (see \cite{SchwartzRM}, p.115) which are ultrabornological and nuclear (references in Section \ref{sectNuclearSpaceCylSemi}). Therefore, if $X=(X_{t}: t \geq 0)$ is a c\`{a}dl\`{a}g weakly adapted process taking values in any of these spaces, then by Corollary \ref{coroLeftLimitProcessLocallyBoundedSouslin} the process of its left-limits $X_{-}=(X_{t-}: t \geq 0)$ is  predictable and locally bounded.
\end{example}

We now formally introduce our main class of integrands. 

\begin{definition}\label{defiLocalBoundWeakInteg}
Let $\Phi$ be locally convex. We denote by $\mathcal{P}_{loc}(\Phi)$ the space (of all equivalence classes) of mappings $H:\R_{+} \times \Omega \rightarrow \Phi$ that are weakly predictable and locally  bounded. 
\end{definition}

If $\Phi$ is a complete barrelled nuclear space, it should be clear that $b \mathcal{P}(\Phi) \subseteq \mathcal{P}_{loc}(\Phi)$ since for each $H \in   b \mathcal{P}(\Phi)$ the image of $H$ is contained in a bounded subset of $\Phi$ (see the proof of Proposition \ref{propIsomorBounPredSpaceLinOper}). 

\begin{example}
Let $Z=(Z_{t}: t \geq 0)$ be a real-valued semimartingale. 
Given $\phi \in \mathscr{D}(\R)$, define $X=(X_{t}: t \geq 0)$ by $X_{t}(\omega)=\phi(\cdot + Z_{t}(\omega))$. Observe that for every $t \geq 0$, $\omega \in \Omega$, it is a consequence of the It\^{o} formula that $X_{t}(\omega) \in \mathscr{D}(\R)$. 

Moreover, the continuity of the translation mapping $\varphi \mapsto \varphi(\cdot +y)$ in $\mathscr{D}(\R)$ for every $y \in \R$ shows that $X$ is a $\mathscr{D}(\R)$-valued c\`{a}dl\`{a}g adapted process. Furthermore, it is shown in (\cite{Ustunel:1982-1}, Theorem III.2) that for every $f \in \mathscr{D}'(\R)$ the real-valued process $\inner{f}{X}$ is a semimartingale. Therefore, $X$ is a $\mathscr{D}(\R)$-valued c\`{a}dl\`{a}g semimartingale. Then, the conclusion of Example \ref{examLeftLimiProcesSpaceDistribu} shows that 
$X_{-}=(X_{t-}: t \geq 0) \in \mathcal{P}_{loc}(\mathscr{D}(\R))$. In particular, if $Z$ has continuous trajectories then $X$ has continuous trajectories  in $\mathscr{D}(\R)$ and in this case $X=(X_{t}: t \geq 0) \in \mathcal{P}_{loc}(\mathscr{D}(\R))$.   
\end{example}

The stochastic integral for integrands in $\mathcal{P}_{loc}(\Phi)$ is defined in the next theorem. 

\begin{theorem}\label{theoLocalBoundWeakIntegNucleSpace}
Let $\Phi$ be a complete, barrelled, nuclear space and let $X$ be a cylindrical semimartingale satisfying any of the equivalent conditions in Proposition \ref{propCylSemimContOpeSpaceSemim}. Then for every  
$H \in \mathcal{P}_{loc}(\Phi)$ there exists a real-valued semimartingale $\displaystyle{\int H \, dX}$ such that:  
\begin{enumerate}
\item $\displaystyle{\left(\int H \, dX \right)^{c}= \int H \, dX^{c}}$. 
\item $\displaystyle{\left(\int H \, dX \right)^{\tau}=\int H \mathbbm{1}_{[0,\tau]} \, dX= \int H \, dX^{\tau}}$, for every stopping time $\tau$. 
\item The mapping $(H,X) \mapsto \int \, H \, dX$ is bilinear.   
\end{enumerate} 
\end{theorem}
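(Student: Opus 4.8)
The plan is to define the integral for $H \in \mathcal{P}_{loc}(\Phi)$ by localization, reducing everything to the integral for weakly bounded integrands already built in Theorem \ref{theoBoundedWeakIntegNucleSpace}. Given $H \in \mathcal{P}_{loc}(\Phi)$, I would fix a localizing sequence $(\tau_{n})$ as in Definition \ref{defiLocallyBoundedProcess}, so that $\tau_{n} \uparrow \infty$ $\Prob$-a.e. and each $H^{\tau_{n}}$ has image in a bounded subset of $\Phi$. The first step is to check that $H_{n} \defeq H \mathbbm{1}_{[0,\tau_{n}]}$ belongs to $b\mathcal{P}(\Phi)$: it is weakly predictable because $\inner{f}{H_{n}} = \inner{f}{H} \, \mathbbm{1}_{[0,\tau_{n}]}$ is the product of the predictable process $\inner{f}{H}$ with the indicator of the predictable stochastic interval $[0,\tau_{n}]$, and it is weakly bounded because its image is contained in $\{ H_{t}(\omega): t \leq \tau_{n}(\omega) \} \cup \{0\}$, a bounded subset of $\Phi$ (weak boundedness being immediate from strong boundedness). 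Hence $Z^{n} \defeq \int H_{n} \, dX$ is a well-defined real-valued semimartingale by Theorem \ref{theoBoundedWeakIntegNucleSpace}.

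The second step is the consistency relation $(Z^{m})^{\tau_{n}} = Z^{n}$ for $n \leq m$. This follows from property \emph{(2)} of Theorem \ref{theoBoundedWeakIntegNucleSpace} together with the identity $\mathbbm{1}_{[0,\tau_{m}]} \mathbbm{1}_{[0,\tau_{n}]} = \mathbbm{1}_{[0,\tau_{n}]}$, since $(\int H_{m} \, dX)^{\tau_{n}} = \int H_{m} \mathbbm{1}_{[0,\tau_{n}]} \, dX = \int H_{n} \, dX$. Consistency shows that on the stochastic interval $[0,\tau_{n}]$ all the $Z^{m}$ with $m \geq n$ coincide, so I can define a single adapted \cadlag process $Z$ by setting $Z_{t}(\omega) = Z^{n}_{t}(\omega)$ whenever $\tau_{n}(\omega) > t$; this is unambiguous, and since $Z^{\tau_{n}} = Z^{n} \in S^{0}$ for every $n$ with $\tau_{n} \uparrow \infty$, the local characterization of semimartingales (see \cite{Protter}) gives $Z \in S^{0}$. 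I then set $\int H \, dX \defeq Z$. To see the definition is independent of the localizing sequence, I would compare two sequences $(\tau_{n})$ and $(\sigma_{m})$ through the common refinement $\tau_{n} \wedge \sigma_{m}$ and use the stopping property again to conclude that the resulting processes agree on $[0,\tau_{n} \wedge \sigma_{m}]$, hence everywhere.

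The final step is to verify properties \emph{(1)}--\emph{(3)}, each obtained by stopping at $\tau_{n}$, applying the corresponding assertion of Theorem \ref{theoBoundedWeakIntegNucleSpace} to the bounded integrand $H_{n}$, and letting $n \to \infty$. For \emph{(2)}, stopping $Z$ at a further stopping time $\tau$ and using $(\int H_{n} \, dX)^{\tau} = \int H_{n} \mathbbm{1}_{[0,\tau]} \, dX = \int H_{n} \, dX^{\tau}$ yields the three-fold equality, noting that $H \mathbbm{1}_{[0,\tau]} \in \mathcal{P}_{loc}(\Phi)$ and that $X^{\tau}$ again satisfies the equivalent conditions of Proposition \ref{propCylSemimContOpeSpaceSemim}; \emph{(1)} is analogous, using that taking continuous parts commutes with the bounded integral and that $X^{c}$ satisfies the same conditions. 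Bilinearity in \emph{(3)} follows because $H \mapsto H_{n}$ is linear and $X \mapsto \int H_{n} \, dX$ is linear by the corresponding property in Theorem \ref{theoBoundedWeakIntegNucleSpace}.

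The main obstacle I anticipate is the gluing step: one must verify that the locally defined semimartingales $Z^{n}$ patch into a genuine element of $S^{0}$ and that the result does not depend on the localizing sequence. The delicate conceptual point is that the integral of Theorem \ref{theoBoundedWeakIntegNucleSpace} is only continuous into the convexification $(S^{0})_{lcx}$ rather than into $S^{0}$; this is, however, irrelevant here, because each individual $Z^{n}$ is an honest real-valued semimartingale and the whole patching argument proceeds pathwise through the stopping property, never through continuity of the integral map. Once this is recognized, the remaining verifications are routine localizations of the weakly bounded case.
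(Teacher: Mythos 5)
Your proposal is correct and follows essentially the same route as the paper: define $\int H\,dX$ by localization via $H\mathbbm{1}_{[0,\tau_{n}]} \in b\mathcal{P}(\Phi)$, use the stopping property \emph{(2)} of Theorem \ref{theoBoundedWeakIntegNucleSpace} for consistency and independence of the localizing sequence, and transfer properties \emph{(1)}--\emph{(3)} by the same stopping argument. The paper compresses the gluing step into the phrase ``a standard localization argument,'' which you have simply spelled out; your remark that the weaker $(S^{0})_{lcx}$-continuity is irrelevant to the pathwise patching is accurate and consistent with the paper's reasoning.
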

\begin{proof} 
Let $H \in \mathcal{P}_{loc}(\Phi)$. Then, there exists a sequence $( \tau_{n} : n \in \N)$ of stopping times, increasing to infinity $\Prob$-a.e., such that for each $n \in \N$ we have $H \mathbbm{1}_{[0,\tau_{n}]} \in b\mathcal{P}(\Phi)$. Hence by Theorem \ref{theoBoundedWeakIntegNucleSpace}  each $\displaystyle{\int \, H \mathbbm{1}_{[0,\tau_{n}]}  \, dX}$ is a well-defined real-valued semimartingale. 

Then for each $t \geq 0$ we can define 
$$  \int_{0}^{t} H \, dX = \int_{0}^{t} H \mathbbm{1}_{[0,\tau_{n}]} \, dX, $$ 
for any $n \in \N$ such that $\tau_{n} \geq t$. A standard localization argument (e.g. see Chapter 4 in \cite{DaPratoZabczyk}) using property \emph{(2)} in Theorem  \ref{theoBoundedWeakIntegNucleSpace} shows that this definition for $\int H \, dX$ is consistent and that it is independent (up to indistinguishable versions) of the localizing sequence for $H$. Since the property of being a real-valued semimartingale is stable by localization, we have that $\int H \, dX$ is a real-valued semimartingale. Likewise, the fact that properties \emph{(1)}, \emph{(2)} and \emph{(3)} are satisfied  follows from Theorem \ref{theoBoundedWeakIntegNucleSpace} by choosing an appropriate localizing sequence.  
\end{proof}

\begin{example}\label{examSemimAsIntegrators}
Let $\Phi$ be a complete, barrelled, nuclear space whose strong dual is nuclear and let $Y=(Y_{t}: t \geq 0)$ be a $\Phi$-valued, regular, c\`{a}dl\`{a}g semimartingale. By its definition it is clear that $Y$ is weakly adapted. If $Y$ satisfies (as a cylindrical process) any of the equivalent conditions in Proposition \ref{propCylSemimContOpeSpaceSemim}, then it follows from Proposition  \ref{propLeftLimitProcessLocallyBounded}  that $Y_{-}=(Y_{t-}: t \geq 0) \in \mathcal{P}_{loc}$. 

If $X$ is a cylindrical semimartingale in $\Phi'$ satisfying any of the equivalent conditions in Proposition \ref{propCylSemimContOpeSpaceSemim}, then by Theorem \ref{theoLocalBoundWeakIntegNucleSpace} the stochastic integral $\displaystyle{\int \, Y_{-} \, dX}$ exists and it is a real-valued semimartingale. 
\end{example}

\begin{remark}\label{remaUstunelStochInteg} 
In \cite{Ustunel:1982} and under the assumption that $\Phi$ is a complete, bornological, reflexive nuclear space whose strong dual $\Phi'$ is complete and nuclear, \"{U}st\"{u}nel introduced the concept of projective system of  semimartingales in $\Phi'$ (see Remark \ref{remaSemiDefiUstunel}) and defined stochastic integrals for $\Phi$-valued weakly predictable process which are locally  bounded. The construction of the stochastic integral in \cite{Ustunel:1982} is very different from ours and relies on the theory of stochastic integration in separable Hilbert spaces and in the concept of projective system of  semimartingales in $\Phi'$. As mentioned in Remark \ref{remaSemiDefiUstunel}, the concept of projective system only makes sense if $\Phi'$ is complete and nuclear. Observe however that for our construction of the stochastic integral we have the less demanding assumption on  $\Phi$ to be a complete, barrelled, nuclear space and that we have assumed nothing on the dual space $\Phi'$. 
\end{remark}

In the next result we list some properties of the stochastic integral for integrands in $\mathcal{P}_{loc}(\Phi)$ with respect to a good integrator. These results follow directly from Theorem \ref{theoLocalBoundWeakIntegNucleSpace}, and Propositions \ref{propJumpStochIntegraBoundPred} and \ref{propAssociativeWeakIntegralNuclear}.

\begin{proposition}\label{propProperStochIntegLocalBounded}
Let $\Phi$ be a complete, barrelled, nuclear space. Suppose that $X$ is a good integrator in $\Phi'$ and let $H \in \mathcal{P}_{loc}(\Phi)$. 
\begin{enumerate}
\item Let $\widetilde{X}=(\widetilde{X}_{t}: t \geq 0)$ denotes the $\Phi'$-valued, regular, c\`{a}dl\`{a}g version of $X$ as in Theorem \ref{theoRegulCylinSemimartigalesNuclear}. Then, 
$$ \Delta \left( \int_{0}^{t} H \, dX \right)= \inner{ \Delta \widetilde{X}_{t}}{H(t)}, \quad \forall \, t \geq 0.$$

\item For every $g \in \mathcal{P}_{loc}(\R)$, 
\begin{equation*}
\int \, g \, dZ(H) = \int \, g H \, dX, 
\end{equation*}
where $Z(H)= \int \, H \, dX$ and $(g H)_{t}(\omega)=g_{t}(\omega)H_{t}(\omega)$ for all $t \geq 0$, $\omega \in \Omega$. 
\end{enumerate}
\end{proposition}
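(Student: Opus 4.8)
The plan is to reduce both identities to their already-established counterparts for bounded integrands, namely Proposition \ref{propJumpStochIntegraBoundPred} for (1) and Proposition \ref{propAssociativeWeakIntegralNuclear} for (2), by means of the localization procedure that underlies the very definition of the integral in Theorem \ref{theoLocalBoundWeakIntegNucleSpace}. Throughout I would fix a sequence $(\tau_{n}: n \in \N)$ of stopping times increasing to $\infty$ $\Prob$-a.e. that localizes $H$ (and, for (2), simultaneously $g$ and $H$, obtained by intersecting the two individual localizing sequences), so that $H \mathbbm{1}_{[0,\tau_{n}]} \in b\mathcal{P}(\Phi)$ for each $n$. The two facts I would use repeatedly are that, by construction, $\int H \, dX$ agrees with $\int H \mathbbm{1}_{[0,\tau_{n}]} \, dX$ on $\{ \tau_{n} \geq t \}$, and that $\left( \int H \, dX \right)^{\tau_{n}} = \int H \mathbbm{1}_{[0,\tau_{n}]} \, dX$ by property (2) of Theorem \ref{theoBoundedWeakIntegNucleSpace}.

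For (1), I would apply Proposition \ref{propJumpStochIntegraBoundPred} to the bounded integrand $H \mathbbm{1}_{[0,\tau_{n}]}$ to obtain
$$ \Delta \left( \int_{0}^{t} H \mathbbm{1}_{[0,\tau_{n}]} \, dX \right) = \inner{\Delta \widetilde{X}_{t}}{(H \mathbbm{1}_{[0,\tau_{n}]})(t)}, \quad \forall \, t \geq 0. $$
On the event $\{ t \leq \tau_{n} \}$ the stopped process $\left( \int H \, dX \right)^{\tau_{n}}$ has the same jump at $t$ as $\int H \, dX$, while $(H \mathbbm{1}_{[0,\tau_{n}]})(t) = H(t)$; hence \eqref{eqJumpsStochInteg} holds on $\{ t \leq \tau_{n} \}$. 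Letting $n \to \infty$ and using that $\{ t \leq \tau_{n} \}$ increases to a set of full probability, the formula follows for every fixed $t \geq 0$, $\Prob$-a.e.

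For (2), I would first record that $gH \in \mathcal{P}_{loc}(\Phi)$: it is weakly predictable because $\inner{f}{gH} = g \inner{f}{H}$ is a product of real-valued predictable processes for each $f \in \Phi'$, and it is locally bounded since $g$ and $H$ are. The argument then compares the two sides after stopping at $\tau_{n}$. Writing $Z_{n} \defeq \left(\int H \, dX\right)^{\tau_{n}} = \int H \mathbbm{1}_{[0,\tau_{n}]} \, dX$ and using the elementary stopping identities for the real-valued integral, I get
$$ \left( \int \, g \, dZ(H) \right)^{\tau_{n}} = (g \mathbbm{1}_{[0,\tau_{n}]}) \cdot Z(H) = (g \mathbbm{1}_{[0,\tau_{n}]}) \cdot Z_{n}. $$
Now $g \mathbbm{1}_{[0,\tau_{n}]} \in b\mathcal{P}$ and $Z_{n} = \int (H \mathbbm{1}_{[0,\tau_{n}]}) \, dX$ with $H \mathbbm{1}_{[0,\tau_{n}]} \in b\mathcal{P}(\Phi)$, so Proposition \ref{propAssociativeWeakIntegralNuclear} applies and yields
$$ (g \mathbbm{1}_{[0,\tau_{n}]}) \cdot Z_{n} = \int \, (g \mathbbm{1}_{[0,\tau_{n}]})(H \mathbbm{1}_{[0,\tau_{n}]}) \, dX = \int \, (gH) \mathbbm{1}_{[0,\tau_{n}]} \, dX = \left( \int \, gH \, dX \right)^{\tau_{n}}, $$
the last equality again by property (2) of Theorem \ref{theoLocalBoundWeakIntegNucleSpace} applied to $gH$. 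Thus the stopped processes coincide for every $n$, and letting $n \to \infty$ gives \eqref{eqAssociatWeakIntegNuclear} up to indistinguishability.

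The routine part consists of the manipulations of the stopped real-valued integrals; the point demanding the most care is the localization bookkeeping — checking that $gH$ genuinely lies in $\mathcal{P}_{loc}(\Phi)$, that a single sequence $(\tau_{n})$ can be chosen to localize $g$, $H$ and $gH$ at once, and that $g$ (locally bounded and predictable) is integrable against the semimartingale $Z(H) \in S^{0}$ so that the left-hand side of \eqref{eqAssociatWeakIntegNuclear} is well-defined. Once these are settled, both assertions follow by passing $\tau_{n} \to \infty$.
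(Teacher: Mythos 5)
Your proposal is correct and follows exactly the route the paper intends: the paper offers no written proof, stating only that the result ``follows directly'' from Theorem \ref{theoLocalBoundWeakIntegNucleSpace} and Propositions \ref{propJumpStochIntegraBoundPred} and \ref{propAssociativeWeakIntegralNuclear}, and your argument is precisely that localization carried out in detail (common localizing sequence, the stopping identity $\left(\int H\,dX\right)^{\tau_{n}}=\int H\mathbbm{1}_{[0,\tau_{n}]}\,dX$, and passage to the limit $\tau_{n}\to\infty$). The bookkeeping you flag — that $gH\in\mathcal{P}_{loc}(\Phi)$ and that $g$ is integrable against $Z(H)$ — is handled correctly.
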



\subsection{Riemann Representation and Integration by Parts} \label{subSectRiemanRepre}

In this section we study further continuity properties of the stochastic integral with respect to good integrators and for integrands which have c\`{a}gl\`{a}d paths. In particular, we will show that the stochastic integral satisfies a Riemman representation and a integration by parts formula.  

Let $\Phi$ be a complete locally convex space. Denote by $\mathbbm{L}(\Phi)$ the linear space of all the $\Phi$-valued c\`{a}gl\`{a}d weakly adapted processes. We will introduce on $\mathbbm{L}(\Phi)$ the \emph{topology of convergence in probability uniformly on compact intervals of time} (abbreviated as \emph{ucp}) by means of a family of F-seminorms (see Section 2.7 in \cite{Jarchow} for more details on topologies defined by F-seminorms). 

Let $\Pi$ denotes a system of seminorms generating the topology on $\Phi$. For each $p \in \Pi$, we define a $F$-seminorm on 
$\mathbbm{L}(\Phi)$ by the prescription
$$ d^{p}_{ucp}(H)=\sum_{n=1}^{\infty} 2^{-n} \Exp \left(1 \wedge \sup_{0 \leq t \leq n} p(H_{t}) \right), \quad \forall H=(H_{t}: t \geq 0) \in   \mathbbm{L}(\Phi).$$
The collection of all the $F$-seminorms $(d^{p}_{ucp}: p \in \Pi)$ generates a linear topology on $\mathbbm{L}(\Phi)$  wherein a  fundamental system of neighborhoods of zero is the family of all the sets of the form 
$$\left\{ H \in \mathbbm{L}(\Phi) : \Prob \left( \sup_{0 \leq t \leq T} p(H_{t}) > \eta \right) < \epsilon \right\}, \quad \,  \epsilon, \eta, T>0, \, p \in \Pi. $$
Now, observe that $\Phi$, being complete, coincides with the projective limit $\mbox{proj}_{p \in \Pi} (\Phi_{p}, i_{p})$ and that  each $\mathbbm{L}(\Phi_{p})$ is a complete metrizable topological vector space (this because each $\Phi_{p}$ is a separable Banach space; see e.g. Section II.1.3 in \cite{VakhaniaTarieladzeChobanyan}). Then, by comparing the fundamental system of neighborhoods of zero in  $\mathbbm{L}(\Phi)$ and the corresponding fundamental system of neighborhoods of zero in each $\mathbbm{L}(\Phi_{p})$ one can conclude (see Proposition 2.6.1 in \cite{Jarchow}, p.38) that  $(\mathbbm{L}(\Phi),ucp)$ coincides with the projective limit $\mbox{proj}_{p \in \Pi} (\mathbbm{L}(\Phi_{p}), i_{p})$, where $i_{p}$ induces the inclusion $H \mapsto i_{p} H$ from  $\mathbbm{L}(\Phi)$ into $\mathbbm{L}(\Phi_{p})$, and if $p \leq q$, $i_{p,q}$ induces the inclusion $H \mapsto i_{p,q} H$ from  $\mathbbm{L}(\Phi_{q})$ into $\mathbbm{L}(\Phi_{p})$. We obtain two important conclusions from the fact that $(\mathbbm{L}(\Phi),ucp)$ is the above projective system. First, that it is complete (see Corollary 3.2.7 in \cite{Jarchow}, p.59) and second that it  is metrizable whenever $\Phi$ is so (see Proposition 2.8.3 in \cite{Jarchow}, p.41). 
   

Denote by $b\mathbbm{L}(\Phi)$ the subspace of $\mathbbm{L}(\Phi)$ consisting of those processes whose image is contained in a bounded subset in $\Phi$. Every $H \in b\mathbbm{L}(\Phi)$ is weakly predictable and hence 
$b\mathbbm{L}(\Phi) \subseteq b \mathcal{P}(\Phi)$. Then, if $\Phi$ is a complete, barrelled, nuclear space the stochastic integral is defined for every $H \in b\mathbbm{L}(\Phi)$. More generally, if $H \in \mathbbm{L}(\Phi) \cap \mathcal{P}_{loc}(\Phi)$ it follows from 
Theorem \ref{theoLocalBoundWeakIntegNucleSpace} that the stochastic integral of such an $H$ exists. Example \ref{examSemimAsIntegrators} provides us with examples of processes which are in $\mathbbm{L}(\Phi) \cap \mathcal{P}_{loc}(\Phi)$ but not in $b\mathbbm{L}(\Phi)$. 

The next result shows that when $X$ is a good integrator the stochastic integral mapping $H \mapsto \int \, H \, dX$ defines a sequentially continuous operator from $(\mathbbm{L}(\Phi) \cap \mathcal{P}_{loc}(\Phi), ucp)$ into $(S^{0}, ucp)$. 

\begin{theorem}\label{theoContStocIntegUCP} Let $\Phi$ be a complete, barrelled, nuclear space. Suppose that $X=(X_{t}: t \geq 0)$ is a good integrator in $\Phi'$.  
Let $H$, $H^{n} \in \mathbbm{L}(\Phi) \cap \mathcal{P}_{loc}(\Phi)$, $n \geq 1$, and suppose that $H^{n} \overset{ucp}{\rightarrow} H$. Then, 
$\displaystyle{\int \, H^{n} \, dX \overset{ucp}{\rightarrow} \int \, H^{n} \, dX}$.  
\end{theorem}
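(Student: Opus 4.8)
The plan is to reduce to a single vanishing integrand and then to the bounded case, where the good-integrator hypothesis can be applied. First, since $(H,X)\mapsto\int H\,dX$ is bilinear (Theorem \ref{theoLocalBoundWeakIntegNucleSpace}(3)), set $G^{n}=H^{n}-H$; then $G^{n}\in\mathbbm{L}(\Phi)\cap\mathcal{P}_{loc}(\Phi)$, $G^{n}\overset{ucp}{\rightarrow}0$, and it suffices to prove $\int G^{n}\,dX\overset{ucp}{\rightarrow}0$ (the limit in the statement should of course read $\int H\,dX$). Fixing $T,\epsilon,\delta>0$, the goal becomes: for all large $n$, $\Prob\big(\sup_{t\le T}\big|\int_{0}^{t}G^{n}\,dX\big|>\epsilon\big)<\delta$.

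Second, I would exploit that $X$ factors through a weaker countably Hilbertian topology. By Proposition \ref{propCylSemimContOpeSpaceSemim} there is such a $\theta$, generated by an increasing sequence of Hilbertian seminorms $(p_{k})$, with $X=X^{\theta}\circ i_{\theta}$; and from the construction in Theorem \ref{theoExisWeakStochIntegral} the integral $\int H\,dX$ depends on $H$ only through $i_{\theta}H$. Hence the good-integrator map $H\mapsto\int H\,dX$, being continuous from $b\mathcal{P}(\Phi)$ into $S^{0}$ (Definition \ref{defiGoodIntegrators}) and vanishing on the kernel of $H\mapsto i_{\theta}H$, factors continuously through the $\theta$-seminorms; since $(p_{k})$ is increasing, for the target neighbourhood there exist an index $k_{0}$ and $\eta>0$ such that, for $H\in b\mathcal{P}(\Phi)$, the bound $\sup_{(t,\omega)}p_{k_{0}}(H_{t}(\omega))<\eta$ forces $\int H\,dX$ into that neighbourhood. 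This reduces control of the integral to the \emph{single} seminorm $p_{k_{0}}$, circumventing the uncountable seminorm family of $\Phi$.

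Third comes the localization, with two stopping times. Since $G^{n}\in\mathcal{P}_{loc}(\Phi)$ there are stopping times $\tau\uparrow\infty$ making $G^{n}\mathbbm{1}_{[0,\tau]}$ lie in a bounded subset of $\Phi$; choosing $\tilde{\tau}^{n}$ among them with $\Prob(\tilde{\tau}^{n}\le T)\to0$ guarantees $G^{n}\mathbbm{1}_{[0,\tilde{\tau}^{n}]}\in b\mathcal{P}(\Phi)$. Because $G^{n}$ is c\`{a}gl\`{a}d and $\sup_{t\le T}p_{k_{0}}(G^{n}_{t})\to0$ in probability, I can pick levels $a_{n}\downarrow0$ with $a_{n}<\eta$ eventually and $\Prob\big(\sup_{t\le T}p_{k_{0}}(G^{n}_{t})\ge a_{n}\big)\to0$, and set $\sigma^{n}=\inf\{t:p_{k_{0}}(G^{n}_{t})\ge a_{n}\}$, so that $\sup_{(t,\omega)}p_{k_{0}}\big((G^{n}\mathbbm{1}_{[0,\sigma^{n}\wedge\tilde{\tau}^{n}]})_{t}\big)\le a_{n}$. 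This vanishing-level truncation is the crucial device: it makes the bounded part converge to $0$ \emph{uniformly} in $(t,\omega)$ in the $p_{k_{0}}$-seminorm. On the event $\{\sigma^{n}\wedge\tilde{\tau}^{n}>T\}$ the stopping property (Theorem \ref{theoLocalBoundWeakIntegNucleSpace}(2)) gives $\int_{0}^{t}G^{n}\,dX=\int_{0}^{t}G^{n}\mathbbm{1}_{[0,\sigma^{n}\wedge\tilde{\tau}^{n}]}\,dX$ for $t\le T$, so
\[
\Prob\Big(\sup_{t\le T}\Big|\int_{0}^{t}G^{n}\,dX\Big|>\epsilon\Big)\le\Prob(\sigma^{n}\le T)+\Prob(\tilde{\tau}^{n}\le T)+\Prob\Big(\sup_{t\le T}\Big|\int_{0}^{t}G^{n}\mathbbm{1}_{[0,\sigma^{n}\wedge\tilde{\tau}^{n}]}\,dX\Big|>\epsilon\Big).
\]
The first two terms tend to $0$ by construction; for the third I apply the $p_{k_{0}}$-continuity from the second step (valid since the truncated integrand lies in $b\mathcal{P}(\Phi)$ with $p_{k_{0}}$-norm $\le a_{n}<\eta$), together with the fact that Emery's topology is finer than ucp, which forces the third term below $\delta$ for large $n$.

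The main obstacle is the mismatch of topologies: the good-integrator hypothesis only provides continuity of $H\mapsto\int H\,dX$ for the strong topology of uniform convergence on $\R_{+}\times\Omega$, whereas ucp convergence of $G^{n}$ is far weaker, so the conclusion is not formal. The weight of the proof is therefore in manufacturing, out of mere ucp convergence, an integrand that is simultaneously in $b\mathcal{P}(\Phi)$ and small in the uniform $b\mathcal{P}(\Phi)$-topology up to an event of small probability; this is exactly what the reduction to the single seminorm $p_{k_{0}}$ (via the $\theta$-factorization) together with the vanishing-level truncation $\sigma^{n}$ and the boundedness truncation $\tilde{\tau}^{n}$ achieve.
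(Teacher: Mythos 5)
Your proof is correct and follows essentially the same route as the paper's (a transcription of Protter's Theorem II.11 to this setting): reduce to $H^{n}\overset{ucp}{\rightarrow}0$, use the good-integrator continuity to extract a single continuous seminorm and threshold controlling the integral, truncate at the first time the seminorm of $H^{n}$ exceeds the threshold, and split the probability accordingly. The only remark worth making is that your detour through the countably Hilbertian topology $\theta$ is unnecessary (and its ``factors through the kernel of $i_{\theta}$'' claim is not really justified): since the topology of $b\mathcal{P}(\Phi)$ is generated by the directed family of seminorms $H\mapsto\sup_{(t,\omega)}p(H_{t}(\omega))$, continuity of $H\mapsto\int H\,dX$ at $0$ already yields a single continuous seminorm $p$ and an $\eta>0$ with the required property, which is exactly what the paper uses.
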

\begin{proof} 
By linearity of the stochastic integral it suffices to prove that $H^{n} \overset{ucp}{\rightarrow} 0$. Our proof is a modification of the arguments used in the proof of Theorem II.11 in \cite{Protter} for the finite dimensional case. 

First, as $X$ is a good integrator the stochastic integral mapping $H \mapsto \int \, H \, dX$ is linear and continuous from $b\mathcal{P}(\Phi)$ into $S^{0}$, hence into $S^{0}$ equipped with the ucp topology. Then, given $\epsilon, \delta, T >0$, there exists a continuous seminorm $p$ on $\Phi$ such that $ \displaystyle{\sup_{(t,\omega)} p(H_{t}(\omega)) \leq 1}$ implies 
$$\Prob \left( \sup_{0 \leq t \leq T} \abs{\int_{0}^{t} \, H \, dX} \geq \delta \right) < \frac{\epsilon}{2}. $$
For each $n \geq 1$, let $\tau_{n}(\omega)=\inf\{ t \geq 0: p(H^{n}_{t}(\omega))>1 \}$, which is a stopping time. Define $\widetilde{H}^{n}= H^{n} \mathbbm{1}_{[0,\tau_{n}]}\mathbbm{1}_{\{\tau_{n}>0\}}$. Then  $\widetilde{H}^{n} \in b\mathbbm{L}(\Phi)$ and $ \displaystyle{\sup_{(t,\omega)} p(\widetilde{H}^{n}_{t}(\omega)) \leq 1}$.  Since $\tau_{k} \geq T$ implies
$$ \sup_{0 \leq t \leq T} \abs{\int_{0}^{t} \, \widetilde{H}^{n} \, dX} = \sup_{0 \leq t \leq T} \abs{\int_{0}^{t} \, H^{n} \, dX}, $$
then we have 
\begin{eqnarray*}
 \Prob \left( \sup_{0 \leq t \leq T} \abs{\int_{0}^{t} \, H^{n} \, dX} \geq \delta \right)
& = &   \Prob \left( \sup_{0 \leq t \leq T} \abs{\int_{0}^{t} \, H^{n} \, dX} \geq \delta , \,  \tau_{k} \geq T \right) \\
& {} &  + \Prob \left( \sup_{0 \leq t \leq T} \abs{\int_{0}^{t} \, H^{n} \, dX} \geq \delta , \,  \tau_{k} < T \right) \\
& \leq & \Prob \left( \sup_{0 \leq t \leq T} \abs{\int_{0}^{t} \, \widetilde{H}^{n} \, dX} \geq \delta \right)
+ \Prob \left( \tau_{k} <T \right)\\
& \leq &\frac{\epsilon}{2}  + \Prob \left(  \sup_{0 \leq t \leq T} p(H^{n}_{t})>1 \right). 
\end{eqnarray*}
Finally, since $H^{n} \overset{ucp}{\rightarrow} 0$, there exists $N>0$ (depending on $\epsilon, \delta, T$ and $p$) such that for all $n \geq N$ we have
$$ \Prob \left(  \sup_{0 \leq t \leq T} p(H^{n}_{t})>1 \right)<  \frac{\epsilon}{2}, $$
hence for $n \geq N$ we have 
$$ \Prob \left( \sup_{0 \leq t \leq T} \abs{\int_{0}^{t} \, H^{k} \, dX} \geq \delta \right) < \epsilon. $$
Thus, $\displaystyle{\int \, H^{n} \, dX \overset{ucp}{\rightarrow} 0}$. 
 \end{proof}

Our next objective is to prove that our stochastic integral satisfies a Riemann representation. In order to formulate and prove our result we will need the following terminology from Section II.2 in \cite{Protter} which we adapt to our context. 

\begin{definition} \hfill
\begin{enumerate}
\item A \emph{random partition} $\sigma$ is a finite sequence of finite stopping times:
$$ 0=\tau_{0} \leq \tau_{1} \leq \cdots \leq \tau_{m+1} < \infty. $$
\item Given a $\Phi$-valued process $H$ and a random partition $\sigma$, we define the process $H$ \emph{sampled} at $\sigma$ to be
\begin{equation}\label{eqDefiSampledProcess}
 H^{\sigma} \defeq H_{0} \mathbbm{1}_{\{0\}}+\sum_{k=1}^{m} H_{\tau_{k}} \mathbbm{1}_{(\tau_{k},\tau_{k+1}]}.
\end{equation}
\item A sequence of random partitions $(\sigma_{n})$, 
$$ \sigma_{n}: \tau_{0}^{n} \leq \tau_{1}^{n} \leq \dots \leq \tau^{n}_{m_{n}+1}, $$
is said to \emph{tend to the identity} if 
\begin{enumerate}
\item $\displaystyle{\lim_{n} \sup_{k} \tau^{n}_{k}=\infty}$ a.s., and
\item $\norm{\sigma_{n}}=\sup_{k} \abs{\tau_{k+1}^{n}-\tau_{k}^{n}} \rightarrow 0$ a.s.
\end{enumerate}  
\end{enumerate}
\end{definition}

We are ready for our main result of this section.  

\begin{theorem}[Riemann representation]\label{theoRiemannRepresentation}
Suppose that $\Phi$ is a complete, barrelled nuclear space whose strong dual $\Phi'$ is also nuclear. Let $X$ be $\Phi'$-valued semimartingale which is a good integrator.
Let $H \in \mathbbm{L}(\Phi) \cap \mathcal{P}_{loc}(\Phi)$. Let $(\sigma_{n})$ be a sequence of random partitions tending to the identity. Then, 
\begin{equation}\label{eqIntegOfStoppedProcess}
\int_{0}^{t} \, H^{\sigma_{n}} \, dX = \inner{X_{0}}{H_{0}} +\sum_{k=1}^{m_{n}} \, \inner{X_{\tau^{n}_{k+1} \wedge t}-X_{\tau^{n}_{k} \wedge t}}{H_{\tau^{n}_{k}}},   
\end{equation}
and 
\begin{equation}\label{eqUCPConvergRiemannRepre}
\int_{0}^{t} \, H^{\sigma_{n}} \, dX \overset{ucp}{\rightarrow} \int \, H_{-} \, dX.
\end{equation}
\end{theorem}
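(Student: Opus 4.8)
The plan is to treat the two displayed assertions separately: first to establish the pathwise Riemann-sum identity \eqref{eqIntegOfStoppedProcess}, and then to deduce the $ucp$ convergence \eqref{eqUCPConvergRiemannRepre} as a soft consequence of the continuity theorem already at our disposal. The first point to record is that the sampled process $H^{\sigma_{n}}$ of \eqref{eqDefiSampledProcess} is again c\`{a}gl\`{a}d (the indicator of a stochastic interval $(\tau_{k},\tau_{k+1}]$ is left-continuous) and weakly adapted, and that it inherits a localizing sequence from $H\in\mathcal{P}_{loc}(\Phi)$; hence $H^{\sigma_{n}}\in\mathbbm{L}(\Phi)\cap\mathcal{P}_{loc}(\Phi)$ and $\int H^{\sigma_{n}}\,dX$ is well defined by Theorem \ref{theoLocalBoundWeakIntegNucleSpace}. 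By linearity of the integral it then suffices to evaluate each block, so I would isolate the following lemma: for finite stopping times $S\leq T$ and an $\mathcal{F}_{S}$-measurable $\Phi$-valued random variable $\xi$ whose range lies in a bounded subset of $\Phi$,
$$\int_{0}^{t} \xi\,\mathbbm{1}_{(S,T]}\,dX = \inner{X_{T\wedge t}-X_{S\wedge t}}{\xi}, \quad t\geq 0.$$

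When $\xi=\sum_{j}\mathbbm{1}_{A_{j}}\phi_{j}$ is a $\Phi$-valued simple random variable with $A_{j}\in\mathcal{F}_{S}$ and $\phi_{j}\in\Phi$, each $\mathbbm{1}_{A_{j}}\mathbbm{1}_{(S,T]}$ belongs to $b\mathcal{P}$, so by the action of the integral on elementary processes (Theorem \ref{theoBoundedWeakIntegNucleSpace}) together with the elementary scalar identity $\bigl(\mathbbm{1}_{A_{j}}\mathbbm{1}_{(S,T]}\bigr)\cdot z = \mathbbm{1}_{A_{j}}\,(z_{T\wedge\,\cdot}-z_{S\wedge\,\cdot})$ valid for every $z\in S^{0}$, the lemma reduces to the identity $\inner{X_{s}}{\phi_{j}}=X(\phi_{j})_{s}$, which holds for the $\Phi'$-valued c\`{a}dl\`{a}g version of $X$. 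To pass from simple $\xi$ to a general bounded-range $\xi$ I would invoke the Schwartz property of nuclear spaces: every bounded subset of $\Phi$ is precompact, hence totally bounded, so $\xi$ is approximated, uniformly in $\omega$ with respect to every continuous seminorm, by $\Phi$-valued $\mathcal{F}_{S}$-measurable simple random variables; that is, there is a net $(\xi_{\alpha})$ of such variables with $\xi_{\alpha}\mathbbm{1}_{(S,T]}\to\xi\mathbbm{1}_{(S,T]}$ in $b\mathcal{P}(\Phi)$. Since $X$ is a good integrator the left-hand sides converge in $S^{0}$, while the right-hand sides converge by continuity of the pairing; this yields the block lemma. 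Summing the blocks and adjoining the degenerate term $H_{0}\mathbbm{1}_{\{0\}}$, whose integral is $\inner{X_{0}}{H_{0}}$ (consistent with the jump formula of Proposition \ref{propJumpStochIntegraBoundPred} at $t=0$, where $X_{0-}=0$), gives \eqref{eqIntegOfStoppedProcess}.

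I expect this block lemma to be the main obstacle, since it is the only place where the abstract tensor-product definition of the integral must be reconciled with the explicit Riemann sums, and the justification of the uniform simple-function approximation is exactly where the nuclearity of $\Phi$ (precompactness of bounded sets) enters in an essential way; the remaining steps are linearity and the continuity already guaranteed by the good-integrator hypothesis.

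For \eqref{eqUCPConvergRiemannRepre} the strategy is to invoke Theorem \ref{theoContStocIntegUCP}: since $X$ is a good integrator, $H\mapsto\int H\,dX$ is sequentially continuous from $(\mathbbm{L}(\Phi)\cap\mathcal{P}_{loc}(\Phi),ucp)$ into $(S^{0},ucp)$, so it remains only to show $H^{\sigma_{n}}\overset{ucp}{\rightarrow}H_{-}$ in $\mathbbm{L}(\Phi)$ (recall $H_{-}=H$ for c\`{a}gl\`{a}d $H$). Using the projective-limit description $(\mathbbm{L}(\Phi),ucp)=\mbox{proj}_{p}(\mathbbm{L}(\Phi_{p}),i_{p})$ established above, this convergence reduces to showing, for each continuous Hilbertian seminorm $p$, that the sampled processes $i_{p}H^{\sigma_{n}}=(i_{p}H)^{\sigma_{n}}$ of the $\Phi_{p}$-valued c\`{a}gl\`{a}d process $i_{p}H$ converge $ucp$ to $i_{p}H$ in $\mathbbm{L}(\Phi_{p})$. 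This is the classical Banach-valued statement that sampling a c\`{a}gl\`{a}d path along random partitions tending to the identity converges uniformly on compacts in probability, proved exactly as in the real-valued case (cf. the argument behind Theorem II.21 in \cite{Protter}). Combining this with Theorem \ref{theoContStocIntegUCP} gives $\int H^{\sigma_{n}}\,dX\overset{ucp}{\rightarrow}\int H_{-}\,dX$, and together with \eqref{eqIntegOfStoppedProcess} completes the proof.
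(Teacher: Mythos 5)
Your treatment of the identity \eqref{eqIntegOfStoppedProcess} is essentially the paper's: your ``block lemma'' is Lemma \ref{lemmActionStoInteElemBounInteg} organized one summand at a time, and the two-stage argument (simple $\mathcal{F}_{S}$-measurable $\xi$ first, then bounded-range $\xi$ by uniform approximation plus the good-integrator property) matches Steps 1 and 2 there. Two details are glossed over. First, for $H\in\mathcal{P}_{loc}(\Phi)$ the random variables $H_{\tau^{n}_{k}}$ need not have bounded range, so the identity must first be proved for the localized partitions $\sigma_{n}\wedge u_{j}$ and then recovered by letting $j\to\infty$; you mention the localizing sequence but do not carry it through the identity. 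Second, total boundedness of bounded sets does not by itself yield \emph{measurable} simple approximations of a merely weakly measurable $\xi$; the paper first places the range inside a compact subset of a separable Banach space $\Phi[B]$ (this is where nuclearity of $\Phi'$ is used) so that Pettis measurability and the standard approximation theorem apply. Both points are repairable.

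The proof of \eqref{eqUCPConvergRiemannRepre}, however, has a genuine gap. You reduce it to the claim that $H^{\sigma_{n}}\overset{ucp}{\rightarrow}H$ in $\mathbbm{L}(\Phi)$ and then invoke Theorem \ref{theoContStocIntegUCP}. That claim is false as soon as $H$ has a jump, already in the real-valued case: take $H=\mathbbm{1}_{(a,\infty)}$, which is c\`agl\`ad; for any partition with $a\in[\tau_{k},\tau_{k+1})$ and any $t\in(a,\tau_{k+1}]$ one has $H^{\sigma_{n}}_{t}=H_{\tau_{k}}=0$ while $H_{t}=1$, so $\sup_{t\leq T}\abs{H^{\sigma_{n}}_{t}-H_{t}}\geq 1$ for every $n$. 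The sampled processes converge to $H$ only pointwise, never uniformly on compacts near a jump, so the continuity theorem cannot be applied to the sequence $H^{\sigma_{n}}$; Theorem II.21 in \cite{Protter} asserts convergence of the \emph{integrals}, not of the sampled integrands. The paper instead approximates $H$ by $H^{k}\in\mathcal{C}(\Phi)$ (Lemma \ref{lemmApproByElementaryProcess}) and splits
$$\int (H-H^{\sigma_{n}})\,dX=\int (H-H^{k})\,dX+\int (H^{k}-(H^{k})^{\sigma_{n}})\,dX+\int ((H^{k})^{\sigma_{n}}-H^{\sigma_{n}})\,dX,$$
controlling the outer terms uniformly in $n$ via Theorem \ref{theoContStocIntegUCP} (sampling does not increase the uniform distance between $H^{k}$ and $H$), and sending the middle term to zero for fixed $k$ by writing it in closed form through Lemma \ref{lemmActionStoInteElemBounInteg} and using the right-continuity of the integrator $X$ rather than any uniform convergence of the integrands. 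As written, your argument for the second display is complete only when $H$ has continuous paths.
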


In order to prove Theorem \ref{theoRiemannRepresentation} we will need to go over several steps. First, we will show 
that \eqref{eqIntegOfStoppedProcess} is satisfied. We will prove indeed something more general, said that our stochastic integral acts on simple predictable integrands in a similar way as the stochastic integral for semimartingales in the scalar case. This is carried out below in Lemma \ref{lemmActionStoInteElemBounInteg}. It is important to stress that this result is not evident since unlike the the theory of integration in finite dimensions, we have not defined our integral first for simple predictable integrands and hence there is no clear indication that it should be of this form. 

Denote by $\mathcal{C}(\Phi)$ the collection of all the $\Phi$-valued processes of the form:
\begin{equation}\label{eqDefiSimpleBoundIntegrNuclear}
H_{r}(\omega)=A_{0}(\omega)\mathbbm{1}_{\{0\}}(r)+ \sum_{k=1}^{N} A_{k}(\omega) \mathbbm{1}_{(\tau_{k},\tau_{k+1}]}(r), 
\end{equation}
where $0=\tau_{0} \leq \tau_{1} \leq \tau_{2} \leq \dots \leq \tau_{N+1}$ are bounded stopping times  and $A_{k}$ is a $\Phi$-valued weakly bounded $\mathcal{F}_{\tau_{k}}$-measurable random variable, $0 \leq k \leq N$, $N \geq 1$. The elements in $\mathcal{C}(\Phi)$ are called \emph{simple predictable bounded integrands}. 

It is clear that $\mathcal{C}(\Phi) \subseteq b \mathbbm{L}$ and hence the stochastic integral exists to every $H \in \mathcal{C}(\Phi)$. Moreover, we have the following:

\begin{lemma}\label{lemmActionStoInteElemBounInteg} Suppose that $\Phi$ is a complete, barrelled nuclear space whose strong dual $\Phi'$ is also nuclear. Let $X$ be $\Phi'$-valued semimartingale which is a good integrator. If $H \in \mathcal{C}(\Phi)$ is of the form \eqref{eqDefiSimpleBoundIntegrNuclear}, then for every $t \geq 0$, 
\begin{equation}\label{eqActionWeakIntSimpBoundIntegrNuclear}
\int_{0}^{t} \, H \, dX = \inner{X_{0}}{A_{0}}+\sum_{k=1}^{N} \inner{X_{\tau_{k+1} \wedge t}- X_{\tau_{k} \wedge t}}{A_{k}}. 
\end{equation}
\end{lemma}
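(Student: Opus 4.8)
The plan is to regard both sides of \eqref{eqActionWeakIntSimpBoundIntegrNuclear} as linear functionals of the integrand $H \in \mathcal{C}(\Phi)$ and to match them, first on the subclass of integrands built from ``scalar times vector'' coefficients, where the abstract integral can be computed explicitly, and then to reach a general $H$ by approximation. By linearity of $H \mapsto \int H \, dX$ and of the right-hand side of \eqref{eqActionWeakIntSimpBoundIntegrNuclear} in the coefficients $A_{k}$, it suffices to treat a single-interval integrand $H=A\mathbbm{1}_{(\sigma,\tau]}$, with $A$ an $\mathcal{F}_{\sigma}$-measurable weakly bounded $\Phi$-valued random variable and $\sigma \leq \tau$ bounded stopping times, together with the degenerate term $A_{0}\mathbbm{1}_{\{0\}}$.

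First I would settle the case $A=\xi \phi$, where $\xi$ is a bounded $\mathcal{F}_{\sigma}$-measurable real random variable and $\phi \in \Phi$ is fixed. Then $h \defeq \xi \mathbbm{1}_{(\sigma,\tau]} \in b\mathcal{P}$ and $H_{t}(\omega)=h(t,\omega)\phi$ is an elementary process of the form \eqref{eqDefiSimpleBoundProceNuclear}, so \eqref{eqActionWeakIntegSimpleIntegNuclear} gives $\int_{0}^{t} H \, dX=(h \cdot X(\phi))_{t}$. Using the standard scalar identity $\xi \mathbbm{1}_{(\sigma,\tau]}\cdot z = \xi(z^{\tau}-z^{\sigma})$ for a real semimartingale $z$, and $X(\phi)=\inner{X}{\phi}$, this equals $\xi\left(\inner{X_{\tau \wedge t}}{\phi}-\inner{X_{\sigma \wedge t}}{\phi}\right)=\inner{X_{\tau \wedge t}-X_{\sigma \wedge t}}{\xi \phi}$, which is exactly the corresponding summand of \eqref{eqActionWeakIntSimpBoundIntegrNuclear}; the term $A_{0}\mathbbm{1}_{\{0\}}$ is handled identically through $\mathbbm{1}_{\{0\}}\xi_{0}\cdot z = \xi_{0}z_{0}$. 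By linearity this establishes \eqref{eqActionWeakIntSimpBoundIntegrNuclear} whenever every $A_{k}$ is a finite sum of such scalar-times-vector terms.

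To reach an arbitrary coefficient $A$, I would approximate it by $\Phi$-valued simple random variables $A^{(m)}=\sum_{j}\mathbbm{1}_{B_{j}^{(m)}}\phi_{j}^{(m)}$ (finite sums, $B_{j}^{(m)}\in \mathcal{F}_{\sigma}$, $\phi_{j}^{(m)}\in \Phi$) with $A^{(m)} \to A$ uniformly on $\Omega$ in the topology of $\Phi$, so that $H^{(m)}\defeq A^{(m)}\mathbbm{1}_{(\sigma,\tau]} \to H$ in $b\mathcal{P}(\Phi)$. Since $X$ is a good integrator (Definition \ref{defiGoodIntegrators}) the integral map is continuous into $S^{0}$, whence $\int H^{(m)} \, dX \to \int H \, dX$; by the previous step each $\int_{0}^{t} H^{(m)} \, dX$ equals $\inner{X_{\tau \wedge t}-X_{\sigma \wedge t}}{A^{(m)}}$, and these converge (pointwise, hence in probability) to $\inner{X_{\tau \wedge t}-X_{\sigma \wedge t}}{A}$, since $X_{\tau \wedge t}-X_{\sigma \wedge t}$ is a fixed element of $\Phi'$ applied to a uniformly $\Phi$-convergent sequence. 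Matching the two limits yields \eqref{eqActionWeakIntSimpBoundIntegrNuclear} for the single interval, and summation over $k$ completes the proof.

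The main obstacle is the uniform approximation of $A$ in the last step. Because $\Phi$ is reflexive, weak boundedness of $A$ forces its range into a bounded subset of $\Phi$ (as in the proof of Proposition \ref{propIsomorBounPredSpaceLinOper}), which, since bounded sets in a complete nuclear space are relatively compact, is relatively compact; the delicate point is to upgrade the mere weak measurability of $A$ to approximability by simple functions uniformly in \emph{every} continuous seminorm of the possibly non-metrizable space $\Phi$. The way I would resolve this is to invoke the regularization Theorem \ref{theoRegulCylinSemimartigalesNuclear} to pass to a weaker separable countably Hilbertian topology in which $A$ has separable, relatively compact and therefore metrizable range and is strongly measurable, so that a single sequence of $\Phi$-valued simple functions converges uniformly in all the seminorms at once; localizing along a sequence of stopping times that keeps the c\`{a}dl\`{a}g version of $X$ bounded in a fixed dual Hilbertian norm then makes the control of the pairing $\inner{X_{\tau \wedge t}-X_{\sigma \wedge t}}{A^{(m)}-A}$ uniform and legitimizes the passage to the limit.
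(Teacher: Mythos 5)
Your overall architecture coincides with the paper's: first verify \eqref{eqActionWeakIntSimpBoundIntegrNuclear} when each coefficient is a finite sum of terms $\xi\phi$ (where the integral is computed explicitly from \eqref{eqActionWeakIntegSimpleIntegNuclear} and the scalar identity $\xi\mathbbm{1}_{(\sigma,\tau]}\cdot z=\xi(z^{\tau}-z^{\sigma})$), then approximate a general coefficient uniformly by simple $\Phi$-valued random variables, pass to the limit on the left-hand side via the good integrator property and on the right-hand side via pointwise convergence of the pairing, and conclude by uniqueness of limits in probability. Up to and including this outline your argument is the paper's.

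The gap is in how you justify the uniform approximation. You assert that the range of $A$, being bounded and hence relatively compact, is ``therefore metrizable,'' and you propose to secure separability and strong measurability by invoking Theorem \ref{theoRegulCylinSemimartigalesNuclear}. Neither works. Compact subsets of a complete barrelled nuclear space need not be metrizable (e.g.\ $[-1,1]^{S}\subseteq\R^{S}$ for $S$ uncountable), and Theorem \ref{theoRegulCylinSemimartigalesNuclear} regularizes the $\Phi'$-valued process induced by the cylindrical semimartingale $X$; it says nothing about the $\Phi$-valued random variable $A$. Moreover, passing to a \emph{weaker} countably Hilbertian topology on $\Phi$ points in the wrong direction: to obtain $H^{(m)}\to H$ in $b\mathcal{P}(\Phi)$ you need $\sup_{\omega}p\left(A^{(m)}(\omega)-A(\omega)\right)\to 0$ for \emph{every} continuous seminorm $p$ on $\Phi$, and uniform convergence in a coarser topology does not deliver that. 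This is exactly where the hypothesis that $\Phi'$ is nuclear, which you never actually use, must enter, and it is how the paper closes the step: the bounded range sits inside a compact convex balanced set $K$; nuclearity of $\Phi'$ together with reflexivity of $\Phi$ yields $B\supseteq K$ with the injection $\Phi[K]\rightarrow\Phi[B]$ nuclear, so $\Phi[B]$ is a \emph{separable Banach space} whose norm topology is finer than the one induced by $\Phi$ and in which the range of $A$ is relatively compact. Pettis measurability and uniform simple-function approximation then take place in $\Phi[B]$, and convergence in the gauge norm $q_{B}$ dominates every continuous seminorm of $\Phi$, which is what the limit passage requires. Your final localization to control the pairing is unnecessary: for each fixed $\omega$, $X_{\tau\wedge t}(\omega)-X_{\sigma\wedge t}(\omega)$ is a single element of $\Phi'$ evaluated along a sequence converging in $\Phi$, which is all that the identification of the limit needs.
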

\begin{proof}
We carry out the proof in two steps. 

\textbf{Step 1} Suppose that for each $k=0,1, \dots, N$, the random variable $A_{k}$ is of the simple form $\displaystyle{A_{k}= \sum_{j=0}^{m_{k}} h_{j,k} \phi_{j,k}}$ where each $h_{j,k}$ is a real-valued $\mathcal{F}_{\tau_{k}}$-measurable random variable and each $\phi_{j,k} \in \Phi$.  
In such a case, by the action of the stochastic integral on the elementary integrands \eqref{eqActionWeakIntegSimpleIntegNuclear} we have
\begin{eqnarray*}
\int_{0}^{t} \, H \, dX 
& = & \int_{0}^{t} \, \sum_{j=0}^{m_{0}} \, h_{j,0} \mathbbm{1}_{\{0\}} \phi_{j,0} \, dX  + \sum_{k=1}^{N}\int_{0}^{t} \, \sum_{j=0}^{m_{k}} h_{j,k} \mathbbm{1}_{(\tau_{k},\tau_{k+1}]} \phi_{j,k} \, dX \\ 
& = &  \sum_{j=0}^{m_{0}} \, \left( \, ( h_{j,0} \mathbbm{1}_{\{0\}} ) \cdot X(\phi_{j,0}) \right)_{t} +\sum_{k=1}^{N} \sum_{j=0}^{m_{k}} \, \left( \, ( h_{j,k} \mathbbm{1}_{(\tau_{k},\tau_{k+1}]} ) \cdot X(\phi_{j,k}) \right)_{t} \\ 
& = & \sum_{j=0}^{m_{0}} \, h_{j,0} \inner{ X_{0}}{\phi_{j,0}}  + \sum_{k=1}^{N} \sum_{j=0}^{m_{k}} \, h_{j,k} \inner{ X_{\tau_{k+1} \wedge t} - X_{\tau_{k} \wedge t}}{\phi_{j,k}}  \\ 
& = &   \inner{ X_{0}}{\sum_{j=0}^{m_{0}} \, h_{j,0} \phi_{j,0}}+ \sum_{k=1}^{N}  \inner{ X_{\tau_{k+1} \wedge t} - X_{\tau_{k} \wedge t}}{ \sum_{j=0}^{m_{k}} \, h_{j,k} \phi_{j,k}}  \\
& = & \inner{X_{0}}{A_{0}} +\sum_{k=1}^{N} \inner{X_{\tau_{k+1} \wedge t}- X_{\tau_{k} \wedge t}}{A_{k}}. 
\end{eqnarray*}
Hence \eqref{eqActionWeakIntSimpBoundIntegrNuclear} holds in this case. 

\textbf{Step 2} Assume that each $A_{k}$ is a $\Phi$-valued weakly bounded $\mathcal{F}_{\tau_{k}}$-measurable random variable. 

Let $0 \leq k \leq N$. Then as in the proof of Proposition \ref{propIsomorBounPredSpaceLinOper} our assumptions on $A_{k}$ imply that the image of the random variable $A_{k}$ is contained in a bounded subset of $\Phi$. By taking the union of these finite family of bounded subsets, we can assume that the image of all the $A_{k}$ is contained in the same bounded subset of $\Phi$. Furthermore, since $\Phi$ is complete, barrelled and nuclear, there exists a compact, convex, balanced subset $K$ of $\Phi$ such that the image of each random variable $A_{k}$ is contained in $K$ (see Corollary 50.2.1 in \cite{Treves}, p.520).  

Denote by $\Phi[K]$ the vector space on $\Phi$ spanned by $K$. It is a Banach space when equipped with the gauge norm $q_{K}(\phi)=\inf\{\lambda> 0:  \phi \in \lambda K\}$ (see Corollary 36.1 in \cite{Treves}, p.371). Since $\Phi'$ is nuclear and $\Phi$ is reflexive, there exists another such set $B$, $K \subseteq B$, with the property that the canonical injection $\Phi[K] \rightarrow \Phi[B]$ is nuclear. Observe that we therefore have that the image of the set $K$ in the space $\Phi[B]$ is compact and that $\Phi[B]$ is separable. 

The information provided in the above paragraphs shows that each $A_{k}$ can be regarded as a $\Phi[B]$-valued $\mathcal{F}_{\tau_{k}}$-measurable random variable with a relatively compact image. It then follows that each $A_{k}$ can be uniformly approximated by a sequence of simple random variables $\displaystyle{A_{n,k}= \sum_{j=0}^{m_{n,k}} h_{n,j,k} \phi_{n,j,k}}$ as those in Step 1 (see Proposition I.1.9 in \cite{VakhaniaTarieladzeChobanyan}, p.12). Then, each 
$$ H_{n}(r,\omega)=A_{n,0}(\omega)\mathbbm{1}_{\{0\}}(r)+ \sum_{k=1}^{N} A_{n,k}(\omega) \mathbbm{1}_{(\tau_{k},\tau_{k+1}]}(r), $$
is an element in $\mathcal{C}(\Phi)$ and the sequence $(H_{n}: n \in \N)$ converges to $H$ in $b\mathcal{P}(\Phi)$. To prove this assertion, let $p$ be any continuous seminorm on $\Phi$. Since the canonical inclusion from $\Phi[B]$ into $\Phi$ is linear and continuous, there exists some $C>0$ (depending on $p$) such that $p(\phi) \leq C \, q_{K}(\phi)$ for every $\phi \in \Phi[B]$.  Then, by the uniform convergence of the sequence $A_{n,k}$ to $A_{k}$ for $0 \leq k \leq N$ we have
\begin{flalign*}
& \sup_{(r,\omega)} p\left( H(r,\omega)- H_{n}(r,\omega) \right) \\
& \leq  \sup_{(r,\omega)}  p\left(A_{0}(\omega) - A_{n,0}(\omega) \right) \mathbbm{1}_{\{0 \}} + \sum_{k=1}^{N} \sup_{(r,\omega)}  p\left(A_{k}(\omega) - A_{n,k}(\omega) \right) \mathbbm{1}_{(\tau_{k},\tau_{k+1}]}(r) \\
& \leq  C\sup_{\omega}  q_{K} \left(A_{0}(\omega) - A_{n,0}(\omega) \right) + C \sum_{k=0}^{N} \sup_{w} q_{K} \left(A_{k}(\omega) - A_{n,k}(\omega) \right) \\
& \rightarrow 0, \mbox{ as } n \rightarrow \infty. 
\end{flalign*}
Now, because $X$ is a good integrator we have that 
$\displaystyle{\int \, H_{n} \, dX \rightarrow \int \, H \, dX}$ in $S^{0}$. Hence, for each $t \geq 0$ we have $\displaystyle{\int_{0}^{t} \, H_{n} \, dX \rightarrow \int_{0}^{t} \, H \, dX}$ in probability.   

On the other hand, for each $t \geq 0$, by the uniform convergence of the sequence $A_{n,k}$ to $A_{k}$ for $0 \leq k \leq N$, and because the canonical inclusion from $\Phi[B]$ into $\Phi$ is linear and continuous, we have pointwise 
$$ \inner{X_{0}}{A_{n,0}}+ \sum_{k=1}^{N} \inner{X_{\tau_{k+1} \wedge t}- X_{\tau_{k} \wedge t}}{A_{n,k}} \rightarrow \inner{X_{0}}{A_{0}}+ \sum_{k=1}^{N} \inner{X_{\tau_{k+1} \wedge t}- X_{\tau_{k} \wedge t}}{A_{k}}, $$
and since by Step 1 we have that for each $t \geq 0$ 
$$ \inner{X_{0}}{A_{n,0}}+ \sum_{k=1}^{N} \inner{X_{\tau_{k+1} \wedge t}- X_{\tau_{k} \wedge t}}{A_{n,k}} = \int_{0}^{t} \, H_{n} \, dX, $$
we conclude by uniqueness of limits in probability that 
\eqref{eqActionWeakIntSimpBoundIntegrNuclear} is satisfied. 
\end{proof}

Our next objective is to show that the convergence in \eqref{eqUCPConvergRiemannRepre} holds. To shows this we will need the following approximation result:

\begin{lemma}\label{lemmApproByElementaryProcess}
Suppose that $\Phi$ is a complete, barrelled nuclear space whose strong dual $\Phi'$ is also nuclear. 
Let $H \in \mathbbm{L}(\Phi) \cap \mathcal{P}_{loc}(\Phi)$. Then, there exists a sequence $(H^{n}: n \in \N) \subseteq \mathcal{C}(\Phi)$ such that $H^{n} \overset{ucp}{\rightarrow} H$. 
\end{lemma}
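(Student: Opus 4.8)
The plan is to transcribe Protter's finite‑dimensional approximation theorem (Theorem II.10) into a separable Banach space attached to $\Phi$, and to organise the argument by localisation. First I would reduce to bounded integrands: since $H\in\mathcal{P}_{loc}(\Phi)$ there is a sequence $(\tau_m)$ of stopping times increasing to $\infty$ such that each stopped process $H^{\tau_m}=H_{\cdot\wedge\tau_m}$ lies in $b\mathbbm{L}(\Phi)$, and $H^{\tau_m}\to H$ in ucp because $\Prob(\tau_m<T)\to0$ for every $T$. Thus it suffices to approximate a bounded càglàd weakly adapted process and then reassemble the pieces.

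For the bounded case I would reuse the Banach‑space reduction from Step 2 of Lemma \ref{lemmActionStoInteElemBounInteg}: the image of $H$ lies in a compact, convex, balanced $K\subseteq\Phi$, and there is a larger such set $B\supseteq K$ for which the inclusion $\Phi[K]\to\Phi[B]$ is nuclear, $\Phi[B]$ is separable, and $K$ is compact in the Banach space $(\Phi[B],q_B)$. On $K$ the topologies of $\Phi$ and of $\Phi[B]$ coincide, so $H$ is a bounded càglàd $K$‑valued process in $\Phi[B]$; in particular its right limits exist in $\Phi[B]$ and $G_t:=H_{t+}$ is a càdlàg $\Phi[B]$‑valued adapted process with $H_t=G_{t-}$. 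Fixing $\epsilon>0$ I set $\tau_0^\epsilon=0$ and
$$\tau_{k+1}^\epsilon=\inf\{t>\tau_k^\epsilon:q_B(G_t-G_{\tau_k^\epsilon})>\epsilon\},$$
which are stopping times, strictly increasing (right‑continuity of $G$ prevents a loop at a right jump of $H$), and, for each $\omega$, only finitely many fall in a compact interval. After truncating at a large time $N$, the sampled process
$$\bar H^\epsilon:=H_0\,\mathbbm{1}_{\{0\}}+\sum_k G_{\tau_k^\epsilon}\,\mathbbm{1}_{(\tau_k^\epsilon,\tau_{k+1}^\epsilon]}$$
belongs to $\mathcal{C}(\Phi)$: the values $G_{\tau_k^\epsilon}=H_{\tau_k^\epsilon+}$ are $\mathcal{F}_{\tau_k^\epsilon}$‑measurable by right‑continuity of the filtration and weakly bounded, and the $\tau_k^\epsilon$ are bounded. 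The definition of $\tau_{k+1}^\epsilon$ gives $q_B(G_{t-}-G_{\tau_k^\epsilon})\le\epsilon$ for $t\in(\tau_k^\epsilon,\tau_{k+1}^\epsilon]$, i.e. $\sup_{t\le N}q_B(\bar H^\epsilon_t-H_t)\le\epsilon$ pathwise.

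The essential payoff of passing through the single norm $q_B$ is that, since $\Phi[B]\hookrightarrow\Phi$ is continuous, every continuous seminorm $p$ on $\Phi$ satisfies $p\le C_p\,q_B$ on $\Phi[B]$, so $\sup_{t\le N}p(\bar H^\epsilon_t-H_t)\le C_p\epsilon$; letting $\epsilon\downarrow0$ yields ucp convergence \emph{simultaneously} in all seminorms. Finally I would combine the two steps: each $H^{\tau_m}$ is a ucp limit of a sequence in $\mathcal{C}(\Phi)$ by the previous paragraph, and $H^{\tau_m}\to H$ in ucp, so a diagonal extraction should produce the required sequence. The main obstacle is exactly this assembly, because when $\Phi$ is non‑metrizable (e.g.\ $\mathscr{D}(U)$ or $\mathscr{S}'(\R^d)$) the ucp topology is generated by an uncountable family of F‑seminorms, a single sequence need not converge in all of them, and the constants $C_{p,m}$ relating $p$ to the norm $q_{B_m}$ of the $m$‑th Banach space degrade as the localising sets grow. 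I would resolve this by exploiting that the local boundedness of $H$ is supplied by a \emph{single} weaker countably Hilbertian topology: as in Proposition \ref{propLeftLimitProcessLocallyBounded} and the proof of Proposition \ref{propLeftLimiLocallyBounDualNuclear}, the localising sets may be taken to be the balls $B_{\rho'_m}(m)$ attached to one increasing sequence $(\rho_m)$ of Hilbertian seminorms, on which the original topology of $\Phi$ coincides with the metrizable topology these seminorms generate. Hence the relevant convergence is governed by the countable family $(\rho_m)$, and, using the projective‑limit description $(\mathbbm{L}(\Phi),ucp)=\mathrm{proj}_{p}(\mathbbm{L}(\Phi_p),ucp)$ recalled before the statement, I can choose the deviation scales $\epsilon_m\downarrow0$ rapidly enough relative to the finitely many constants $C_{\rho_j,m}$ ($j\le m$) for the diagonal sequence to converge to $H$ in ucp. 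This countable reduction is the delicate point; everything else is the scalar construction transported into $\Phi[B]$.
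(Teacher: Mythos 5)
Your treatment of the bounded case is exactly the paper's proof: localize so that $H\in b\mathbbm{L}(\Phi)$, use the compact, convex, balanced set $B$ supplied by nuclearity of $\Phi'$ and reflexivity of $\Phi$ to regard $H$ as a c\`{a}gl\`{a}d process in the separable Banach space $\Phi[B]$, run the stopping-time scheme of Protter's Theorem II.10 on the right-limit process $Z$, and transfer the pathwise $q_{B}$-bound to every continuous seminorm through $p\le C_{p}\,q_{B}$. The paper does precisely this and nothing more; in particular it disposes of the localization with the single sentence ``this means we can assume $H\in b\mathbbm{L}(\Phi)$'' and never returns to assemble the stopped approximations into one sequence.

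Your closing paragraph is therefore aimed at a step the paper leaves implicit, and the concern is legitimate: if $H^{\tau_m}$ is approximated in the norm $q_{B_m}$ of a growing family of bounded sets, then for each fixed $m$ the transfer constant $C_{p,m}=\sup_{\phi\in B_m}p(\phi)$ is unbounded as $p$ ranges over the (possibly uncountable) continuous seminorms of $\Phi$, so no single choice of accuracies $\epsilon_m$ forces $C_{p,m}\epsilon_m\to 0$ for every $p$ simultaneously. However, your proposed resolution does not actually close this. First, the localizing bounded sets of an arbitrary $H\in\mathcal{P}_{loc}(\Phi)$ are not handed to you in the form $B_{\rho'_m}(m)$ for a single countably Hilbertian topology; that structure is what Propositions \ref{propLeftLimiLocallyBounDualNuclear} and \ref{propLeftLimitProcessLocallyBounded} \emph{produce} for regular c\`{a}dl\`{a}g processes, not a feature of every element of $\mathcal{P}_{loc}(\Phi)$, and the claim that the original topology of $\Phi$ coincides on those balls with the topology generated by the $\rho'_m$ is asserted without proof (the $\rho'_m$ are not continuous seminorms on $\Phi$). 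Second, even granting a countable family, the ucp topology of $\mathbbm{L}(\Phi)$ requires convergence in $d^{p}_{ucp}$ for \emph{all} continuous seminorms $p$, not only for the $\rho'_m$, so the diagonal extraction is still not justified. In short: your argument coincides with the paper's wherever the paper supplies details, and is more candid than the paper about the reassembly step, but that step remains a sketch in your write-up just as it is silent in the paper's.
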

\begin{proof}
Since $H \in \mathbbm{L}(\Phi) \cap \mathcal{P}_{loc}(\Phi)$, there exists a  sequence $( \tau_{n} : n \in \N)$ of stopping times increasing to $\infty$ $\Prob$-a.e. such that for each $n \in \N$, $H^{\tau_{n}} \in b\mathbbm{L}(\Phi)$. This means we can assume $H \in b\mathbbm{L}(\Phi)$.

Now, since $H \in b\mathbbm{L}(\Phi)$, the image of $H$ is contained in a bounded subset in $\Phi$. As in the proof of Lemma \ref{lemmActionStoInteElemBounInteg}, because $\Phi$ is complete, barrelled and nuclear, and because $\Phi'$ is nuclear and reflexive, there exists a compact, convex, balanced subset $B$ of $\Phi$ such that $\Phi[B]$ is a separable Banach space with norm $q_{B}$ and $H$ can be considered as a c\`{a}gl\`{a}d adapted process with values in $\Phi[B]$, i.e. $H \in \mathbbm{L}(\Phi[B])$. 
Let $Z$ be defined by $Z_{t}=\lim_{u \rightarrow t, u>t} H_{t}$. We therefore have that $Z$ is a c\`{a}dl\`{a}g adapted process with values in $\Phi[B]$. 
As in the proof of Theorem II.10 in \cite{Protter}, given $\epsilon >0$, if we define $\tau_{0}^{\epsilon}=0$ and $\displaystyle{\tau^{\epsilon}_{n+1}= \inf\{ t: t> \tau^{\epsilon}_{n} \mbox{ and } q_{B}(Z_{t}-Z_{\tau^{\epsilon}_{n}}) > \epsilon \}}$, then one can show (this as in Theorem II.10 in \cite{Protter}) that $(\tau^{\epsilon}_{n})$ is a sequence of stopping times increasing to $\infty$ $\Prob$-a.e. and that 
$$ H^{n,\epsilon}=H_{0} \mathbbm{1}_{\{0\}} + \sum_{k=1}^{n} Z_{\tau^{\epsilon}_{k}} \mathbbm{1}_{(\tau^{\epsilon}_{k} \wedge n, \tau^{\epsilon}_{k+1} \wedge n]}, $$  
can be made arbitrarily close to $H$ in ucp in the norm $q_{B}$ by making $\epsilon$ small enough and $n$ large enough. Finally, since the topology in $(\Phi[B],q_{B})$ is finer than that in $\Phi$, convergence in ucp in the norm $q_{B}$ implies convergence in ucp in $\Phi$; this as in the proof of Lemma \ref{lemmActionStoInteElemBounInteg}.   
\end{proof}

\begin{proof}[Proof of Theorem \ref{theoRiemannRepresentation}]
We begin by proving the equality in  \eqref{eqIntegOfStoppedProcess}. 
Let $(u_{j}: j \in \N)$ a sequence of stopping times increasing to $\infty$ $\Prob$-a.e. such that for each $j \in \N$ the process $H^{u_{j}}$ is bounded in $\Phi$.

For any fixed $n \in \N$, denote by $\sigma_{n} \wedge u_{j}$ the random partition 
$$ \sigma_{n} \wedge u_{j}: \tau_{0}^{n} \wedge u_{j} \leq \tau_{1}^{n} \wedge u_{j} \leq \dots \leq \tau_{m_{n}}^{n} \wedge u_{j}. $$
It is clear from \eqref{eqDefiSampledProcess} that for every $n \in \N$ we have $H^{\sigma_{n} \wedge u_{j}} \in \mathcal{C}(\Phi)$ and hence by Lemma \ref{lemmActionStoInteElemBounInteg} it follows that 
$$ \int_{0}^{t} \, H^{\sigma_{n} \wedge u_{j}} \, dX = \inner{X_{0}}{H_{0}} +\sum_{k=1}^{m_{n}} \, \inner{X_{\tau^{n}_{k+1}\wedge u_{j}  \wedge t}-X_{\tau^{n}_{k} \wedge u_{j} \wedge t}}{H_{\tau^{n}_{k} \wedge u_{j}}}. $$ 
Since $u_{j}$ increases to $\infty$ $\Prob$-a.e then $H^{\sigma_{n} \wedge u_{j}} \overset{ucp}{\rightarrow} H^{\sigma_{n}}$ and hence it follows from Theorem \ref{theoContStocIntegUCP} that $\displaystyle{\int \, H^{\sigma_{n} \wedge u_{j}} \, dX \overset{ucp}{\rightarrow} \int \, H^{\sigma_{n}} \, dX}$. Likewise, because $u_{j}$ increases to $\infty$ $\Prob$-a.e, then we have pointwise $\Prob$-a.e for each $t \geq 0$, 
$$ \sum_{k=1}^{m_{n}} \, \inner{X_{\tau^{n}_{k+1}\wedge u_{j}  \wedge t}-X_{\tau^{n}_{k} \wedge u_{j} \wedge t}}{H_{\tau^{n}_{k} \wedge u_{j}}} \rightarrow \sum_{k=1}^{m_{n}} \, \inner{X_{\tau^{n}_{k+1} \wedge t}-X_{\tau^{n}_{k} \wedge t}}{H_{\tau^{n}_{k}}}.$$
Then by uniqueness of limits we obtain the equality in  \eqref{eqIntegOfStoppedProcess}. 

We now show that the convergence in \eqref{eqUCPConvergRiemannRepre} holds. To do this we will  adapt to our context the proof of Theorem II.21 in \cite{Protter}. 

First, by Lemma \ref{lemmApproByElementaryProcess} there exists a sequence $(H^{k}: k \in \N) \subseteq \mathcal{C}(\Phi)$ such that $H^{k} \overset{ucp}{\rightarrow} H$. 
We have 
\begin{eqnarray}
\int \, ( H - H^{\sigma_{n}}) \, dX & = &  \int \, ( H - H^{k})  \, dX + \int \, ( H^{k} - (H^{k})^{\sigma_{n}}) \, dX \nonumber \\ 
& {} & + \int \, ( (H^{k})^{\sigma_{n}} - H^{\sigma_{n}}) \, dX.  \label{eqProofRiemannRepre}
\end{eqnarray}
By Theorem \ref{theoContStocIntegUCP}, since $H^{k} \overset{ucp}{\rightarrow} H$ the first term in the right-hand side of \eqref{eqProofRiemannRepre} converges to 0 in ucp. In a similar way, we have that $(H^{k})^{\sigma_{n}} \overset{ucp}{\rightarrow} H^{\sigma_{n}}$ uniformly on $n$ and therefore  by Theorem \ref{theoContStocIntegUCP} the third  term in the right-hand side of \eqref{eqProofRiemannRepre} converges to 0 in ucp.

For the middle term in the right-hand side of \eqref{eqProofRiemannRepre}, since for each $k \in \N$, $H^{k} \in \mathcal{C}(\Phi)$, the one can write the stochastic integral $\displaystyle{ \int \, ( (H^{k})^{\sigma_{n}} - H^{\sigma_{n}}) \, dX}$ in closed form (as in Lemma \ref{lemmActionStoInteElemBounInteg}). Then, the right-continuity of $X$ implies that for any fixed $(k,\omega)$ the stochastic integral $\displaystyle{ \int \, ( (H^{k})^{\sigma_{n}} - H^{\sigma_{n}}) \, dX}$
 tends to $0$ as $n \rightarrow \infty$. Hence, to show that \eqref{eqProofRiemannRepre} tends to $0$ as $n \rightarrow \infty$ one chooses $k$ so large that the first and third terms in the right-hand side of \eqref{eqProofRiemannRepre} are small, and then for a fixed value of $k$ one chooses $n$ so large that middle term in the right-hand side of \eqref{eqProofRiemannRepre} is small.   
\end{proof}

Thanks to the Riemann representation formula, we can now prove that a integration by parts formula holds by following similar arguments to those in the finite dimensional setting. We must stress the fact that the two semimartingales are required to be good integrators and hence the formula might not hold for every semimartingale.

\begin{theorem}[Integration by Parts Formula]\label{theoIntegByPartsFormula}
Suppose that $\Phi$ and $\Phi'$ are complete, barrelled nuclear spaces. Let $Y$ be respectively a $\Phi$-valued c\`{a}dl\`{a}g semimartingale and $X$ be a $\Phi'$-valued c\`{a}dl\`{a}g semimartingale, both $Y$ and $X$ being good integrators. Then there exists a real-valued adapted c\`{a}dl\`{a}g process $[X,Y]$, such that 
\begin{equation}\label{eqDefIntegByParts}
\inner{X_{t}}{Y_{t}}=\inner{X_{0}}{Y_{0}}+\int_{0}^{t} \, Y_{-} \, dX + \int_{0}^{t} \, X_{-} \, dY + [X,Y]_{t}, \quad \forall t \geq 0.
\end{equation} 
Furthermore, if $(\sigma_{n})$ is a sequence of random partitions tending to the identity, then 
\begin{equation}\label{eqDefQuadraCovaria}
[X,Y]_{t}= \inner{X_{0}}{Y_{0}}+ \lim_{n \rightarrow \infty} \sum_{k=1}^{m_{n}} \, \inner{X_{\tau^{n}_{k+1} \wedge t}-X_{\tau^{n}_{k} \wedge t}}{Y_{\tau^{n}_{k+1} \wedge t}-Y_{\tau^{n}_{k} \wedge t}}, 
\end{equation}
where the convergence is in ucp. 
\end{theorem}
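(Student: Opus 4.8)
The plan is to reproduce the classical finite–dimensional derivation of integration by parts (Protter, Theorems II.22--II.23), replacing products of increments by the duality pairing $\inner{\cdot}{\cdot}$ between $\Phi'$ and $\Phi$, and identifying the two resulting Riemann--Stieltjes sums through the Riemann representation of Theorem \ref{theoRiemannRepresentation}. The process $[X,Y]$ will not be built directly; instead it will \emph{emerge} as the ucp limit of the ``cross'' sums, its convergence being forced by the convergence of the other two sums.

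First I would fix a sequence $(\sigma_n)$ of random partitions tending to the identity, $\sigma_n:0=\tau^n_0\le\tau^n_1\le\cdots\le\tau^n_{m_n+1}$, and write, for $n$ so large that $\tau^n_{m_n+1}\wedge t=t$ (legitimate uniformly on $[0,T]$ after localizing via $\sup_k\tau^n_k\to\infty$, exactly as in the proof of \eqref{eqIntegOfStoppedProcess}), the exact telescoping identity
\begin{align*}
\inner{X_t}{Y_t}-\inner{X_0}{Y_0}
&=\sum_{k=0}^{m_n}\inner{X_{\tau^n_k\wedge t}}{Y_{\tau^n_{k+1}\wedge t}-Y_{\tau^n_k\wedge t}}
+\sum_{k=0}^{m_n}\inner{X_{\tau^n_{k+1}\wedge t}-X_{\tau^n_k\wedge t}}{Y_{\tau^n_k\wedge t}}\\
&\quad+\sum_{k=0}^{m_n}\inner{X_{\tau^n_{k+1}\wedge t}-X_{\tau^n_k\wedge t}}{Y_{\tau^n_{k+1}\wedge t}-Y_{\tau^n_k\wedge t}},
\end{align*}
which is just the bilinear identity $\inner{a'}{b'}-\inner{a}{b}=\inner{a}{b'-b}+\inner{a'-a}{b}+\inner{a'-a}{b'-b}$ summed over the partition. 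Call the three sums $S^n_1,S^n_2,S^n_3$.

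Next I would identify the ucp limits of $S^n_1$ and $S^n_2$. Splitting off the $k=0$ term $\inner{X_{\tau^n_1\wedge t}-X_0}{Y_0}$ of $S^n_2$, which tends to $0$ in ucp by right--continuity of $X$ (note $\tau^n_1\le\norm{\sigma_n}\to 0$), the remaining sum $\sum_{k=1}^{m_n}\inner{X_{\tau^n_{k+1}\wedge t}-X_{\tau^n_k\wedge t}}{Y_{\tau^n_k}}$ equals, by the closed form of Lemma \ref{lemmActionStoInteElemBounInteg}, the integral $\int_0^t Y^{\sigma_n}\,dX$ less its $t=0$ contribution, and hence converges in ucp to $\int Y_-\,dX$ (up to that contribution) by the Riemann representation \eqref{eqUCPConvergRiemannRepre}. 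For $S^n_1$ I would apply the \emph{same} representation with the roles of $\Phi$ and $\Phi'$ interchanged: here $\Phi'$ is the base nuclear space and $Y$, which is $(\Phi')'=\Phi$--valued by reflexivity, is the integrator. This is exactly where the hypothesis that \emph{both} $\Phi$ and $\Phi'$ are complete barrelled nuclear and that \emph{both} $X$ and $Y$ are good integrators is used; one also needs $X_-\in\mathbbm{L}(\Phi')\cap\mathcal{P}_{loc}(\Phi')$ and $Y_-\in\mathbbm{L}(\Phi)\cap\mathcal{P}_{loc}(\Phi)$, which hold by Proposition \ref{propLeftLimitProcessLocallyBounded} (equivalently Example \ref{examSemimAsIntegrators}). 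Thus $S^n_1\overset{ucp}{\rightarrow}\int X_-\,dY$ modulo its $t=0$ term.

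Finally, since $\inner{X_t}{Y_t}-\inner{X_0}{Y_0}$ is independent of $n$ and $S^n_1,S^n_2$ converge in ucp, the identity $S^n_3=(\inner{X_t}{Y_t}-\inner{X_0}{Y_0})-S^n_1-S^n_2$ \emph{forces} $S^n_3$ to converge in ucp; dropping its $k=0$ term $\inner{X_{\tau^n_1\wedge t}-X_0}{Y_{\tau^n_1\wedge t}-Y_0}\to 0$ shows the sum in \eqref{eqDefQuadraCovaria} converges and lets me \emph{define} $[X,Y]_t$ by that limit plus $\inner{X_0}{Y_0}$. Rearranging the telescoping identity then yields \eqref{eqDefIntegByParts}, and \eqref{eqDefQuadraCovaria} holds by construction; a careful accounting of the contributions at $t=0$ (via right--continuity and the stated integral convention) fixes the additive constant $\inner{X_0}{Y_0}$. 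The process $[X,Y]$ is adapted (each partial sum is) and càdlàg, being a finite linear combination of the càdlàg processes $\inner{X_\cdot}{Y_\cdot}$, $\int Y_-\,dX$ and $\int X_-\,dY$. The hard part is the symmetric use of the Riemann representation for $S^n_1$ — integrating the $\Phi'$--valued left--limit process $X_-$ against the $\Phi$--valued good integrator $Y$ — which is precisely what the symmetric nuclearity and good--integrator hypotheses guarantee; the remaining points (ucp vanishing of the $k=0$ terms from $\tau^n_1\to 0$, and uniformity in $t\in[0,T]$ from $\sup_k\tau^n_k\to\infty$) follow the template of Theorems II.21--II.23 in \cite{Protter}.
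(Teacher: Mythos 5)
Your proposal is correct and follows essentially the same route as the paper: the telescoping bilinear identity over the partition, the symmetric application of the Riemann representation (Theorem \ref{theoRiemannRepresentation}) to identify the ucp limits of the two cross-term sums as $\int Y_{-}\,dX$ and $\int X_{-}\,dY$, and the definition of $[X,Y]$ forced by the convergence of the remaining sum. The only cosmetic difference is that the paper defines $[X,Y]_{t}$ directly as $\inner{X_{t}}{Y_{t}}-\inner{X_{0}}{Y_{0}}-\int_{0}^{t}Y_{-}\,dX-\int_{0}^{t}X_{-}\,dY$ and then reads off \eqref{eqDefQuadraCovaria}, whereas you define it as the limit of the cross sums and then derive \eqref{eqDefIntegByParts}; these are equivalent.
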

\begin{proof}
Observe first that from Propositions \ref{propCylSemimContOpeSpaceSemim} and \ref{propLeftLimitProcessLocallyBounded} that the integrals $\displaystyle{\int_{0}^{t} \, Y_{-} \, dX}$ and $\displaystyle{ \int_{0}^{t} \, X_{-} \, dY }$ are well-defined. 
Now, note that for every $n \in \N$ we have 
\begin{eqnarray}
\inner{X_{t}}{Y_{t}} & = & \inner{X_{0}}{Y_{0}} + 
\sum_{k=1}^{m_{n}} \, \inner{X_{\tau^{n}_{k+1} \wedge t}-X_{\tau^{n}_{k} \wedge t}}{Y_{\tau^{n}_{k} \wedge t}} \nonumber \\
& {} & +  \sum_{k=1}^{m_{n}} \, \inner{X_{\tau^{n}_{k} \wedge t}}{Y_{\tau^{n}_{k+1} \wedge t}-Y_{\tau^{n}_{k} \wedge t}} \nonumber \\
& {} & + \sum_{k=1}^{m_{n}} \, \inner{X_{\tau^{n}_{k+1} \wedge t}-X_{\tau^{n}_{k} \wedge t}}{Y_{\tau^{n}_{k+1} \wedge t}-Y_{\tau^{n}_{k} \wedge t}} \label{eqDecompTheoIntByParts}. 
\end{eqnarray}
Observe that by Theorem \ref{theoRiemannRepresentation}, the first and second terms in \eqref{eqDecompTheoIntByParts} converges in ucp to $\displaystyle{\int_{0}^{t} \, Y_{-} \, dX}$ and $\displaystyle{ \int_{0}^{t} \, X_{-} \, dY }$. 
Then, if we define the process $[X,Y]$ by 
$$ [X,Y]_{t} \defeq \inner{X_{t}}{Y_{t}}-\inner{X_{0}}{Y_{0}}-\int_{0}^{t} \, Y_{-} \, dX - \int_{0}^{t} \, X_{-} \, dY,  $$
we immediately get \eqref{eqDefIntegByParts} and that $[X,Y]$ is a real-valued adapted c\`{a}dl\`{a}g process. Moreover, from \eqref{eqDecompTheoIntByParts} and the arguments given above we conclude \eqref{eqDefQuadraCovaria}. 
\end{proof}

\begin{definition}
We call the real-valued processes $[X,Y]$ defined in Theorem \ref{theoIntegByPartsFormula} as the \emph{quadratic covariation} of $X$ and $Y$. 
\end{definition}

Some of the properties of the quadratic covariation are given in the following result. 

\begin{corollary}\label{coroPropQuadraCova} 
Suppose that $\Phi$, $\Phi'$, $X$ and $Y$ are as in Theorem \ref{theoIntegByPartsFormula}. Then
\begin{enumerate}
\item $\Delta [X,Y]_{0}=\inner{X_{0}}{Y_{0}}$ and $\Delta [X,Y]_{t}=\inner{\Delta X_{t}}{\Delta Y_{t}}$ for $t>0$.  
\item If $X$ or $Y$ is a continuous process, then $[X,Y]$ is a continuous process.  
\item If $\tau$ is a stopping time, 
$[X^{\tau},Y]=[X,Y^{\tau}]=[X^{\tau},Y^{\tau}]=[X,Y]^{\tau}$. 
\end{enumerate}
\end{corollary}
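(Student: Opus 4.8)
The plan is to deduce all three properties from the defining identity
$$[X,Y]_{t}=\inner{X_{t}}{Y_{t}}-\inner{X_{0}}{Y_{0}}-\int_{0}^{t} Y_{-}\,dX-\int_{0}^{t} X_{-}\,dY,$$
combined only with the jump formula of Proposition \ref{propProperStochIntegLocalBounded}, the stopping identity of Theorem \ref{theoLocalBoundWeakIntegNucleSpace}(2), and the Riemann representation \eqref{eqDefQuadraCovaria}; no new analytic input is needed, since the hypotheses guarantee $Y_{-}\in\mathcal{P}_{loc}(\Phi)$ and $X_{-}\in\mathcal{P}_{loc}(\Phi')$ and hence that both integrals and all covariations appearing below are defined. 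For (1) I would first compute the jumps of the two stochastic integrals: applying Proposition \ref{propProperStochIntegLocalBounded}(1) with integrator $X$ and integrand $Y_{-}$ gives $\Delta\left(\int_{0}^{t} Y_{-}\,dX\right)=\inner{\Delta X_{t}}{Y_{t-}}$, and applying the same result with the roles of $\Phi$ and $\Phi'$ interchanged (integrator $Y$, integrand $X_{-}$) gives $\Delta\left(\int_{0}^{t} X_{-}\,dY\right)=\inner{X_{t-}}{\Delta Y_{t}}$. Bilinearity of the canonical pairing yields, for $t>0$, the expansion $\Delta\inner{X_{t}}{Y_{t}}=\inner{\Delta X_{t}}{Y_{t-}}+\inner{X_{t-}}{\Delta Y_{t}}+\inner{\Delta X_{t}}{\Delta Y_{t}}$, so subtracting the two integral jumps from $\Delta\inner{X_{t}}{Y_{t}}$ leaves exactly $\Delta[X,Y]_{t}=\inner{\Delta X_{t}}{\Delta Y_{t}}$. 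For $t=0$ I would read off from \eqref{eqDefQuadraCovaria} that $[X,Y]_{0}=\inner{X_{0}}{Y_{0}}$, since every increment in the sum vanishes when $t=0$; with the usual convention $[X,Y]_{0-}=0$ this gives $\Delta[X,Y]_{0}=\inner{X_{0}}{Y_{0}}$.

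Property (2) is then immediate from (1): if, say, $X$ is continuous then $\Delta X_{t}=0$ for every $t>0$, whence $\Delta[X,Y]_{t}=\inner{\Delta X_{t}}{\Delta Y_{t}}=0$ for all $t>0$. Since $[X,Y]$ is c\`{a}dl\`{a}g (Theorem \ref{theoIntegByPartsFormula}), the absence of jumps on $(0,\infty)$ means its paths are continuous; the initial value $[X,Y]_{0}=\inner{X_{0}}{Y_{0}}$ is a path starting point and does not obstruct continuity. The case where $Y$ is continuous is symmetric.

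For (3) I would first observe that $X^{\tau}$ and $Y^{\tau}$ are again good integrators: because $\int H\,dX^{\tau}=\left(\int H\,dX\right)^{\tau}$ by Theorem \ref{theoLocalBoundWeakIntegNucleSpace}(2) and $z\mapsto z^{\tau}$ is continuous on $S^{0}$, the continuity of $H\mapsto\int H\,dX$ into $S^{0}$ is inherited by $X^{\tau}$, and likewise for $Y^{\tau}$; hence all four quadratic covariations are defined. I would then prove the chain of equalities in two moves. First, for $[X^{\tau},Y^{\tau}]=[X,Y]^{\tau}$: the stopping identity gives $\left(\int Y_{-}\,dX\right)^{\tau}=\int Y_{-}\,dX^{\tau}$, and since $X^{\tau}$ is constant on $(\tau,\infty)$ one may replace the integrand $Y_{-}$ by $(Y^{\tau})_{-}$ without changing the integral (they agree on $[0,\tau]$, and $dX^{\tau}$ charges nothing afterwards); treating $\int X_{-}\,dY$ the same way and using $\inner{X^{\tau}_{t}}{Y^{\tau}_{t}}=\inner{X_{t\wedge\tau}}{Y_{t\wedge\tau}}$ converts the definition of $[X^{\tau},Y^{\tau}]$ into that of $[X,Y]^{\tau}$.

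Second, for $[X^{\tau},Y]=[X^{\tau},Y^{\tau}]$ I would subtract the two definitions. The difference of the pairing terms is $\inner{X^{\tau}_{t}}{Y_{t}-Y^{\tau}_{t}}=\inner{X_{\tau}}{Y_{t}-Y_{t\wedge\tau}}$; the two integrals against $dX^{\tau}$ cancel because their integrands coincide on $[0,\tau]$ while $dX^{\tau}$ vanishes on $(\tau,\infty)$; and the remaining integral against $dY-dY^{\tau}$ equals $\inner{X_{\tau}}{Y_{t}-Y_{t\wedge\tau}}$, since $(X^{\tau})_{-}=X_{\tau}$ on $(\tau,\infty)$ and $dY-dY^{\tau}$ is supported there. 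These two contributions cancel, giving $[X^{\tau},Y]=[X^{\tau},Y^{\tau}]$, and the symmetric computation yields $[X,Y^{\tau}]=[X^{\tau},Y^{\tau}]$, closing the chain. I expect the main obstacle to be precisely this last bookkeeping at and after $\tau$: one must justify extracting the $\mathcal{F}_{\tau}$-measurable value $X_{\tau}$ from the integral against $dY-dY^{\tau}$ (a localized version of Corollary \ref{coroWeakIntegF0Measurable}) and verify that every boundary contribution at $\tau$ cancels. A perhaps cleaner alternative is to argue the whole of (3) directly from the Riemann representation \eqref{eqDefQuadraCovaria}, where stopping the processes simply stops each increment, the increments past $\tau$ vanish, and the single partition cell straddling $\tau$ contributes a term tending to $\inner{\Delta X_{\tau}}{Y_{\tau^{n}_{k+1}\wedge t}-Y_{\tau}}$, which vanishes in the limit by right-continuity of $Y$.
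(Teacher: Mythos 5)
Your argument for parts (1) and (2) is exactly the paper's: the paper likewise reads $\Delta[X,Y]_{0}=\inner{X_{0}}{Y_{0}}$ off \eqref{eqDefQuadraCovaria}, expands $\Delta\inner{X_{t}}{Y_{t}}$ bilinearly, identifies the cross terms $\inner{\Delta X_{t}}{Y_{t-}}$ and $\inner{X_{t-}}{\Delta Y_{t}}$ with the jumps of the two stochastic integrals via Proposition \ref{propProperStochIntegLocalBounded}, and deduces (2) as an immediate consequence. The only divergence is in part (3), where the paper disposes of the whole chain of identities with the single remark that it ``follows from \eqref{eqDefQuadraCovaria}'' --- i.e.\ the Riemann-sum route you describe at the very end of your proposal, in which stopping the processes stops each increment and the cell straddling $\tau$ contributes nothing in the limit by right-continuity. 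Your primary route, through the defining identity $[X,Y]=\inner{X}{Y}-\inner{X_{0}}{Y_{0}}-\int Y_{-}\,dX-\int X_{-}\,dY$ together with the stopping identity of Theorem \ref{theoLocalBoundWeakIntegNucleSpace}(2), also works but is heavier: as you yourself flag, it forces you to extract the $\mathcal{F}_{\tau}$-measurable, generally unbounded random element $X_{\tau}$ from an integral against $dY-dY^{\tau}$, which needs a localized version of Lemma \ref{lemmActionStoInteElemBounInteg}/Corollary \ref{coroWeakIntegF0Measurable} and some care with the boundary term at $\tau$. The Riemann representation sidesteps all of that bookkeeping, which is presumably why the paper relies on it; your version buys nothing extra here, so I would promote your ``cleaner alternative'' to the main argument for (3).
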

\begin{proof}
For $(1)$, it is clear from \eqref{eqDefQuadraCovaria} that $\Delta [X,Y]_{0}=\inner{X_{0}}{Y_{0}}$. If $t>0$ we have from Proposition \ref{propProperStochIntegLocalBounded} that
\begin{eqnarray*}
\inner{\Delta X_{t}}{\Delta Y_{t}} 
& = & \Delta\inner{X_{t}}{Y_{t}}-\inner{X_{t-}}{Y_{t}-Y_{t-}}-\inner{X_{t}-X_{t-}}{Y_{t-}} \\
& = & \Delta\inner{X_{t}}{Y_{t}} -\Delta \left( \int_{0}^{t} \, X_{-} \, dY \right) -\Delta \left( \int_{0}^{t} \, Y_{-} \, dX \right) 
= \Delta [X,Y]_{t}. 
\end{eqnarray*}
Property $(2)$ follows immediately from $(1)$ and property $(3)$ follows from  \eqref{eqDefQuadraCovaria}. 
\end{proof}

\begin{example}\label{examQuadraCova}
In this example we discuss some situations for which the quadratic covariation can be computed using the quadratic covariation for Hilbert space-valued processes. 

Suppose that $\Phi$ and $\Phi'$ are complete, barrelled nuclear spaces. Assume that $Y$ is a $\Phi$-valued good integrator which is also a semimartingale in the projective limit sense, i.e. for every Hilbertian seminorm $q$ on $\Phi$ we have that $\tilde{Y}^{q} \defeq i_{q} Y$ is a semimartingale with values in $\Phi_{q}$ (this definition coincides with that in Remark \ref{remaSemiDefiUstunel} but for processes with values in $\Phi$ instead of $\Phi'$). Let us recall that when $\Phi$ is a nuclear Fr\'{e}chet space any $\Phi$-valued semimartingale (in our cylindrical definition) is a semimartingale in the projective limit sense (see \cite{Ustunel:1982-1}, Corollary II.3). 

Assume that $X$ is a $\Phi'$-valued $\mathcal{H}^{1}_{S}$-semimartingale satisfying any of the equivalent conditions in Proposition \ref{propCylSemimContOpeSpaceSemim}. Observe that $X$ is a good integrator by Proposition \ref{propGoodIntegratorsNuclearSpace}. Moreover, by Theorem 3.22 in \cite{FonsecaMora:Semi} there exists a continuous Hilbertian seminorm $p$ on $\Phi$ and a $\Phi'_{p}$-valued c\`{a}dl\`{a}g $\mathcal{H}^{1}_{S}$-semimartingale $\tilde{X}^{p}$ such that $X=i'_{p} \tilde{X}^{p}$. Then for each $t \geq 0$ we have $\Prob$-a.e. 
$\inner{X_{t}(\omega)}{Y_{t}(\omega)}=\inner{i'_{p} \tilde{X}^{p}}{i_{p}Y_{t}(\omega)}=\inner{\tilde{X}^{p}}{\tilde{Y}^{p}},$
where $\inner{\tilde{X}^{p}}{\tilde{Y}^{p}}$ corresponds to the duality in the pair $(\Phi_{p},\Phi'_{p})$.  Then by \eqref{eqDefQuadraCovaria}, we have 
\begin{eqnarray*}
[X,Y]_{t} & = & \inner{X_{0}}{Y_{0}}+ \lim_{n \rightarrow \infty} \sum_{k=1}^{m_{n}} \, \inner{X_{\tau^{n}_{k+1} \wedge t}-X_{\tau^{n}_{k} \wedge t}}{Y_{\tau^{n}_{k+1} \wedge t}-Y_{\tau^{n}_{k} \wedge t}} \\
& = & \inner{\tilde{X}^{p}_{0}}{\tilde{Y}^{p}_{0}}+ \lim_{n \rightarrow \infty} \sum_{k=1}^{m_{n}} \, \inner{\tilde{X}^{p}_{\tau^{n}_{k+1} \wedge t}-\tilde{X}^{p}_{\tau^{n}_{k} \wedge t}}{\tilde{Y}^{p}_{\tau^{n}_{k+1} \wedge t}-\tilde{Y}^{p}_{\tau^{n}_{k} \wedge t}} \\
& = & [\tilde{X}^{p},\tilde{Y}^{p}], 
\end{eqnarray*}
where $[\tilde{X}^{p},\tilde{Y}^{p}]$ is the quadratic covariation  of $\tilde{X}^{p}$ and $\tilde{Y}^{p}$ as Hilbert space valued processes with respect to the duality in the pair $(\Phi_{p},\Phi'_{p})$ (see Result 26.9 in \cite{Metivier}, p.185). 
Observe that from this result we can conclude that $[X,Y]$ has finite variation and by \eqref{eqDefIntegByParts} that $(\inner{X_{t}}{Y_{t}}: t \geq 0)$ has a c\`{a}dl\`{a}g semimartingale version. 

Among other properties of $[X,Y]$ which can be ``transferred'' from corresponding properties of  $[\tilde{X}^{p},\tilde{Y}^{p}]$, observe that if $X$ has continuous paths and $Y$ is a projective limit of processes of finite variation  (i.e. each $\tilde{Y}^{q}$ is of finite variation in $\Phi_{q}$) then $[\tilde{X}^{p},\tilde{Y}^{p}]=0$, thus $[X,Y]=0$. 

We can obtain the same conclusions of the above paragraphs if we assume that $X$ is a $\Phi'$-valued good integrator which is also a semimartingale in the projective limit sense (if $\Phi'$ is nuclear Fr\'{e}chet any $\Phi'$-valued semimartingale in our cylindrical sense is a semimartingale in the projective limit sense; see \cite{Ustunel:1982-1}, Corollary II.3) and $Y$ is a $\Phi$-valued $\mathcal{H}^{1}_{S}$-semimartingale satisfying any of the equivalent conditions in Proposition \ref{propCylSemimContOpeSpaceSemim}. 
\end{example}

\begin{remark}\label{remaUstunelIntegByParts}
In this section we have extended the results of \"{U}st\"{u}nel (Theorem IV.1 in \cite{Ustunel:1982}) on the integration by parts formula and the existence of quadratic covariation for processes taking values in nuclear spaces. Indeed, in Theorem \ref{theoIntegByPartsFormula}, Corollary \ref{coroPropQuadraCova} and Example \ref{examQuadraCova} we consider less demanding assumptions on $\Phi$ and $\Phi'$  that those considered by  \"{U}st\"{u}nel in  \cite{Ustunel:1982}. In particular, we do not require neither that $\Phi$ or $\Phi'$ be bornological nor separable.  
\end{remark}

\section{The Stochastic Fubini Theorem and Applications to Stochastic Evolution Equations}\label{sectStochFubiniSEE}


\subsection{The Stochastic Fubini Theorem}\label{subsectStochFubiniTheorem}

The main objective of this section is to prove the following result:


\begin{theorem}[Stochastic Fubini Theorem]\label{theoStochFubini}
Let $(E,\mathcal{E},\varrho)$ be a finite measure space. Assume that $\Phi$ is a complete, barrelled nuclear space   whose strong dual $\Phi'$ is nuclear. Let $X$ be a $\Phi'$-valued semimartingale which is a good integrator. 
Let $H: [0,\infty) \times \Omega \times E \rightarrow \Phi$ such that $\forall f \in \Phi'$, the mapping $(e,t,\omega) \mapsto \inner{f}{H(e,t,\omega)}$ is $\mathcal{E} \otimes \mathcal{P}$-measurable and $\sup_{(e,t,\omega)}\abs{\inner{f}{H(e,t,\omega)}}< \infty$. Then, 
\begin{enumerate}
\item For every $(t,\omega) \in [0,\infty) \times \Omega$, the mapping $e \mapsto H(e,t,\omega)$ is Bochner integrable in $\Phi$, and the mapping $(t,\omega) \mapsto \int_{E}  H(e,t,\omega) \varrho(de)$ belongs to $b\mathcal{P}(\Phi)$. 
\item There exists a $\mathcal{E} \otimes \mathcal{B}(\R_{+})\otimes \mathcal{F}$-measurable function $K:E \times [0,\infty) \times \Omega \rightarrow \R$ such that for each $e \in E$, 
$K(e, \cdot,\cdot)$ is a c\`{a}dl\`{a}g version of $\int  H(e,\cdot, \cdot) dX$. 
\item $\int_{E} K(e, t,\cdot) \varrho(de)$  exists and has a c\`{a}dl\`{a}g version. Moreover,  up to indistinguishability
\begin{equation}\label{eqStochasticFubiniIdentity}
 \int_{E} K(e, t,\cdot) \varrho(de) = \int_{0}^{t} \left( \int_{E}  H(e, s,\cdot) \varrho(de)\right) dX_{s}. 
\end{equation}
\end{enumerate}
\end{theorem}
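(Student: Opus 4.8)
Throughout write $\bar H_t(\omega)\defeq\int_E H(e,t,\omega)\,\varrho(de)$ and, for fixed $e$, $Z^e\defeq\int H(e,\cdot,\cdot)\,dX$. The three claims split into a Bochner-integration statement (1), which is essentially measure-theoretic, and the genuinely stochastic content (2)--(3). The plan is to dispose of (1) directly, to obtain the jointly measurable version in (2) by an approximation/monotone-class argument, and to prove the identity (3) first for elementary integrands and then by a density argument exploiting the good-integrator continuity and the finiteness of $\varrho$.

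\emph{Part (1).} Since $\Phi$ is complete, barrelled and nuclear it is reflexive, so exactly as in the proof of Proposition \ref{propIsomorBounPredSpaceLinOper} the hypothesis $\sup_{(e,t,\omega)}\abs{\inner{f}{H(e,t,\omega)}}<\infty$ for every $f\in\Phi'$ shows that the whole image $\{H(e,t,\omega)\}$ is weakly bounded, hence strongly bounded, hence relatively compact (and thus separable) in the complete nuclear space $\Phi$. For fixed $(t,\omega)$ the map $e\mapsto H(e,t,\omega)$ is then weakly measurable with separable range, so by the Pettis measurability theorem it is strongly measurable; because $\varrho$ is finite and $\sup_{e}p(H(e,t,\omega))<\infty$ for every continuous seminorm $p$, the Bochner integral $\bar H_t(\omega)$ exists in $\Phi$. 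Using that continuous functionals commute with the Bochner integral gives $\inner{f}{\bar H}=\int_E\inner{f}{H(e,\cdot,\cdot)}\,\varrho(de)$, whence weak predictability follows from the classical Fubini theorem applied to the $\mathcal{E}\otimes\mathcal{P}$-measurable integrand, and weak boundedness from $\abs{\inner{f}{\bar H}}\le\varrho(E)\sup_{(e,t,\omega)}\abs{\inner{f}{H}}<\infty$. Hence $\bar H\in b\mathcal{P}(\Phi)$.

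\emph{Part (2).} For each fixed $e$ the section $H(e,\cdot,\cdot)$ is weakly predictable and weakly bounded, so $H(e,\cdot,\cdot)\in b\mathcal{P}(\Phi)$ and, as $X$ is a good integrator, $Z^e$ is a genuine element of $S^0$. The first task is to show $e\mapsto Z^e$ is Borel measurable from $(E,\mathcal{E})$ into the complete metric space $S^0$. I would argue by approximation: the tensor-simple integrands $\sum_i\mathbbm{1}_{A_i}(e)G_i$ with $A_i\in\mathcal{E}$, $G_i\in b\mathcal{P}(\Phi)$ yield $Z^e=\sum_i\mathbbm{1}_{A_i}(e)\int G_i\,dX$, which is visibly measurable; a monotone-class argument on the scalar functionals $\inner{f}{H}$, together with a dominated-convergence property for the integral $H\mapsto\int H\,dX$ (pointwise weakly convergent, uniformly bounded integrands giving $S^0$-convergent integrals, established as in the scalar theory from the good-integrator continuity), extends measurability to general $H$ since measurability passes to pointwise limits in a metric space. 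Once $e\mapsto Z^e$ is measurable I would produce a jointly measurable càdlàg $K$ by approximating it uniformly by countably-valued measurable maps, choosing one càdlàg version per attained value, and passing to an a.s.\ locally uniform limit via a Borel--Cantelli estimate for the $F$-seminorms defining the ucp topology (coarser than Emery's). This extraction of a jointly measurable version is the main obstacle, precisely as in the finite-dimensional Fubini theorem.

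\emph{Part (3).} By Part (1), $\bar H\in b\mathcal{P}(\Phi)$, so the right-hand side of \eqref{eqStochasticFubiniIdentity}, namely $\int\bar H\,dX$, is a well-defined semimartingale. I would first verify the identity for tensor-simple $H=\sum_i\mathbbm{1}_{A_i}(e)G_i$: by linearity of both the stochastic integral and the $\varrho$-integral, together with the action \eqref{eqActionWeakIntegSimpleIntegNuclear} on elementary integrands, both sides collapse to the same finite combination $\sum_i\varrho(A_i)\int G_i\,dX$, giving also the càdlàg version. For general $H$ I would approximate by tensor-simple $H^n$ (chosen, via the construction of Part (2), with uniform weak bounds and $\bar H^n\to\bar H$ in $b\mathcal{P}(\Phi)$ and $Z^{e}_{n}\to Z^e$ in $S^0$ for each $e$): good-integrator continuity gives $\int\bar H^n\,dX\to\int\bar H\,dX$ in $S^0$, while on the left the uniform bound and finiteness of $\varrho$ furnish the domination needed to interchange $\lim_n$ with $\int_E(\cdot)\,\varrho(de)$ in ucp. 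Combining with the simple-function identity $\int_E K^n(e,t,\cdot)\,\varrho(de)=\int_0^t\bar H^n\,dX$ yields \eqref{eqStochasticFubiniIdentity}, and simultaneously identifies $\int_E K(e,t,\cdot)\,\varrho(de)$ with the càdlàg semimartingale $\int\bar H\,dX$, establishing its existence and the existence of a càdlàg version. The two places demanding genuine care are the dominated-convergence input of Part (2) and the measurable-version extraction flagged above; the remaining steps are routine given the machinery of Theorem \ref{theoBoundedWeakIntegNucleSpace} and Corollary \ref{coroDenseSimpleBoundedProcesses}.
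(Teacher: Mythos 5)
Your Part (1) is essentially the paper's argument and is fine. The gap is in Parts (2)--(3), and it is concentrated in your choice of approximating class. You approximate $H$ by tensor-simple integrands $\sum_i\mathbbm{1}_{A_i}(e)G_i$ via a monotone-class argument, which only produces \emph{pointwise} (weakly) convergent, uniformly bounded approximants, and you then invoke a ``dominated-convergence property'' for $H\mapsto\int H\,dX$ to pass to the limit in $S^{0}$. No such property is established in the paper, and it does not follow from the definition of a good integrator: that definition only gives continuity of the integral with respect to the topology of \emph{uniform} convergence on $\R_{+}\times\Omega$ in every seminorm (the topology of $b\mathcal{P}(\Phi)$), and uniformly bounded pointwise convergence does not imply convergence in that topology. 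For a single real-valued semimartingale a dominated convergence theorem is available (via Bichteler--Dellacherie), but for a general cylindrical good integrator it is exactly the kind of statement that would need its own proof. The paper sidesteps this entirely: its Lemma \ref{lemmSequeApproxFubini} uses the nuclearity of $\Phi'$ (via the nuclear representation of the operator in $\mathcal{L}_{b}(\Phi',b\mathcal{P}_{E})$ associated with $H$) to approximate $H$ \emph{uniformly} over $(e,t,\omega)$ by elementary processes of the form $\sum_{k}h_{k}(e,t,\omega)\phi_{k}$, with the scalar parts $h_{k}$ jointly $\mathcal{E}\otimes\mathcal{P}$-measurable. This is the step where the hypothesis that $\Phi'$ is nuclear is actually consumed; your proposal never uses it in Parts (2)--(3), which is a sign the approximation scheme is too weak. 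With the paper's approximants, each term $h_{k}(e,\cdot,\cdot)\cdot X(\phi_{k})$ is covered by the classical one-dimensional stochastic Fubini theorem, which delivers both the jointly measurable c\`{a}dl\`{a}g versions of Part (2) and the identity \eqref{eqStochasticFubiniIdentity} for the approximants, and the uniform convergence transfers through the good-integrator continuity.

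A second, smaller gap sits in your interchange of $\lim_{n}$ with $\int_{E}(\cdot)\,\varrho(de)$ in Part (3). For each fixed $e$ you only have $K^{n}(e,\cdot,\cdot)\to K(e,\cdot,\cdot)$ in ucp, i.e.\ in probability; ``uniform bound plus finite $\varrho$'' does not by itself let you dominate $\int_{E}\sup_{0\le t\le T}\abs{K(e,t,\cdot)-K^{n}(e,t,\cdot)}\,\varrho(de)$, since there is no almost-sure control and no uniformity in $e$. The paper resolves this by constructing a \emph{measurable} diagonal index $n_{k}(e)$ (in the spirit of Lemma 12.4.17 and Theorem 12.4.18 in \cite{CohenElliott}) so that $K_{n_{k}(e)}(e,\cdot,\cdot)\to K(e,\cdot,\cdot)$ in probability uniformly over $e\in E$ and $t\in[0,T]$, after which the interchange and the identification of $\int_{E}K(e,t,\cdot)\,\varrho(de)$ with $\int_{0}^{t}\bigl(\int_{E}H(e,s,\cdot)\,\varrho(de)\bigr)dX_{s}$ go through. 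You would need to add an argument of this type (or some other device producing uniformity in $e$) to close Part (3).
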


Before we can prove Theorem \ref{theoStochFubini} we will need the following preliminary results. Below $(E,\mathcal{E},\varrho)$, $\Phi$ and $\Phi'$ will be as in 
Theorem \ref{theoStochFubini}. 

Denote by $b\mathcal{P}_{E}$ the linear space of all the bounded mappings $h:E \times [0,\infty) \times \Omega  \rightarrow \R$ which are $\mathcal{E} \otimes \mathcal{P}$-measurable. This space is Banach when equipped with the uniform norm $\norm{h}_{u}=\sup_{(e,t,\omega)}\abs{h(e,t,\omega)}$. 

Let $b\mathcal{P}_{E}(\Phi)$ be the linear space of all mappings $H: E \times  [0,\infty) \times \Omega  \rightarrow \Phi$ such that for every $f \in \Phi'$, the mapping $(e,t,\omega) \mapsto \inner{f}{H(e,t,\omega)}$ is $\mathcal{E} \otimes \mathcal{P}$-measurable and $\sup_{(e,t,\omega)}\abs{\inner{f}{H(e,t,\omega)}}< \infty$. As in Proposition \ref{propIsomorBounPredSpaceLinOper} our assumptions on $\Phi$ imply that the mapping  \eqref{defIsomorBounPredSpaceLinOper} is an isomorphism  from $b\mathcal{P}_{E}(\Phi)$ into $\mathcal{L}_{b}(\Phi', b\mathcal{P}_{E})$. 

In analogy with our development in Section \ref{sectionCharacWeakIntegransNuclear}, we can equip $b\mathcal{P}_{E}(\Phi)$ with the topology of convergence in $\Phi$ uniformly on $E \times  \R_{+} \times \Omega$, i.e. 
the topology generated by the family of seminorms $\displaystyle{p(H)=\sup_{(e,t,\omega)} p(H(e,t,\omega))}$, where $p$ ranges over a generating family of seminorms for the topology on $\Phi$. The next result shows that our hypothesis that $\Phi'$ is nuclear extend the conclusion of Corollary \ref{coroDenseSimpleBoundedProcesses}. 

\begin{lemma}\label{lemmSequeApproxFubini} 
For every $H \in b\mathcal{P}_{E}(\Phi)$ there exists a sequence of elementary processes: 
\begin{equation}\label{eqSequElemIntegFubini}
 H_{n}(e,t,\omega)=\sum_{k=1}^{n} h_{k}(e,t,\omega) \phi_{k}, \quad \forall t \geq 0, \, \omega \in \Omega, e \in E,
\end{equation}
where $(h_{k}: k \in \N) \subseteq  b\mathcal{P}_{E}$ and $(\phi_{k}: k \in \N) \subseteq \Phi$; such that $(H_{n}: n \in \N)$ converges to $H$ in $b\mathcal{P}_{E}(\Phi)$. 
\end{lemma}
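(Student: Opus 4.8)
The plan is to follow the strategy of the proofs of Lemmas~\ref{lemmActionStoInteElemBounInteg} and \ref{lemmApproByElementaryProcess}, reducing the problem to a separable Banach space continuously included in $\Phi$, and then to extract a genuine (countable) series representation from the nuclearity of the relevant inclusions. First I would note that, since $H \in b\mathcal{P}_{E}(\Phi)$ is weakly bounded, the argument of Proposition~\ref{propIsomorBounPredSpaceLinOper} shows that the image of $H$ is contained in a bounded subset of $\Phi$; because $\Phi$ is complete, barrelled and nuclear this image lies in a compact, convex, balanced set $K$ (Corollary~50.2.1 in \cite{Treves}, p.520), and because $\Phi'$ is nuclear and $\Phi$ reflexive there are further such sets $K \subseteq B \subseteq \widetilde{B}$ for which the canonical inclusions $\Phi[K] \rightarrow \Phi[B] \rightarrow \Phi[\widetilde{B}]$ are nuclear and $\Phi[B], \Phi[\widetilde{B}]$ are separable Banach spaces. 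In particular $K$ is relatively compact in $\Phi[B]$, so $H$ may be regarded as a $\Phi[B]$-valued map with separable range; as $\inner{f}{H}$ is $\mathcal{E}\otimes\mathcal{P}$-measurable for every $f \in \Phi'$ and $\Phi'$ is total over $\Phi[B]$, the Pettis measurability theorem shows that $H$ is strongly $\mathcal{E}\otimes\mathcal{P}$-measurable in $\Phi[B]$.

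Next I would expand the nuclear inclusion $\Phi[B] \rightarrow \Phi[\widetilde{B}]$ in the form $x \mapsto \sum_{k} \lambda_{k} \inner{c_{k}'}{x} b_{k}$, where $\sum_{k} \abs{\lambda_{k}} < \infty$, the $c_{k}' \in (\Phi[B])'$ satisfy $\norm{c_{k}'} \leq 1$, and the $b_{k} \in \Phi[\widetilde{B}]$ satisfy $q_{\widetilde{B}}(b_{k}) \leq 1$. Evaluating this identity at $x = H(e,t,\omega)$, and recalling that the inclusion is the identity on vectors, gives pointwise
\begin{equation*}
 H(e,t,\omega) = \sum_{k=1}^{\infty} \lambda_{k} \inner{c_{k}'}{H(e,t,\omega)} \, b_{k}.
\end{equation*}
Setting $h_{k} \defeq \lambda_{k} \inner{c_{k}'}{H}$ and $\phi_{k} \defeq b_{k}$, each $h_{k}$ is bounded (by $\abs{\lambda_{k}}$, since the image of $H$ lies in the $q_{B}$-unit ball) and, crucially, $\mathcal{E}\otimes\mathcal{P}$-measurable because $c_{k}' \in (\Phi[B])'$ and $H$ is strongly measurable in $\Phi[B]$; hence $h_{k} \in b\mathcal{P}_{E}$ and $\phi_{k} \in \Phi[\widetilde{B}] \subseteq \Phi$.

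Finally, since the inclusion $\Phi[\widetilde{B}] \hookrightarrow \Phi$ is continuous, for every continuous seminorm $p$ on $\Phi$ there is $C>0$ with $p \leq C q_{\widetilde{B}}$ on $\Phi[\widetilde{B}]$, and therefore, writing $M \defeq \sup_{k} \norm{\inner{c_{k}'}{H}}_{u} \, q_{\widetilde{B}}(b_{k}) < \infty$,
\begin{equation*}
 \sup_{(e,t,\omega)} p\Big( H(e,t,\omega) - \sum_{k=1}^{n} h_{k}(e,t,\omega) \phi_{k} \Big) \leq C M \sum_{k>n} \abs{\lambda_{k}} \longrightarrow 0 \quad (n \to \infty).
\end{equation*}
This shows that the partial sums $H_{n} = \sum_{k=1}^{n} h_{k} \phi_{k}$, which are elementary processes of the form \eqref{eqSequElemIntegFubini}, converge to $H$ in $b\mathcal{P}_{E}(\Phi)$, which is the desired conclusion.

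The hardest part is the interplay between measurability and the nuclear factorization: I must produce the scalar coefficients as functionals acting on a space in which $H$ is strongly measurable, which is why the expansion is carried out on the \emph{second} nuclear inclusion $\Phi[B] \rightarrow \Phi[\widetilde{B}]$ (whose defining functionals lie in $(\Phi[B])'$) rather than on $\Phi[K] \rightarrow \Phi[B]$, whose functionals would only be $q_{K}$-continuous and need not be measurable against $H$. This is precisely the point at which the nuclearity of $\Phi'$ (together with the reflexivity of $\Phi$) is indispensable: it both renders $K$ relatively compact in the separable Banach space $\Phi[B]$ and supplies the nuclear inclusions that convert the mere net-density of elementary processes from Corollary~\ref{coroDenseSimpleBoundedProcesses} into the countable, absolutely convergent series required here.
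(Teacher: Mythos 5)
Your proof is correct, but it takes a genuinely different route from the paper's. The paper's own proof is much shorter: it pushes $H$ through the isomorphism \eqref{defIsomorBounPredSpaceLinOper} onto an operator $A \in \mathcal{L}_{b}(\Phi', b\mathcal{P}_{E})$, invokes the fact that every continuous linear map from the nuclear space $\Phi'$ into a Banach space is nuclear, and reads the series $A(f)=\sum_{m}\lambda_{m}\inner{f}{\phi_{m}}h_{m}$ (with $\sum\abs{\lambda_{m}}<\infty$, $(\phi_{m})$ in a compact convex balanced subset of $\Phi$, $\norm{h_{m}}_{u}\leq 1$) directly off the representation theorem for nuclear operators (Theorems III.7.1--7.2 in \cite{Schaefer}); convergence of the partial sums in $\mathcal{L}_{b}(\Phi',b\mathcal{P}_{E})$ then transfers back through the isomorphism, and the measurability and boundedness of the $h_{m}$ come for free because the representation already places them in the unit ball of $b\mathcal{P}_{E}$. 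You instead localize $H$ to the step spaces $\Phi[B]\rightarrow\Phi[\widetilde{B}]$, use dual nuclearity of $\Phi$ to expand the identity on the range of $H$ as a nuclear Banach-space inclusion, and build the coefficients $h_{k}=\lambda_{k}\inner{c_{k}'}{H}$ by hand, which forces you to justify their measurability via Pettis. That extra work is exactly where your argument is more explicit than the paper's (and it mirrors the technique the paper itself uses in Lemma \ref{lemmActionStoInteElemBounInteg} and in part (1) of Theorem \ref{theoStochFubini}); the paper's route buys brevity by hiding the same nuclearity input inside the operator-theoretic representation theorem. Both proofs rest on the same hypotheses and both are valid.
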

\begin{proof}
Let $A$ the image of $H$ under the isomorphism \eqref{defIsomorBounPredSpaceLinOper} from $b\mathcal{P}_{E}(\Phi)$ onto $\mathcal{L}_{b}(\Phi', b\mathcal{P}_{E})$. Since $\Phi'$ is nuclear, $A$ is a nuclear operator and has representation (Theorems III.7.1 and III.7.2 in \cite{Schaefer}, p.99-101)
$$ A(f)=\sum_{m=1}^{\infty} \lambda_{m} \inner{f}{\phi_{m}}h_{m}, $$
where $(\phi_{m}: m \in \N) \subseteq K$ for $K$ a compact, convex, balanced subset of $\Phi$ (that $K$ can be chosen to be compact is a consequence that $\Phi$ is a reflexive nuclear space; see Corollary 50.2.1 in \cite{Treves}, p.520), $\norm{h_{m}}_{u} \leq 1$, and $\sum_{m \in \N} \abs{\lambda_{m}} < \infty$.  Since the convergence of the series is in $\mathcal{L}_{b}(\Phi', b\mathcal{P}_{E})$, by the isomorphism   
\eqref{defIsomorBounPredSpaceLinOper} we have that the sequence $H_{n}=\sum_{m=1}^{n} \lambda_{m} \inner{f}{\phi_{m}}h_{m}$ of elementary processes  converges to $H$ in  $b\mathcal{P}_{E}(\Phi)$. 
\end{proof}

\begin{proof}[Proof of Theorem \ref{theoStochFubini}] For convenience we divide the proof in three parts.

\textbf{Proof of (1)} 
From our assumption $\sup_{(t,\omega,e)}\abs{\inner{f}{H(t,\omega,e)}}< \infty$ $\forall f \in \Phi'$, and by the arguments used in the proof of Proposition \ref{propIsomorBounPredSpaceLinOper}, there exists a compact, convex, balanced subset $D$ of $\Phi$ such that the image of $H$ is contained in $D$.   
As in the proof of Lemma \ref{lemmActionStoInteElemBounInteg} and using that $\Phi'$ is nuclear, there exists another such set $B$, $D \subseteq B$, with the property that the vector space $\Phi[B]$ spanned by $B$ is a separable Banach space when equipped with the gauge norm $q_{B}(\phi)=\inf\{\lambda> 0:  \phi \in \lambda B\}$ (see Corollary 36.1 in \cite{Treves}, p.371). 

Given the above, we can consider $H$ as a bounded mapping from $[0, \infty) \times \Omega \times e$ into $\Phi[B]$ which is weakly $\mathcal{P} \otimes \mathcal{E}$-measurable. Since  $\Phi[B]$ is a separable Banach space, by Pettis measurability theorem $H$ is $\mathcal{P} \otimes \mathcal{E} / \mathcal{B}(\Phi[B])$-measurable, and hence for each $(t,\omega) \in [0,\infty) \times \Omega$ the Bochner integral $\int_{E} H(t,\omega,e) \varrho(de)$ exists as an element in  $\Phi[B]$. Therefore  the mapping $e \mapsto H(t,\omega,e)$ is Bochner integrable in $\Phi$ (see  \cite{BeckmannDeitmar:2015, Thomas:1975}). Moreover, by Fubini's theorem  for every $f \in \Phi'$ the mapping $(t,\omega) \mapsto \inner{f}{\int_{E} H(t,\omega,e) \varrho(de)}=\int_{E} \inner{f}{H(t,\omega,e)} \varrho(de)$ is $\mathcal{P}$-measurable. 
Likewise, from our hypothesis on $H$ we have
$$ \sup_{(t,\omega)} \abs{ \inner{f}{\int_{E} H(t,\omega,e) \varrho(de)} } \leq \varrho(E) \sup_{(t,\omega,e)}\abs{\inner{f}{H(t,\omega,e)}}< \infty.$$
Hence the mapping $(t,\omega) \mapsto \int_{E}  H(t,\omega,e) \varrho(de)$ belongs to $b\mathcal{P}(\Phi)$.

\textbf{Proof of (2)} Let $(H_{n}: n \in \N)$ be a sequence of elementary processes converging to $H$ in $b \mathcal{P}_{E}(\Phi)$  as in Lemma \ref{lemmSequeApproxFubini}. For every $e \in E$ we have
$H_{n}(e,\cdot,\cdot) \in b \mathcal{P}(\Phi)$ and by \eqref{eqSequElemIntegFubini}, 
$$ \int H_{n}(e,\cdot,\cdot) dX = \sum_{i=1}^{n} \left( h_{i}(e,\cdot, \cdot) \cdot X(\phi_{i}) \right).$$ 
By the finite dimensional stochastic Fubini Theorem (e.g see Theorem IV.64 in \cite{Protter}) for each $i \in \N$ there exists a  $\mathcal{E} \otimes  \mathcal{B}(\R_{+})\otimes \mathcal{F}$-measurable function $k_{i}: E \times [0,\infty) \times \Omega  \rightarrow \R$ such that for each $e \in E$, $k_{i}(e,\cdot,\cdot)$ is a c\`{a}dl\`{a}g version of $ h_{i}(e,\cdot, \cdot) \cdot X(\phi_{i})$. This way, the process $K_{n}: E \times [0,\infty) \times \Omega  \rightarrow \R$ 
$$K_{n}(e,t, \omega)= \sum_{i=1}^{n} k_{i}(e,t, \omega),$$ 
is $\mathcal{B}(\R_{+})\otimes \mathcal{F} \otimes \mathcal{E}$-measurable and for each $e \in E$, $K_{n}(e, \cdot,\cdot)$ is a c\`{a}dl\`{a}g version of $\int H_{n}(e, \cdot,\cdot) dX$. Now, since $H_{n}$ converges to $H$ in $b \mathcal{P}_{E}(\Phi)$, for any given $e \in E$ we have by the good integrator property of $X$ that $\int H_{n}(e, \cdot,\cdot) dX \rightarrow \int H(e, \cdot,\cdot) dX$ in $S^{0}$ (hence in ucp). Hence for each $e \in E$ the sequence $K_{n}(e,\cdot,\cdot)$ converges in ucp.  Then, by Theorem IV.62 in \cite{Protter} there exists a $\mathcal{E} \otimes \mathcal{B}(\R_{+})\otimes \mathcal{F} $-measurable function $K: E\times  [0,\infty) \times \Omega  \rightarrow \R$ such that for each $e \in E$, $K(e, \cdot,\cdot)$ is c\`{a}dl\`{a}g and $K(e, \cdot,\cdot)=\lim_{n \rightarrow \infty} K_{n}(e, \cdot,\cdot)$ with convergence in ucp. 

As a consequence of the arguments given in the above paragraphs and by uniqueness of limits we have that $K(e, \cdot,\cdot)$ is a c\`{a}dl\`{a}g version of $\int  H(e, \cdot, \cdot) dX$.

\textbf{Proof of (3)} We start by showing that 
\eqref{eqStochasticFubiniIdentity} holds for each $K_{n}$. To do this, observe that for every $n \in \N$, by the finite dimensional stochastic Fubini theorem (e.g see Theorem IV.64 in \cite{Protter}, p.210), basic properties of the Bochner integral, and the action of the stochastic integral on simple processes (see \eqref{eqActionWeakIntegSimpleIntegNuclear}), we have up to indistinghishability, 
\begin{eqnarray}
\int_{E} K_{n}(e,t,\cdot) \varrho(de) 
& = & \sum_{i=1}^{n} \int_{E} \left( h_{i}(e, \cdot, \cdot) \cdot X(\phi_{i}) \right)_{t}  \varrho(de) \nonumber \\
& = & \sum_{i=1}^{n} \left( \left( \int_{E}  h_{i}(e, \cdot, \cdot)  \varrho(de) \right) \cdot X(\phi_{i}) \right)_{t} \nonumber \\
& = & \int_{0}^{t} \left( \int_{E}  H_{n}(e,s,\cdot) \varrho(de)\right) dX_{s}. \label{eqFubiniSimpleProcesses}
\end{eqnarray}
Our plan is to show that \eqref{eqFubiniSimpleProcesses} implies \eqref{eqStochasticFubiniIdentity} by taking limits with respect to an appropriate subsequence. But before we an do this, we must show the existence of the process $\int_{E} K(e, t,\cdot) \varrho(de)$. 

Fix $T>0$. Recall from Step 2 that for each $e \in E$ and $k \in \N$ we have as $n \rightarrow \infty$, 
$$ \Prob \left( \sup_{0 \leq t \leq T} \abs{ K(e, t,\cdot) - K_{n}(e, t,\cdot) } >\frac{1}{k} \right) \rightarrow 0. $$
Following the idea used in the proofs of Lemma 12.4.17 and Theorem 12.4.18 in \cite{CohenElliott}, we define the sequence
$$ n_{k}(e)=\inf \left\{ m > n_{k-1}(e): \Prob \left( \sup_{0 \leq t \leq T} \abs{ K(e, t,\cdot) - K_{n}(e, t,\cdot) } >\frac{1}{k} \right) \leq \frac{1}{k}, \, \forall n>m \right\}.$$
Then $n_{k}(\cdot)$ is a $\mathcal{E}$-measurable function, and 
$$ \Prob \left( \sup_{e \in E} \sup_{0 \leq t \leq T}  \abs{K(e, t,\cdot) - K_{n_{k}(e)}(e,t,\cdot)}  > \frac{1}{k} \right) \leq \frac{1}{k}. $$
Therefore, $K_{n_{k}(e)}(e,t,\cdot) \rightarrow K(e,t,\cdot)$ in probability uniformly on $E \times [0,T]$. Moreover, for any $\epsilon >0$ it follows that as $k \rightarrow \infty$, 
\begin{multline}\label{eqUCPConvLebIntegStochFubini}
\Prob \left( \int_{E} \sup_{0 \leq t \leq T} \abs{K(e, t,\cdot) - K_{n_{k}(e)}(e, t,\cdot)} \varrho(de) > \epsilon  \right) \\
\leq \Prob \left( \sup_{e \in E} \sup_{0 \leq t \leq T} \abs{K(e,t,\cdot) - K_{n_{k}(e)}(e,t,\cdot)}  > \frac{\epsilon}{\varrho(E)} \right) \rightarrow 0.  
\end{multline}
By the  finite dimensional stochastic Fubini theorem (e.g see Theorem IV.64 in \cite{Protter}) each $\int_{E} k_{i}(e,t,\cdot) \varrho(de)$ has a c\`{a}dl\`{a}g version, the same is true for $\int_{E} K_{n_{k}(e)}(e,t,\cdot) \varrho(de)$. Then by \eqref{eqUCPConvLebIntegStochFubini} we have that the process $\int_{E} K(e, t,\cdot) \varrho(de)$ exists as the limit in ucp of the sequence $\int_{E} K_{n_{k}(e)}(e,t,\cdot) \varrho(de)$ as $k \rightarrow \infty$, and therefore $\int_{E} K(e, t,\cdot) \varrho(de)$ has a c\`{a}dl\`{a}g version. 

Our next objective will be to show that the sequence $\int_{0}^{t} \left( \int_{E} H_{n_{k}(e)}(e,s,\cdot) \varrho(de)\right) dX_{s}$ converges to $\int_{0}^{t} \left( \int_{E} H(e,s,\cdot) \varrho(de)\right) dX_{s}$ in ucp. 

In effect, for any given continuous seminorm $p$ on $\Phi$ by the basic properties of the Bochner integral and since $H_{n} \rightarrow H$ in $b\mathcal{P}_{E}(\Phi)$ we have as $k \rightarrow \infty$, 
\begin{flalign*}
&\sup_{(t,\omega)} p \left( \int_{E} H(e,t,\cdot) \varrho(de)  - \int_{E} H_{n_{k}(e)}(e,t,\cdot) \varrho(de) \right) \\
& \leq \sup_{(t,\omega)}  \int_{E} p \left( H(e,t,\cdot) - H_{n_{k}(e)}(e,t,\cdot) \right) \varrho(de) \\ 
& \leq \varrho(E) \sup_{(e,t,\omega)}  p \left( H(e,t,\cdot) - H_{n_{k}(e)}(e,t,\cdot) \right)  \rightarrow 0. 
\end{flalign*}
Hence $\int_{E} H_{n_{k}(e)}(e,t,\cdot) \varrho(de) \rightarrow \int_{E} H(e,t,\cdot) \varrho(de)$ in $b\mathcal{P}(\Phi)$ as $k \rightarrow \infty$. By the good integrator property of $X$ we have that $\int_{0}^{t} \left( \int_{E} H_{n_{k}(e)}(e,s,\cdot) \varrho(de)\right) dX_{s}$ converges to $\int_{0}^{t} \left( \int_{E} H(e,s,\cdot) \varrho(de)\right) dX_{s}$ in $S^{0}$, thus in ucp. 

Finally, by \eqref{eqFubiniSimpleProcesses} we have (with limits in ucp), 
\begin{eqnarray*}
\int_{E} K(e, t,\cdot) \varrho(de) 
& = & \lim_{k \rightarrow \infty} \int_{E} K_{n_{k}(e)}(e,t,\cdot) \varrho(de) \\
& = & \lim_{k \rightarrow \infty} \int_{0}^{t} \left( \int_{E} H_{n_{k}(e)}(e,s,\cdot) \varrho(de)\right) dX_{s} \\
& = & \int_{0}^{t} \left( \int_{E} H(e,s,\cdot) \varrho(de)\right) dX_{s}. 
\end{eqnarray*}
\end{proof}

\subsection{Stochastic Evolution Equations in the Dual of a Nuclear Space}\label{subsectSEECylSemiNoise}

Al through this section we assume  that $\Phi$ is a complete, barrelled nuclear space  whose strong dual $\Phi$ is nuclear.

Our main objective in this section is to show existence and uniqueness of solutions to the following class of stochastic evolution equations
\begin{equation}\label{eqStochEvolEqua}
dY_{t}=A' Y_{t} +  dX_{t}, t \geq 0,
\end{equation}
with initial condition $Y_{0}=\eta$ $\Prob$-a.e., where $\eta$ is a $\mathcal{F}_{0}$-measurable $\Phi'$-valued regular random variable, $X$ is $\Phi'$-valued semimartingale which is a good integrator, and $A$ is the generator of a strongly continuous $C_{0}$-semigroup $(S(t): t \geq 0) \subseteq \mathcal{L}(\Phi,\Phi)$ (see \cite{Komura:1968} for information on $C_{0}$-semigroups in locally convex spaces). 

\begin{definition}
A $\Phi'$-valued regular adapted process $Y=(Y_{t}: t \geq 0)$ is called a \emph{weak solution} to \eqref{eqStochEvolEqua} if for any given $t > 0$ and  $\phi \in \mbox{Dom}(A)$ we have $\int_{0}^{t} \abs{\inner{Y_{r}}{A\phi}} dr < \infty$ $\Prob$-a.e. and 
\begin{equation}\label{eqWeakSoluStochEvolEqua}
\inner{Y_{t}}{\phi}= \inner{\eta}{\phi} + \int_{0}^{t} \inner{Y_{r}}{A\phi} dr + \inner{X_{t}}{\phi}. 
\end{equation}
\end{definition}  

The natural candidate for a weak solution to \eqref{eqStochEvolEqua} is the so called \emph{mild solution}: 
\begin{equation}\label{eqMildSoluStochEvolEqua}
Z_{t}=S(t)'\eta + \int_{0}^{t} S(t-r)' \, dX_{r}, \quad t \geq 0. 
\end{equation}
Here observe that since $\Phi$ is reflexive, the family of dual operators $(S(t)': t \geq 0) \subseteq  \mathcal{L}(\Phi',\Phi')$ is a strongly continuous $C_{0}$-semigroup whose generator is $A'$ (see the Corollary to Theorem 1 in \cite{Komura:1968}).    

We must prove first that the stochastic convolution, i.e. the stochastic integral $ \int_{0}^{t} S(t-r)' \, dX_{r}$ is well-defined. This task is carried out in the following result. 

\begin{proposition}\label{propExistStochConvo}
There exists a $\Phi'$-valued regular adapted process $ \int_{0}^{t} S(t-r)' \, dX_{r}$,  $t \geq 0$, satisfying for each $t \geq 0 $ and $\phi \in \Phi$ that $\Prob$-a.e. 
\begin{equation}\label{eqDefiStochConvol}
\inner{\int_{0}^{t} S(t-r)' \, dX_{r}}{\phi} = \int_{0}^{t} S(t-r)\phi \, dX_{r}.
\end{equation} 
Moreover, the $\Phi'$-valued process $(Z_{t}: t \geq 0)$ defined in \eqref{eqMildSoluStochEvolEqua} is regular and adapted. 
\end{proposition}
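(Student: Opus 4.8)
The plan is to realise the stochastic convolution as an application of the integral map of Theorem~\ref{theoBoundedWeakIntegNucleSpace}. Fix $t \geq 0$ and $\phi \in \Phi$ and define the $\Phi$-valued process $H^{t,\phi}$ by $H^{t,\phi}_{r}(\omega)= S(t-r)\phi \, \mathbbm{1}_{[0,t]}(r)$. Since the semigroup is strongly continuous, $r \mapsto S(t-r)\phi$ is continuous on $[0,t]$, so $H^{t,\phi}$ is a deterministic, bounded (its image $\{ S(s)\phi : s \in [0,t]\}$ is compact, hence bounded in $\Phi$), weakly predictable process; thus $H^{t,\phi}\in b\mathcal{P}(\Phi)$ and one may set $\int_{0}^{t} S(t-r)\phi \, dX_{r}\defeq \left( \int H^{t,\phi} \, dX \right)_{t}$, a well-defined real random variable. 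I would first record that $\phi \mapsto H^{t,\phi}$ is linear, since $S(t-r)$ is linear; combined with the linearity of the integral map and of evaluation at $t$, this shows (with equality in $S^{0}$, hence $\Prob$-a.e.\ at the fixed time $t$) that $\mathcal{W}_{t}\colon \phi \mapsto \int_{0}^{t} S(t-r)\phi \, dX_{r}$ is a cylindrical random variable in $\Phi'$ and that $(\mathcal{W}_{t}: t \geq 0)$ is a cylindrical process.

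The decisive step is to prove that each $\mathcal{W}_{t}$ is continuous from $\Phi$ into $L^{0}\ProbSpace$, and this is exactly where I would use the good integrator hypothesis. Because $\Phi$ is barrelled and the semigroup is strongly continuous, the Banach--Steinhaus theorem gives local equicontinuity: for every continuous seminorm $p$ on $\Phi$ and every $T>0$ there is a continuous seminorm $q$ with $p(S(s)\psi) \leq q(\psi)$ for all $s \in [0,T]$, $\psi \in \Phi$. Hence $p(H^{t,\phi})=\sup_{s \in [0,t]} p(S(s)\phi) \leq q(\phi)$ for every $t \leq T$, so $\phi \mapsto H^{t,\phi}$ is continuous from $\Phi$ into $b\mathcal{P}(\Phi)$, uniformly over $t \in [0,T]$. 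As $X$ is a good integrator, $H \mapsto \int H \, dX$ is continuous from $b\mathcal{P}(\Phi)$ into $S^{0}$ (Definition~\ref{defiGoodIntegrators}), and evaluation $z \mapsto z_{t}$ is continuous from $S^{0}$ into $L^{0}\ProbSpace$ because Emery's topology is finer than the ucp topology. Composing these three continuous maps yields the continuity of $\mathcal{W}_{t}$, and in fact the equicontinuity of $(\mathcal{W}_{t}: t \in [0,T])$ for every $T>0$.

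With the continuity of $\mathcal{W}_{t}$ secured, I would invoke the regularisation available on complete barrelled nuclear spaces: a continuous cylindrical random variable in $\Phi'$ has a continuous Fourier transform, hence admits a $\Phi'$-valued regular version by a Minlos-type theorem (see Theorem~2.10 in \cite{FonsecaMora:2018} and \cite{VakhaniaTarieladzeChobanyan}). This gives, for each $t$, a regular $\Phi'$-valued $W_{t}$ satisfying \eqref{eqDefiStochConvol}. Since the process $\int H^{t,\phi}\, dX$ is adapted, $\mathcal{W}_{t}(\phi)$ is $\mathcal{F}_{t}$-measurable for every $\phi$, so $W_{t}$ is weakly $\mathcal{F}_{t}$-measurable; as $W_{t}$ is regular it takes values in some $(\widehat{\Phi}_{\theta})'$, where the Borel and cylindrical $\sigma$-algebras coincide, whence $W_{t}$ is strongly $\mathcal{F}_{t}$-measurable. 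Thus the process $\int_{0}^{t} S(t-r)'\, dX_{r}\defeq W_{t}$ is regular and adapted.

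For the final assertion, the operator $S(t)'$ is continuous on $\Phi'$ by reflexivity, so $S(t)'\eta$ is $\mathcal{F}_{0}$-measurable, hence adapted, and its law is the push-forward under $S(t)'$ of the Radon law of the regular variable $\eta$, hence Radon; therefore $S(t)'\eta$ is regular by Theorem~2.10 in \cite{FonsecaMora:2018}. Consequently $Z_{t}=S(t)'\eta+\int_{0}^{t} S(t-r)'\, dX_{r}$ from \eqref{eqMildSoluStochEvolEqua} is adapted, and, being the sum of two regular variables (each supported in a dual of the form $(\widehat{\Phi}_{\theta})'$, and hence both supported in the dual associated to the common refinement of the two countably Hilbertian topologies), is regular. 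The genuinely delicate points are the continuity of $\mathcal{W}_{t}$—indispensably relying on the good integrator property, since otherwise the integral map is only continuous into $(S^{0})_{lcx}$ and evaluation at $t$ need not stay continuous—and the passage from continuity to regularity via the Minlos-type theorem.
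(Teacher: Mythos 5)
Your proof is correct and follows essentially the same route as the paper: define $H^{\phi}=\mathbbm{1}_{[0,t]}\,S(t-\cdot)\phi\in b\mathcal{P}(\Phi)$, show that $\phi\mapsto H^{\phi}$ is continuous into $b\mathcal{P}(\Phi)$, compose with the good-integrator continuity of $H\mapsto\int H\,dX$ and evaluation at $t$, and then apply a regularization theorem to obtain the $\Phi'$-valued regular adapted version. The only cosmetic difference is that you derive the continuity of $\phi\mapsto H^{\phi}$ from Banach--Steinhaus equicontinuity of $(S(s):s\in[0,t])$, whereas the paper invokes the closed graph theorem to get joint continuity of $(r,\phi)\mapsto S(t-r)\phi$; both arguments rest on $\Phi$ being barrelled and lead to the same conclusion.
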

\begin{proof}
Let $t \geq 0 $ and $\phi \in \Phi$. Let $H^{\phi}$ be defined as $H^{\phi}(r,\omega)=\ind{[0,t]}{r} S(t-r) \phi$. The continuity of the mapping $[0,t] \ni r \mapsto S(t-r)\phi$ easily shows that $H^{\phi} \in b\mathcal{P}(\Phi)$, therefore the stochastic integral $\int_{0}^{t} S(t-r) \phi \, dX_{r}$ exists. 

Likewise, by continuity of the mapping $[0,t] \ni r \mapsto S(t-r) \phi$ and since $\Phi$ is barrelled, by an application of the closed graph theorem it can be shown that the mapping $(r,\phi) \mapsto S(t-r)\phi$ is continuous from $[0,t] \times \Phi$ into $\Phi$. From this it follows that the mapping $\phi \mapsto H^{\phi}$ is continuous from $\Phi$ into $b\mathcal{P}(\Phi)$. By the good integrator property of $X$ the mapping $\phi \mapsto \int_{0}^{t} H^{\phi}(r) dX_{r} = \int_{0}^{t} S(t-r) \phi \, dX_{r}$ is continuous from $\Phi$ into $L^{0} \ProbSpace$. Then, the regularization theorem (see \cite{ItoNawata:1983}) shows that there exists a $\Phi'$-valued regular random variable $ \int_{0}^{t} S(t-r)' \, dX_{r}$ satisfying \eqref{eqDefiStochConvol}. Moreover, \eqref{eqDefiStochConvol} shows that $ \int_{0}^{t} S(t-r)' \, dX_{r}$ is weakly adapted, hence adapted because it is regular.    

Finally, since $\eta$ is a $\mathcal{F}_{0}$-measurable $\Phi'$-valued regular random variable, the strong continuity of the semigroup $(S(t)': t \geq 0)$ shows that $(S(t)'\eta : t \geq 0)$  is regular and adapted and hence so is $(Z_{t}: t \geq 0)$. 
\end{proof}

\begin{theorem}\label{theoExisSolutionsSEE}
The stochastic evolution equation \eqref{eqStochEvolEqua} has a weak solution given by \eqref{eqMildSoluStochEvolEqua}. 
\end{theorem}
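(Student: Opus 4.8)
The plan is to verify directly that the mild solution $Z$ of \eqref{eqMildSoluStochEvolEqua} satisfies the weak formulation \eqref{eqWeakSoluStochEvolEqua} for every $\phi \in \mbox{Dom}(A)$. That $Z$ is regular and adapted is already supplied by Proposition \ref{propExistStochConvo}, so two things remain: the integrability requirement $\int_0^t \abs{\inner{Z_r}{A\phi}}\,dr < \infty$ $\Prob$-a.e., and the identity \eqref{eqWeakSoluStochEvolEqua} itself. The two analytic tools I would lean on are the semigroup calculus identity $\int_0^\tau S(v)A\phi\,dv = S(\tau)\phi - \phi$ (valid in $\Phi$ for $\phi \in \mbox{Dom}(A)$, together with $S(v)A\phi = \tfrac{d}{dv}S(v)\phi$), and the Stochastic Fubini Theorem \ref{theoStochFubini}, which is precisely what licenses the interchange of the Lebesgue integral $dr$ with the stochastic integral $dX$.

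First I would expand the left-hand side: using $\inner{S(t)'\eta}{\phi}=\inner{\eta}{S(t)\phi}$ together with \eqref{eqDefiStochConvol} of Proposition \ref{propExistStochConvo},
$$\inner{Z_t}{\phi}=\inner{\eta}{S(t)\phi}+\int_0^t S(t-r)\phi\,dX_r.$$
Next I would expand $\inner{Z_r}{A\phi}=\inner{\eta}{S(r)A\phi}+\int_0^r S(r-u)A\phi\,dX_u$ and integrate in $r$. The deterministic part is handled by commuting the fixed regular functional $\eta(\omega)\in\Phi'$ with the $\Phi$-valued Riemann integral of the continuous map $r\mapsto S(r)A\phi$ and then applying the semigroup identity:
$$\int_0^t \inner{\eta}{S(r)A\phi}\,dr=\inner{\eta}{\int_0^t S(r)A\phi\,dr}=\inner{\eta}{S(t)\phi}-\inner{\eta}{\phi}.$$

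The crux is the stochastic part $\int_0^t\big(\int_0^r S(r-u)A\phi\,dX_u\big)\,dr$. Here I would invoke Theorem \ref{theoStochFubini} with $E=[0,t]$ carrying the restriction of Lebesgue measure $\varrho$ and the integrand $H(r,u,\omega)=\ind{[0,r]}{u}\,S(r-u)A\phi$, where $r\in E$ plays the role of the $E$-variable and $u$ is the time variable of the stochastic integral. The hypotheses are routine: for each $f\in\Phi'$ the map is $\mathcal{E}\otimes\mathcal{P}$-measurable (it is deterministic, left-continuous in $u$, and jointly Borel in $(r,u)$) and bounded by $\sup_{0\le v\le t}\abs{\inner{f}{S(v)A\phi}}<\infty$, so $H\in b\mathcal{P}_E(\Phi)$. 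Since $r\le t$, the associated $K(r,t,\cdot)$ is a version of $\int_0^r S(r-u)A\phi\,dX_u$, and part (3) of the theorem gives
$$\int_0^t\Big(\int_0^r S(r-u)A\phi\,dX_u\Big)dr=\int_0^t\Big(\int_s^t S(r-s)A\phi\,dr\Big)dX_s=\int_0^t\big(S(t-s)\phi-\phi\big)\,dX_s,$$
the last equality again by the semigroup identity after the substitution $v=r-s$. The required integrability $\int_0^t\abs{\inner{Z_r}{A\phi}}\,dr<\infty$ a.e. I would read off from the continuity of the deterministic part together with the existence of the iterated integral guaranteed by parts (1) and (3) of Theorem \ref{theoStochFubini}.

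Finally I would reassemble. Writing $\int_0^t S(t-s)\phi\,dX_s=\inner{\int_0^t S(t-r)'dX_r}{\phi}$ and $\int_0^t \phi\,dX_s=\inner{X_t}{\phi}$ (using $X_0=0$, which is consistent with the normalization $Y_0=\eta$ that the weak formulation itself forces), the right-hand side of \eqref{eqWeakSoluStochEvolEqua} becomes
$$\inner{\eta}{\phi}+\Big[\inner{\eta}{S(t)\phi}-\inner{\eta}{\phi}+\inner{\int_0^t S(t-r)'dX_r}{\phi}-\inner{X_t}{\phi}\Big]+\inner{X_t}{\phi}=\inner{Z_t}{\phi},$$
so $Z$ solves \eqref{eqWeakSoluStochEvolEqua}. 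I expect the main obstacle to be the bookkeeping of the stochastic Fubini application: correctly matching the roles of the $E$-variable and the time variable, verifying $H\in b\mathcal{P}_E(\Phi)$, and confirming that the a.e. cancellation $-\int_0^t\phi\,dX_s+\inner{X_t}{\phi}=0$ is exactly what the normalization $Y_0=\eta$ encodes.
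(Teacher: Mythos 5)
Your proof is correct and follows essentially the same route as the paper: the paper isolates your stochastic--Fubini computation (with the very same integrand $\ind{[0,s]}{r}\,S(s-r)A\phi$) as Lemma \ref{lemmaStochasticFubiniSEE} and then reassembles exactly as you do, using Proposition \ref{propExistStochConvo} for regularity/adaptedness and \eqref{eqFiniteIntegStocFubini} for the integrability requirement. The only cosmetic difference is your parenthetical appeal to $X_{0}=0$, which is not needed under the paper's convention (cf.\ Lemma \ref{lemmActionStoInteElemBounInteg}) that the stochastic integral of the constant $\phi$ over $[0,t]$ already equals $\inner{X_{t}}{\phi}$.
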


A key step in the proof of Theorem \ref{theoExisSolutionsSEE} is carried out in the following result which is a consequence of the stochastic Fubini theorem (Theorem \ref{theoStochFubini})  

\begin{lemma}\label{lemmaStochasticFubiniSEE}
For every $t>0$ and $\phi \in \mbox{Dom}(A)$, we have $\Prob$-a.e.
\begin{equation}\label{eqFiniteIntegStocFubini}
\int_{0}^{t} \abs{ \int_{0}^{s} S(s-r) A \phi \, dX_{r} } ds < \infty, \quad \mbox{and} \quad \int_{0}^{t} \abs{ \int_{0}^{s} S(t-s) A \phi \, dX_{r} } ds < \infty.
\end{equation}
Moreover, the following identities holds $\Prob$-a.e. 
\begin{eqnarray} \label{eqStochasticFubiniSEE}
\int_{0}^{t} \left( \int_{0}^{s} S(s-r) A \phi \, dX_{r} \right) ds  
& = & \int_{0}^{t} S(t-r) \phi \, dX_{r} - \inner{X_{t}}{\phi} \nonumber \\
& = & \int_{0}^{t} \left( \int_{0}^{s} S(t-s) A \phi \, dX_{r} \right) ds. 
\end{eqnarray}
\end{lemma}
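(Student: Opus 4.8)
The plan is to deduce both identities in \eqref{eqStochasticFubiniSEE}, together with the finiteness statements \eqref{eqFiniteIntegStocFubini}, from the stochastic Fubini theorem (Theorem \ref{theoStochFubini}) applied to two carefully chosen parameter-dependent integrands over the finite measure space $(E,\mathcal{E},\varrho)=([0,t],\mathcal{B}([0,t]),ds)$. Fix $t>0$ and $\phi\in\mbox{Dom}(A)$, and first treat the left-hand identity. I would set $H(e,r,\omega)=\mathbbm{1}_{[0,e]}(r)\,S(e-r)A\phi$ and check $H\in b\mathcal{P}_{E}(\Phi)$: for each $f\in\Phi'$ the map $(e,r)\mapsto\mathbbm{1}_{[0,e]}(r)\inner{f}{S(e-r)A\phi}$ is deterministic and jointly Borel (the set $\{r\le e\}$ is $\mathcal{B}(E)\otimes\mathcal{B}(\R_{+})$-measurable and $(e,r)\mapsto S(e-r)A\phi$ is continuous on $\{r\le e\}$ by strong continuity of the semigroup), hence $\mathcal{E}\otimes\mathcal{P}$-measurable; and it is uniformly bounded because $\{S(u)A\phi:u\in[0,t]\}$ is compact, hence bounded, in $\Phi$, so $\sup_{0\le r\le e\le t}\abs{\inner{f}{S(e-r)A\phi}}\le\sup_{0\le u\le t}\abs{\inner{f}{S(u)A\phi}}<\infty$.

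Granting the hypotheses, part (2) of Theorem \ref{theoStochFubini} yields a jointly measurable version $K(e,t,\cdot)=\int_{0}^{e}S(e-r)A\phi\,dX_{r}$ (the truncation $e\wedge t$ equals $e$ since $e\in[0,t]$), and part (3) asserts that the Lebesgue integral $\int_{0}^{t}K(e,t,\cdot)\,de$ exists — which is precisely the first finiteness claim in \eqref{eqFiniteIntegStocFubini} — and that the identity \eqref{eqStochasticFubiniIdentity} holds. The right-hand side of \eqref{eqStochasticFubiniIdentity} involves the inner Bochner integral $\int_{E}H(e,s,\cdot)\varrho(de)=\int_{s}^{t}S(e-s)A\phi\,de$, and the substitution $u=e-s$ together with the semigroup fundamental theorem of calculus $\int_{0}^{\tau}S(u)A\phi\,du=S(\tau)\phi-\phi$ (valid for $\phi\in\mbox{Dom}(A)$) turns it into $S(t-s)\phi-\phi$. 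Thus \eqref{eqStochasticFubiniIdentity} reads
\begin{equation*}
\int_{0}^{t}\Big(\int_{0}^{s}S(s-r)A\phi\,dX_{r}\Big)ds=\int_{0}^{t}\big(S(t-s)\phi-\phi\big)\,dX_{s}=\int_{0}^{t}S(t-r)\phi\,dX_{r}-\int_{0}^{t}\phi\,dX_{r},
\end{equation*}
the last step being linearity of the integral in the integrand. Since the constant process $r\mapsto\phi$ acts on $X$ through the Riemann representation of Lemma \ref{lemmActionStoInteElemBounInteg} as $\int_{0}^{t}\phi\,dX_{r}=\inner{X_{t}}{\phi}$ (the convention that retains the value at time $0$), the first equality in \eqref{eqStochasticFubiniSEE} follows.

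For the second equality I would run the identical argument with $\tilde{H}(e,r,\omega)=\mathbbm{1}_{[0,e]}(r)\,S(t-e)A\phi$, whose membership in $b\mathcal{P}_{E}(\Phi)$ is verified exactly as above. Here, for fixed $e$ the integrand in $r$ is constant, so part (2) gives $\tilde{K}(e,t,\cdot)=\int_{0}^{e}S(t-e)A\phi\,dX_{r}=\inner{X_{e}}{S(t-e)A\phi}$, and part (3) delivers the second finiteness claim in \eqref{eqFiniteIntegStocFubini} and the corresponding Fubini identity. The relevant inner Bochner integral is now $\int_{E}\tilde{H}(e,s,\cdot)\varrho(de)=\int_{s}^{t}S(t-e)A\phi\,de$, and the substitution $u=t-e$ produces $\int_{0}^{t-s}S(u)A\phi\,du=S(t-s)\phi-\phi$ — the very same expression as before. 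Consequently the middle member of \eqref{eqStochasticFubiniSEE} also equals $\int_{0}^{t}(S(t-s)\phi-\phi)\,dX_{s}=\int_{0}^{t}S(t-r)\phi\,dX_{r}-\inner{X_{t}}{\phi}$, which closes the chain of equalities.

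I expect the only real work to lie in the verification step feeding Theorem \ref{theoStochFubini}: confirming the joint $\mathcal{E}\otimes\mathcal{P}$-measurability and the uniform weak boundedness of $H$ and $\tilde{H}$, and keeping the bookkeeping straight — namely the truncation $e\wedge t=e$ arising from $e\in[0,t]$, the opposite orientations of the substitutions $u=e-s$ versus $u=t-e$ (which nonetheless give the same integral), and the time-$0$ convention that makes a constant integrand integrate to $\inner{X_{t}}{\phi}$ rather than $\inner{X_{t}-X_{0}}{\phi}$. The algebraic core — that both choices of integrand collapse, after the semigroup identity, to the single integrand $s\mapsto S(t-s)\phi-\phi$ — is what makes the two identities coincide.
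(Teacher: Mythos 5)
Your proposal is correct and follows essentially the same route as the paper: the paper likewise applies Theorem \ref{theoStochFubini} to the two parameter-dependent integrands $\mathbbm{1}_{[0,s]}(r)S(s-r)A\phi$ and $\mathbbm{1}_{[0,s]}(r)S(t-s)A\phi$ on $E=[0,t]$ with Lebesgue measure, reduces both inner $ds$-integrals to $S(t-r)\phi-\phi$ via the semigroup fundamental identity, and uses that the constant integrand $\phi$ integrates to $\inner{X_{t}}{\phi}$. Your additional verification of the $\mathcal{E}\otimes\mathcal{P}$-measurability and uniform weak boundedness hypotheses is exactly the step the paper compresses into the phrase ``as a direct consequence of the strong continuity of the $C_{0}$-semigroup.''
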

\begin{proof}
Fix $t >0$ and $\phi \in \mbox{Dom}(A)$. Consider the following families of $\Phi$-valued mappings: 
\begin{align*}
H_{1}(s,r) &= \ind{[0,s]}{r} S(s-r)A \phi. \\
H_{2}(s,r) &= \ind{[0,s]}{r} S(t-s)A \phi.
\end{align*}
for $s \in [0,t]$, $r \in [0,t]$. As a direct consequence of the strong continuity of the $C_{0}$-semigroup $(S(t): t \geq 0)$ we can verify that $H_{1}$ and $H_{2}$ satisfy the conditions of the stochastic Fubini theorem (Theorem \ref{theoStochFubini}). Hence we have \eqref{eqFiniteIntegStocFubini} and the iterated integrals in \eqref{eqStochasticFubiniSEE} exists. 

Now observe that for any given $r \in [0,t]$, from standard properties of $C_{0}$-semigroups (see \cite{Komura:1968}) we have
$$ \int_{0}^{t} \ind{[0,s]}{r} S(s-r)A \phi \, ds = S(t-r)\phi -\phi = \int_{0}^{t} \ind{[0,s]}{r} S(t-s)A \phi \, ds. $$
Then, from the above identity and the stochastic Fubini theorem applied to $H_{1}$ and $H_{2}$ we have
\begin{eqnarray*} 
\int_{0}^{t} \left( \int_{0}^{s} S(s-r) A \phi \, dX_{r} \right) ds  
& = & \int_{0}^{t} \left( \int_{0}^{s} S(t-s) A \phi \, dX_{r} \right) ds  \\
& = & \int_{0}^{t} \left( \int_{0}^{t} \ind{[0,s]}{r} S(t-s) A \phi \, ds \right) dX_{r}  \\
& = & \int_{0}^{t} S(t-r) \phi \, dX_{r} - \inner{X_{t}}{\phi}. 
\end{eqnarray*}  
\end{proof}

\begin{proof}[Proof of Theorem \ref{theoExisSolutionsSEE}]
Let $Z=(Z_{t}: t \geq 0)$ be as defined in \eqref{eqMildSoluStochEvolEqua}. We know from Proposition \ref{propExistStochConvo} that $Z$ is a $\Phi'$-valued regular and adapted process. 

Let $t >0$ and $\phi \in \Phi$. By the strong continuity of the $C_{0}$-semigroup $(S(t)':t \geq 0)$ it follows that $\int_{0}^{t} \abs{\inner{S(r)'\eta}{A\phi}} dr < \infty$. Therefore, it follows from \eqref{eqMildSoluStochEvolEqua} and \eqref{eqFiniteIntegStocFubini} that $\Prob$-a.e. $\int_{0}^{t} \abs{\inner{Z_{r}}{A\phi}} dr < \infty$. 

We must show $(Z_{t}: t \geq 0)$ satisfies \eqref{eqWeakSoluStochEvolEqua}. In effect, for any $s \in [0,t]$ from \eqref{eqMildSoluStochEvolEqua} and \eqref{eqDefiStochConvol} we have $\Prob$-a.e.
$$ \inner{Z_{s}}{A\phi}=\inner{\eta}{S(s) A\phi}
+ \int_{0}^{s} \, S(s-r) A \phi \, dX_{r}. $$
Integrating both sides on $[0,t]$ with respect to the Lebesgue measure, then from standard properties of $C_{0}$-semigroups (see \cite{Komura:1968}), from \eqref{eqStochasticFubiniSEE}, and then from \eqref{eqMildSoluStochEvolEqua} and \eqref{eqDefiStochConvol}, we have $\Prob$-a.e.
\begin{eqnarray*}
 \int_{0}^{t} \inner{Z_{s}}{A \phi} ds 
& = &  \int_{0}^{t} \inner{\eta}{S(s) A\phi} ds 
+ \int_{0}^{t} \left( \int_{0}^{s} \, S(s-r) A \phi \, dX_{r} \right) ds  \\
& = & \inner{\eta}{S(t)\phi-\phi} + \int_{0}^{t} S(t-r) \phi \, dX_{r} - \inner{X_{t}}{\phi} \\
& = & \inner{Z_{t}}{\phi} -\inner{\eta}{\phi} - \inner{X_{t}}{\phi}.  
\end{eqnarray*} 
Hence, the mild solution $Z=(Z_{t}: t \geq 0)$ is a weak solution to \eqref{eqStochEvolEqua}. 
\end{proof}

We can prove a converse of Theorem \ref{theoExisSolutionsSEE} if we assume that our weak solution has some regularity property on its paths. This property is described below. 

\begin{definition}\label{defiASLocalBocnTraje}
We say that a $\Phi'$-valued adapted process $Y=(Y_{t}: t \geq 0)$ has \emph{almost surely locally Bochner integrable trajectories} if for each $t>0$ there exists $\Omega_{t} \subseteq \Omega$ with $\Prob(\Omega_{t})=1$ and such that for each $\omega \in \Omega_{t}$ there exists a continuous seminorm $p=p(t,\omega)$ on $\Phi$ such that $Y_{s}(\omega) \in \Phi_{p}$ a.e. on $[0,t]$ and $\int_{0}^{t} \, p'(Y_{s}(\omega)) \, ds< \infty$. 
\end{definition}

\begin{example}\label{examASLocalBocnTraje}
For a $\Phi'$-valued adapted process $Y=(Y_{t}: t \geq 0)$ the property of having almost surely locally Bochner integrable trajectories is implied by any of the following conditions:
\begin{enumerate}
\item For each $T>0$, $Y$ is (weakly) jointly measurable on $[0,T] \times \Omega$  and $\Exp \int_{0}^{T} \abs{\inner{Y_{t}}{\phi}} dt < \infty$ for each $\phi \in \Phi$.   In effect, under these assumptions it is shown in Lemma 6.11 in \cite{FonsecaMora:2018-1} that for each $T>0$ there exists a continuous Hilbertian seminorm $p$ on $\Phi$ for which $\Exp \int_{0}^{T} p'(Y_{t}) dt < \infty$. 
\item $Y$ has c\`{a}dl\`{a}g paths. In that case, for $\Prob$-a.e. $\omega \in \Omega$, by Remark 3.6 in \cite{FonsecaMora:Skorokhod} for each $T>0$ there exists a continuous Hilbertian seminorm $p$ on $\Phi$ such that $t \mapsto Y_{t}(\omega)$ is  c\`{a}dl\`{a}g from $[0,T]$ into $\Phi'_{p}$. In particular we have
$$ \int_{0}^{T} \, p'(Y_{t}(\omega)) \, dt \leq T \, \sup_{0 \leq t \leq T}  p'(Y_{t}(\omega)) < \infty. $$
\end{enumerate}
\end{example}

We are ready for our main result on uniqueness of solutions. 

\begin{theorem}\label{theoUniqueSoluSEE} 
Let $Y=(Y_{t}: t \geq 0)$ be a weak solution  to \eqref{eqStochEvolEqua} which has almost surely locally Bochner integrable trajectories.  
Then for each $t \geq 0$, $Y_{t}$ is given $\Prob$-a.e. by 
\begin{equation}\label{eqWeakSolIsMild}
 Y_{t}=S(t)'\eta+\int_{0}^{t} \, S(t-r)' dX_{r}.
\end{equation}
\end{theorem}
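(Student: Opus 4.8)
The plan is to show that, for every $\phi$ in a dense subset of $\Phi$, the identity $\inner{Y_t}{\phi}=\inner{S(t)'\eta}{\phi}+\inner{\int_0^t S(t-r)'\,dX_r}{\phi}$ holds $\Prob$-a.e.; since $Y_t$ and the mild solution $Z_t$ of \eqref{eqMildSoluStochEvolEqua} are both regular $\Phi'$-valued random variables, weak agreement on a dense set forces $Y_t=Z_t$ $\Prob$-a.e. (this is the standard determinacy of regular random variables through their action on a total set, using that both take values a.e. in a separable space $(\widehat{\Phi}_{\theta})'$). By Proposition \ref{propExistStochConvo} the right-hand side equals $\inner{\eta}{S(t)\phi}+\int_0^t S(t-r)\phi\,dX_r$, so the problem reduces to identifying $\inner{Y_t}{\phi}$ with this expression. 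I would take the dense set to be $\mbox{Dom}(A^2)$, which is dense in $\Phi$ because $A$ generates a $C_0$-semigroup.

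The heart of the argument is a time-dependent weak formulation for the orbit $\phi(s)\defeq S(t-s)\phi$, $s\in[0,t]$, with $\phi \in \mbox{Dom}(A^2)$. Fixing a sequence of deterministic partitions $\sigma_n: 0=s_0^n\le\cdots\le s_{m_n}^n=t$ tending to the identity, I would telescope $\inner{Y_t}{\phi}-\inner{\eta}{S(t)\phi}=\sum_k\big(\inner{Y_{s_{k+1}}}{\phi(s_{k+1})}-\inner{Y_{s_k}}{\phi(s_k)}\big)$ and split each summand as
\begin{equation*}
\big(\inner{Y_{s_{k+1}}}{\phi(s_k)}-\inner{Y_{s_k}}{\phi(s_k)}\big)+\inner{Y_{s_{k+1}}}{\phi(s_{k+1})-\phi(s_k)}.
\end{equation*}
The first bracket is evaluated by the weak solution equation \eqref{eqWeakSoluStochEvolEqua} with the fixed test function $\phi(s_k)=S(t-s_k)\phi\in\mbox{Dom}(A)$, producing the drift $\int_{s_k}^{s_{k+1}}\inner{Y_r}{A\phi(s_k)}\,dr$ and the increment $\inner{X_{s_{k+1}}-X_{s_k}}{\phi(s_k)}$ sampled at the \emph{left} endpoint. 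Summing the increments and invoking the Riemann representation (Theorem \ref{theoRiemannRepresentation}) applied to the bounded c\`{a}gl\`{a}d deterministic integrand $r\mapsto S(t-r)\phi\in b\mathbbm{L}(\Phi)$ shows that the stochastic part converges in ucp to $\int_0^t S(t-r)\phi\,dX_r$ (the time-$0$ contributions $\inner{X_0}{S(t)\phi}$ being reconciled by the conventions of Theorem \ref{theoRiemannRepresentation} and Proposition \ref{propExistStochConvo}). The drift sum $\sum_k\int_{s_k}^{s_{k+1}}\inner{Y_r}{A\phi(s_k)}\,dr$ converges to $\int_0^t\inner{Y_r}{A\phi(r)}\,dr$ by dominated convergence, the pointwise bound $\abs{\inner{Y_r}{A\phi(s_k)}}\le p'(Y_r)\,\sup_{u}p(S(t-u)A\phi)$ being integrable thanks to the assumption that $Y$ has almost surely locally Bochner integrable trajectories.

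It remains to treat the second bracket $\sum_k\inner{Y_{s_{k+1}}}{\phi(s_{k+1})-\phi(s_k)}=\sum_k\int_{s_k}^{s_{k+1}}\inner{Y_{s_{k+1}}}{\phi'(r)}\,dr$, where $\phi'(r)=-S(t-r)A\phi$; this is the main obstacle, because $Y$ is only weakly measurable and one cannot replace $Y_{s_{k+1}}$ by $Y_r$ using path continuity. I would re-expand $\inner{Y_{s_{k+1}}-Y_r}{\phi'(r)}$ by the weak equation \eqref{eqWeakSoluStochEvolEqua} with the test function $\phi'(r)\in\mbox{Dom}(A)$ (here $\phi\in\mbox{Dom}(A^2)$ is used, since $A\phi'(r)=-S(t-r)A^2\phi$), obtaining a double time-integral bounded by $\norm{\sigma_n}\int_0^t p'(Y_u)\,du\to0$ and a remainder $\sum_k\int_{s_k}^{s_{k+1}}\inner{X_{s_{k+1}}-X_r}{\phi'(r)}\,dr\to0$ by right-continuity of $X$ and dominated convergence. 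Hence the second bracket converges to $\int_0^t\inner{Y_r}{\phi'(r)}\,dr$. Combining, the total drift converges to $\int_0^t\inner{Y_r}{A\phi(r)+\phi'(r)}\,dr$, which vanishes identically because $\phi(s)=S(t-s)\phi$ solves the backward equation $\phi'(s)=-A\phi(s)$. We are left with $\inner{Y_t}{\phi}=\inner{\eta}{S(t)\phi}+\int_0^t S(t-r)\phi\,dX_r=\inner{Z_t}{\phi}$ $\Prob$-a.e. for all $\phi\in\mbox{Dom}(A^2)$, and density together with regularity yields \eqref{eqWeakSolIsMild}.
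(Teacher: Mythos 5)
Your argument is correct in substance but follows a genuinely different route from the paper. The paper never discretizes time: it tests the weak-solution identity \eqref{eqWeakSoluStochEvolEqua} against the single family $s \mapsto S(t-s)A\phi$, integrates in $s$ over $[0,t]$, and interchanges the two integrations --- by the classical Fubini theorem for the drift (identity \eqref{eqFubiniGeneratorEvoluSystem}, which is where the locally Bochner integrable trajectories enter) and by the stochastic Fubini theorem (Lemma \ref{lemmaStochasticFubiniSEE}, resting on Theorem \ref{theoStochFubini}) for the martingale part; adding $\inner{X_{t}}{\phi}$ and using \eqref{eqWeakSoluStochEvolEqua} once more then gives \eqref{eqWeakSolIsMild}. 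You instead run the classical ``apply the equation along the orbit $S(t-s)\phi$'' computation by hand with Riemann sums, replacing the stochastic Fubini theorem by the Riemann representation (Theorem \ref{theoRiemannRepresentation}) together with Theorem \ref{theoContStocIntegUCP}; both tools need exactly the same hypotheses ($X$ a good integrator, $\Phi'$ nuclear), so neither proof is more general. What your route buys is independence from Section \ref{subsectStochFubiniTheorem}; what it costs is the bookkeeping in your second bracket: to re-expand $\inner{Y_{s_{k+1}}-Y_{r}}{\phi'(r)}$ you must invoke \eqref{eqWeakSoluStochEvolEqua} for Lebesgue-a.e.\ $r$ outside a \emph{single} $\Prob$-null set, which requires a Fubini argument on $[0,t]\times\Omega$ and hence joint measurability of $(r,\omega)\mapsto\inner{Y_{r}(\omega)}{\phi'(r)}$ --- worth making explicit, though the locally Bochner integrable hypothesis supplies what is needed. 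Your restriction to $\phi\in\mbox{Dom}(A^{2})$ is not a real loss of generality: the paper's computation also uses $AS(t-s)A\phi$ and so implicitly makes the same requirement, and $\mbox{Dom}(A^{2})$ is dense for the semigroups considered here, so the closing density-plus-regularity argument (countable total set in $\widehat{\Phi}_{\theta}$) is unaffected. Finally, your remark that the $\inner{X_{0}}{S(t)\phi}$ terms reconcile is correct: the $k=0$ increment of your telescoping contributes $\inner{X_{s_{1}}}{S(t)\phi}$ rather than $\inner{X_{s_{1}}-X_{0}}{S(t)\phi}$ because the initial condition is $Y_{0}=\eta$, and this extra $\inner{X_{0}}{S(t)\phi}$ is precisely the term $\inner{X_{0}}{H_{0}}$ appearing in \eqref{eqIntegOfStoppedProcess}.
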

\begin{proof} We modify the arguments used in the proof of Theorem 6.9 in \cite{FonsecaMora:2018-1}. 

Let $t \geq 0 $ and $\phi \in \mbox{Dom}(A)$.  Let  $\omega \in \Omega_{t}$ and $p=p(t,\omega)$ as in Definition \ref{defiASLocalBocnTraje}. By the strong continuity of the $C_{0}$-semigroup $(S(r): r \geq 0)$ and since $S(t-s)A \phi \in \mbox{Dom}(A)$ for $s \in[0,t]$, the set $\{ A S(t-s)A \phi: 0 \leq s \leq t\}$ is bounded in $\Phi$, hence is bounded under the seminorm $p$. Therefore
\begin{multline*}
\int_{0}^{t} \left( \int_{0}^{t} \, \abs{  \mathbbm{1}_{[0,s]}(r) \inner{Y_{r}(\omega)}{A S(t-s)A \phi} } dr \right) ds \\
 \leq  t \sup_{0 \leq r \leq s \leq t} p(A S(t-s)A \phi) \, \int_{0}^{t} \, p'(Y_{s}(\omega)) \, ds< \infty.
\end{multline*}
Then from Fubini's theorem and standard properties of the dual semigroup $(S(r)': r \geq 0)$, we have for $\omega \in \Omega_{t}$
\begin{eqnarray}
\int_{0}^{t} \left( \int_{0}^{s} \, \inner{Y_{r}(\omega)}{A S(t-s)A \phi} dr \right) ds 
& = & \int_{0}^{t} \left( \int_{0}^{t-r} \, \inner{Y_{r}(\omega)}{AS(s)A\phi} ds \right) dr  \nonumber \\
& = & \int_{0}^{t}  \inner{\int_{0}^{t-r} S(s)'A' Y_{r}(\omega) ds}{A\phi}  \,  dr  \nonumber \\
& = & \int_{0}^{t} \inner{S(t-r)'Y_{r}(\omega) -Y_{r}(\omega)}{A\phi}  \,  dr  \nonumber \\
& = & \int_{0}^{t}  \inner{Y_{r}}{(S(t-r) A\phi- A \phi)}  dr  \label{eqFubiniGeneratorEvoluSystem}
\end{eqnarray}
Given any $s \in [0,t]$, from the definition of weak solution (with $\phi$ replaced by $S(t-s)A \phi$), we have $\Prob$-a.e. 
\begin{eqnarray*}
\int_{0}^{s} \, S(t-s)A \phi \, dX_{r} 
& = & \inner{X_{s}}{S(t-s)A \phi} \\
& = & \inner{Y_{s}-\eta}{S(t-s)A \phi}- \int_{0}^{s} \, \inner{Y_{r}}{AS(t-s)A\phi}\, dr.
\end{eqnarray*}
Integrating both sides on $[0,t]$ with respect to the Lebesgue measure and then using \eqref{eqFubiniGeneratorEvoluSystem}, it follows that $\Prob$-a.e. 
\begin{eqnarray*}
 \int_{0}^{t} \left( \int_{0}^{s} \, S(t-s)A \phi \, dX_{r}  \right)  ds  
& = & \int_{0}^{t} \, \inner{Y_{s}}{S(t-s)A \phi}  ds -\int_{0}^{t} \inner{\eta}{S(t-s)A \phi}  ds \\
& {} &   - \int_{0}^{t} \left( \int_{0}^{s} \, \inner{Y_{r}}{AS(t-s)A\phi}\, dr \right) ds \\ 
& =  & \int_{0}^{t}  \inner{Y_{s}}{A\phi}  ds -\left( \inner{\eta}{S(t)\phi}- \inner{\eta}{\phi} \right).  
\end{eqnarray*}
Reordering, and then from \eqref{eqStochasticFubiniSEE} we get that 
\begin{flalign*}
& \inner{\eta}{\phi} + \int_{0}^{t} \inner{Y_{s}}{A\phi} ds \\
& = \inner{S(t)'\eta}{\phi}+ \int_{0}^{t} \left( \int_{0}^{s} \, S(t-s)A \phi \, dX_{r}  \right)  ds    \\
& =\inner{S(t)'\eta}{\phi} + \int_{0}^{t} S(t-r) \phi \, dX_{r} - \inner{X_{t}}{\phi}. 
\end{flalign*} 
Summing the term $\inner{X_{t}}{\phi}$ at both sides, then using  \eqref{eqDefiStochConvol} and the fact that $Y$ is a weak solution we conclude that $\Prob$-a.e
\begin{equation*}
\inner{Y_{t}}{\psi}=\inner{S(t)'\eta}{\psi} + \inner{\int_{0}^{t} S(t-r)' \, dX_{r}}{\phi}.
\end{equation*}
The above equality is valid for each $\psi \in \mbox{Dom}(A)$, and since $\mbox{Dom}(A)$ is dense in $\Phi$ we conclude that $Y_{t}$ is of the form \eqref{eqWeakSolIsMild}. 
\end{proof}

\begin{corollary}\label{coroExisteUniqueSoluSEE}
If the stochastic convolution process $(\int_{0}^{t} S(t-r)' \, dX_{r}:  t \geq 0)$ has almost surely locally Bochner integrable trajectories (respectively c\`{a}dl\`{a}g/continuous paths) then
the mild solution \eqref{eqMildSoluStochEvolEqua} is the unique weak solution to \eqref{eqStochEvolEqua} having almost surely locally Bochner integrable trajectories (respectively c\`{a}dl\`{a}g/continuous paths).
\end{corollary}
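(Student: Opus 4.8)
The plan is to assemble the corollary from the existence result (Theorem \ref{theoExisSolutionsSEE}) and the uniqueness result (Theorem \ref{theoUniqueSoluSEE}), with the only real work being a careful bookkeeping of path regularity for the two summands of the mild solution \eqref{eqMildSoluStochEvolEqua}. Write $Z_{t}=S(t)'\eta + C_{t}$, where $C_{t}=\int_{0}^{t}S(t-r)'\,dX_{r}$ is the stochastic convolution. By hypothesis $C$ has almost surely locally Bochner integrable trajectories (respectively càdlàg / continuous paths), so the task reduces to transferring these properties to $Z$ and then invoking the two theorems.

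First I would treat the deterministic summand $S(\cdot)'\eta$. Since $\eta$ is $\mathcal{F}_{0}$-measurable, $\Phi'$-valued and regular, and since the dual semigroup $(S(t)': t\geq 0)$ is strongly continuous, for every $\omega$ the trajectory $t\mapsto S(t)'\eta(\omega)$ is continuous in $\Phi'$; thus $(S(t)'\eta: t\geq 0)$ has continuous paths (hence càdlàg paths and, by Example \ref{examASLocalBocnTraje}(2), also almost surely locally Bochner integrable trajectories). Consequently this summand enjoys all three regularity properties, regardless of which hypothesis is assumed on $C$. Recall also that $Z$ is regular and adapted (Proposition \ref{propExistStochConvo}) and is already a weak solution by Theorem \ref{theoExisSolutionsSEE}.

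For existence it then remains to show $Z=S(\cdot)'\eta+C$ inherits the property assumed on $C$. In the continuous (resp. càdlàg) case this is immediate, as a sum of continuous (resp. càdlàg) $\Phi'$-valued processes is again continuous (resp. càdlàg). In the almost surely locally Bochner integrable case, given $t>0$ and $\omega$ in the relevant full-probability set, I would choose continuous seminorms $p_{1},p_{2}$ on $\Phi$ witnessing the property for $S(\cdot)'\eta$ and for $C$, and set $p=p_{1}+p_{2}$; since $p\geq p_{i}$ forces $\Phi'_{p_{i}}\subseteq \Phi'_{p}$ with $p'\leq p_{i}'$ on $\Phi'_{p_{i}}$, we obtain $Z_{s}(\omega)\in \Phi'_{p}$ a.e.\ on $[0,t]$ and $\int_{0}^{t}p'(Z_{s}(\omega))\,ds \leq \int_{0}^{t}p_{1}'(S(s)'\eta(\omega))\,ds+\int_{0}^{t}p_{2}'(C_{s}(\omega))\,ds<\infty$. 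Thus $Z$ is a weak solution with the required path property.

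For uniqueness, let $Y$ be any weak solution enjoying the assumed property. In each of the three cases $Y$ has almost surely locally Bochner integrable trajectories (directly, or via Example \ref{examASLocalBocnTraje}(2), since continuous $\Rightarrow$ càdlàg $\Rightarrow$ the Bochner property), so Theorem \ref{theoUniqueSoluSEE} applies and yields $Y_{t}=Z_{t}$ $\Prob$-a.e.\ for each fixed $t\geq 0$; hence $Y$ is a modification of $Z$, which settles the Bochner-integrable case, while in the càdlàg and continuous cases both processes share càdlàg (resp.\ continuous) paths and, being mutual modifications, are indistinguishable. I expect the only genuinely delicate point to be the seminorm combination in the Bochner-integrable case — ensuring that a single continuous seminorm simultaneously dominates both summands while keeping the integral finite — with everything else following routinely from the two previously established theorems.
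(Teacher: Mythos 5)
Your proof is correct and follows essentially the same route as the paper: observe that $S(\cdot)'\eta$ has continuous paths (hence all three regularity properties), transfer the assumed property of the convolution to the sum, and then invoke Theorems \ref{theoExisSolutionsSEE} and \ref{theoUniqueSoluSEE}. Your extra care with the seminorm combination $p=p_{1}+p_{2}$ in the Bochner-integrable case and with upgrading modification to indistinguishability in the c\`{a}dl\`{a}g/continuous cases is sound detail that the paper leaves implicit.
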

\begin{proof}
Observe that since $(S(t)'\eta: t \geq 0)$ has almost surely locally Bochner integrable trajectories (as it has continuous paths by the strong continuity of the dual semigroup), then if the stochastic convolution process $(\int_{0}^{t} S(t-r)' \, dX_{r}:  t \geq 0)$ has almost surely locally Bochner integrable trajectories (respectively c\`{a}dl\`{a}g/continuous paths) then the mild solution 
\eqref{eqMildSoluStochEvolEqua} possesses this property too. The result is then a consequence of Theorems \ref{theoExisSolutionsSEE} and \ref{theoUniqueSoluSEE}. 
\end{proof}
 
In the last part of this section we will show existence and uniqueness of a solution with continuous paths under the assumption that $A \in \mathcal{L}(\Phi,\Phi)$ is the generator of a  $(C_{0},1)$-semigroup and $X$ is a $\mathcal{H}^{1}_{S}$-semimartingale with continuous paths. 

Recall that a $C_{0}$-semigroup $( S(t) : t \geq 0)$ is called a \emph{$(C_{0},1)$-semigroup} if for each continuous seminorm $p$ on $\Phi$  there exists some $\vartheta_{p} \geq 0$ and a continuous seminorm $q$ on $\Phi$ such that  $ p(S(t)\phi) \leq e^{\vartheta_{p} t} q(\phi)$, for all $t \geq 0$ and $\phi \in \Phi$. $(C_{0},1)$-semigroups where introduced by Babalola in \cite{Babalola:1974} and include the class of \emph{quasiequicontinuous} $C_{0}$-semigroups
(case $\vartheta_{p}=\sigma$ with $\sigma$ a positive constant independent of $p$) and \emph{equicontinuous} $C_{0}$-semigroups
(case $\vartheta_{p}=0$ for each $p$).

\begin{theorem}\label{theoExisUniqCadlagSoluSEE}
Suppose that $\Phi$ is a complete bornological nuclear space whose strong dual $\Phi'$  is complete and nuclear. Assume $A \in \mathcal{L}(\Phi,\Phi)$ is the generator of a  $(C_{0},1)$-semigroup and $X$ is a $\Phi'$-valued $\mathcal{H}^{1}_{S}$-semimartingale with continuous paths satisfying any of the equivalent conditions in Proposition \ref{propCylSemimContOpeSpaceSemim}. Then, the stochastic evolution equation  \eqref{eqStochEvolEqua} has a unique weak solution $Z=(Z_{t}: t \geq 0)$ with continuous paths satisfying for each $t \geq 0$ and $\phi \in \Phi$ that $\Prob$-a.e.
\begin{equation}\label{eqCadlagVersiMildSolu}
\inner{Z_{t}}{\phi}= \inner{\eta -X_{0}}{S(t)\phi} +\inner{X_{t}}{\phi} +\int_{0}^{t} \inner{X_{t}}{S(t-s)A\phi} ds. 
\end{equation}
\end{theorem}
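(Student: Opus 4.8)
The plan is to take the mild solution \eqref{eqMildSoluStochEvolEqua} as the candidate and to show, in order, that (a) it is a weak solution, (b) it admits a $\Phi'$-valued continuous version satisfying the explicit pairing \eqref{eqCadlagVersiMildSolu}, and (c) it is the unique weak solution with continuous paths. Existence is almost immediate: since $A\in\mathcal{L}(\Phi,\Phi)$ we have $\mbox{Dom}(A)=\Phi$, and since $X$ is a $\mathcal{H}^{1}_{S}$-semimartingale it is a good integrator by Proposition \ref{propGoodIntegratorsNuclearSpace}; hence Proposition \ref{propExistStochConvo} makes the stochastic convolution meaningful and Theorem \ref{theoExisSolutionsSEE} already tells us that the mild solution $Z$ is a weak solution. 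The substance of the theorem is therefore the path regularity and the closed-form expression, together with uniqueness.

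First I would establish the pathwise identity behind \eqref{eqCadlagVersiMildSolu}. Fix $\phi\in\Phi$ and $t>0$. Because $A$ is bounded, the deterministic path $r\mapsto S(t-r)\phi$ is continuously differentiable with $\frac{d}{dr}S(t-r)\phi=-S(t-r)A\phi$, so it is of finite variation. Applying Lemma \ref{lemmaStochasticFubiniSEE} (itself a consequence of the stochastic Fubini theorem, Theorem \ref{theoStochFubini}) reduces $\int_{0}^{t}S(t-r)\phi\,dX_{r}-\inner{X_{t}}{\phi}$ to an iterated integral whose inner integrand $S(t-s)A\phi$ is constant in the integration variable $r$; since $X$ has continuous paths, the action of the stochastic integral on such a constant integrand (see \eqref{eqActionWeakIntegSimpleIntegNuclear}) is the pathwise pairing, and the iterated integral collapses to the ordinary (pathwise) Lebesgue integral appearing as the last term of \eqref{eqCadlagVersiMildSolu} (up to the boundary term $\inner{X_{0}}{S(t)\phi}$). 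Pairing the mild solution with $\phi$ then yields \eqref{eqCadlagVersiMildSolu} and, as a by-product, exhibits $t\mapsto\inner{Z_{t}}{\phi}$ as a continuous real-valued semimartingale for every $\phi$.

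Next I would upgrade this weak continuity to genuine continuity of the $\Phi'$-valued paths by a regularization argument. The right-hand side of \eqref{eqCadlagVersiMildSolu} defines a cylindrical process; each of its three terms is continuous in $t$ (strong continuity of the dual semigroup for the first, continuity of $X$ for the second, dominated convergence for the Lebesgue integral), and the $(C_{0},1)$-bound $p(S(t)\phi)\leq e^{\vartheta_{p}t}q(\phi)$ controls $S(t)\phi$ and $S(t-s)A\phi$ uniformly for $\phi$ in a fixed neighbourhood of the origin and $t$ in a compact interval. Together with the equicontinuity of $(X_{t})$ and the regularity of $\eta$, this shows the candidate cylindrical process satisfies the equivalent conditions of Proposition \ref{propCylSemimContOpeSpaceSemim}. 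Since moreover each $\inner{Z_{t}}{\phi}$ is a continuous semimartingale, the regularization theorem (Theorem \ref{theoRegulCylinSemimartigalesNuclear}) provides a $\Phi'$-valued, regular, continuous process that is a version of $Z$, and this version is therefore a weak solution to \eqref{eqStochEvolEqua} with continuous paths.

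Finally, uniqueness follows from the converse theory already developed: a process with continuous paths is in particular c\`{a}dl\`{a}g, hence has almost surely locally Bochner integrable trajectories by Example \ref{examASLocalBocnTraje}(2), so Theorem \ref{theoUniqueSoluSEE} (equivalently Corollary \ref{coroExisteUniqueSoluSEE}) forces any such weak solution to coincide with the mild solution. I expect the main obstacle to be the regularization step: one must verify the equicontinuity of the cylindrical process \emph{uniformly in $t$ on compact intervals} and check the integrability needed to apply Theorem \ref{theoRegulCylinSemimartigalesNuclear}, and it is precisely here that the hypotheses that $\Phi'$ be complete and nuclear, that the semigroup be of class $(C_{0},1)$, and that $X$ be a $\mathcal{H}^{1}_{S}$-semimartingale are used in an essential way.
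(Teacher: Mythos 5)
Your proof has the same skeleton as the paper's — existence via Theorem \ref{theoExisSolutionsSEE}, an explicit formula for the stochastic convolution, regularization to a continuous $\Phi'$-valued version, and uniqueness via Example \ref{examASLocalBocnTraje}(2) and Corollary \ref{coroExisteUniqueSoluSEE} — but the central step is carried out by a genuinely different device. The paper derives \eqref{eqCadlagVersiStocConvo} by applying the stochastic integration by parts formula (Theorem \ref{theoIntegByPartsFormula}) to $X$ and $Y_{s}=\mathbbm{1}_{[0,t]}(s)S(t-s)\phi$, after first proving $[X,Y]=0$; that vanishing is exactly where the $(C_{0},1)$ hypothesis is consumed (it lets one realize $s\mapsto i_{q}Y_{s}$ as a finite-variation path in each $\Phi_{q}$, so that $Y$ is a projective limit of finite-variation processes) and where the assumptions that $X$ is a \emph{continuous} $\mathcal{H}^{1}_{S}$-semimartingale enter through Example \ref{examQuadraCova}. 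You instead read the identity off Lemma \ref{lemmaStochasticFubiniSEE}, evaluating the inner integral as $\int_{0}^{s}S(t-s)A\phi\,dX_{r}=\inner{X_{s}}{S(t-s)A\phi}$ (the same evaluation the paper itself uses in the proof of Theorem \ref{theoUniqueSoluSEE}). This is shorter, avoids the quadratic-covariation machinery entirely, and in fact only uses the good-integrator property of $X$ for the identity itself; the continuity of $X$ and the $\mathcal{H}^{1}_{S}$ hypothesis are then needed only to make $X$ a good integrator and to get continuity of $t\mapsto\inner{Z_{t}}{\phi}$. The final regularization also differs slightly: the paper applies the regularization theorem for ultrabornological nuclear spaces directly to $U_{t}=\int_{0}^{t}S(t-r)'\,dX_{r}$, needing only that $\phi\mapsto\inner{U_{t}}{\phi}$ is continuous into $L^{0}\ProbSpace$ (from Proposition \ref{propExistStochConvo}) and that each $\inner{U_{t}}{\phi}$ has a continuous version, whereas your appeal to Theorem \ref{theoRegulCylinSemimartigalesNuclear} obliges you to verify the equicontinuity conditions of Proposition \ref{propCylSemimContOpeSpaceSemim} for the candidate cylindrical process — doable with the $(C_{0},1)$ bound, but more than is required.

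One point you should not leave as a parenthesis: the boundary term. With the paper's convention $\int_{0}^{s}\psi\,dX_{r}=\inner{X_{s}}{\psi}$, the Fubini route yields $\int_{0}^{t}S(t-r)\phi\,dX_{r}=\inner{X_{t}}{\phi}+\int_{0}^{t}\inner{X_{s}}{S(t-s)A\phi}\,ds$, with no $-\inner{X_{0}}{S(t)\phi}$; that term is generated in the paper's argument by the $\inner{X_{0}}{Y_{0}}$ appearing in the integration by parts formula. To land exactly on \eqref{eqCadlagVersiMildSolu} you must therefore either track the time-zero conventions of the stochastic integral and of $[X,Y]$ explicitly, or normalize $X_{0}=0$; writing ``up to the boundary term'' is not a proof of the stated formula. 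This is a reconcilable bookkeeping issue rather than a structural flaw, but it is the one place where your argument, as written, does not produce the asserted identity.
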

\begin{proof}
Our first objective is to show that the mild solution \eqref{eqMildSoluStochEvolEqua} satisfies \eqref{eqCadlagVersiMildSolu}. In view of \eqref{eqDefiStochConvol} it suffices to show that for each $t \geq 0$ and $\phi \in \Phi$  we have $\Prob$-a.e. 
\begin{equation}\label{eqCadlagVersiStocConvo}
\int_{0}^{t} S(t-r)\phi \, dX_{r} = \inner{X_{t}}{\phi} -  \inner{X_{0}}{S(t)\phi} +  \int_{0}^{t} \inner{X_{t}}{S(t-s)A\phi} ds.
\end{equation}
To do this, let $t \geq 0$ and $\phi \in \Phi$, and define $Y_{s}=\mathbbm{1}_{[0,t]}(s)S(t-s) \phi$. We will check that \eqref{eqCadlagVersiStocConvo} holds by applying the stochastic integration by parts formula to the processes $X$ and $Y$, but before we will verify that $[X,Y]=0$. This last result will follows from the arguments used in Example \ref{examQuadraCova} if we can show that $Y$ defines a projective limit of processes of bounded variation.

In effect, for any given continuous Hilbertian seminorm $q$ on $\Phi$, since $(S(s): s \geq 0)$ is a  $(C_{0},1)$-semigroup there exist a continuous seminorm $p$ on $\Phi$, $q \leq p$, and a $C_{0}$-semigroup $( S_{p}(t): t \geq 0)$ on the Banach space $\Phi_{p}$ such that
$S_{p}(s) i_{p} \varphi= i_{p} S(s) \varphi$, $\forall \varphi \in \Phi$, $s \geq 0$ (see Theorems 2.3 and 2.6 in \cite{Babalola:1974}).  
Hence, $s \mapsto i_{q} Y_{s}= \mathbbm{1}_{[0,t]}(s)i_{q,p} S_{p}(t-s) \phi$ is a $\Phi_{q}$-valued function with bounded variation on $[0,t]$ (recall $s \mapsto S_{p}(s)\phi$ is norm continuously differentiable on $[0,\infty)$). Since the above is true for every continuous Hilbertian seminorm on $\Phi$, then $Y$ defines a projective limit of processes of finite variation. Since $X$ is a $\Phi'$-valued $\mathcal{H}^{1}_{S}$-semimartingale with continuous paths, from the arguments used in Example \ref{examQuadraCova} we have $[X,Y]=0$. 

By an application of the stochastic integration by parts formula (Theorem \ref{theoIntegByPartsFormula}), and then from standard properties of $C_{0}$-semigroups (see \cite{Komura:1968}), we have $\Prob$-a.e.
\begin{eqnarray*}
\int_{0}^{t} S(t-s)\phi \, dX_{s}
& = & \inner{X_{t}}{\phi} -  \inner{X_{0}}{S(t)\phi} - \int_{0}^{t} X_{s} \,  d_{s} (S(t-s) \phi) \\ 
& = & \inner{X_{t}}{\phi} -  \inner{X_{0}}{S(t)\phi} + \int_{0}^{t} \inner{X_{t}}{S(t-s)A\phi} ds 
\end{eqnarray*}
Observe that by \eqref{eqCadlagVersiStocConvo} the stochastic convolution $U_{t}=\int_{0}^{t} S(t-r)' \, dX_{r}$ satisfies that for each  $\phi \in \Phi$ the process  $(\inner{U_{t}}{\phi}: t \geq 0)$ has a continuous version. Moreover, from the arguments used in the proof of Proposition  \ref{propExistStochConvo} we conclude that for each $t \geq 0$ the mapping $\phi \mapsto \inner{U_{t}}{\phi}$ is continuous from $\Phi$ into $L^{0}\ProbSpace$. Since $\Phi$ is ultrabornological (it is complete and bornological, see Theorem 13.2.12 in \cite{NariciBeckenstein}, p.449), the regularization theorem for ultrabornological nuclear spaces (Corollary 3.11 in \cite{FonsecaMora:2018}) shows that $(U_{t}: t \geq 0)$ has a $\Phi'$-valued version with continuous paths satisfying \eqref{eqCadlagVersiStocConvo}. Hence the mild solution $(Z_{t}: t \geq 0)$ satisfies \eqref{eqCadlagVersiMildSolu} and by Corollary \ref{coroExisteUniqueSoluSEE} we have that $(Z_{t}: t \geq 0)$ is the unique weak solution to \eqref{eqStochEvolEqua} with continuous paths. 
\end{proof}

\begin{example}
Let $\Phi=\mathscr{S}(\R^{d})$, for $d \geq 1$ (see Section \ref{sectNuclearSpaceCylSemi}). Let $X$ be a $\mathscr{S}(\R^{d})'$-valued $\mathcal{H}^{1}_{S}$-semimartingale with continuous paths satisfying any of the equivalent conditions in Proposition \ref{propCylSemimContOpeSpaceSemim}.
Consider the (lineal) stochastic heat equation on  $\mathscr{S}(\R^{d})'$:
\begin{equation}\label{eqHeatSEELevy}
d Y_{t}=\Delta Y_{t}+ dX_{t}, \quad t \geq 0,
\end{equation}
with initial condition $Y_{0}=\eta$, for a  $\mathscr{S}(\R^{d})'$-valued $\mathcal{F}_{0}$-measurable random variable, and here  $\Delta$ is the Laplace operator on $\mathscr{S}(\R^{d})'$. 

It is well-known that $\Delta \in \mathcal{L}(\mathscr{S}(\R^{d}),\mathscr{S}(\R^{d}))$ is the infinitesimal generator of the \emph{heat semigroup}  $(S(t): t \geq 0)$ which is the equicontinuous $C_{0}$-semigroup on $\mathscr{S}(\R^{d})$ defined as: $S(0)=I$ and for each $t>0$, 
\begin{equation*}
(S(t)\phi)(x) \defeq \frac{1}{(4 \pi t)^{d/2}}  \int_{\R^{d}} e^{-\norm{x-y}^{2}/4t} \phi(y) dy, \quad \forall \phi \in  \mathcal{S}(\R^{d}), \, x \in \R^{d}. 
\end{equation*}
If we let $\mu_{t}(x)= \frac{1}{(4 \pi t)^{d/2}}  e^{-\norm{x}^{2}/4t}$, then $\mu_{t} \in \mathscr{S}(\R^{d})$ and $S(t)\phi=\mu_{t} \ast \phi$ for each $\phi \in \mathscr{S}(\R^{d})$. 

Then Theorem \ref{theoExisUniqCadlagSoluSEE} shows that \eqref{eqHeatSEELevy} has a unique weak solution $(Z_{t}: t \geq 0)$ with continuous paths satisfying for any $t \geq 0$ and $\phi \in   \mathscr{S}(\R^{d})$:
\begin{equation*}\label{eqSoluHeatEqua}
\inner{Z_{t}}{\phi}= \inner{\eta-X_{0}}{\mu_{t} \ast \phi} + \inner{X_{s}}{\phi} +\int_{0}^{t} \inner{X_{s}}{\Delta (\mu_{t-s} \ast \phi)} \, ds. 
\end{equation*}
This example extends to the context of $\mathcal{H}^{1}_{S}$-semimartingales with continuous paths the results obtained by \"{U}st\"{u}nel in \cite{Ustunel:1982-2} on the existence and uniqueness of solutions for the stochastic heat equation in $\mathscr{S}(\R^{d})'$ driven by a Wiener process.
\end{example}

\section{Applications to Stochastic Integration for a Sequence of Real-Valued Semimartingales}\label{sectAppliSequenceSemi}

\subsection{Literature Review}\label{subSecLiteReviewSequeSemi}

In \cite{DeDonnoPratelli:2006}, De Donno and Pratelli introduced a theory of stochastic integration with respect to a sequence of real-valued semimartingales. The main idea behind the construction is to think on a sequence of semimartingales $Z=(Z^{j}: j \in \N)$ as a process taking values in the space of real sequence $\R^{\N}$ (equipped with the product topology). The corresponding class of stochastic integrands is chosen to take values in the set of unbounded  functionals on $\R^{\N}$. This theory of stochastic integration has found applications to mathematical finance, in particular in modelling large markets (see e.g. \cite{DeDonnoGuasoniPratelli:2005, Mostovyi:2018}). In the following paragraphs we describe the main ideas in the construction of the stochastic integral introduced in \cite{DeDonnoPratelli:2006}.

Denote by $\mathcal{U}$ the collection of all the not-necessarily continuous linear functionals on $\R^{\N}$, i.e. $k \in \mathcal{U}$ is a linear functional whose domain $\mbox{Dom}(k)$ is a subspace of $\R^{\N}$ (which can be the trivial set $\{0\}$). A simple integrand (a elementary process in our terminology) is a finite sum of the form $\sum_{k=1}^{n} h_{k} e_{k}$,  
where $h_{k} \in b\mathcal{P}$ for $k=1, \dots, n$ and $(e_{j}:j \in \N)$ is the canonical basis in $\R^{\N}$. For such a simple integrand the stochastic integral is defined as $H \cdot Z= \sum_{k=1}^{n} (h_{k} \cdot Z^{k})$. 

A process $H$ with values in $\mathcal{U}$ is called predictable if 
there exists a sequence $(H^{n}: n \in \N)$ of simple processes such that for each $t \geq 0$, $\omega \in \Omega$, $H(t,\omega)(f) = \lim_{n} H(t,\omega)(f)$  for all $f \in \mbox{Dom}(H(t,\omega))$. 

A $\mathcal{U}$-valued predictable process is said to be integrable with respect to the sequence of semimartingales $Z=(Z^{j}: j \in \N)$ if there is a sequence $(H^{n}: n \in \N)$ of simple integrands, such that $H^{n}$ converges to $H$ a.s. and the sequence of semimartingales $ H^{n} \cdot Z$ converges to a semimartingale $Y$ in $S^{0}$. In this case $H$ is called a generalized integrand and its stochastic integral is defined as  $ H \cdot Z = Y$. 

It is proved in Proposition 1 in  \cite{DeDonnoPratelli:2006} that when the stochastic integral exists, it does not depend on the approximating sequence. This is proven using a result by Memin \cite{Memin:1980} which allows, by mean of an  application  of a change in probability measure, to replace the integral with respect to a sequence of semimartingales with the sum of an integral with respect to a sequence of square integrable martingales and an integral with respect to a sequence of predictable processes with integrable variation. These types of stochastic integrals are developed in \cite{DeDonnoPratelli:2006}. 

Observe that the definition of generalized integrand implicitly assumes continuity of the stochastic integral mapping  $H \mapsto H \cdot Z$. Indeed, it is proven in Theorem 3 in  \cite{DeDonnoPratelli:2006} that the set of that the set of stochastic integrals with respect to $Z$ is closed in $S^{0}$. 

The above definition of stochastic integral for generalized integrands  has its drawbacks. In particular, the stochastic integral lacks some of the standard properties of the stochastic integral in finite dimensions. For instance, the integral is not linear with respect to the integrator process (see Remark 9 and the examples in Section 6 in \cite{DeDonnoPratelli:2006}).  

Motivated by the above, in the next section we apply our recently developed theory of stochastic integration in nuclear spaces to carry out an alternative construction for the stochastic integral with respect to a sequence of real-valued semimartingales. We will consider however locally bounded integrands as in Section \ref{sectExteStocIntegNuclear} instead of the class of generalized integrands of \cite{DeDonnoPratelli:2006}. This way our integral will have at disposal all the tools developed in this article which we hope can be useful for future applications of this theory.  

\subsection{Construction of the Stochastic Integral}\label{subsecIntegSequenSemima}

To define the stochastic integral we first need to settle this problem in our context. Let $\Phi=\oplus_{j \in \N} \R_{j}$ where $\R_{j}\defeq \R$ for every $j \in \N$. The locally convex direct sum $\Phi$ is a complete, barrelled, nuclear space (see \cite{Jarchow}, Propositions 6.6.7 and 11.3.1, and Corollary 21.2.3). Furthermore, we have $\Phi' \simeq \R^{\N} \defeq \prod_{j \in \N} \R_{j}$ with the product topology (see \cite{Schaefer}, Corollary IV.4.3.1) is also a complete, barrelled, nuclear space (see \cite{Schaefer}, Theorems II.5.3. and III.7.4).

Let $(e_{j}:j \in \N)$ be the canonical basis in $\R^{\N}$. Observe that each $\phi \in \Phi$ is of the form 
\begin{equation}\label{eqGeneFormElemDirectSum}
 \phi= \sum_{j \in \N} a_{j} e_{j}, 
\end{equation}
where $a_{j} \in \R$ for all $j \in \N$ and the set $\{ j \in \R: a_{j} \neq 0\}$ is finite.

Let $(Z^{j}: j \in \N)$ be a sequence in $S^{0}$. For $\phi \in \Phi$ of the form \eqref{eqGeneFormElemDirectSum}, define 
\begin{equation}\label{eqDefiCyliSemiSequenceSemi}
X(\phi) \defeq \sum_{j \in \N} a_{j} Z^{j}. 
\end{equation}
Observe that in particular we have $X(e_{j})=Z^{j}$. 
It is clear that $X(\phi) \in S^{0}$ for each $\phi \in \Phi$ and that the mapping $X: \Phi \rightarrow S^{0}$, $\phi \mapsto X(\phi)$, is linear. This way we have that $X$ is a cylindrical semimartingale in $\Phi'$. Moreover, we have:

\begin{theorem}\label{theoCylSemiSequenceSemiContinuous} Let $(Z^{j}: j \in \N)$ be a sequence in $S^{0}$. Let $X$ be the cylindrical semimartingale defined in \eqref{eqDefiCyliSemiSequenceSemi}. Then the mapping  $X: \Phi \rightarrow S^{0}$ is continuous. Furthermore, $X$ has a version that is a $\Phi'$-valued regular, c\`{a}dl\`{a}g semimartingale. 
\end{theorem}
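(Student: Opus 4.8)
The plan is to treat the two assertions separately. The substantive part is the continuity of $X:\Phi \rightarrow S^{0}$; the existence of a regular \cadlag semimartingale version then follows at once. Indeed, once I know that $X:\Phi \rightarrow S^{0}$ is continuous, $X$ satisfies condition (2) of Proposition \ref{propCylSemimContOpeSpaceSemim}, and since $\Phi=\oplus_{j \in \N}\R_{j}$ is nuclear, Theorem \ref{theoRegulCylinSemimartigalesNuclear} immediately yields the desired $\Phi'$-valued regular \cadlag semimartingale version of $X$. So the entire proof reduces to establishing continuity.

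To prove continuity I would argue directly with the $F$-seminorm $\norm{\cdot}_{S^{0}}$, rather than trying to invoke a universal property of the direct sum (which transfers continuity only to locally convex targets, whereas $S^{0}$ is not locally convex). Recall that a basis of neighbourhoods of zero in the locally convex direct sum $\Phi=\oplus_{j\in\N}\R_{j}$ is given by the absolutely convex hulls
$$V_{(\epsilon_{j})}= \Big\{ \phi=\sum_{j} b_{j} e_{j} : \sum_{j} \abs{b_{j}}/\epsilon_{j} \leq 1 \Big\},$$
as $(\epsilon_{j})$ ranges over all sequences of strictly positive reals. The two properties of $\norm{\cdot}_{S^{0}}$ that I will use are subadditivity, $\norm{z+w}_{S^{0}}\leq \norm{z}_{S^{0}}+\norm{w}_{S^{0}}$, and the monotonicity $\norm{\mu z}_{S^{0}}\leq \norm{z}_{S^{0}}$ for $\abs{\mu}\leq 1$; the latter follows directly from the definition of $\norm{\cdot}_{S^{0}}$, since $\abs{\mu}\leq 1$ forces $\mu h \in \mathcal{E}_{1}$ whenever $h\in\mathcal{E}_{1}$.

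Given a neighbourhood of zero $W$ in $S^{0}$, I pick $\delta>0$ with $\{z:\norm{z}_{S^{0}}\leq\delta\}\subseteq W$. For each $j$, continuity of scalar multiplication in $S^{0}$ gives $\norm{\lambda Z^{j}}_{S^{0}}\rightarrow 0$ as $\lambda\rightarrow 0$, so I may choose $\epsilon_{j}>0$ with $\norm{\epsilon_{j} Z^{j}}_{S^{0}}\leq \delta 2^{-j-1}$. Then for any $\phi=\sum_{j} b_{j} e_{j}\in V_{(\epsilon_{j})}$ (a finite sum, with $\abs{b_{j}}\leq\epsilon_{j}$ for every $j$), subadditivity followed by monotonicity yields
$$ \norm{X(\phi)}_{S^{0}} = \norm{ \sum_{j} b_{j} Z^{j} }_{S^{0}} \leq \sum_{j} \norm{b_{j} Z^{j}}_{S^{0}} \leq \sum_{j} \norm{\epsilon_{j} Z^{j}}_{S^{0}} \leq \sum_{j} \delta 2^{-j-1} \leq \delta, $$
so that $X(V_{(\epsilon_{j})})\subseteq W$. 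As $X$ is linear, this proves continuity at the origin, hence everywhere.

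The main obstacle is precisely the non-local-convexity of $S^{0}$: the summand-by-summand continuity of $X$ (which is automatic, each $\R_{j}$ being one-dimensional) does not by itself give continuity on $\Phi$, because the coproduct property of the locally convex direct sum only transfers continuity to locally convex ranges, and the finest vector topology on an infinite-dimensional space is strictly finer than the locally convex one. The estimate above circumvents this by exploiting that the explicit direct-sum neighbourhoods $V_{(\epsilon_{j})}$ control each coordinate by $\epsilon_{j}$, which combines with the subadditivity and monotonicity of the $F$-seminorm $\norm{\cdot}_{S^{0}}$ and a $2^{-j}$ summation; the second assertion is then a direct appeal to Theorem \ref{theoRegulCylinSemimartigalesNuclear}.
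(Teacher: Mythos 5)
Your proof is correct, and the continuity argument takes a genuinely different route from the paper's. The paper disposes of continuity in two lines by invoking the coproduct property of the countable locally convex direct sum (citing Jarchow, Sections 4.1 and 4.3): $X$ is continuous on $\oplus_{j}\R_{j}$ if and only if each restriction $u \mapsto uZ^{j}$ is continuous, and the latter is just continuity of scalar multiplication in $S^{0}$. The concern you raise --- that the universal property of the locally convex direct sum a priori only transfers continuity to locally convex targets, whereas $S^{0}$ is not locally convex --- is a legitimate one in general, but for \emph{countable} index sets it is a classical fact (this is what Jarchow's Section 4.3 supplies) that the locally convex direct sum topology coincides with the finest vector topology making the inclusions continuous, so the coproduct property does hold for maps into arbitrary topological vector spaces. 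Your direct estimate with the neighbourhoods $V_{(\epsilon_{j})}$, the subadditivity and the monotonicity $\norm{\mu z}_{S^{0}}\leq\norm{z}_{S^{0}}$ for $\abs{\mu}\leq 1$, together with the $2^{-j}$ summation, in effect re-proves exactly this special case by hand; it is longer but self-contained and makes visible why countability of the index set is what saves the argument. Your treatment of the second assertion (continuity gives condition (2) of Proposition \ref{propCylSemimContOpeSpaceSemim}, then apply Theorem \ref{theoRegulCylinSemimartigalesNuclear}) is identical to the paper's.
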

\begin{proof} We first prove the continuity of $X$. 
For each $j \in \N$ denote by $I_{j}: \R_{j} \rightarrow \oplus_{j \in \N} \R_{j}$, $u \mapsto u e_{j}$, the canonical inclusion. From properties of the direct sum topology (see e.g. Sections 4.1 and 4.3 in \cite{Jarchow}) we know that the mapping $X: \Phi \rightarrow S^{0}$ is continuous if and only if for each $j \in \N$ the mapping $X \circ I_{j}$ is continuous from $\R_{j}$ into $S^{0}$. But observe that for each $j \in \N$ we have that $X \circ I_{j}(u)=u Z^{j}$ and since for each $Z \in S^{0}$ the mapping $u \mapsto uZ$ is continuous from $\R$ into $S^{0}$, then we conclude that 
$X: \Phi \rightarrow S^{0}$ is continuous. The final assertion follows from Proposition \ref{propCylSemimContOpeSpaceSemim} and Theorem \ref{theoRegulCylinSemimartigalesNuclear}. 
\end{proof}

Because $\Phi$ is a complete, barrelled, nuclear space,  we can now define the stochastic integral with respect to a sequence of semimartingales via Theorem \ref{theoLocalBoundWeakIntegNucleSpace}.  

\begin{definition}
Let $(Z^{j}: j \in \N)$ be a sequence in $S^{0}$ and let 
$X$ be the cylindrical semimartingale defined by such sequence in \eqref{eqDefiCyliSemiSequenceSemi}. For each $H \in \mathcal{P}_{loc}(\Phi)$, we define the stochastic integral of $H$ with respect to the sequence $(Z^{j}: j \in \N)$  as the stochastic integral $\int \, H \, dX$ of $H$ with respect to $X$ as defined in 
Theorem \ref{theoLocalBoundWeakIntegNucleSpace}. 
\end{definition}

The stochastic integral with respect to a sequence of semimartingales satisfies all the properties listed in the above sections. However, as we will describe below some new properties emerge, in particular in connection with the property of good integrator. 

In effect, the reader might observe that from our construction of the stochastic integral we only require for $(Z^{j}: j \in \N)$ to be a sequence in $S^{0}$. Indeed, by Theorem \ref{theoCylSemiSequenceSemiContinuous} the cylindrical semimartingale $X$ defined in \eqref{eqDefiCyliSemiSequenceSemi} is continuous from $\Phi$ into $S^{0}$. However, as mentioned in Section \ref{sectionCharacWeakIntegransNuclear} the above continuity property is not in general enough to conclude that $X$ is a good integrator, unless $X$ is one of the particular classes of cylindrical semimartingales which we know are good integrators. 

As a new phenomena of the above construction we will show below that under a change of probability measure the cylindrical semimartingale defined by a sequence of real-valued semimartingales is a good integrator.  We will require the following notation: $\mathcal{S}^{0}(\Prob)$ and $\mathcal{M}^{2}_{\infty} \oplus \mathcal{A}(\Prob)$ denote the spaces $\mathcal{S}^{0}$ and $\mathcal{M}^{2}_{\infty} \oplus \mathcal{A}$ defined with respect to the probability measure $\Prob$. 

\begin{proposition}\label{propGoodIntegraChangeInProbab}
Let $(Z^{j}: j \in \N)$ be a sequence in $S^{0}$ and let $X$ be the cylindrical semimartingale defined by such sequence in \eqref{eqDefiCyliSemiSequenceSemi}. There exists a probability measure $\mathbbm{Q}$, equivalent to $\Prob$, with $d\mathbbm{Q}/d\mathbbm{P} \in L^{\infty} (\Prob)$, such that 
\begin{enumerate}
\item $Z^{j}=m^{j}+a^{j} \in \mathcal{M}^{2}_{\infty} \oplus \mathcal{A}(\mathbbm{Q})$ for each $j \in \N$, and $X$ is a $\mathcal{M}^{2}_{\infty} \oplus \mathcal{A}(\mathbbm{Q})$-cylindrical semimartingale such that the mapping $X: \Phi \rightarrow S^{0}(\mathbbm{Q})$ is continuous. 
\item For every $ H \in b\mathcal{P}(\Phi)$,
$$ \int \, H \, dX =  \int \, H \, dM + \int \, H \, dA  \in \mathcal{M}^{2}_{\infty} \oplus \mathcal{A}(\mathbbm{Q}), $$
where $M$ is the $\mathcal{M}^{2}_{\infty}(\mathbbm{Q})$-cylindrical martingale defined as in \eqref{eqDefiCyliSemiSequenceSemi} by the sequence $(m^{j}: j \in \N) \subseteq \mathcal{M}^{2}_{\infty}(\mathbbm{Q})$ and $A$ is the $\mathcal{A}(\mathbbm{Q})$-cylindrical semimartingale defined as in \eqref{eqDefiCyliSemiSequenceSemi} by the sequence $(a^{j}: j \in \N) \subseteq \mathcal{A}(\mathbbm{Q})$. 
\item The stochastic  integral mapping $H \mapsto \int \, H \, dX$ is continuous from $b\mathcal{P}(\Phi)$ into $\mathcal{M}^{2}_{\infty} \oplus \mathcal{A}(\mathbbm{Q})$. In particular, $X$ is a good integrator under the measure $\mathbbm{Q}$.
\end{enumerate}
\end{proposition}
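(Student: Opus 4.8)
The plan is to reduce the statement to Corollary \ref{coroSpecialSemiIsGoodInteg} once an appropriate change of measure has placed the whole sequence inside the Banach space $\mathcal{M}^{2}_{\infty} \oplus \mathcal{A}$. The only genuinely new ingredient is a change-of-probability result due to Memin; the continuity assertions will then come for free from the direct sum structure of $\Phi$, and the good integrator conclusion will be a direct application of Corollary \ref{coroSpecialSemiIsGoodInteg} under the new measure.

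First I would invoke Memin's theorem (\cite{Memin:1980}, in the form used in \cite{DeDonnoPratelli:2006}) to produce a single probability measure $\mathbbm{Q}$, equivalent to $\Prob$ with $d\mathbbm{Q}/d\Prob \in L^{\infty}(\Prob)$, under which every $Z^{j}$ is a special semimartingale admitting a canonical decomposition $Z^{j} = m^{j} + a^{j}$ with $m^{j} \in \mathcal{M}^{2}_{\infty}(\mathbbm{Q})$ and $a^{j} \in \mathcal{A}(\mathbbm{Q})$; that is, $Z^{j} \in \mathcal{M}^{2}_{\infty} \oplus \mathcal{A}(\mathbbm{Q})$ for each $j \in \N$. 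This gives at once that for $\phi = \sum_{j} a_{j} e_{j} \in \Phi$ the element $X(\phi) = \sum_{j} a_{j} Z^{j}$, being a finite linear combination, lies in $\mathcal{M}^{2}_{\infty} \oplus \mathcal{A}(\mathbbm{Q})$, so $X$ is a $\mathcal{M}^{2}_{\infty} \oplus \mathcal{A}(\mathbbm{Q})$-cylindrical semimartingale. For the continuity of $X : \Phi \rightarrow S^{0}(\mathbbm{Q})$ I would repeat verbatim the direct-sum argument of Theorem \ref{theoCylSemiSequenceSemiContinuous}: since $\Phi = \oplus_{j \in \N} \R_{j}$, the map is continuous as soon as each $X \circ I_{j} : u \mapsto u Z^{j}$ is continuous from $\R_{j}$ into $S^{0}(\mathbbm{Q})$, which holds because scalar multiplication by the fixed semimartingale $Z^{j}$ is continuous. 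This establishes (1) and, in particular, shows that $X$ satisfies the equivalent conditions of Proposition \ref{propCylSemimContOpeSpaceSemim} under $\mathbbm{Q}$ (note that the semimartingale property is preserved under the equivalent change of measure).

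For (2) and (3) I would apply Corollary \ref{coroSpecialSemiIsGoodInteg} to $X$ regarded as a cylindrical semimartingale under $\mathbbm{Q}$. The corollary immediately yields cylindrical components $\widetilde{M}$ and $\widetilde{A}$ obtained from the canonical $\mathbbm{Q}$-decomposition of each $X(\phi)$, the splitting $\int H \, dX = \int H \, d\widetilde{M} + \int H \, d\widetilde{A} \in \mathcal{M}^{2}_{\infty} \oplus \mathcal{A}(\mathbbm{Q})$ for all $H \in b\mathcal{P}(\Phi)$, the continuity of $H \mapsto \int H \, dX$ from $b\mathcal{P}(\Phi)$ into $\mathcal{M}^{2}_{\infty} \oplus \mathcal{A}(\mathbbm{Q})$, and hence the good integrator property under $\mathbbm{Q}$. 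It then remains only to identify $\widetilde{M}, \widetilde{A}$ with the cylindrical semimartingales $M, A$ built from the sequences $(m^{j})$ and $(a^{j})$ via \eqref{eqDefiCyliSemiSequenceSemi}: by uniqueness of the canonical decomposition of the special semimartingale $X(\phi)$, and since $X(\phi) = \sum_{j} a_{j} m^{j} + \sum_{j} a_{j} a^{j} = M(\phi) + A(\phi)$ with $M(\phi) \in \mathcal{M}^{2}_{\infty}(\mathbbm{Q})$ and $A(\phi) \in \mathcal{A}(\mathbbm{Q})$, we must have $M = \widetilde{M}$ and $A = \widetilde{A}$.

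The main obstacle is the very first step: producing a single equivalent measure $\mathbbm{Q}$ with bounded density that simultaneously renders all the countably many $Z^{j}$ of class $\mathcal{M}^{2}_{\infty} \oplus \mathcal{A}$. For a single semimartingale this is Memin's theorem; for the whole sequence one must combine the measures obtained for each $Z^{j}$ into a single one, which is precisely the content underlying Proposition 1 of \cite{DeDonnoPratelli:2006}, and I would cite this rather than reprove it. Once $\mathbbm{Q}$ is in hand, I would stress that no uniform control over $j$ is needed here: because the elements of the direct sum $\Phi$ have only finitely many nonzero coordinates, every continuity assertion localizes to finitely many indices, which is exactly why the direct-sum argument suffices and why the good integrator property follows without any summability hypothesis on the sequence.
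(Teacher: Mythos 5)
Your proposal is correct and follows essentially the same route as the paper: the simultaneous change of measure for the countable family (the paper cites Dellacherie's Th\'{e}or\`{e}me 5 in \cite{Dellacherie:1978} and Memin's Lemme I.3, which is the same result underlying Proposition 1 of \cite{DeDonnoPratelli:2006}), followed by an application of Corollary \ref{coroSpecialSemiIsGoodInteg} under $\mathbbm{Q}$. The only minor divergence is that for the continuity of $X:\Phi\rightarrow S^{0}(\mathbbm{Q})$ the paper reuses the already-established continuity into $S^{0}(\Prob)$ together with the homeomorphism $S^{0}(\Prob)\simeq S^{0}(\mathbbm{Q})$ for equivalent measures, whereas you re-run the direct-sum argument under $\mathbbm{Q}$; both are valid, and your explicit identification of $M$, $A$ with the corollary's components via uniqueness of the canonical decomposition is a detail the paper leaves implicit but which your version correctly supplies.
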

\begin{proof}
Given the sequence $(Z^{j}: j \in \N)$ in $S^{0}$, by a result of Dellacherie (Th\'{e}or\`{e}me 5 in \cite{Dellacherie:1978}; see also Lemme I.3 in \cite{Memin:1980})  there exists a probability measure $\mathbbm{Q}$, equivalent to $\Prob$, with $d\mathbbm{Q}/d\mathbbm{P} \in L^{\infty} (\Prob)$, such that for each $j \in \N$ we have $Z^{j} =m^{j}+a^{j}   \in \mathcal{M}^{2}_{\infty} \oplus \mathcal{A}(\mathbbm{Q})$. Then $X$ defined by \eqref{eqDefiCyliSemiSequenceSemi} is a $\mathcal{M}^{2}_{\infty} \oplus \mathcal{A}(\mathbbm{Q})$-cylindrical semimartingale. 

Now recall from  Theorem \ref{theoCylSemiSequenceSemiContinuous} that $X$ is continuous from $\Phi$ into $S^{0}(\Prob)$. Since $\Prob$ and $\mathbbm{Q}$ are equivalent, the spaces $S^{0}(\Prob)$ and $S^{0}(\mathbbm{Q})$ are homeomorphic (see \cite{DellacherieMeyer}, Remarques VII.100.(c), p.318).  Hence $X$ is continuous from $\Phi$ into $S^{0}(\mathbbm{Q})$. This proves \emph{(1)}. 

The statements \emph{(2)} and \emph{(3)} are a direct consequence of \emph{(1)} and Corollary \ref{coroSpecialSemiIsGoodInteg}. 
\end{proof}

\begin{remark}
The result in Proposition \ref{propGoodIntegraChangeInProbab} has an interesting application. Since both $\Phi$ and $\Phi'$ are complete  barrelled nuclear spaces, under a change in probability measure, for any sequence of real-valued semimartingales the corresponding cylindrical semimartingale is a good integrator, hence the stochastic integral with respect to any such sequence of semimartingales has at disposal all the machinery developed in Sections \ref{sectExteStocIntegNuclear} and \ref{sectStochFubiniSEE}. In particular the Riemann representation, the stochastic integration by parts formula and the stochastic Fubini theorem.   
\end{remark} 

We finalize this section by pointing out the relation of the stochastic integral defined by De Donno and Pratelli in  \cite{DeDonnoPratelli:2006} with the stochastic integral introduced in this section.  To do this, we examine the behaviour of our stochastic integral on the bounded integrands. 

First, from \eqref{eqGeneFormElemDirectSum} we can see that the  elementary processes in $b\mathcal{P}(\Phi)$ are those processes of the form 
\begin{equation}\label{eqElemIntegSequenceSemima}
H_{t}(\omega)=\sum_{k=1}^{n} h_{k}(r,\omega) e_{k}, \quad \forall \, t \geq 0, \, \omega \in \Omega, 
\end{equation}
where $h_{k} \in b\mathcal{P}$ for $k=1, \dots, n$. From Theorem \ref{theoBoundedWeakIntegNucleSpace} we have
\begin{equation}\label{eqIntegSequeSemiElemIntegrands}
 \int \, H \, dX = \sum_{k=1}^{n} h_{k} \cdot X(e_{k})= \sum_{k=1}^{n} \int \, h_{k} \, dZ^{k}. 
\end{equation}
This way our stochastic integral coincide with that in \cite{DeDonnoPratelli:2006} for the elementary processes. 

Now, given any $H \in b\mathcal{P}$ observe that by Lemma \ref{lemmSequeApproxFubini} there exists a sequence of elementary processes $(H_{n}: n \in \N)$ such that $H_{n} \rightarrow H$ in $b\mathcal{P}(\Phi)$. If the cylindrical semimartingale $X$ determined by the sequence $(Z^{j}: j \in \N)$ is a good integrator, then we have 
$\int \, H_{n} \, dX$ converges to $\int \, H \, dX$ in $S^{0}$. Therefore, if $X$ is a good integrator every $H \in b\mathcal{P}(\Phi)$ is a generalized integrand (as defined in Section \ref{subSecLiteReviewSequeSemi}) for the sequence $(Z^{j}: j \in \N)$.  

If the cylindrical semimartingale $X$ determined by the sequence $(Z^{j}: j \in \N)$ is not a good integrator the connection between the two definitions of stochastic integrals is not so clear. However, we 
know from Proposition \ref{propGoodIntegraChangeInProbab} that an equivalent probability measure $\mathbbm{Q}$ exists such that $X$ is a good integrator under the measure $\mathbbm{Q}$.

\section{Final Remarks and Comparison With Other Theories of Stochastic Integration in Locally Convex Spaces}\label{sectRemarkComparisons}

In Section \ref{subSecLocalConvInteg} we introduced the stochastic integral mapping for integrands in the space $\Phi \, \widehat{\otimes}_{\pi} \, b\mathcal{P}$ under the assumption that $\Phi$ is a locally convex space and that $X: \Phi \rightarrow S^{0}$ is a cylindrical semimartingale with a standard continuity property (Assumption \ref{assuCylSemimartingale}). Our construction relies on the good integrator property of real-valued semimartingales and properties of the tensor product of topological vector spaces. 

Under additional assumptions on either $\Phi$ and $X$, we have been able to prove further extensions of our theory of integration. Indeed if either $\Phi$ is metrizable or a complete barrelled nuclear space, we have shown that the integrands $\Phi \, \widehat{\otimes}_{\pi} \, b\mathcal{P}$ can be identified with $\Phi$-valued weakly predictable (locally) bounded processes (Proposition \ref{propInducedBoundPredProcessMetricCase} and Theorem \ref{theoWeakIntegNuclearBoundPrec}). Furthermore, we have shown that for some specific classes of cylindrical semimartingales the stochastic integral mapping is continuous (see Remark \ref{remaLocaConvWeakIntegSpSemima}). In particular if $\Phi$ is a complete, barrelled nuclear space and $X$ is a good integrator, we have shown in Sections \ref{sectExteStocIntegNuclear} to \ref{sectStochFubiniSEE} that the stochastic integral satisfies many of the properties of the stochastic integral in finite dimensions, including a Riemann representation formula, stochastic integration by parts formula, and the stochastic Fubini theorem. 

Other authors have introduced theories of stochastic integration either assuming further properties on the locally convex space or under the assumption that the cylindrical semimartingale is of some specific class. Depending on the above assumptions, it is possible to define the stochastic integral for a class of integrands larger than the weakly predictable (locally) bounded processes.  In the following paragraphs we briefly describe some of these other theories of stochastic integration and their corresponding class of integrands. 

In the papers \cite{MikuleviciusRozovskii:1998, MikuleviciusRozovskii:1999}, Mikulevicius and Rozovskii introduced a theory of stochastic integration with respect to a locally square integrable cylindrical martingale defined on a quasi-complete locally convex space $\Phi$. The covariance structure of such a cylindrical martingale is determined by a predicable family $(Q_{s}: s \geq 0)$ of symmetric non-negative linear forms from $\Phi'$ into $\Phi$ and a predictable increasing process $\lambda_{s}$. Using the Schwartz theory of reproducing kernels, a family of Hilbert spaces $H_{s} \subseteq \Phi$ is associated with the covariance function $Q_{s}$. The class of integrands in    \cite{MikuleviciusRozovskii:1998, MikuleviciusRozovskii:1999} are the $\Phi$-valued weakly predictable processes $h$ satisfying $h(s,\omega)\in H_{s}$ and $\int_{0}^{t} \abs{h(s)}_{H_{s}}^{2} \, d \lambda_{s} < \infty$ $\Prob$-a.e. $\forall t>0$.  

In \cite{FonsecaMora:2018-1} the author of this paper introduced a theory of stochastic integration with respect to a cylindrical martingale-valued measure  defined on a locally convex space $\Phi$. Roughly speaking, a cylindrical martingale-valued measure is a family $M=(M(t,A): t \geq 0, A \in \mathcal{R})$ such that $(M(t,A): t \geq 0)$ is a cylindrical martingale in $\Phi'$ for each $A \in \mathcal{R}$ and $M(t,\cdot)$ is finitely additive on $\mathcal{R}$ for each $t \geq 0$, where $\mathcal{R}$ is a ring of subsets of a topological space $U$. These cylindrical martingale-valued measures are assumed to have (weak) square moments determined by a family of continuous Hilbertian semi-norms $\{ q_{r,u}: r \in \R_{+}, u \in U\}$ on $\Phi$ and two $\sigma$-finite measures $\mu$ and $\lambda$ defined on $(U,\mathcal{B}(U))$ and $(\R_{+},\mathcal{B}(\R_{+}))$ respectively. The class of integrands in \cite{FonsecaMora:2018-1} are families of predictable processes $h$ satisfying $h(r,\omega,u) \in \Phi_{q_{r,u}}$ and $\forall t>0$, $\int_{0}^{t} \, q_{r,u}(h(r,u))^{2} \, \mu(du)  \lambda(dr) < \infty$ $\Prob$-a.e.   

A theory of stochastic integration for weakly predictable locally bounded integrands with respect to semimartingales (in the projective sense) taking values in the dual of a reflexive nuclear space was introduced by \"{U}st\"{u}nel in the papers \cite{Ustunel:1982, Ustunel:1982-1, Ustunel:1982-2}. As mentioned previously in this  article, the theory of stochastic integration developed in Sections \ref{sectStochIntegNuclear} to \ref{sectStochFubiniSEE} generalize the results obtained by \"{U}st\"{u}nel (Remarks \ref{remaSemiDefiUstunel}, \ref{remaUstunelStochInteg} and  \ref{remaUstunelIntegByParts}). 

In \cite{KurtzProtter:1996}, Kurtz and Protter introduced the concept of standard $\mathbbm{H}^{\#}$-semimartingale, which corresponds to a cylindrical semimartingale $Y$ defined on a separable Banach space $\mathbbm{H}$ such that for each $t>0$ the set $ \mathcal{H}_{t}= \left\{ \sup_{s \leq t} \abs{Z_{-} \cdot Y(s)}: Z \in S_{1}  \right\}$ 
is stochastically continuous, where in the above $S_{1}$ denotes the collection of all the $\mathbbm{H}$-valued c\`{a}dl\`{a}g processes of the form $Z(t)=\sum_{k=1}^{m} \xi_{k}(t) h_{k}$ where for $k=1,\dots, m$, $\xi_{k}$ is an adapted real-valued c\`{a}dl\`{a}g process, $h_{k} \in \mathbbm{H}$, $\sup_{s \leq t} \norm{Z(s)} \leq 1$, and $Z_{-} \cdot Y(t) = \sum_{k=1}^{m} \int_{0}^{t} \,  \xi_{k}(s-) dY(h_{k})(s)$. Stochastic integrals are therefore defined for $\mathbbm{H}$-valued predictable processes with locally compact image by a continuity argument. 

The concept of standard $\mathbbm{H}^{\#}$-semimartingale extends the concept of good integrator from finite dimensions to a cylindrical semimartingale defined on a separable Banach space $\mathbbm{H}$. Our concept of good integrator of Definition \ref{defiGoodIntegrators} also extends the concept of good integrator from finite dimension but to a cylindrical semimartingale in the dual $\Phi'$ of a complete barrelled nuclear space $\Phi$. Observe that no intersection of these infinite dimensional versions of the concept of good integration occurs since it is known that only the finite dimensional spaces can be Banach and nuclear at the same time. 

In the context of Banach spaces, there are several theories of (vector-valued) stochastic integration with respect to specific classes of cylindrical semimartingales. For example, in Hilbert spaces we have stochastic integration with respect to cylindrical square integrable martingales and cylindrical L\'{e}vy processes \cite{DaPratoZabczyk, MetivierPellaumail, JakubowskiRiedle:2017}, in separable Banach spaces we have stochastic integration with respect to cylindrical Brownian motion \cite{Kalinichenko:2019}, in Banach spaces of martingale type $p \in [1,2]$ with respect to cylindrical L\'{e}vy processes of order $p$ \cite{KosmalaRiedle:2021}, and in UMD Banach spaces we have stochastic integration with respect to cylindrical Brownian motion \cite{vanNeervenVeraarWeis:2008} and with respect to continuous cylindrical local martingales  \cite{VeraarYaroslavtsev:2016}. 

We are not aware of any theory of stochastic integration in Banach spaces with respect to general cylindrical semimartingales. The stochastic integral mapping of Section \ref{subSecLocalConvInteg} can serve for this purpose but the class of integrands $\Phi \, \widehat{\otimes}_{\pi} \, b\mathcal{P}$ (which we know by Proposition \ref{propInducedBoundPredProcessMetricCase}  that can be identified with $\Phi$-valued weakly predictable bounded processes) might be relatively small and requires a further extension or alternative characterization. In particular, observe that if $\Phi$ has the approximation property (e.g. if $\Phi$ has a Schauder basis) then $\Phi \, \widehat{\otimes}_{\pi} \, b\mathcal{P}$ can be identified with the space of nuclear operators $\mathcal{N}(\Phi',b\mathcal{P})$ (see \cite{KotheII}). We hope we can explore these ideas in the Banach space setting in a latter publication.

\textbf{Acknowledgements} {The author thank an anonymous referee for valuable comments and suggestions that contributed
greatly to improve the presentation of this article.  Thanks also to  Dar\'{i}o Mena-Arias and Ronald Zu\~{n}iga-Rojas for useful discussions.
The author acknowledge The University of Costa Rica for providing financial support through the grant ``B9131-An\'{a}lisis Estoc\'{a}stico con Procesos Cil\'{i}ndricos''.}


\begin{thebibliography}{HD}



\bibitem{Babalola:1974}  Babalola, V. A.: \emph{Semigroups of operators on locally convex spaces}, Trans. Amer. Math. Soc., {199}, 163--179 (1974).

\bibitem{BeckmannDeitmar:2015} Beckmann, R., Deitmar, A.: \emph{Two applications of nets}, Ann. Funct. Anal., {6}, no. 3, 176--190 (2015).


\bibitem{BogachevMT}  Bogachev, V. I.: \emph{Measure Theory}, Springer, Vol. II. (2007).

\bibitem{CohenElliott} Cohen, S. N., Elliott, R. J.: \emph{Stochastic calculus and applications}, Second edition. Probability and its Applications, Springer (2015).

\bibitem{Criens:2019} Criens, D.: \emph{Cylindrical Martingale problems associated with L\'{e}vy generators}, J. Theoret. Probab., {32}, no. 3, 1306--1359 (2019).

\bibitem{DaPratoZabczyk} Da Prato G., Zabczyk, J.:  \emph{Stochastic Equations in Infinite Dimensions}, Encyclopedia of Mathematics and Its Applications, Cambridge University Press (1992).



\bibitem{DeDonnoGuasoniPratelli:2005} De Donno, M.; Guasoni, P.; Pratelli, M.: \emph{Super-replication and utility maximization in large financial markets}, Stochastic Process. Appl., {115}, no. 12, 2006--2022 (2005).

\bibitem{DeDonnoPratelli:2006} 
De Donno, M.,  Pratelli, M.; \emph{Stochastic integration with respect to a sequence of semimartingales}. In memoriam Paul-André Meyer: Séminaire de Probabilités XXXIX, 119--135,
Lecture Notes in Math., 1874, Springer (2006).

\bibitem{Dellacherie:1978} Dellacherie, C.: \emph{Quelques applications du lemme de Borel-Cantelli \`{a} la th\'{e}orie des semimartingales}, S\'{e}minaire de Probabilit\'{e}s XII,  Lecture Notes in Math. 649, 742--745, Springer (1978).


\bibitem{DellacherieMeyer} Dellacherie, C, Meyer, P.A.: \emph{Probabilit\'{e}s et potentiel. Chapitres V \`{a} VIII. Th\'{e}orie des martingales}, Actualit\'{e}s Scientifiques et Industrielles, 1385, Hermann, Paris, (1980).  



\bibitem{Emery:1979} Emery, M.: \emph{Une topologie sur l'espace des semimartingales}, S\'{e}m. Probab. XIII. Univ. Strasbourg, p.260--280, Lecture Notes in Math. 721, Springer (1979). 

\bibitem{FonsecaMora:2018}  Fonseca-Mora, C. A.: \emph{Existence of Continuous and C\`{a}dl\`{a}g Versions for Cylindrical Processes in the Dual of a Nuclear Space}, J Theor Probab, {31}, no.2, 867--894 (2018).  

\bibitem{FonsecaMora:2018-1} Fonseca-Mora, C. A.: \emph{Stochastic Integration and Stochastic PDEs Driven by Jumps on the Dual of a Nuclear Space}, Stoch PDE: Anal Comp,   6, no.4,  618--689 (2018).


\bibitem{FonsecaMora:Semi} Fonseca-Mora, C. A.: \emph{Semimartingales on Duals of Nuclear Spaces}, Electron. J. Probab., Volume 25, paper no. 36 (2020). 

\bibitem{FonsecaMora:Skorokhod} Fonseca-Mora, C. A.:  \emph{Tightness and Weak Convergence of Probabilities on the Skorokhod Space on the Dual of a Nuclear Space and Applications}, Studia Math., {254}, no.2,  109--147 (2020).

\bibitem{Glockner:2004} Gl\"{o}ckner, H.: \emph{Tensor products in the category of topological vector spaces are not associative}, Comment. Math. Univ. Carolinae, {45}, no.4, 607--614 (2004).

\bibitem{Glockner:2012} Gl\"{o}ckner, H.: \emph{Continuity of bilinear maps on direct sums of topological vector spaces},
J. Funct. Anal., {262}, no.5, 2013--2030 (2012). 


\bibitem{Hollstein:1973} Hollstein, R.: \emph{\"{U}ber die Metrisierbarkeit von projektiven Tensorprodukten},  Manuscripta Math., {9}, 201--209 (1973). 

\bibitem{ItoNawata:1983} It\^{o} K., Nawata, M.: \emph{Regularization of linear random functionals}, In Probability Theory and Mathematical Statistics. Proceedings of the Fourth USSR-Japan Symposium, Tbilisi, USSR, August 23-29, 1982, Lecture Notes in Math. 1021, Springer, 257–-267 (1983). 

\bibitem{JacodShiryaev} Jacod, J., Shiryaev, A.N.: \emph{Limit Theorems for Stochastic Processes}, Springer, second edition (2003).

\bibitem{Jarchow} Jarchow, H.: \emph{Locally Convex Spaces}, Mathematische Leitf\"{a}den, Springer (1981). 

\bibitem{JakubowskiRiedle:2017} Jakubowski, A., Riedle, M.: \emph{Stochastic integration with respect to cylindrical L\'{e}vy processes}, Ann. Probab., {45}, no.6B, 4273--4306 (2017). 

\bibitem{Kalinichenko:2019} Kalinichenko, A. A.: \emph{An approach to stochastic integration in general separable Banach spaces}, Potential Anal., {50}, no. 4, 591--608 (2019).

\bibitem{KarandikarRao} Karandikar, R. L., Rao, B. V.: \emph{Introduction to stochastic calculus}, Indian Statistical Institute Series, Springer (2018).

\bibitem{Komura:1968} T. K\={o}mura, \emph{Semigroups of operators in locally convex spaces}, J. Functional Analysis, {2}, 258--296 (1968).

\bibitem{KosmalaRiedle:2021} Kosmala, T; Riedle, M.: \emph{Stochastic integration with respect to cylindrical L\'{e}vy processes by $p$-summing operators}, J. Theoret. Probab., {34}, no. 1, 477--497 (2021).


\bibitem{KotheII} K\"{o}the, G.: \emph{Topological Vector Spaces II}, Grundlehren der mathematischen Wissenschaften 237, Springer (1979).

\bibitem{KumarRiedle:2020} Kumar, U., Riedle, M.: \emph{The stochastic Cauchy problem driven by a cylindrical Lévy process}, Electron. J. Probab., Volume 25, paper no. 10, 26 pp (2020).

\bibitem{KurtzProtter:1996} Kurtz, T. G., Protter, P. E.:  \emph{Weak convergence of stochastic integrals
and differential equations II: Infinite dimensional case. Probabilistic models for
nonlinear partial differential equations}. In \emph{Lect. Notes Math.} 1627, Springer, 197--285 (1996).


\bibitem{Memin:1980} Memin, J.: \emph{Espaces de semimartingales et changement de probabilit\'{e}}, Z. Wahrsch. Verw. Gebiete, {52}, no. 1, 9--39 (1980).

\bibitem{Metivier} M\'{e}tivier, M.: \emph{Semimartingales. A course on stochastic processes}, de Gruyter Studies in Mathematics 2,  Walter de Gruyter Co. (1982).

\bibitem{MetivierPellaumail} M\'{e}tivier, M; Pellaumail, J.: \emph{Stochastic integration}, Probability and Mathematical Statistics, Academic Press (1980).


\bibitem{MikuleviciusRozovskii:1998} Mikulevicius, R.,    Rozovskii, B. L.: \emph{Normalized stochastic integrals in topological vector spaces}. S\'{e}minaire de probabilit\'{e}s XXXII, 137--165, Lect. Notes Math. 1686, Springer (1998).
     
\bibitem{MikuleviciusRozovskii:1999} Mikulevicius, R.,    Rozovskii, B. L.: \emph{Martingale problems for stochastic PDE's. Stochastic partial differential equations: six perspectives}, 243--325, Math. Surveys Monogr., 64, Amer. Math. Soc. (1999).

\bibitem{Mostovyi:2018} Mostovyi, O.: Utility maximization in a large market. Math. Finance, {28}, no. 1, 106--118 (2018).

     
\bibitem{NariciBeckenstein}  Narici L.,  Beckenstein, E.: \emph{Topological Vector Spaces}, Pure and Applied Mathematics, CRC Press, second edition (2011).  


\bibitem{Pietsch}  Pietsch, A.: \emph{Nuclear Locally Convex Spaces}, Ergebnisse der Mathematikund ihrer Grenzgebiete, Springer (1972). 

\bibitem{PriolaZabczyk:2011} Priola, E. and Zabczyk, J.:  Structural properties of semilinear SPDEs driven by cylindrical stable processes, \emph{Probab. Theory Relat. Fields}, {149}, no.1-2, 97--137 (2011). 

\bibitem{Protter} Protter, P. E.: \emph{Stochastic Integration and Differential Equations}, Stochastic Modelling and Applied Probability 21 Springer-Verlag, second edition, version 2.1 (2005).    
    
\bibitem{Schaefer} Schaefer, H.: \emph{Topological Vector Spaces}, Graduate Texts in Mathematics, Springer, second edition (1999).

\bibitem{SchwartzRM}  Schwartz, L.: \emph{Radon Measures on Arbitrary Topological Spaces and Cylindrical Measures}, Tata Institute of Fundamental Research Studies in Mathematics, Oxford University Press (1973). 

\bibitem{Shuchat:1972} Shuchat, A. H.: \emph{Integral representation theorems in topological vector spaces}, Trans. Amer. Math. Soc., {172}, 373--397 (1972).

\bibitem{SunXieXie:2020} Sun, X., Xie, L., Xie, Y.: \emph{Pathwise Uniqueness for a Class of SPDEs Driven by Cylindrical $\alpha$-Stable Processes},  Potential Anal., {53}, no. 2, 659--675 (2020).

\bibitem{Swartz:1984} Swartz, C.: \emph{Continuity and hypocontinuity for bilinear maps}, Math. Z., {186}, no. 3, 321--329 (1984). 

 \bibitem{Thomas:1975} Thomas, G. E. F.: \emph{Integration of functions with values in locally convex Suslin spaces}, Trans. Amer. Math. Soc., {212}, 61--81 (1975).


\bibitem{Tomasek:1965} Tom\'{a}\u{s}ek, S.: \emph{Some remarks on tensor products},  
Comment. Math. Univ. Carolinae, {6}, 85--96 (1965). 


\bibitem{Turpin:1982} Turpin, P.: \emph{Produits tensoriels d'espaces vectoriels topologiques}, Bull. Soc. Math. France, {110}, 3--13 (1982). 

\bibitem{Treves} Tr\`{e}ves, F.: \emph{Topological Vector Spaces, Distributions and Kernels}, Pure and Applied Mathematics, Academic Press (1967).

\bibitem{Ustunel:1982}  \"{U}st\"{u}nel, A. S.: \emph{Stochastic integration on nuclear spaces and its applications}, Ann. Inst. H. Poincar\'{e} Sect. B (N.S.), {18}, no. 2, 165--200 (1982). 
     
\bibitem{Ustunel:1982-1} \"{U}st\"{u}nel, A. S.: \emph{A characterization of semimartingales on nuclear spaces}, Z. Wahrsch. Verw. Gebiete, {60}, no. 1, 21--39 (1982).
     
\bibitem{Ustunel:1982-2} \"{U}st\"{u}nel, A. S.: \emph{Some applications of stochastic integration in infinite dimensions}, Stochastics, {7}, no. 4, 255--288 (1982).

\bibitem{VakhaniaTarieladzeChobanyan} Vakhania, N. N., Tarieladze, V. I., Chobanyan, S. A.: \emph{Probability Distributions on Banach Spaces}, Reidel Publishing (1987).

\bibitem{vanNeervenVeraarWeis:2008} van Neerven, J. M. A. M., Veraar, M. C., Weis, L.:  \emph{Stochastic evolution equations in UMD Banach spaces},  J. Funct. Anal., {255}, no. 4, 940--993 (2008). 

\bibitem{VeraarYaroslavtsev:2016} Veraar, M. and Yaroslavtsev, I.: Cylindrical continuous martingales and stochastic integration in infinite dimensions, \emph{Electron. J. Probab.}, {21}, no.59, 1--53 (2016).      

\end{thebibliography}
\end{document}